\tikzset{anchorbase/.style={baseline={([yshift=-0.5ex]current bounding box.center)}}}
\tikzstyle directed=[postaction={decorate,decoration={markings,
    mark=at position #1 with {\arrow{>}}}}]
\tikzstyle rdirected=[postaction={decorate,decoration={markings,
    mark=at position #1 with {\arrow{<}}}}]
\numberwithin{equation}{section}
\newtheorem{theorem}[subsubsection]{Theorem}
\newtheorem{lemma}[theorem]{Lemma}
\newtheorem{prop}[theorem]{Proposition}
\newtheorem{corollary}[subsubsection]{Corollary}
\newtheorem{porism}[subsubsection]{Porism}
\newtheorem{conjecture}[theorem]{Conjecture}
\theoremstyle{definition}
\newtheorem{definition}[subsubsection]{Definition}
\newtheorem{remark}[theorem]{Remark}
\newtheorem{example}[subsubsection]{Example}
\newtheorem{question}[theorem]{Question}
\newcommand{\m}{\mathfrak{m}}
\newcommand{\fm}{\mathfrak{m}}
\newcommand{\p}{\mathfrak{p}}
\newcommand{\fp}{\mathfrak{p}}
\newcommand{\q}{\mathfrak{q}}
\newcommand{\fq}{\mathfrak{q}}
\newcommand{\bA}{\mathcal{A}}
\newcommand{\bC}{\mathcal{C}}
\newcommand{\cL}{\mathcal{L}}
\newcommand{\cD}{\mathcal{D}}
\newcommand{\co}{\mathrm{co}}
\newcommand{\red}{\mathrm{red}}
\newcommand{\Aff}{\mathsf{Aff}}
\newcommand{\Fais}{\mathsf{Fais}}
\newcommand{\Fun}{\mathsf{Fun}}
\newcommand{\Alg}{\mathsf{Alg}}
\newcommand{\LocAlg}{\mathsf{LocAlg}}
\newcommand{\Set}{\mathsf{Set}}
\newcommand{\Grp}{\mathsf{Grp}}
\newcommand{\Mod}{\mathsf{Mod}}
\newcommand{\Rep}{\mathsf{Rep}}
\newcommand{\Tilt}{\mathsf{Tilt}}
\newcommand{\QCoh}{\mathsf{QCoh}}
\newcommand{\gr}{\mathrm{gr}}
\newcommand{\Frac}{\mathrm{Frac}}
\newcommand{\tto}{\twoheadrightarrow}
\newcommand{\cO}{\mathcal{O}}
\newcommand{\mN}{\mathbb{N}}
\newcommand{\mG}{\mathbb{G}}
\newcommand{\mZ}{\mathbb{Z}}
\newcommand{\mA}{\mathbb{A}}
\newcommand{\mC}{\mathbb{C}}
\newcommand{\End}{\mathrm{End}}
\newcommand{\im}{\mathrm{im}}
\newcommand{\Ext}{\mathrm{Ext}}
\newcommand{\Tor}{\mathrm{Tor}}
\newcommand{\Hom}{\mathrm{Hom}}
\newcommand{\Ann}{\mathrm{Ann}}
\newcommand{\Aut}{\mathrm{Aut}}
\newcommand{\Sym}{\mathrm{Sym}}
\newcommand{\Lie}{\mathrm{Lie}}
\newcommand{\op}{\mathrm{op}}
\newcommand{\Ind}{\mathrm{Ind}}
\newcommand{\Res}{\mathrm{Res}}
\newcommand{\Spec}{\mathrm{Spec}}
\newcommand{\GSpec}{\mathcal{S}pec}
\newcommand{\Nil}{\mathrm{Nil}}
\newcommand{\Id}{\mathrm{Id}}
\newcommand{\Vecc}{\mathsf{Vec}}
\newcommand{\sVec}{\mathsf{sVec}}
\newcommand{\Ver}{\mathsf{Ver}}
\newcommand{\Top}{\mathsf{Top}}
\newcommand{\unit}{{\mathbbm{1}}}
\newcommand{\cC}{\mathcal{C}}
\begin{document}
\title[Algebra in tensor categories]{Commutative algebra in tensor categories}
\author{Kevin Coulembier}
\address{K.C.: School of Mathematics and Statistics, University of Sydney, NSW 2006, Australia}
\email{kevin.coulembier@sydney.edu.au}

%\thanks{.}

\keywords{prime spectrum, geometric reductivity, observability, Nakayama lemma, Hopf algebras, affine group schemes}
\subjclass[2020]{}

\begin{abstract}
We develop some foundations of commutative algebra, with a view towards algebraic geometry, in symmetric tensor categories. Most results establish analogues of classical theorems, in tensor categories which admit a tensor functor to some tensor category verifying specific conditions. This is in line with the current program which aims to describe tensor categories by their tensor functors to incompressible categories. We place particular emphasis on the notion of observable subgroups of affine group schemes in tensor categories, which in particular leads to some further insight into observability for classical affine group schemes.
\end{abstract}

\maketitle

%\tableofcontents

\section*{Introduction}

Tensor categories (always assumed to be symmetric in this paper) over fields of characteristic zero are well-understood thanks to the classical work of Deligne \cite{Del90, Del02}, although the exotic case of tensor categories which are not of moderate growth continues to be an exciting research area, see \cite{HS}.
Recent breakthroughs in the theory of tensor categories of moderate growth over fields of positive characteristic, see for instance \cite{BE, BEO, Tann, CEO, CEO2, Os}, have revealed the richness of the theory and the importance of `incompressible tensor categories'. Note that in characteristic zero, the only incompressible categories of moderate growth are the categories of (super) vector spaces.

These developments motivate the following three directions for research. One would expect each incompressible category to yield a well-behaved theory of algebraic geometry, just as the category of vector spaces connects to ordinary algebraic geometry (over a field) and the category of super vector spaces gives rise to super geometry. Secondly, the incompressible categories $\Ver_{p^n}$ from \cite{BE, BEO, AbEnv} which have been discovered thus far, are not yet well-understood in certain aspects. Thirdly, the major open question in the area is whether the current list of incompressible tensor categories is exhaustive.

In this paper we develop some foundational theory of commutative algebra in tensor categories, as the canonical precursor to the first direction identified above (algebraic geometry). However, this neatly ties in with the other two lines of research. For instance, we identify two sufficient conditions on tensor categories, and call the categories that satisfy them `maximally nilpotent' ({\bf MN}) and `geometrically reductive' ({\bf GR}), for validity of paramount classical properties in commutative algebra in those categories. As one would hope (based on the desire to develop algebraic geometry specifically in incompressible categories), these potential properties seem closely related to whether a category is incompressible. This is explored in the follow-up paper \cite{CEO2}. This in turn motivates the question of whether they are satisfied in~$\Ver_{p^n}$. Partial positive answers to these questions are obtained in the current paper as well as in \cite{CEO2}.

Now we describe the precise content of the paper. In Section~\ref{SecPrel} we recall some necessary background. In Section~\ref{SecDefs} we formulate the basic definitions for commutative algebra. In Section~\ref{SecNPGR} we introduce and study {\bf MN} and {\bf GR} tensor categories. In Section~\ref{SecLoc} we study the principle of localisation of algebras in tensor categories. In Section~\ref{SecComp} we show how the previous results relate to a comparison between the spectra of algebras in tensor categories and their invariant subalgebras. In Section~\ref{SecHer} we investigate some potential properties which are inherited by any tensor category which admits a tensor functor to a category in which they are valid. These include classical properties relating to Nakayma's lemma, the fact that commutative Hopf algebras are faithfully flat over Hopf subalgebras and the fact that finitely generated algebras are noetherian. The conclusion is invariably that these properties are satisfied in categories which are both {\bf MN} and {\bf GR}, demonstrating further the importance of proving that incompressible categories are {\bf MN} and {\bf GR}.

%The intentions in the last two sections deviate slightly from the previous ones, so we explain them in more detail. 

Finally, in Section~\ref{SecGrp} we study affine group schemes in tensor categories. Mostly we verify standard properties of quotients and normal subgroups, but we pay special attention to `observability'. This is motivated by the important role the classical theory of observable subgroups, see~\cite{BBHM, Gr}, has played recently  in studying tannakian categories appearing as abelian envelopes in \cite{CEOP}. Our approach actually differs from the more classical ones and seems to give new information (and new proofs) even for ordinary affine group schemes over fields, for instance removing conditions like smoothness, finite type and algebraically closed fields. This application is written out in Appendix~\ref{App}. This appendix also highlights that the existence of a well-behaved theory of algebraic geometry (in this case based on $\Vecc$) implies that the `main theorem' of observability (a priori not related to geometry) is satisfied, yielding further  concrete motivation to develop further algebraic geometry in other tensor categories.

\section{Preliminaries}\label{SecPrel}

Let $k$ be a field.

\subsection{Tensor categories}

We refer to \cite{Del90, DM, EGNO} for a full introduction to the theory of tensor categories.

\subsubsection{}Following \cite{Del90}, an essentially small $k$-linear symmetric monoidal category $(\bC,\otimes,\unit)$ is a {\bf tensor category over $k$} if
\begin{enumerate}
\item $\bC$ is abelian;
\item $k\to\End(\unit)$ is an isomorphism;
\item $(\bC,\otimes,\unit)$ is rigid, meaning that every object $X$ has a monoidal dual $X^\vee$.
\end{enumerate}
By \cite[Proposition~1.17]{DM}, it then follows that $\unit$ is simple.
If additionally, we have
\begin{enumerate}
\item[(4)] Every object in~$\bC$ has finite length;
\end{enumerate}
then using (2) and (3) shows also that morphism spaces are finite dimensional. Tensor categories satisfying (4) will be called {\bf pretannakian categories}.

The important advantage of pretannakian categories that we will rely on is that ind-objects are equal to the union (direct limit) of compact subobjects. In fact, pretannakian categories are (when ignoring the tensor product) equivalent to categories of finite dimensional comodules of coalgebras.

Under condition (4) the following condition is well-defined:
\begin{enumerate}
\item[(5)] For every object $X\in\bC$, the function
$$\mN\to\mN,\quad n\mapsto \ell(X^{\otimes n})$$
is bounded by an exponential.
\end{enumerate}
Pretannakian categories satisfying (5) are called (tensor categories) of {\bf moderate growth}.

A {\bf tensor functor} between tensor categories is a $k$-linear exact symmetric monoidal functor. It is well-known, see \cite{DM}, that tensor functors are automatically faithful. In fact, one can interchange the words `exact' and `faithful' in the above definition of tensor functor, see~\cite{CEOP}.

\subsubsection{} We will make little distinction between a tensor category $\bC$ and its ind-completion $\Ind\bC$. Similarly, we will use the same notation for a tensor functor and its cocontinuous extension to the ind-completions.

%We will call an object in a tensor category $\bC$ or $\Ind\bC$ `trivial' if it is a direct sum of copies of $\unit$, meaning it belongs to the canonical monoidal subcategory $\Vecc^\infty\subset\Ind\bC$.

It will be important for the sequel that, for a tensor category $\bC$, the ind-completion $\Ind\cC$ is complete, as a Grothendieck category. However, limits need not be well behaved with respect to tensor functors, see Example~\ref{ExInverse} below.

\subsubsection{} A {\bf tensor subcategory} of a tensor category is a topologising subcategory closed under the tensor product and duals. A tensor category $\cC_1$ is {\bf finitely generated} over a tensor subcategory $\cC\subset\cC_1$ if there is some $X\in \cC_1$ so that the only tensor subcategory of $\cC_1$ which contains both $\cC$ and $X$ is $\cC_1$ itself. In particular, a tensor category is called finitely generated if it is finitely generated over $\Vecc$.

%
%
%\subsubsection{Absolute tensor categories}\label{absolute}
%Following \cite[Definition~5.2.1]{CEOP} a tensor category $\cC$ over $k$ is {\bf absolute} if its extension of scalars $\cC_K$ is a tensor category for every field extension $K$ of $k$. 
%
%
%Pretannakian categories over perfect fields are absolute (see \cite[Corollary~5.2.4]{CEOP}) and semisimple pretannakian categories are absolute by \cite[Theorem~5.4.1]{CEOP} (and their extensions of scalars remain semisimple). By \cite[Lemma~5.2.5]{CEOP}, existence of a tensor functor $\cC\to\cD$ to an absolute tensor category $\cD$ over $k$ implies that $\cC$ is also absolute. 
%%The author knows of no tensor categories which are not absolute. 
%
%%By \cite[Theorem~6.2.1]{CEOP}, an absolute pretannakian category satisfies the technical condition in \cite[\S 8]{Del90} to perform tannakian reconstruction. (not used any more?)
%
%%
%%\subsubsection{Terminology}\label{LiftTerm}
%%Consider a tensor functor $F:\cC\to\cD$. For $M\in\Ind\cC$ and a subobject $N\subset F(M)$ we say that `{\bf $N$ lifts through $F$}' if there exists a subobject $N_1\subset M$ which is sent to $N$ ({\it i.e.} there exists a monomorphism $i: N_1\hookrightarrow M$ such that the equivalence class of $F(i)$ equals $N\subset F(M)$. In Example~\ref{ExTann}, a subspace $U\subset \omega(V)$ thus `lifts through $\omega$' precisely when $U$ is stable under the action of $G$ on $V$ ({\it i.e.} a subrepresentation).
%%

\subsection{Affine group schemes}

\subsubsection{}\label{ExTann}
For an affine group scheme~$G$ over $k$, we have the (pre)tannakian tensor category $\Rep G$ of finite dimensional rational $G$-representations. In this case the ind-completion is the category $\Rep^\infty G$ of all rational $G$-representations. We will always use the symbol $\omega$ to denote the forgetful tensor functor
$$\omega: \Rep G\to\Vecc\quad\mbox{ or }\quad\omega: \Rep^\infty G\to\Vecc^\infty$$
to the category of (finite dimensional) vector spaces.

We will use some running examples, for which we fix some terminology:

\subsubsection{Infinitesimal group schemes}

\label{DefInf} For a (non-trivial) finite affine group scheme~$G$ over a field $k$ (meaning the coordinate algebra $\cO(G)$ is finite dimensional), the following conditions are equivalent:
\begin{enumerate}
\item The coordinate algebra $\cO(G)$ is local.
\item $G$ is connected.
\item The coordinate algebra $\cO(G)$ has as spectrum one point.
\item The field $k$ has characteristic $p>0$, and there is a power $q$ of $p$ for which 
$$(f-\varepsilon(f))^q\;=\;0\;=\; f^q-\varepsilon(f^q),\quad\mbox{ for all $f\in \cO(G)$.}$$
\end{enumerate}
Such a finite group scheme is called {\bf infinitesimal}.

\subsubsection{Additive group}\label{Ga}
We will mainly consider the additive group $\mG_a$ over $\mC$. The indecomposable objects in~$\Rep\mG_a$ are labelled by their dimension as $\{M_i\,|\, i\in\mZ_{>0}\}$.

We fix a non-zero $D\in \Lie\mG_a$, so that a $\mG_a$-representation `is' as a vector space with a nilpotent endomorphism (the action of $D$).

\begin{example}\label{ExInverse}
Set $k=\mC$ and $\cC=\Rep\mG_a$.  We choose a basis $\{e_{ij}\,|\, 1\le j\le i\}$ of $\omega(M_i)$ compatible with the socle filtration (e.g. $\mC e_{i1}$ is the socle).  In~$\Rep^\infty\mG_a$, the limit
$$\prod_{i>0} M_i\;\simeq\; \varprojlim_j \bigoplus_{i=1}^j M_i$$
is given by the subspace of $\prod_i \omega(M_i)$ consisting of the vectors
$\left(\sum_j a_{ij} e_{ij}\right)_{i}$
for which there exists $l\in\mN$ such that $a_{ij}=0$ when $j>l$, with natural $\mG_a$-action. This can be computed directly, or by using the fully faithful embedding $F:\Rep\mG_a\to\Rep\Lie\mG_a$. Indeed $F_\ast$ (which is defined in \ref{Fs} below and acts by taking the maximal integrable submodule) acting on the product of the $M_i$ inside $\Ind\Rep\Lie\mG_a$ (which is just the product of vector spaces) yields the above space.
\end{example}

\subsection{The right adjoint of a tensor functor}

\subsubsection{}\label{Fs} A tensor functor $F:\cC\to\cD$ has, by the special adjoint functor theorem, always a right adjoint
$$F\dashv F_\ast:\Ind\cD\to\Ind\cC.$$
This functor  $F_\ast$ has a canonical lax-monoidal structure, is continuous and commutes with filtered colimits.

\subsubsection{}
For the inclusion $I:\Vecc\hookrightarrow \cC$, we denote $I_\ast$ by
$$H^0\simeq\Hom(\unit, -):\Ind\cC\to\Vecc^\infty,$$
the functor returning the maximal subobject belonging to $\Vecc^\infty\subset \Ind\cC$.
Note that unless $\bC$ is pretannakian, $H^0$ does not need to send $\bC$ to $\Vecc$. We will typically abbreviate $H^0(X)$ to $X^0$.
In Example~\ref{ExTann}, we have 
$$H^0(X)=:X^0=X^G:=H^0(G,X),$$ the space of $G$-invariants in a representation $X$ of $G$.

\subsubsection{}\label{deftilde}
The inclusion $I:\Vecc\hookrightarrow \cC$ also has a left adjoint 
$$H_0:\Ind\cC\to\Vecc^\infty,\quad X\mapsto X/\widetilde{X}\;\mbox{ with }\; \widetilde{X}=\bigcap_{f\in\Hom(X,\unit)}\ker f.$$
If $\cC$ is pretannakian and $X\in \cC$, then $H_0(X)^\ast\simeq H^0(X^\vee)$.

\begin{prop}\label{Propadj0}
The following conditions are equivalent on a tensor functor $F:\cC\to\cD$ between pretannakian categories:
\begin{enumerate}
\item $F_\ast$ is faithful;
\item Every object in~$\cD$ is a quotient of an object $F(X)$, for $X\in\cC$;
\item Every object in~$\cD$ is a subobject of an object $F(X)$, for $X\in\cC$.
\end{enumerate}
\end{prop}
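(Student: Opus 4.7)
The plan is to reformulate (1) using the counit and then exploit the ind-structure. I will rely on the standard fact that for an adjunction $F\dashv F_\ast$, the right adjoint $F_\ast$ is faithful if and only if the counit $\epsilon_Y\colon FF_\ast(Y)\to Y$ is an epimorphism for every $Y$: from $\Hom(FF_\ast Y,-)\simeq\Hom(F_\ast Y,F_\ast-)$ one sees that $\epsilon_Y$ is epic iff $F_\ast$ is faithful on maps out of $Y$. The equivalence $(2)\Leftrightarrow(3)$ follows at once from rigidity of $\cD$: an epimorphism $F(X)\tto Y$ dualises to a monomorphism $Y^\vee\hookrightarrow F(X)^\vee=F(X^\vee)$, and since every object of $\cD$ admits a dual one exchanges (2) and (3) by applying either hypothesis to $Y^\vee$.

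For $(1)\Rightarrow(2)$, I take $Y\in\cD$. As $\cC$ is pretannakian I write $F_\ast(Y)=\bigcup_i X_i$ as a filtered union of compact subobjects $X_i\in\cC$. Since $F$ is cocontinuous (as a left adjoint on ind-completions) and exact, this gives $FF_\ast(Y)=\bigcup_i F(X_i)$ as a filtered union of subobjects of $FF_\ast(Y)$ in $\Ind\cD$. The assumed epimorphicity of $\epsilon_Y$ then reads $\sum_i\im\bigl(F(X_i)\to Y\bigr)=Y$, and as $Y$ has finite length in the pretannakian category $\cD$ the sum stabilises at some finite stage, yielding an epimorphism $F(X_{i_1}\oplus\cdots\oplus X_{i_n})\tto Y$; thus $Y$ is a quotient of $F(X)$ with $X\in\cC$.

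For $(2)\Rightarrow(1)$, I need $\epsilon_Y$ epic for every $Y\in\Ind\cD$. Using pretannakianness of $\cD$, write $Y=\bigcup_\alpha Y_\alpha$ as a filtered union of compact subobjects $Y_\alpha\in\cD$. Hypothesis (2) supplies an epimorphism $F(X_\alpha)\tto Y_\alpha$, which by adjunction factors as $F(X_\alpha)\to FF_\ast(Y_\alpha)\xrightarrow{\epsilon_{Y_\alpha}} Y_\alpha$, forcing $\epsilon_{Y_\alpha}$ to be epic. Since $F_\ast$ commutes with filtered colimits (recalled in \ref{Fs}) and $F$ is cocontinuous, $\epsilon_Y$ is the filtered colimit of the $\epsilon_{Y_\alpha}$; a filtered colimit of epimorphisms in the Grothendieck category $\Ind\cD$ is again an epimorphism, so $\epsilon_Y$ is epic.

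The only delicate point is the colimit bookkeeping in $\Ind\cD$: one has to confirm that images, sums, and filtered unions interact well with $F$ and $F_\ast$, and that a compact object covered by a filtered union of subobjects is covered by finitely many of them. These are automatic from the Grothendieck-category structure on $\Ind\cD$ together with the cocontinuity of $F$ and the filtered-colimit compatibility of $F_\ast$ recalled in Section~\ref{SecPrel}, so no new ingredients are required.
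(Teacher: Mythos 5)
Your proof is correct and follows essentially the same route as the paper: equivalence of (2) and (3) by duality, and the counit criterion $\epsilon_Y$ epic $\Leftrightarrow$ $F_\ast$ faithful combined with compactness for (1)$\Rightarrow$(2). The paper leaves (2)$\Rightarrow$(1) as an easy exercise ``via $\epsilon_Y$''; your filtered-colimit argument, using that $F_\ast$ commutes with filtered colimits, is precisely the intended completion of that step.
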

\begin{proof}
Conditions (2) and (3) are equivalent by duality.

We prove that (1) implies (2). By adjunction, (1) implies that $\epsilon_Y:FF_\ast Y\to Y$ is an epimorphism for every $Y\in\cD$. As $Y$ is compact, it follows easily there is a compact subobject $X\subset F_\ast Y$ such that $FX\to Y$ is still an epimorphism.

That (2) implies (1) can be proved easily directly, or again via $\epsilon_Y$.
\end{proof}

%By Barr-Beck comonadicity, we know that $F$ lifts to an equivalence between $\Ind\cC$ and the category of comodules over the comonad $FF_\ast$ on $\Ind\cD$. For Example~\ref{ExTann}, we simply have $\omega\omega_\ast\simeq\cO(G)\otimes -$.

% if $N$ is a comodule over $FF_\ast$. Loosely speaking, this thus asserts that $N$ already exists as a subobject in~$\Ind\cC$.

\subsection{Higher Verlinde categories}

The motivation behind most of the current paper is to develop algebraic geometry in tensor categories like to the categories $\Ver_{p^n}$ (defined over fields of characteristic $p>0$) from \cite{BEO}, to which we refer for definition and properties. In the current paper we work in more abstraction and generality. We do investigate to some extent validity of hypotheses in the categories $\Ver_{p^n}$. We also use $\Ver_{p^\infty}$ for the union of the categories $\Ver_{p^n},n>0$ as constructed in \cite{BE, BEO}. For $\mathrm{char}(k)\not=2$, we define the tensor category $\sVec$ of supervector spaces as the category of finite dimensional $\mZ/2$-graded vector spaces with Koszul sign rule for the braiding.
We use the notational shortcut 
$$\Ver_{k}\;=\;\begin{cases}
\sVec_{k}&\mbox{ if $\mathrm{char}(k)=0$}\\
\Ver_p&\mbox{ if $\mathrm{char}(k)=p>0$}
\end{cases}$$
For example, in characteristic $2$, we have $\Ver_{k}=\Vecc_k$.
For some justification of $\Ver_p$ as a positive characteristic analogue of $\sVec$, see \cite{CEO, Os}.
%%%%%%%%%%%%%%%%%%%%%%%%%%%%%%%%%%%%%%%%%%%%%%%%%%%%%%%%%%%%%%%%%%%%%%%%%%%%%%%%%%%%%%%%%%%%%%%%%%%%%%%%%%%%%%%%%%%%%%%%%%%%%%%%%%%%%%%%%%%%%%%%%%%%%%%%%%%%%%%%%%%%%%%%%%%%%%%%%%%%%%%%%%%%%%%%%%%%%%%%%%%%%%%%%%%%%%%%%%%%%%%%%%%%%%%%%%%%%%%%%%%%%%%%%%%%%%%%%%%%%%%%%%%%%%%%%%%%%%%%%%%%%%%%%%%%%%%%%%%%%%%%%%%%%%%%%%%%

\section{The basics of commutative algebra}\label{SecDefs}
For the rest of the paper we let $\cC$ be a pretannakian category.

\subsection{Algebras}

\subsubsection{}
Denote by $\Alg\cC$ the category of commutative algebras (symmetric ring objects) in~$\Ind\cC$. Such a ring object $(A,\mu,\eta)$, with $\mu:A\otimes A\to A$ and $\eta:\unit\to A$ will usually simply be denoted by the underlying object $A$.

For subobjects $X\subset A\supset Y$, we write $XY=YX$ for the image of $X\otimes Y\to A$. We also write $X^n$ for $XX\cdots X$.
For $A\in\Alg\cC$, an {\bf ideal} $I<A$ is a subobject $I\subset A$ such that $I A\subset I$. An algebra $A$ is {\bf simple} if it has precisely two ideals, $0$ and $A$.

For convenience, we point out that an algebra morphism $f: A\to B$ \ is a monomorphism in~$\Alg\cC$ if and only if it is a monomorphism in~$\Ind\cC$.

\subsubsection{}\label{A0A0}
 By the lax monoidal structure of $H^0$, for an algebra $A$ in~$\Ind\cC$, $A^0$ is a subalgebra. In fact, we will often use the obvious algebra isomorphism
 \begin{equation}\label{EndA}
 A^0\;\simeq\; \End_A(A).
 \end{equation}
 The assignment $A\mapsto A^0$ is the right adjoint of the canonical inclusion $\Alg_k\to\Alg\cC$. 
 
 With $\widetilde{A}$ as in \ref{deftilde}, the left adjoint of this inclusion is given by
 $$A\mapsto A_0:=A/R(A),\quad\mbox{with } \; R(A):=A\widetilde{A}.$$

\subsubsection{}The category of $A$-modules in~$\Ind\bC$ is denoted by $\Mod_{\bC}A$. In the particular case where $A$ is an ordinary $k$-algebra interpreted to be in~$\Ind\bC$, an $A$-module in~$\bC$ is the same as an object $M\in\Ind\bC$ equipped with algebra morphism $A\to\End(M)$.

%\subsubsection{}We abbreviate $(\Alg\cC)^{\op}$ to $\Aff\cC$. The category of presheaves on $\Aff\cC$, or equivalently the category of functors $\Alg\cC\to\Set$, is denoted by $\Fun\cC$. We have the Yoneda embedding
%$$\Aff\cC\to\Fun\cC,\quad A\mapsto F_A:=\Alg(A,-).$$
%

For an $A$-module $M$, its annihilator ideal $\Ann_A(M)$ is the maximal ideal $I<A$ for which $I\otimes M\to M$ is zero. Equivalently, it is the kernel of the morphism from $A$ to the internal hom from $M$ to itself.

For a morphism $f:X\to Y$ we have the associated morphism of graded algebras $\Sym X\to \Sym Y$ and we denote the corresponding morphisms by $f^n:\Sym^nX\to\Sym^nY$.

\subsection{Ideals}
Consider $A\in\Alg\cC$.
\subsubsection{Prime ideals}

An ideal $I<A$ is {\bf prime} if for every $\bC\ni X,Y\subset A$, the condition $XY\subset I$ implies that at least one of $X\subset I$ or $Y\subset I$ is true. We say that $A$ is a {\bf domain} if $0$ is a prime ideal. We reserve the term `integral domain' for domains in~$\Vecc^\infty$.

We will typically denote prime ideals by $\p$ (or $\q$) and maximal ideals (which are automatically prime) by $\m$.

We denote by $\Spec A$ the set of prime ideals in~$A$. For any ideal $J<A$ we have 
$$V(J)\;=\;\{\p\in\Spec A\,|\, J\subset \p\}\;\subset\; \Spec A$$
and these are the closed subsets of the {\bf Zariski topology} on $\Spec A$. We then obtain a functor
$$\Spec:\; (\Alg\bC)^{\op}\;\to\; \Top.$$

%For $f\in A^0$, we denote the complement of $V(f(A))$ by $\bar{U}_f$, in order to distinguish it from the corresponding $U_f\subset \Spec A^0$.

\begin{remark}
One can easily verify that $\Spec A$ is a `spectral space' and $\Spec f$ is a `spectral map' for a $f:A\to B$ in~$\Alg\cC$, in the terminology of \cite{Ho}.
\end{remark}

\subsubsection{Radical ideals} For an ideal $I<A$, we denote by $\sqrt{I}$ the intersection of all prime ideals which contain~$I$, so
$$\sqrt{I}=\cap_{\p\in V(I)}\p.$$ An ideal $I$ is {\bf radical} if $I=\sqrt{I}$. As observed in Corollary~\ref{CorNil}(2) below, the more classical definition of a radical ideal is equivalent.

The assignment $J\mapsto V(J)$ yields a bijection (an isomorphism of posets for obvious partial orders) between the set of radical ideals and the set of closed subsets of $\Spec A$.

%\subsubsection{} For an algebra morphism $f:A\to B$, we have a continuous function $\Spec f:\,\Spec B\to \Spec A$, sending $P<B$ to its preimage under $f$. In other words $\Spec$ is a contravariant functor from $\Alg\cC$ to the category of topological spaces.

\subsubsection{Nil ideals}
A subobject $ X\subset A$ is {\bf nilpotent} if $X^n=0$ for some $n\in\mN$. An ideal $I<A$ is {\bf nil} if every $\bC\ni X\subset I$ is nilpotent. The unique maximal nil ideal (generated by all nilpotent subobjects) is denoted by $\Nil A$ and called the nilradical. A finitely generated nil ideal is nilpotent.

\begin{lemma}\label{LemNil}
The nilradical $\Nil A$ is the intersection of all prime ideals, so $\Nil A=\sqrt{0}$. 
\end{lemma}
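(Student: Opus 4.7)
The plan is to prove the two inclusions $\Nil A \subset \sqrt{0}$ and $\sqrt{0}\subset \Nil A$ separately, adapting the classical Krull argument from elements to $\cC$-subobjects.

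For $\Nil A \subset \sqrt{0}$, I would fix a prime $\p < A$ and show every nilpotent $\cC$-subobject $X \subset A$ is contained in $\p$. Induct on the nilpotency index $n$: the relation $X\cdot X^{n-1} = X^n = 0 \subset \p$ together with primality forces $X\subset \p$ or $X^{n-1}\subset \p$ (note $X^{n-1}\in\cC$, being a quotient of $X^{\otimes(n-1)}$), and in the latter case one recurses. Since $\Nil A$ is the union of its $\cC$-subobjects, each of which is nilpotent by definition of a nil ideal and hence lies in $\p$, one obtains $\Nil A\subset\p$ for every prime, i.e.\ $\Nil A\subset\sqrt 0$.

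For the reverse inclusion, the strategy is to show $\sqrt{0}$ is itself a nil ideal. Equivalently, if $X\in\cC$ with $X\subset A$ is not nilpotent, I would produce a prime $\p$ with $X\not\subset\p$. Consider the poset
\[
\mathcal F \;=\; \bigl\{\, I<A \text{ ideal} \,:\, X^n\not\subset I \text{ for all } n\geq 1\,\bigr\},
\]
which is nonempty since $0\in\mathcal F$. For a chain $\{I_\alpha\}\subset\mathcal F$ the sup $\bigcup_\alpha I_\alpha$ is an ideal of $A$, and since $X^n\in\cC$ is compact in $\Ind\cC$, any inclusion $X^n\subset\bigcup_\alpha I_\alpha$ factors through some $I_\alpha$; thus the sup lies in $\mathcal F$. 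Zorn's lemma then produces a maximal element $\p\in\mathcal F$, which by construction satisfies $X\not\subset\p$.

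The main step is to verify $\p$ is prime. Suppose for contradiction there exist $\cC\ni Y,Z\subset A$ with $YZ\subset\p$ but $Y,Z\not\subset\p$. Then $\p+AY$ and $\p+AZ$ both properly contain $\p$ and so lie outside $\mathcal F$, providing $m,n\geq 1$ with $X^m\subset \p+AY$ and $X^n\subset\p+AZ$. Multiplying,
\[
X^{m+n}\;=\;X^m X^n\;\subset\;(\p+AY)(\p+AZ)\;\subset\;\p+AYZ\;\subset\;\p,
\]
contradicting $\p\in\mathcal F$. I expect the only conceptual subtlety is the compactness step legitimising the chain argument; everything else is a faithful transcription of the classical proof, with primality applied to subobjects $X,Y\in\cC$ in place of elements.
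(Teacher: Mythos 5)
Your proof is correct and takes essentially the same route as the paper: the inclusion $\Nil A\subset\p$ for every prime (which the paper dismisses as obvious and you verify by induction on the nilpotency index), followed by the same Zorn's-lemma construction of an ideal maximal among those avoiding all powers of a non-nilpotent $\cC\ni X\subset A$, with primality established by the identical product computation $X^{m+n}\subset(\p+AY)(\p+AZ)\subset\p$. Your explicit compactness argument for why chains in $\mathcal{F}$ have upper bounds is a detail the paper leaves implicit, but it does not change the approach.
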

\begin{proof}
The inclusion $\Nil A\subset \fp$ for every prime ideal $\fp$ is obvious. If the resulting inclusion $\Nil A\subset \cap_\fp\fp$ (union over all prime ideals) were not an equality, it would be possible to find $\cC\ni X\subset \cap_\fp\fp$ with $X^n\not=0$ for all $n\in\mN$.

 To complete the proof, we thus consider $\cC\ni X\subset A$ and assume $X^n\not=0$ for all $n\in\mN$. We need to show that there exists a prime ideal which does not contain~$X$.

 Consider the set $S$ of ideals $I<A$ which contain none of the $X^n\subset A$. By assumption $0\in S$. Let $P$ be a maximal element in~$S$, which exists by application of Zorn's lemma. Now $P$ must be prime. Indeed, for two ideals $I,J$ which are not contained in~$P$, we must have $X^a\subset P+I$ and $X^b\subset P+J$ for some $a,b$ and hence $X^{a+b}\subset P+IJ$. Therefore $IJ\not\subset P$.
Since, by definition, $X$ is not contained in~$P$, this concludes the proof.
\end{proof}

We have some immediate consequences of the lemma.
\begin{corollary}
The following are equivalent for $X\in\bC$:
\begin{enumerate}
\item $\Sym X$ is finite ({\it i.e.} $\Sym^nX=0$ for some $n\in\mN$);
\item $\Sym X$ is infinitesimal ({\it i.e.} $\Spec\Sym X$ is a singleton).
\end{enumerate}
\end{corollary}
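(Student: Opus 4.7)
The plan is to derive the corollary directly from Lemma~\ref{LemNil} applied to $A=\Sym X$, together with the observation that the augmentation ideal $I=\bigoplus_{m\ge 1}\Sym^m X\subset \Sym X$ is always a prime ideal.

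First I would record two facts used throughout. (a) For all $a,b\ge 0$ the multiplication $\Sym^aX\otimes\Sym^bX\to\Sym^{a+b}X$ is an epimorphism, because composing it with the symmetrisation $X^{\otimes a}\otimes X^{\otimes b}\twoheadrightarrow \Sym^aX\otimes\Sym^bX$ yields the epimorphism $X^{\otimes(a+b)}\twoheadrightarrow\Sym^{a+b}X$. In particular, $\Sym^nX=0$ forces $\Sym^mX=0$ for every $m\ge n$. (b) The augmentation $\Sym X\twoheadrightarrow\unit$ has kernel $I$, and $\unit$ is a domain: its only subobjects in $\cC$ are $0$ and $\unit$, and $\unit\cdot\unit=\unit\ne 0$. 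Hence $I$ is a (maximal, and in particular) prime ideal of $\Sym X$.

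For (1)$\Rightarrow$(2): if $\Sym^nX=0$, then fact (a) gives $I^n=\bigoplus_{m\ge n}\Sym^mX=0$, so $I$ is nilpotent, hence nil. By Lemma~\ref{LemNil} every prime ideal of $\Sym X$ contains $\Nil\Sym X\supseteq I$, and by fact (b) $I$ is itself a prime. Thus $\Spec\Sym X=\{I\}$. For (2)$\Rightarrow$(1): the prime $I$ of (b) must be the unique point of $\Spec \Sym X$, so by Lemma~\ref{LemNil} we obtain $I=\Nil\Sym X$, i.e.\ $I$ is nil. Since $X\in\cC$ is a subobject of $I$, the definition of nil ideal yields some $n$ with $X^n=0$ in $\Sym X$. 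Because the image of $X^{\otimes n}\to\Sym X$ coincides with $\Sym^n X$ (again by (a) in the special case of symmetrisation), this means $\Sym^nX=0$.

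There is no substantial obstacle: the proof is essentially an unpacking of the definitions once Lemma~\ref{LemNil} is in place. The only point that warrants care is to remember that the subobject $X^n\subset \Sym X$ appearing in the definition of nilpotency refers to the image of the $n$-fold multiplication $X^{\otimes n}\to \Sym X$, which in degree one coincides with $\Sym^nX$, and not with $X^{\otimes n}$ itself.
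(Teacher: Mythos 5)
Your proof is correct and takes exactly the approach the paper intends: the corollary is stated there without proof as an ``immediate consequence'' of Lemma~\ref{LemNil}, and your argument---identifying the augmentation ideal $I=\bigoplus_{m\ge1}\Sym^mX$ as a maximal (hence prime) ideal, noting that finiteness of $\Sym X$ makes $I$ nilpotent, and applying the lemma in both directions---is the natural unpacking of that remark. The care you take with the identification $X^n=\Sym^nX$ (image of multiplication versus tensor power) is exactly the right point to check.
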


\begin{corollary}\label{CorNil}
\begin{enumerate}
\item For an ideal $I<A$, the radical $\sqrt{I}$ is equal to the ideal spanned by all $\bC\ni X\subset A$ for which $X^n\subset I$ for some $n\in\mN$.
\item An ideal $I<A$ is radical if and only if for every $ \cC\ni X\subset A$ and $n\in \mN$, the condition $X^n\subset I$ implies $X\subset I$.
\item We have $(\sqrt{I})^0=\sqrt{I^0}$.
\end{enumerate}
\end{corollary}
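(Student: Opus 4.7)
The plan is to derive all three statements from Lemma \ref{LemNil} by passing to the quotient $A/I$. Under the quotient map $\pi: A \twoheadrightarrow A/I$, prime ideals of $A/I$ correspond bijectively to primes of $A$ containing $I$, so $\pi^{-1}(\sqrt{0}_{A/I}) = \sqrt{I}$, and by Lemma \ref{LemNil} this equals $\pi^{-1}(\Nil(A/I))$. For part (1), the key is to verify that $\pi^{-1}(\Nil(A/I))$ is precisely the ideal of $A$ generated by all $X \in \mathcal{C}$ with $X^n \subset I$ for some $n$: any such $X$ has image $\pi(X) \subset A/I$ satisfying $\pi(X)^n = 0$, hence $\pi(X) \subset \Nil(A/I)$, and conversely every nilpotent subobject of $A/I$ lifts to such an $X$ (after possibly replacing $X$ by $X + I$, using that $I$ is an ideal).

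Part (2) follows immediately from (1): if $I = \sqrt{I}$, then $X^n \subset I$ places $X$ among the generators of $\sqrt{I} = I$, so $X \subset I$; conversely the stated property forces every generator of $\sqrt{I}$ as described in (1) to lie in $I$, giving $\sqrt{I} \subset I$. For part (3), the inclusion $\sqrt{I^0} \subset (\sqrt{I})^0$ also follows quickly from (1): given $x \in A^0$ with $x^n \in I^0 \subset I$, the subobject $X := \im(x: \unit \to A)$ satisfies $X^n = \im(x^n) \subset I$ (using that $x$ factors through $X$ and that tensoring is exact), so $X \subset \sqrt{I}$ by (1), hence $x \in (\sqrt{I})^0$.

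The subtle direction is $(\sqrt{I})^0 \subset \sqrt{I^0}$, where I would exploit the simplicity of $\unit$. For $x \in (\sqrt{I})^0$, the image of $x: \unit \to A$ is a quotient of $\unit$, so it is either $0$ (in which case $x \in I^0$) or isomorphic to $\unit$. In the latter case $\unit \subset \sqrt{I}$, forcing $\sqrt{I} = A$ since an ideal containing the unit of $A$ must be all of $A$; by applying Lemma \ref{LemNil} to $A/I$, this in turn forces $I = A$, whence $I^0 = A^0$ and $x \in \sqrt{I^0}$ trivially. The main obstacle is this last step, which rests on the observation that any nonzero algebra in $\Ind\mathcal{C}$ admits a prime ideal---a fact itself recoverable from Lemma \ref{LemNil}, since $\Nil A = A$ would force $\unit$ to be nilpotent, contradicting $\unit^n = \unit \neq 0$.
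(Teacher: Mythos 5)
Your parts (1) and (2) are correct and take essentially the paper's route: the paper likewise proves (1) by noting that a compact $X\subset\sqrt{I}$ has nilpotent image in $A/I$ by Lemma~\ref{LemNil}, and then treats (2) (and (3)) as formal consequences. Your first inclusion in (3), namely $\sqrt{I^0}\subset(\sqrt{I})^0$ via $X:=\im(x)$ and $\im(x^n)=X^n$ (exactness of $\otimes$), is also correct.

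The gap is in the reverse inclusion $(\sqrt{I})^0\subset\sqrt{I^0}$. You argue that if $x\in(\sqrt{I})^0$ is nonzero then its image is a copy of $\unit$ inside $\sqrt{I}$, and that ``an ideal containing the unit of $A$ must be all of $A$'', forcing $\sqrt{I}=A$. This conflates a subobject of $\sqrt{I}$ that is merely \emph{isomorphic} to the unit object $\unit$ with the image of the algebra unit $\eta:\unit\to A$. An ideal containing $\im(\eta)$ is indeed all of $A$, but an ideal can contain many copies of $\unit$ without containing $\im(\eta)$. Concretely, take $\cC=\Vecc$, $A=k[t]/(t^2)$, $I=0$: then $\sqrt{I}=(t)$ is proper, yet $t\in(\sqrt{I})^0$ has image $kt\cong\unit$. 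Your argument would conclude $I=A$ here, i.e.\ it ``proves'' that $(\sqrt{I})^0=0$ whenever $I$ is proper, which this example refutes (note the corollary itself holds there, since $t\in\sqrt{I^0}$). The repair is just the mirror image of the argument you already used for the other inclusion: for $x\in(\sqrt{I})^0$ set $X:=\im(x)\subset\sqrt{I}$; the converse direction established in your proof of (1) (the image $\pi(X)$ is a compact subobject of $\Nil(A/I)$, hence nilpotent) gives $X^n\subset I$ for some $n$, and since $\unit\to X$ is an epimorphism and $\otimes$ is exact, $\im(x^n)=X^n\subset I$, so $x^n\in I^0$ and $x\in\sqrt{I^0}$. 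No appeal to simplicity of $\unit$ is needed, and this is exactly why the paper can call (3) an immediate consequence of (1).
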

\begin{proof}
Parts (2) and (3) are immediate consequences of part (1). For part (1), it is obvious that $X^n\subset I$ implies $X\subset\sqrt{I}$. Conversely, consider some $\cC\ni X\subset\sqrt{I}$. By Lemma~\ref{LemNil}, the image of $X$ in~$A/I$ is nilpotent, from which it follows that $X^n\subset I$ for some $n$.
\end{proof}

\begin{corollary}\label{CorDense}
For a monomorphism $i:A\hookrightarrow B$ in~$\Alg\cC$, the image of $\Spec B\to \Spec A$ is dense.
\end{corollary}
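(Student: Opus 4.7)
The plan is to identify the topological closure of the image of $\Spec i:\Spec B\to\Spec A$ as $V(J)$ for the ideal
$$J\;=\;\bigcap_{\fq\in\Spec B}i^{-1}(\fq)\;<\;A,$$
and then to show $J\subset\Nil A$, which by Lemma~\ref{LemNil} forces $V(J)\supset V(\sqrt{0})=\Spec A$.

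First I would observe that a closed subset $V(I)\subset\Spec A$ contains the image of $\Spec i$ precisely when $I\subset i^{-1}(\fq)$ for all $\fq\in\Spec B$, i.e. when $I\subset J$. Since $J$ is radical (as an intersection of prime ideals), the closure of the image is exactly $V(J)$. This is the only part that treats the Zariski topology; the rest reduces to an algebraic statement about $J$.

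Next I would establish $J\subset\Nil A$. Because $i$ is a monomorphism, taking preimages of subobjects along $i$ commutes with intersections, so
$$J\;=\;i^{-1}\!\Bigl(\bigcap_{\fq\in\Spec B}\fq\Bigr)\;=\;i^{-1}(\Nil B),$$
where the second equality is Lemma~\ref{LemNil}. Now for any $\cC\ni X\subset J$, the subobject $i(X)\subset \Nil B$ lies in $\cC$, so by definition of a nil ideal $i(X)$ is nilpotent: $i(X^n)=i(X)^n=0$ for some $n\in\mN$. Since $i$ is a monomorphism in $\Alg\cC$, this yields $X^n=0$, so every compact subobject of $J$ is nilpotent, i.e.\ $J$ is itself a nil ideal. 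By maximality of $\Nil A$ we obtain $J\subset\Nil A$, and applying $V(-)$ together with Lemma~\ref{LemNil} again gives
$$V(J)\;\supset\;V(\Nil A)\;=\;V(\sqrt{0})\;=\;\Spec A,$$
as required.

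The only real technical point, and the main thing to verify carefully, is the commutation of pullback along the monomorphism $i$ with (possibly infinite) intersections of subobjects in $\Ind\cC$; this is standard in any Grothendieck abelian category and can be invoked without ceremony. Everything else is a direct translation of the classical argument, using the internal definitions of prime/nil ideals and the already-proven Lemma~\ref{LemNil} in place of ordinary element-wise reasoning.
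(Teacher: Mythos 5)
Your proof is correct and takes essentially the same route as the paper's: identify the closure of the image as $V(J)$ for $J$ the intersection of the preimages of the primes of $B$, use Lemma~\ref{LemNil} to rewrite $J$ as $i^{-1}(\Nil B)$, and transfer nilpotence along the monomorphism to conclude $J\subset\Nil A$ (the paper notes the equality $J=\Nil A$, but your inclusion suffices). One minor remark: preimages commute with intersections for \emph{arbitrary} algebra morphisms (preimage is right adjoint to image on subobject lattices), so the monomorphism hypothesis is only genuinely needed in the step $i(X^n)=i(X)^n=0\Rightarrow X^n=0$.
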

\begin{proof}
Clearly, the closure of any subset of prime ideals is given by $V(J)$ for $J$ the intersection of the set of prime ideals. By Lemma~\ref{LemNil}, the closure of the image of $\Spec(i)$ is thus $V(J)$ for $J=A\cap \Nil B$. We have $J= \Nil A$ and $V(\Nil A)=\Spec A$ by definition of $\Nil$.
\end{proof}

\subsection{Algebra morphisms}
Consider $A\in\Alg\cC$.
\begin{definition}\label{defff}
An $A$-module $M$ is {\bf flat}, if the endofunctor
$M\otimes_A-$ of $\Mod_{\cC}A$ is exact and {\bf faithful} if the functor is faithful.
\end{definition}

\begin{remark}\label{RemFaith}
Consider a tensor functor $F:\cC\to\cC_1$ and $A\in\Alg\cC$ with a module $N$. If $FN$ is faithful (resp. flat) as an $FA$-module, then also the $A$-module $N$ is faithful (resp. flat).
\end{remark}

We will only need the following proposition in the classical case ($\cC=\Vecc$), but we can easily extend it to semisimple pretannakian categories.

\begin{prop}(Govorov - Lazard)
Assume that $\cC$ is semisimple. Then $M\in\Mod_{\cC}A$ is flat if and only if it is a direct limit of $A$-modules $A\otimes X$, $X\in\cC$.
\end{prop}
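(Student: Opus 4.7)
The plan is to adapt the classical Govorov--Lazard theorem, with $A\otimes X$ for $X\in\cC$ playing the role of ``finitely generated free modules''. The easy direction is that each $A\otimes X$ is flat, because $(A\otimes X)\otimes_A -$ is naturally isomorphic to $X\otimes -$, which is exact on $\Ind\cC$ by rigidity of $X$; and that filtered colimits of flat $A$-modules remain flat, because $\Mod_\cC A$ is a Grothendieck category and $\otimes_A$ is cocontinuous in each variable.

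For the hard direction, assume $M$ is flat. I will form the comma category $\mathcal{E}(M)$ whose objects are pairs $(X,\phi)$ with $X\in\cC$ and $\phi:A\otimes X\to M$ an $A$-module map, and whose morphisms $(X,\phi)\to(Y,\psi)$ are $A$-module maps $f:A\otimes X\to A\otimes Y$ satisfying $\psi\circ f=\phi$. Using the adjunction $\Hom_A(A\otimes X,M)\simeq\Hom_\cC(X,M)$ together with the fact that $M$ is the filtered colimit in $\Ind\cC$ of its compact $\cC$-subobjects, the canonical map $\colim_{\mathcal{E}(M)}(A\otimes X)\to M$ is surjective, and it will become an isomorphism in $\Mod_\cC A$ once $\mathcal{E}(M)$ is shown to be filtered. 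The joining condition for filteredness is immediate: $(X_1,\phi_1)$ and $(X_2,\phi_2)$ both map into $(X_1\oplus X_2,\phi_1\oplus\phi_2)$.

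The real content, and the expected main obstacle, is the coequalization condition, which is the categorical analogue of Lazard's equational criterion for flatness. Given parallel $f,g:(X,\phi)\to(Y,\psi)$, setting $h=f-g$ gives $\psi\circ h=0$, and the task is to produce $(Z,\chi)\in\mathcal{E}(M)$ together with an $A$-module map $k:A\otimes Y\to A\otimes Z$ satisfying $\chi\circ k=\psi$ and $k\circ h=0$. I plan to mimic the classical relation-solving argument, applying the exact functor $-\otimes_A M$ to the short exact sequence $0\to\im(h)\to A\otimes Y\to\coker(h)\to 0$ to factor the induced map $\coker(h)\to M$ through a suitable $A\otimes Z$. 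Semisimplicity of $\cC$ enters decisively in two ways: it makes every $X\in\cC$ projective, so that each $A\otimes X$ is projective in $\Mod_\cC A$ and admits the lifts needed to construct $k$; and it forces any short exact sequence of compact objects in $\cC$ to split, allowing the desired $k$ to be assembled from the finite data $(h,\phi,\psi)$ without coherence issues.
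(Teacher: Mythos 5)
Your overall skeleton is sound and matches the paper's: the easy direction, the comma category $\mathcal{E}(M)$, and the identification of the key step --- given $h:A\otimes X\to A\otimes Y$ and $\psi:A\otimes Y\to M$ with $\psi h=0$, produce $k:A\otimes Y\to A\otimes Z$ and $\chi:A\otimes Z\to M$ with $\chi k=\psi$ and $kh=0$ --- is exactly the claim to which the paper (without spelling out the reduction) reduces the proof. The gap is in your plan for proving this claim. Applying $-\otimes_AM$ to $0\to\im(h)\to A\otimes Y\to\coker(h)\to 0$ gives information about $\coker(h)\otimes_AM$, but the goal concerns $\Hom_A(\coker(h),M)$, and nothing in your sketch converts one into the other. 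Likewise, neither of your two uses of semisimplicity addresses the real constraint: projectivity of $A\otimes Y$ in $\Mod_{\cC}A$ together with compactness of $Y$ does let you factor $\psi$ through some $A\otimes Z$ (lift $\psi$ along an epimorphism $\bigoplus_i A\otimes Z_i\tto M$ and cut down to a finite sub-sum), but this works for an \emph{arbitrary} module $M$, flat or not, and gives no control whatsoever on $k\circ h$. The condition $kh=0$ is precisely what separates flat modules from general ones, so any argument that does not invoke flatness at exactly that point cannot close.

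The missing idea is duality/rigidity, i.e.\ the categorical form of the classical isomorphism $\Hom_A(N,A)\otimes_AM\simeq\Hom_A(N,M)$ for $N$ finitely presented and $M$ flat. The paper dualizes $h$ to $h^\vee:A\otimes Y^\vee\to A\otimes X^\vee$, covers $\ker(h^\vee)$ by a direct sum of free modules $\bigoplus_iA\otimes Z_i^\vee$, and applies $-\otimes_AM$ to the resulting exact-in-the-middle sequence; \emph{this} is where flatness enters. Under adjunction, $\psi$ corresponds to a morphism $\unit\to Y^\vee\otimes M$ which is killed by $Y^\vee\otimes M\to X^\vee\otimes M$ (this encodes $\psi h=0$), hence lands in the image of $\bigoplus_iZ_i^\vee\otimes M$. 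Semisimplicity is used here, as projectivity of $\unit$ in $\Ind\cC$, to lift this morphism into $\bigoplus_iZ_i^\vee\otimes M$, and compactness of $\unit$ cuts the lift down to a finite sub-sum. Dualizing back yields $\chi$ and $k$ with $\chi k=\psi$, and $kh=0$ holds because the dual composite $A\otimes Z^\vee\to A\otimes Y^\vee\to A\otimes X^\vee$ vanishes by exactness of the covering sequence. Without this dualization step your argument has no mechanism to enforce $kh=0$, so as written the proposal does not prove the hard direction.
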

\begin{proof}
One direction is obvious. For the other direction, the proof reduces quickly to the proving the claim that any diagram of $A$-module morphisms
$A\otimes X\to A\otimes Y\to M,$
which composes to zero, can be completed to a commutative diagram
$$\xymatrix{
A\otimes X\ar[r]& A\otimes Y\ar[r]\ar[rd]& A\otimes Z\ar[d]\\
&&M
}$$
for some $Z\in \cC$ such that the top row composes to zero. We start by completing the adjoint of $A\otimes X\to A\otimes Y$ to an exact (in the middle) sequence of $A$-modules
$$\bigoplus_iA\otimes Z_i^\vee\;\to\; A\otimes Y^\vee\;\to\; A\otimes X^\vee.$$
Since $M$ is flat, applying $-\otimes_AM$ yields an exact sequence
$$\bigoplus_i Z_i^\vee\otimes M\;\to\;Y^\vee\otimes M\;\to\; X^\vee\otimes M.$$
Now $A\otimes X\to M$ corresponds to a morphism $\unit\to Y^\vee\otimes M$ which is sent to zero under $Y^\vee\otimes M\to X^\vee\otimes M$. We can thus let it factor through (some finite sub-sum of) $(\oplus_i Z_i^\vee)\otimes M$. Applying adjunction again yields the desired $A\otimes (\oplus_iZ_i)\to M$.
\end{proof}

As a special case of Definition~\ref{defff}, an algebra morphism $f:A\to B$  is {flat} if the monoidal functor
$$B\otimes_A-\,:\, \Mod_{\cC}A\to\Mod_{\cC}B$$
is exact, and {faithful} if the functor is faithful.

\begin{remark}\label{epifp}
As in the classical case, for a faithfully flat algebra morphism $A\to B$, it follows that we have an equaliser (in~$\Ind\cC$)
$$A\to B\rightrightarrows B\otimes_AB,$$
by splitting the sequence via $B\otimes_A-$. In particular, if $A\to B$ is an epimorphism as well as faithfully flat, it must be an isomorphism.
\end{remark}

\begin{lemma}\label{FaithMax}
Take $A\in\Alg\cC$ and an exact monoidal functor $F:\Mod_{\cC}A\to\bA$ to an abelian monoidal category~$\bA$. Then $F$ is faithful if and only if $F(A/\m)\not=0$ for all maximal ideals~$\m<A$.
\end{lemma}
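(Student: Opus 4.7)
The forward direction is immediate: if $F$ is faithful and $\m<A$ is a maximal ideal, then $A/\m\neq 0$ in $\Mod_\cC A$, so $F(A/\m)\neq 0$.

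For the converse I argue by contraposition. Suppose $F$ is not faithful; since $F$ is exact, there is a nonzero $M\in\Mod_\cC A$ with $F(M)=0$, and the plan is to produce a proper ideal $I<A$ whose quotient $A/I$ lies in the kernel $\mathcal{K}:=\{N\in\Mod_\cC A : F(N)=0\}$ of $F$. Granted such an $I$, a Zorn argument on the proper ideals of $A$ (which succeeds because $\unit\in\cC$ is compact, so the union of a chain of proper ideals cannot absorb $\unit$ and is again proper, just as in the proof of Lemma~\ref{LemNil}) produces a maximal $\m\supseteq I$. Then $A/\m$ is a quotient of $A/I\in\mathcal{K}$, hence $A/\m\in\mathcal{K}$, contradicting the hypothesis $F(A/\m)\neq 0$.

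To construct $I$, I first pick a nonzero $X\in\cC$ together with a monomorphism $X\hookrightarrow M$, which exists because $\cC$ generates $\Ind\cC$. Using rigidity of $\cC$, this inclusion corresponds, via the adjunction $-\otimes X\dashv -\otimes X^\vee$ in $\Ind\cC$, to a nonzero morphism $\unit\to M\otimes X^\vee$, and this in turn (by the free-forgetful adjunction $\cC\rightleftarrows\Mod_\cC A$) to an $A$-linear morphism
$$\alpha\colon\; A\;\longrightarrow\; M\otimes X^\vee$$
(where $M\otimes X^\vee$ is given the $A$-module structure induced from that on $M$) whose restriction to $\unit\subset A$ recovers that initial nonzero map. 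Hence $\unit\not\subset\ker\alpha$, so $I:=\ker\alpha$ is a proper ideal and $A/I\hookrightarrow M\otimes X^\vee$ in $\Mod_\cC A$. Now $\mathcal{K}$ is a Serre subcategory by exactness of $F$, and is closed under $\otimes_A$ with arbitrary objects because $F$ is (strong) monoidal, giving $F(N\otimes_A N')\simeq F(N)\otimes F(N')$ in $\bA$. Combined with the identification $M\otimes X^\vee\simeq M\otimes_A(A\otimes X^\vee)$ and $M\in\mathcal{K}$, this puts $M\otimes X^\vee$ into $\mathcal{K}$, whence $A/I\in\mathcal{K}$ by the Serre property. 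The one delicate step is producing $\alpha$ and verifying its $A$-linearity using the two adjunctions above; once this is in place, everything else is formal.
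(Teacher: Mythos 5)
Your proof is correct and follows essentially the same route as the paper: both pass from a nonzero module $M$ with $F(M)=0$ to a compact subobject $X\subset M$, use monoidality of $F$ to conclude $F(M\otimes X^\vee)\simeq F\bigl((A\otimes X^\vee)\otimes_A M\bigr)=0$, and take the kernel of the adjoint $A$-module morphism $A\to M\otimes X^\vee$ as the proper ideal whose enclosing maximal ideal yields the contradiction. The only differences are presentational: you spell out the Zorn argument and the two adjunctions that the paper leaves implicit.
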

\begin{proof}
One direction is obvious. To prove the other direction by contradiction, take an $A$-module $N$ and assume that $F(N)=0$. Take some subobject $\cC\ni X\subset N$. By assumption,
$$F(X^\vee\otimes N)\simeq F((A\otimes X^\vee)\otimes_AN)\simeq F(A\otimes X^\vee)\otimes F(N)=0.$$

However, $N\otimes X^\vee$ is an $A$-module with non-zero $A$-module morphism $A\to N\otimes X^\vee$. Let $I$ be the kernel, then $F(A/I)=0$, so also $F(A/\m)$ for any maximal ideal which contains $I$, a contradiction. 
\end{proof}

\subsubsection{}\label{Deffp}
An algebra morphism $A\to B$ in~$\Alg\cC$ is {\bf finitely presented} in the categorical sense (i.e. $B$ is compact in the category of $A$-algebras), if and only if there exists $X\in\cC$ such that~$B$ is the quotient of $A\otimes\Sym X$ by an ideal generated by a subobject in~$\cC$.

\subsection{Grothendieck topologies}

\subsubsection{}

As in the classical case, we can equip the category $\Aff\cC:=(\Alg\cC)^{\op}$ with various Grothendieck topologies (in order to work with an essentially small category, we can restrict the sizes of the allowed algebras).

\begin{definition}
The covers in the {\bf fpqc pretopology} on $\Aff\cC$ correspond to the finite collections of algebra morphisms $\{A\to B_i\,|\, i\in I\}$ for which $A\to \prod_i B_i$ is faithfully flat.
\end{definition}

We denote by 
$$\Fais\cC \subset \Fun\cC$$
the category of sheaves for the fpqc topology. Concretely, $F\in \Fun\cC$ belongs to $\Fais\cC$ if and only if $F$ commutes with finite products and if 
$$F(A)\to F(B)\rightrightarrows F(B\otimes_AB)$$
is an equaliser for every faithfully flat algebra morphism $A\to B$.

\begin{lemma}\label{LemDodu}
Consider $F\subset G\in\Fun\cC$ such that $G\in \Fais\cC$ and $F$ commutes with finite products and is such that $F(A)\to F(B)$ is a monomorphism whenever $A\to B$ is faithfully flat in~$\Alg\cC$. Then the following are equivalent:
\begin{enumerate}
\item $F\to G$ is the sheafification of $F$;
\item For every $x\in G(A)$ there exists a faithfully flat $A\to B$ and $y\in F(B)$ such that the images in~$G(B)$ of $x$ and $y$ coincide.
\end{enumerate}  
\end{lemma}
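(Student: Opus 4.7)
My plan is to recast the equivalence in terms of the canonical factorization $F \to aF \to G$, where $a$ denotes fpqc sheafification; since $G$ already lies in $\Fais\cC$, condition (1) is equivalent to the induced morphism $aF \to G$ being an isomorphism in $\Fais\cC$. The inclusion $F \hookrightarrow G$ is a pointwise monomorphism of presheaves, and sheafification is left exact, so $aF \to G$ is automatically a monomorphism of sheaves. Thus (1) holds precisely when $aF \to G$ is additionally an epimorphism of sheaves, meaning that for every $x \in G(A)$ there is some faithfully flat $A \to B$ such that $x|_B$ lies in the image of $(aF)(B) \to G(B)$.

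For the direction (1) $\Rightarrow$ (2), I would invoke the plus-construction description of $aF$: the hypotheses on $F$ (commuting with finite products and sending faithfully flat morphisms to monomorphisms) exhibit $F$ as a separated presheaf for the fpqc pretopology, and recalling from the definition of the pretopology that any cover can be replaced by a single faithfully flat morphism $A \to B$, one pass of the plus construction computes $aF$. Given $x \in G(A) = (aF)(A)$, this description supplies a faithfully flat $A \to B$ together with $y \in F(B)$ whose class in $(aF)(B)$ coincides with $x|_B$; transporting along the isomorphism $aF \simeq G$ yields that $y$ equals $x|_B$ in $G(B)$, which is precisely condition~(2).

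For the reverse direction (2) $\Rightarrow$ (1), condition (2) immediately provides, for each $x \in G(A)$, a faithfully flat $A \to B$ with $x|_B$ in the image of $F(B) \to G(B)$, hence a fortiori in the image of $(aF)(B) \to G(B)$. This is exactly the statement that $aF \to G$ is an epimorphism of sheaves; combined with the monomorphism observation above, it gives the desired isomorphism and hence (1).

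The main obstacle, such as it is, will be articulating the plus-construction step carefully: one must verify that the two hypotheses on $F$ really do make it a separated presheaf for the fpqc topology, so that elements of $(aF)(A)$ are represented by honest sections $y \in F(B)$ over a single faithfully flat cover (rather than by iterated limits), and that compatibility on $B \otimes_A B$ is automatic in our setting because $F(B \otimes_A B) \hookrightarrow G(B \otimes_A B)$ already identifies the two pullbacks of $y$ via those of $x$.
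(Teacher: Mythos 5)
Your proposal is correct and takes essentially the same approach as the paper: the paper's proof consists of citations to Demazure--Gabriel for precisely the two facts you establish, namely the realisation of the sheafification of a separated presheaf as a filtered colimit (one pass of the plus construction) over faithfully flat morphisms $A\to B$ for (1)$\Rightarrow$(2), and the local-density criterion for (2)$\Rightarrow$(1), which you recover via the standard ``monomorphism plus epimorphism implies isomorphism'' argument for sheaves. Your observation that the two hypotheses on $F$ (preservation of finite products and injectivity along faithfully flat maps) are exactly what make $F$ separated for the fpqc pretopology, so that single faithfully flat covers suffice, is the right key point and matches the paper's implicit use of it.
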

\begin{proof}
These are standard properties. That (2) implies (1) is a special case of \cite[III. \S 1.1.7]{DG}. That (1) implies (2) is a consequence of the realisation of sheafification of $F$ as a filtered colimit over all faithfully flat $A\to B$ in~\cite[III. \S 1]{DG}.
\end{proof}

%%%%%%%%%%%%%%%%%%%%%%%%%%%%%%%%%%%%%%%%%%%%%%%%%%%%%%%%%%%%%%%%%%%%%%%%%%%%%%%%%%%%%%%%%%%%%%%%%%%%%%%%%%%%%%%%%%%%%%%%%%%%%%%%%%%%%%%%%%%%%%%%%%%%%%%%%%%%%%%%%%%%%%%%%%%%%%%%%%%%%%%%%%%%%%%%%%%%%%%%%%%%%%%%%%%%%%%%%%%%%%%%%%%%%%%%%%%%%%%%%%%%%%%%%%%%%%%%%%%%%%%%%%%%%%%%%%%%%%%%%%%%%%%%%%%%%%%%%%%%%%%%%%%%%%%%%%%%

\section{Some convenient assumptions}\label{SecNPGR}
Recall that $\cC$ denotes some pretannakian category.
In this section we introduce some potential properties of $\cC$. Throughout the paper, as well as in \cite{CEO2}, we will demonstrate that they have far-reaching implications. In this section we also give examples of cases where the assumptions are satisfied. In the next section, we demonstrate how the conditions can be formulated entirely in terms of spectra of algebras.

\subsection{Geometric reductivity}
Consider the following potential property of $\cC$:
\begin{enumerate}
\item[{\bf(GR)}] For every non-zero morphism $X\tto\unit$ in~$\cC$, there exists $n>0$ for which $\Sym^n X\tto\unit$ is split.
\end{enumerate}
If this property is satisfied we say that $\cC$ is {\bf geometrically reductive} or simply {\bf GR}.

\begin{example}
If $\cC=\Rep G$ for an affine group scheme $G$, then we can reformulate condition~{\bf (GR)} as follows. For every representation $V$ with 1-dimensional trivial subrepresentation $L\subset V$, there exist $n>0$ and $G$-invariant $f\in S^n(V^\ast)$ (a homogeneous polynomial on $\mA V$) with $f(L)\not=0$, in other words $G$ is {geometrically reductive}.
\end{example}

It thus follows from Haboush's theorem (Mumford's conjecture) that representation categories of reductive groups are {\bf GR}. In particular, the generalisation in \cite{Wa} implies:

\begin{corollary}[Haboush, Waterhouse]\label{CorGRG}
If $k$ is a perfect field and $G$ an algebraic affine group scheme over $k$, then $\Rep G$ is {\bf GR} if and only if $G^0_{\rm red}$ is reductive.
\end{corollary}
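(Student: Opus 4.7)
The plan is almost a one-liner. By the Example immediately above, condition~{\bf (GR)} applied to the pretannakian category $\Rep G$ unpacks exactly to the classical condition that the affine group scheme $G$ be geometrically reductive in the sense of Mumford: for every finite-dimensional rational representation $V$ with a one-dimensional trivial subrepresentation $L$, there should exist $n > 0$ and a $G$-invariant $f \in S^n(V^\ast)^G$ (a homogeneous polynomial on $\mA V$) not vanishing on $L$. Thus all that remains to prove is the classical characterisation of geometrically reductive algebraic affine group schemes over a perfect field, which is precisely Waterhouse's theorem in \cite{Wa}.

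The main step of that classical characterisation is Haboush's theorem (the Mumford conjecture) on geometric reductivity of connected reductive groups, which handles the implication $G^0_{\mathrm{red}} \text{ reductive} \Rightarrow G^0_{\mathrm{red}} \text{ geometrically reductive}$. The passage between $G^0_{\mathrm{red}}$, $G^0$ and $G$ in both directions relies on two stability properties of geometric reductivity: it is preserved, in both directions, under extensions by an infinitesimal group scheme (this handles $G^0_{\mathrm{red}} \subset G^0$, where perfectness of $k$ is essential so that $G^0_{\mathrm{red}}$ is a closed subgroup scheme, and invariants are produced by a Frobenius-twist argument) and under extensions by a finite étale group scheme (this handles $G^0 \subset G$, since $G/G^0$ is finite étale, via an averaging argument over cosets). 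The converse at the bottom of the chain is the classical Nagata--Mumford theorem that a smooth connected affine algebraic group which is geometrically reductive must be reductive.

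The main (and essentially only) thing to check is therefore that the categorical reformulation in the Example agrees literally with the definition of geometric reductivity used in \cite{Wa}, and that the hypotheses on $k$ and $G$ match the hypotheses under which \cite{Wa} proves the characterisation; both checks are immediate. No genuinely new argument is required, and the corollary should be read as a dictionary entry between the categorical condition {\bf (GR)} on $\Rep G$ and the classical theorem of Haboush--Waterhouse.
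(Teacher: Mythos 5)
Your proposal is correct and is exactly the paper's own argument: the paper derives the corollary by noting (via the Example preceding it) that {\bf (GR)} for $\Rep G$ is literally classical geometric reductivity of $G$, and then citing Waterhouse's generalisation \cite{Wa} of Haboush's theorem for perfect fields. Your additional sketch of the internal structure of Waterhouse's proof (Haboush for the reductive part, stability under infinitesimal and finite \'etale extensions, and the Nagata--Mumford converse) is elaboration of the cited result rather than a new argument, and does not change the fact that the two proofs coincide.
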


We prove a special case of the corollary directly, for arbitrary fields.
\begin{lemma}
If $G$ is an infinitesimal group scheme, then $\Rep G$ is {\bf GR}.
\end{lemma}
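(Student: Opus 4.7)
The plan is to apply Frobenius to the coaction, exploiting condition~(4) of \ref{DefInf} to trivialise the $\cO(G)$-valued coefficients. Since $G$ is infinitesimal, $\mathrm{char}(k)=p>0$ and there is a power $q$ of $p$ with $h^q = \varepsilon(h^q)\in k$ for every $h\in \cO(G)$. Given a nonzero morphism $\phi\colon X\tto \unit$ in $\Rep G$, I would pick any vector $x\in X$ with $\phi(x)\ne 0$ (possible since $\phi$ is surjective) and propose $x^q\in \Sym^q X$ as a $G$-invariant section of $\Sym^q\phi\colon\Sym^q X\tto \Sym^q\unit=\unit$.

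The content is then the Frobenius computation that $x^q$ is $G$-invariant. Writing the coaction as $\Delta_X(x)=\sum_i x_i\otimes h_i\in X\otimes \cO(G)$ and extending multiplicatively to the algebra $\Sym X$, the commutativity of the $k$-algebra $\Sym X\otimes \cO(G)$ gives
\[
\Delta_{\Sym X}(x^q)\;=\;\Delta_X(x)^q\;=\;\Bigl(\sum_i x_i\otimes h_i\Bigr)^q\;=\;\sum_i x_i^q\otimes h_i^q,
\]
where the last equality is the identity $(a+b)^q=a^q+b^q$ in characteristic $p$. By the infinitesimal hypothesis $h_i^q=\varepsilon(h_i^q)\cdot 1_{\cO(G)}$, so the right-hand side equals $\bigl(\sum_i\varepsilon(h_i^q)\,x_i^q\bigr)\otimes 1$. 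The counit axiom gives $x=\sum_i\varepsilon(h_i)\,x_i$, and raising to the $q$-th power, using that $\varepsilon$ is a ring map, yields $\sum_i\varepsilon(h_i^q)\,x_i^q=x^q$. Hence $\Delta_{\Sym X}(x^q)=x^q\otimes 1$, so $x^q$ is $G$-invariant; and $(\Sym^q\phi)(x^q)=\phi(x)^q\ne 0$, giving the desired splitting.

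There is no real obstacle: once one sees that condition~(4) of \ref{DefInf} is exactly what is needed to collapse the $q$-th power of the coaction to a scalar, the remainder is bookkeeping. Equivalently, one could dualise to the formulation in the example preceding the lemma and take $\lambda^q\in S^q V^\ast$ for any $\lambda\in V^\ast$ with $\lambda|_L\ne 0$, yielding a $G$-invariant with $\lambda^q(L)\ne 0$ by the same Frobenius identity.
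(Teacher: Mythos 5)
Your proof is correct and is essentially the paper's own argument: both take the $q$-th power $x^q\in\Sym^qX$ of a lift of a generator of $\unit$ and show it is $G$-invariant via the Frobenius identity together with condition~(4) of \ref{DefInf}, yielding the splitting. The only cosmetic difference is that the paper first normalises the coaction as $\rho(v)=1\otimes v+\sum_i f_i\otimes v_i$ with the $v_i$ in the kernel (so that only the nilpotency $f_i^q=0$ of augmentation-ideal elements is needed), whereas you work with an arbitrary coaction expansion and invoke the full identity $h^q=\varepsilon(h^q)$ plus the counit axiom.
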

\begin{proof}
For a representation $V$ with non-zero $V\tto k$, we choose a basis $\{v_1,\cdots, v_n\}$ of the kernel and some $v\in V$ not in the kernel. We write the coaction of $\cO(G)$ on $v$ as
$$\rho(v)\;=\; 1\otimes v+\sum_i f_i\otimes v_i.$$
The fact that $\varepsilon(f_i)=0$ shows $f_i^q=\varepsilon(f_i^{q})=0$, for a power $q$ of $p$ as in \ref{DefInf}(3). It follows that $v^q\in \Sym^q V$ spans a subrepresentation which splits $\Sym^qV\tto k$.
\end{proof}

\begin{lemma}\label{LemGR}
 $\cC$ is {\bf GR} if and only if:
 
 {\rm If $\mathrm{char}(k)=0$: } $\cC$ is semisimple.
 
{\rm If $\mathrm{char}(k)=p>0$: } For every non-zero morphism $X\tto\unit$ in~$\cC$, there exists $j>0$ for which $\Sym^{p^j} X\tto\unit$ is split.

\end{lemma}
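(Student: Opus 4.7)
The plan is to handle the two characteristics separately, since in each case one direction of the biconditional is immediate.

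\textbf{Characteristic zero.} That semisimplicity implies \textbf{GR} is immediate: every epimorphism onto $\unit$ is already split, so $n=1$ works. For the converse, I would show that \textbf{GR} implies $\unit$ is projective, and then rigidity (which gives $\Ext^1(X,Y) \cong \Ext^1(\unit, X^\vee \otimes Y)$ for all $X,Y$) promotes this to full semisimplicity of $\cC$. To prove projectivity of $\unit$, fix an epimorphism $f\colon X \tto \unit$ and apply \textbf{GR} to obtain a section $s\colon \unit \to \Sym^n X$. In characteristic zero the inclusion $\iota\colon \Sym^n X \hookrightarrow X^{\otimes n}$ as $S_n$-invariants splits the canonical quotient, and one computes
$$
f \circ (\mathrm{id}_X \otimes f^{\otimes(n-1)}) \circ \iota \circ s \;=\; n!\cdot\mathrm{id}_\unit.
$$
Dividing by $n!$ yields the desired section of $f$.

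\textbf{Positive characteristic.} The implication from the $p^j$-version back to \textbf{GR} is immediate (take $n=p^j$). For the forward implication, fix $f\colon X \tto \unit$ and set
$$S(f) := \{n \geq 1 \mid \Sym^n f \text{ is split}\}.$$
By \textbf{GR}, $S(f) \neq \emptyset$, and the goal is to exhibit a pure power of $p$ in $S(f)$. Two closure properties hold for free: \emph{addition}, since for sections $s_i\colon\unit\to\Sym^{n_i}X$ the product $\mu\circ(s_1\otimes s_2)\colon\unit\to\Sym^{n_1+n_2}X$ coming from the algebra multiplication of $\Sym X$ is a section of $\Sym^{n_1+n_2}f$; and \emph{multiplication by any positive integer}, since for $s$ splitting $\Sym^n f$, the composite of $\Sym^k s$ with the plethysm $\Sym^k(\Sym^n X)\to\Sym^{kn}X$ splits $\Sym^{kn}f$.

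The decisive step, which I expect to be the main obstacle, is to show $d := \gcd(S(f))$ is a power of $p$; once this is known, the closure properties force $p^j \in S(f)$ for every sufficiently large $j$. If some prime $q \neq p$ divided $d$, then every element of $S(f)$ would lie in $q\mathbb{N}$. I would contradict this by applying \textbf{GR} to the auxiliary surjection $\Sym^{p^j} f\colon \Sym^{p^j}X \tto \unit$ for appropriately large $j$: any section of $\Sym^m \Sym^{p^j} X \to \unit$ produced by \textbf{GR} yields, via plethysm, an element $m p^j \in S(f)$, and combining this with the Frobenius $p$-th power operation $s \mapsto s^p$ in the graded algebra $\Sym X$ (which lifts $S(f)$ to $pS(f)$ without introducing new prime factors outside $p$) should eventually produce an element of $S(f)$ not divisible by $q$, delivering the desired contradiction.
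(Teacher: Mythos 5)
Your characteristic-zero argument is correct and is essentially the paper's: a section of $\Sym^n f$ transports through the symmetrizer to a section of $f$ itself (up to a nonzero scalar), whence $\Ext^1(\unit,-)=0$ and rigidity gives semisimplicity. (Minor normalization slip: if $\iota$ is chosen to split the quotient $X^{\otimes n}\tto\Sym^nX$, the scalar is $1$ rather than $n!$; either way the conclusion holds in characteristic zero.)

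The positive-characteristic direction, however, has a genuine gap, precisely at the step you flag as decisive: nothing in your toolkit can prove that $\gcd(S(f))$ is a power of $p$. All three operations you have preserve divisibility by the hypothetical prime $q\neq p$. Addition of elements of $S(f)$ obviously does; the Frobenius operation $s\mapsto s^p$ only multiplies degrees by $p$; and applying {\bf (GR)} to the auxiliary surjection $\Sym^{p^j}X\tto\unit$ produces, via plethysm, an element $mp^j\in S(f)$ with $m$ completely uncontrolled --- if every element of $S(f)$ were divisible by $q$, then $q\mid mp^j$ just forces $q\mid m$, which is perfectly consistent and yields no contradiction. So in the scenario you are trying to exclude, every element your operations generate remains divisible by $q$, and the argument cannot close. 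The missing ingredient is a structural fact about symmetric powers in characteristic $p$, which is exactly what the paper uses: for the $p$-adic expansion $n=\sum_j n_jp^j$, the object $\Sym^nX$ is a direct summand of $\bigotimes_j(\Sym^{p^j}X)^{\otimes n_j}$, because the composite $\Sym^nX\to\bigotimes_j(\Sym^{p^j}X)^{\otimes n_j}\to\Sym^nX$ of the natural comultiplication and multiplication maps is multiplication by $n!/\prod_j(p^j!)^{n_j}$, which is prime to $p$ by Legendre's formula. Granting this, if $n\in S(f)$ has digit $n_{j_0}\neq 0$, a section of $\Sym^nf$ yields a section of $\bigotimes_j(\Sym^{p^j}f)^{\otimes n_j}$, and composing with $\Sym^{p^j}f$ on every tensor factor except one copy of $\Sym^{p^{j_0}}X$ gives a section of $\Sym^{p^{j_0}}f$, i.e. $p^{j_0}\in S(f)$ directly, with no gcd analysis; and if this only produces $j_0=0$, so that $f$ itself splits, then functoriality of $\Sym^{p}$ applied to a section of $f$ gives $p\in S(f)$.
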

\begin{proof}
For part (1), $\Sym^n X$ is a direct summand of $\otimes^n X$, condition {\bf (GR)} guarantees a section $\gamma$ to $\otimes^nX\xrightarrow{\otimes^n \beta}\unit$ for every $\beta:X\tto\unit$. Considering the composition
$$\unit\xrightarrow{\gamma}\otimes^n X\xrightarrow{\otimes^{n-1} \beta \otimes X}X$$
yields a section for $\beta$. Hence, $\Ext^1(\unit,-)$ vanishes and by adjunction $\Ext^1$ is identically zero.

Part (2) is proved similarly, by writing $\Sym^n X$ as a direct summand of
$$\bigotimes_j(\Sym^{p^j}X)^{\otimes n_j},$$
for the $p$-adic expansion $n=\sum_jn_jp^j$.
\end{proof}

%We observe that Nagata's theorem on algebraic groups extends to tensor categories.

\begin{theorem}\label{ThmGR}
\begin{enumerate}
\item If for every finitely generated $A\in\Alg\cC$, the subalgebra $A^0\in\Alg_k$ is finitely generated, then $\cC$ is {\bf GR}.
\item $\cC$ is {\bf GR} if and only if for every $A\in \Alg \cC$ the following is satisfied:

{\rm If $\mathrm{char}(k)=0$:} The morphism $A^0\to A_0$ is a surjection;

{\rm If $\mathrm{char}(k)=p>0$:} For every $a\in A_0$, there exists $j\in\mZ_{>0}$ for which $a^{p^j}$ is in the image of $A^0\hookrightarrow A\tto A_0$.

\end{enumerate}
\end{theorem}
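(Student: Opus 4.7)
The plan is to prove (2) first and then deduce (1) from its contrapositive; the key tools are the pullback $\unit\times_{A_0}A$ and the graded structure of $\Sym X$.

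For the direction \textbf{GR}$\Rightarrow$(image condition) in (2), given $A\in\Alg\cC$ and $a\colon\unit\to A_0$, form the pullback $P:=\unit\times_{A_0}A$ in $\Ind\cC$. Since $\pi\colon A\tto A_0$ is an epimorphism, so is the induced $P\tto\unit$, and by compactness of $\unit$ we may pass to a compact subobject $P'\subset P$ still surjecting onto $\unit$. Apply \textbf{GR} to produce a section $s\colon\unit\to\Sym^nP'$ with $n=1$ in characteristic zero (using Lemma~\ref{LemGR}(1)) or $n=p^j$ in characteristic $p>0$ (using Lemma~\ref{LemGR}(2)). Composing with $\Sym^nP'\hookrightarrow\Sym^nA\to A$ via the algebra multiplication yields $\tilde s\in A^0$. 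Because $\pi|_P$ factors through $P\to\unit\xrightarrow{a}A_0$ by the pullback property and $\pi$ is an algebra map, a short diagram chase gives $\pi(\tilde s)=\mathrm{mult}_{A_0}\circ\Sym^n(a)=a^n$, as required.

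For the converse in (2), given a non-zero $\beta\colon X\to\unit$, apply the condition to $A=\Sym X$. The subobject $\widetilde A=\bigoplus_{m>0}\widetilde{\Sym^m X}$ is graded, so $R(A)$ is a graded ideal and both $A_0$ and $A^0$ inherit gradings; in particular $(A_0)_1=X/\widetilde X=H_0(X)$. The algebra map $\gamma\colon A_0\to\unit$ induced from $\tilde\beta$ restricts on $(A_0)_1$ to the canonical surjection $H_0(X)\tto\unit$, so we can choose $a\in(A_0)_1$ with $\gamma(a)=1$. By hypothesis, $a^n$ lies in the image of $A^0\to A_0$ for some $n\geq 1$; writing a lift as $b=\sum_m b_m$ with $b_m\in(\Sym^m X)^0$, graded-ness forces $\pi(b_m)=(a^n)_m$, and since $a^n\in(A_0)_n$ we get $b_0=0$. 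Applying $\tilde\beta$ gives
\[
1\;=\;\gamma(a)^n\;=\;\tilde\beta(b)\;=\;\sum_{m\geq 1}\beta^m(b_m),
\]
so some $\beta^m(b_m)\neq 0$ with $m\geq 1$; rescaling $b_m$ yields the desired section of $\Sym^m\beta$, establishing \textbf{GR}.

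For (1), I argue contrapositively using (2). If $\cC$ is not \textbf{GR}, the proof above already produces a finitely generated $A=\Sym X$ for which $A^0\to A_0$ misses all powers of some $a\in(A_0)_1$. The subtlety is that $A^0$ itself may still be finitely generated (for instance in $\Rep\mG_a$ on $X=M_2$ one has $A^0=k[u]$), so this local failure of the image condition does not automatically yield non-finite-generation. The main obstacle is therefore to upgrade the failure into a genuine Nagata-type counterexample: I would try $A'=\Sym(X^{\oplus N})$ for sufficiently large $N$ and argue categorically that $(A')^0$ accumulates unbounded independent generators, adapting the Popov strategy (the classical converse of Hilbert--Nagata) to the tensor-category setting.
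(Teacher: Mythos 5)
Your part (2) is correct and is essentially the paper's own argument. The forward direction (pullback of $A\tto A_0\leftarrow\unit$, passage to a compact subobject surjecting onto $\unit$, splitting of $\Sym^nP'\tto\unit$ via Lemma~\ref{LemGR}, then multiplying into $A$) is exactly the paper's construction, and your diagram chase showing $\pi(\tilde s)=a^n$ is sound. Your converse is also the paper's idea — apply the hypothesis to $A=\Sym X$ and extract a splitting of some $\Sym^m X\tto\unit$ from a lift of a power of a degree-one element — with the grading bookkeeping (in particular $b_0=0$ because $R(A)$ vanishes in degree zero, and any $m\ge 1$ with $\beta^m(b_m)\ne 0$ suffices) carried out correctly.

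Part (1), however, is a genuine gap: you correctly diagnose that non-surjectivity of $A^0\to A_0$ for $A=\Sym X$ does not contradict finite generation of $A^0$ (your $\Rep\mG_a$ example makes the point), but you then only sketch a hope — take $\Sym(X^{\oplus N})$ and adapt Popov's strategy — without carrying anything out, so the implication remains unproved. The paper's proof (following van der Kallen) uses a construction you are missing: for $\alpha:X\tto\unit$, set $B=\Sym X$, let $J<B$ be the ideal generated by $\ker\alpha$ in degree one, and form the finitely generated algebra $A:=B\oplus B/J$, in which $B/J\simeq k[x]\simeq\unit^{\aleph_0}$ is an ideal squaring to zero. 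Then $A^0\simeq B^0\oplus k^{\aleph_0}$, and if no $\alpha^n$ splits, the composite $B^0\to B\to B/J\simeq k[x]$ vanishes in positive degrees, so the augmentation ideal of $B^0$ annihilates $k^{\aleph_0}$ and every $k$-subspace of $k^{\aleph_0}$ is an ideal of $A^0$; hence $A^0$ is not noetherian, so not finitely generated, contradicting the hypothesis. The key point is that one should not try to force $(\Sym X)^0$ itself to be non-finitely generated (which, as you note, may simply be false for a fixed $X$), but instead pass to a square-zero extension where failure of {\bf (GR)} is converted directly into non-noetherianity of the invariant algebra.
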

\begin{proof}
 All ideas for proving part (1) can be found in \cite[\S 2]{vdK}.
Concretely, for $\alpha:X\tto\unit$ in~$\cC$ consider the algebra $B=\Sym X$ and the ideal $J<B$ generated by $\ker \alpha$ in degree 1. Let~$A$ denote the algebra
$$A:= B\oplus B/J\simeq (\Sym X)\oplus  \unit^{\aleph_0},$$
where $B$ is a (unital) subalgebra and $B/J$ is an ideal which squares to zero and the multiplication $B\otimes B/J\to B/J$ comes from the ordinary multiplication. In particular
$$A^0\;\simeq\; B^0\oplus k^{\aleph_0}.$$

Assume for a contradiction that no $\alpha^n$ for $n>0$ admits a section. This is equivalent to saying that the composite $B^0\to B\to B/J\simeq k[x]$ is zero, except in degree $0$. Consequently, the multiplication of the augmentation ideal in~$B^0$ on $k^{\aleph_0}$ is zero, so every subspace of $k^{\aleph_0}$ is an ideal of $A^0$. Clearly,~$A^0$ is not noetherian as a $k$-algebra, yielding a contradiction.

For part (2), we restrict to the case $p>0$, with the characteristic zero case being easier. Assume first that {\bf (GR)} is satisfied. For $a\in A_0$, we can consider the pullback of $A\tto A_0\leftarrow \unit$, by compactness of $\unit$, there exists $\cC\ni X\subset A$ with simple top $\unit$ so that the image of $X$ under $A\tto A_0$ is spanned by $a$. We can then use the assumed splitting of $\Sym^{p^j} X\tto\unit$ in Lemma~\ref{LemGR}(2) to construct $\unit\to\Sym^{p^j} X\to A$ which maps to $a^{p^j}$. 

Now assume that the property in (2) is satisfied for all $A$. For $X\tto \unit$ we can consider
$$A\tto A_0\tto k[x],$$
where we realise $k[x]$ as the quotient of $A:=\Sym X$ by the ideal generated in degree 1 by the kernel of $X\tto \unit$. The assumption applied to an element in the preimage of $x$ in~$A_0$ then allows us to construct the desired $\unit\to\Sym^nX$.
\end{proof}

\begin{conjecture}\label{ConjNag}
\begin{enumerate}
\item Nagata's theorem extends (from tannakian categories, see \cite{BF}) to arbitrary tensor categories of moderate growth, {\it i.e.} Theorem~\ref{ThmGR}(1) is actually `if and only if'.
\item The category $\Ver_{p^\infty}$ is {\bf GR}.
\end{enumerate}

\end{conjecture}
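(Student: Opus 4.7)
For part (1), I would try to adapt Nagata's classical argument to the tensor-categorical setting. The target is to show that if $\cC$ is {\bf GR} and of moderate growth, then every finitely generated $A \in \Alg\cC$ has $A^0$ finitely generated as a $k$-algebra. The plan is first to establish a categorical Hilbert basis theorem for $\cC$, and then mimic the Popov--Haboush argument: given $a \in A^0$, find $X \in \cC$ with $X \subset A$ whose image in $A_0$ hits $a$, use Theorem~\ref{ThmGR}(2) to lift to an invariant $\unit \to \Sym^{p^j}X \to A$ mapping to $a^{p^j}$, and iterate along a noetherian presentation of $A$ to extract finitely many invariant generators. The principal obstacle is that, as noted in Section~\ref{SecHer}, the Hilbert basis theorem is so far only established under the combined hypothesis {\bf MN}$\,+\,${\bf GR}, so one must either upgrade that result to {\bf GR}$\,+\,$moderate growth, or circumvent noetherianity by a direct length-theoretic argument exploiting moderate growth. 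In characteristic zero, {\bf GR} collapses to semisimplicity by Lemma~\ref{LemGR}, which makes the statement essentially formal; the real content is in characteristic $p$, where one must manage not a true Reynolds operator but only the weaker $p^j$-power splittings of {\bf GR}, mirroring exactly the original Haboush-versus-Nagata gap.

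For part (2), the strategy is to reduce to verifying {\bf GR} for each $\Ver_{p^n}$ separately: since $\Ver_{p^\infty}$ is the filtered union of the $\Ver_{p^n}$ and $\Sym^{p^j}$ commutes with the inclusion functors, a splitting of $\Sym^{p^j}\beta$ in any $\Ver_{p^m}$ containing $X$ already suffices. For a given $\beta : X \tto \unit$ in $\Ver_{p^n}$, my approach is to imitate the argument used earlier in this section for infinitesimal group schemes: construct a suitable non-invariant lift of $\unit$ into $X$ in the categorical sense available in $\Ver_{p^n}$, and then use a Frobenius identity to force its $p^j$-th power in $\Sym^{p^j}X$ to span an invariant summand splitting $\Sym^{p^j}\beta$. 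The base case $n=1$ is trivial because $\Ver_p$ is semisimple and hence {\bf GR}.

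The hardest step is that, unlike for infinitesimal group schemes where the coaction identity $f^q = \varepsilon(f^q)$ is immediate, in $\Ver_{p^n}$ the required identity must be extracted from the intrinsic Frobenius functor $\mathrm{Fr}: \Ver_{p^n} \to \Ver_{p^{n+1}}$ of~\cite{BEO}, and verifying that $\mathrm{Fr}$ interacts with $\Sym$ in the way demanded by {\bf GR} looks like the main technical hurdle. Inducting on $n$ via this Frobenius ladder, and exploiting that obstructions living in an augmentation-type ideal become trivial after one further Frobenius twist, seems to me the most promising route. If this inductive mechanism can be made precise, part~(2) would, in particular, supply $\Ver_{p^\infty}$ itself as a nontrivial testing ground for part~(1), thereby linking the two halves of the conjecture.
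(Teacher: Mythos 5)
You were asked to prove Conjecture~\ref{ConjNag}, which the paper itself leaves open: it offers no proof of either part, only the partial result Proposition~\ref{Propp2} (that $\Ver_{p^2}$ is {\bf GR}), cases announced for \cite{CEO2}, and a counterexample (Remark~\ref{Counter}(2)) showing that part (1) fails without moderate growth. Your proposal is, accordingly, not a proof but a research outline, and the steps you defer are exactly the ones that are open. For part (1), the decisive gap is that you give no mechanism by which moderate growth enters the argument. Since the semisimple (hence {\bf GR}) Deligne category $(\Rep GL)_t$ admits a finitely generated algebra $A$ with $A^0$ not finitely generated, any correct proof must use moderate growth in an essential, quantitative way; ``circumvent noetherianity by a direct length-theoretic argument exploiting moderate growth'' names this problem rather than solving it. Your fallback on a categorical Hilbert basis theorem also misreads the paper: Lemma~\ref{HBT} derives {\bf (HBP)} from {\bf (MN1)} alone, not from {\bf MN}$+${\bf GR}, and neither hypothesis is available here --- {\bf GR} does not imply {\bf (MN1)}, {\bf GR} alone cannot imply {\bf (HBP)} (the same Deligne-category example violates {\bf (HBP)}, as shown in Section~\ref{SecHer}), and whether {\bf GR} plus moderate growth implies {\bf (HBP)} is itself unknown.

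For part (2), your reduction of $\Ver_{p^\infty}$ to the individual categories $\Ver_{p^n}$ is correct, since {\bf (GR)} for an object is witnessed inside any full tensor subcategory containing it; but the inductive ``Frobenius ladder'' where the actual content would have to live is unsupported and, as described, cannot work. The Frobenius functor of \cite{BEO, CEO} is a functor $\cC\to\cC\boxtimes\Ver_p$, not a functor $\Ver_{p^n}\to\Ver_{p^{n+1}}$, and for $n\ge 2$ it is not exact (these categories are not Frobenius exact, as they admit no fiber functor to $\Ver_p$), so there is no categorical substitute in sight for the coaction identity $f^q=\varepsilon(f^q)$ that drives the infinitesimal-group-scheme argument you want to imitate. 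It is instructive to compare with the one case the paper does prove: Proposition~\ref{Propp2} establishes {\bf GR} for $\Ver_{p^2}$ by entirely different means --- lifting the problem along $\Tilt SL_2\to\Ver_{p^2}$, splitting $\Sym^p T(2p-2)\tto k$ in $\Rep SL_2$ via the determinant of the Steinberg module $L(p-1)$ (Lemma~\ref{LemTilt}(1)), and then descending the splitting through this non-full functor by a delicate analysis of Hom spaces between tilting modules (Lemmas~\ref{LemTop} and~\ref{LemTilt}(2)). A serious attack on part (2) would most plausibly have to extend that tilting-theoretic argument to $\Ver_{p^n}$ for all $n$, which is precisely what remains open.
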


\begin{remark}\label{Counter}
\begin{enumerate}
\item Conjecture~\ref{ConjNag}(1) for $p=2$ will be proved in \cite{CEO2}. For general~$p$ it will be proved for the subcategory $\Ver_{p^2}$ below in Proposition~\ref{Propp2}.
\item Conjecture~\ref{ConjNag}(1) would be false without the assumption of moderate growth. For example take $k=\mC$ and the semisimple pretannakian category $\cC=(\Rep GL)_t$ for $t\in\mC\backslash\mZ$, see \cite[\S 10]{Del07}. Then for the finitely generated algebra $A:=\Sym(X_t\oplus X_t^\vee)$, we find that $A^0$ is an $\mN$-graded algebra where the dimension of the degree $n$ part equals the number $p(n)$ of partitions of size $n$, which is not of polynomial growth.
\item It follows from Corollary~\ref{CorGRG} that every finite tannakian category (over a perfect field) is {\bf GR}. In characteristic zero this is {\em not} the case for finite {\em super}-tannakian categories. Indeed, consider the representation category of the 1-dimensional odd Lie superalgebra, with short exact sequence
$$0\to\bar{\unit}\to E\to\unit\to0,$$
with $\bar{\unit}$ the odd line, see also \cite[Remark~2]{Ve}. It is an interesting question whether arbitrary finite tensor categories in positive characteristic are {\bf GR}. An affirmative answer would in particular imply Conjecture~\ref{ConjNag}(2).
\end{enumerate}
\end{remark}

From now on we assume $\mathrm{char}(k)=p>0$, and we label the indecomposable tilting module of $SL_2$ with highest weight $i\in\mN$ by $T(i).$

\begin{prop}\label{Propp2}
The category $\Ver_{p^2}$ is {\bf GR}.
\end{prop}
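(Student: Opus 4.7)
The plan is to verify the criterion of Lemma~\ref{LemGR}(2): for each non-zero $\alpha:X\tto\unit$ in $\Ver_{p^2}$, I must produce $j>0$ and a section of $\Sym^{p^j}X\tto\unit$.

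The first step is to reduce to the case of $X$ indecomposable. Since $\unit$ is simple, $\alpha$ factors through a unique indecomposable summand $X_0\subset X$, and any section of $\Sym^{p^j}X_0\tto\unit$ extends to a section over $\Sym^{p^j}X$. Using the presentation of $\Ver_{p^2}$ via the tilting modules $T(i)$ of $SL_2$ in characteristic $p$ from \cite{BEO}, the indecomposables of $\Ver_{p^2}$ come from a finite range of weights, and the condition that $T(i)$ surjects to $\unit=T(0)$ cuts this down to a short explicit list. The indecomposables coming from the subcategory $\Ver_p\subset\Ver_{p^2}$ are semisimple and so already split by Lemma~\ref{LemGR}(2) trivially; the substantive work therefore concerns the new non-simple indecomposables $T(i)$ of $\Ver_{p^2}$.

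The second step is, for each such $X=T(i)$, to exhibit a splitting of $\Sym^{p}X\tto\unit$ (possibly $\Sym^{p^j}$ for some bounded $j$) by analysing the tilting decomposition of $\Sym^{p}T(i)$ inside the category of tilting modules for $SL_2$, and then projecting to $\Ver_{p^2}$. The natural source of the section is a Frobenius-twist phenomenon: the $p$-th power map $X\to\Sym^p X$ restricted to the socle/cosocle layers of $T(i)$ hits a $T(0)$-summand which, under the quotient map to $\Ver_{p^2}$, stays a direct summand and splits the map induced by $\alpha$.

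The main obstacle will be the non-semisimplicity of $\Ver_{p^2}$: unlike in $\Ver_p$, no abstract semisimple splitting principle is available, so one must verify by hand that the chosen inclusion $\unit\hookrightarrow\Sym^p X$ composes correctly with $\Sym^p(\alpha)$ to the identity on $\unit$, and moreover that the candidate direct summand is not killed when one passes from the tilting category to the quotient $\Ver_{p^2}$ (that is, that the relevant morphism is not negligible). Because only finitely many $T(i)$ in $\Ver_{p^2}$ admit a surjection to $\unit$, this amounts to a finite case analysis rather than a uniform construction; the proof should reduce to a small number of explicit verifications on the distinguished indecomposable(s) that generate $\Ver_{p^2}$ over $\Ver_p$.
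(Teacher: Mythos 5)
Your overall skeleton (work in $\Tilt SL_2$, construct a splitting of $\Sym^p$ there, then descend to $\Ver_{p^2}$) is the same as the paper's, but both of your substantive steps have gaps, and the first is a step that actually fails. The proposed source of the section --- the $p$-th power map into $\Sym^p X$ --- cannot work. In characteristic $p$ the assignment $v\mapsto v^p$ is additive and Frobenius-semilinear, so it gives an equivariant embedding of the Frobenius twist $T^{(1)}\hookrightarrow \Sym^p T$ for $T=T(2p-2)$; but $T(2p-2)$ has simple top and socle both equal to $k$ with $L(2p-2)$ in the middle, so the unique trivial subobject of (the image of) $T^{(1)}$ is the $p$-th power of the socle, which lies in $\ker\beta$ and is therefore killed by $\beta^p$. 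The Frobenius twist of the non-split surjection $\beta$ is still non-split, so no trivial subobject coming from $p$-th powers splits $\Sym^p T(2p-2)\tto k$. The paper's section comes from a genuinely different construction: $T(2p-2)$ is a direct summand of $L(p-1)^{\otimes 2}$, and since $L(p-1)$ is $p$-dimensional and self-dual, the (dual of the) determinant gives an invariant $k\to\Sym^p(L(p-1)^{\otimes 2})$; one-dimensionality of $\Hom_{SL_2}(L(p-1)^{\otimes 2},k)$ then transfers this splitting to the summand (Lemma~\ref{LemTilt}(1)).

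The second gap is that the descent to $\Ver_{p^2}$, which you correctly flag as the main obstacle, is exactly where the bulk of the proof lies, and "checking the morphism is not negligible" is not the right formulation. Since $\Sym^p T(2p-2)$ is not a tilting module, it is not even in the domain of $\Tilt SL_2\to\Ver_{p^2}$, and the functor is only fully faithful on the projective range $p-1\le i<p^2-1$; so one cannot "project" the splitting. The paper instead works with the canonical presentation $(\otimes^p T(2p-2))^{p-1}\to\otimes^p T(2p-2)\to\Sym^p T(2p-2)\to 0$, produces a direct summand $T(2p-2)\subset \otimes^p T(2p-2)$ whose composite to $k$ is non-zero, and proves that every non-isomorphism $T(i)\to T(2p-2)$ with $p-1\le i<p^2-1$ lifts through the presentation (Lemma~\ref{LemTilt}(2)); this requires the $\Hom$-computations of Lemma~\ref{LemTop} and the structure of tensor ideals, not a finite list of explicit verifications. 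Only then does applying the quotient functor show that the image of $P$ in $\Sym^p P$ is a copy of $\unit$ splitting the surjection. Two smaller points: the reduction is cleaner via the projective cover (any $X\tto\unit$ receives $P\to X$ over $\unit$, so it suffices to split $\Sym^p P\tto\unit$, rather than listing indecomposables), and $p=2$ needs separate treatment (there $\Sym^2 P\simeq\unit^2$ by direct computation, while the tilting argument assumes $p>2$).
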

\begin{proof}
It suffices to prove property {\bf (GR)} for $P\tto \unit$ with $P$ the projective cover of $\unit$. We will prove in fact that $\Sym^p P\tto\unit$ is split.
 For $p=2$ this is an easy direct computation; $\Sym^2P\simeq \unit^2$. 

Consider the (defining) symmetric monoidal functor 
\begin{equation}\label{TiltVer}
\Tilt SL_2\to \Ver_{p^2}.
\end{equation}
The indecomposable projective objects in~$\Ver_{p^2}$ are the images of $T(i)\in \Tilt SL_2$ for $p-1\le i<p^2-1$, where in particular $P$ is the image of $T(2p-2)$, see \cite{BEO}. Moreover, the functor~\eqref{TiltVer} is fully faithful on morphisms between the latter tilting modules, see \cite[Proposition~3.5 and Theorem~4.2]{BEO}.
The result for $p>2$ is therefore a direct consequence of Lemma~\ref{LemTilt}(2) below.
\end{proof}

%\begin{lemma}
%The tilting modules $T(i)$ of $SL_2$ for $p-1\le i < p^2-1$ have distinct simple tops, given by $L(j)$ for $0\le j<p^2-p$. More concretely, for $1\le a<p$ and $0\le b< p$ the simple top of $T(ap+b-1)$ is $L(ap-b-1)$.
%\end{lemma}

\begin{lemma}\label{LemTop}
\begin{enumerate}
\item We have
$$\dim \Hom_{SL_2}(T(a),k)=\begin{cases}
1&\mbox{ if $a=2p^i-2$ for $i\in\mN$,}\\
0&\mbox{ otherwise.}
\end{cases}$$
\item We have
$$\dim \Hom_{SL_2}(T(a),L(2p-2))=\begin{cases}
1&\mbox{ if $a=2p$,}\\
1&\mbox{ if $a=2p^i-2p$ for $i\in\mZ_{>1}$,}\\
0&\mbox{ otherwise.}
\end{cases}$$
\end{enumerate}
\end{lemma}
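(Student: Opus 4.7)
The plan is to exploit the self-duality $T(a)\cong T(a)^{*}$ of tilting modules for $SL_2$ (all simples for $SL_2$ are self-dual), which converts both $\Hom$-dimensions into multiplicities of the corresponding simple in $\soc(T(a))$, and then to read these socles off from restrictions of $T(a)$ to Frobenius kernels $G_i\subset SL_2$. The central input is Donkin's identification: for each $i\ge 0$ and each $p^{i}$-restricted weight $\mu\in[0,p^{i}-1]$,
\[ T(2p^{i}-2-\mu)|_{G_i}\;=\;P_{G_i}(L(\mu)), \]
the indecomposable projective cover of $L(\mu)$ in the category of $G_i$-modules. Its $G_i$-socle is the simple module $L(\mu)|_{G_i}$; because $G_i$ is normal in $SL_2$ this socle is $SL_2$-stable, while every $SL_2$-submodule is a fortiori a $G_i$-submodule, so it coincides with the $SL_2$-socle. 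Steinberg's tensor product theorem (applied to the $p^i$-restricted weight $\mu$) identifies $L(\mu)|_{G_i}$ with $L_{SL_2}(\mu)$ itself, so $\soc_{SL_2}(T(2p^{i}-2-\mu))\cong L_{SL_2}(\mu)$, and by self-duality the head of $T(2p^{i}-2-\mu)$ is the same simple.

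For part (1) I specialise to $\mu=0$, giving $a=2p^{i}-2$, head $k$, and $\Hom_{SL_2}(T(a),k)=1$. For part (2) I specialise to $\mu=2p-2$; this requires $i\ge 2$ (so that $2p-2\le p^{i}-1$) and produces the values $a=2p^{i}-2p$. The exceptional value $a=2p$ in part (2), which for $p\ge 3$ does not lie in any Donkin interval $[p^{i}-1,2p^{i}-2]$, I treat by direct Weyl-filtration analysis: $T(2p)$ has a two-step Weyl filtration $\Delta(2p)\hookrightarrow T(2p)\twoheadrightarrow \Delta(2p-2)$, and combining the known Loewy structures (head $L(2p)$ and socle $L(2p-2)$ for $\Delta(2p)$; head $L(2p-2)$ and socle $L(0)$ for $\Delta(2p-2)$) exhibits both head and socle of $T(2p)$ as $L(2p-2)$.

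The converse vanishing direction proceeds by case analysis: for $a$ outside the principal block, the linkage principle removes the relevant simple as a composition factor of $T(a)$ altogether; for $a$ in the principal block and inside some Donkin interval, the Donkin correspondence pins down the socle as the specific $L(2p^{i}-2-a)$, which agrees with $k$ (resp.\ $L(2p-2)$) only for the listed $a$. The remaining principal-block values of $a$ sit in gaps between Donkin intervals; here one verifies via a Jantzen-sum or tensor-product computation that neither $k$ nor $L(2p-2)$ appears in $\soc(T(a))$. The main obstacle is precisely this gap analysis, since the structure of $T(a)$ in a gap is not captured by a clean closed form; fortunately only the coarse socle information is required, so it reduces to finitely many residue-class checks modulo $p$.
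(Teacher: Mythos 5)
Your positive direction is essentially sound: the self-duality reduction of both Hom-spaces to socle computations, the use of Donkin's theorem $T(2p^i-2-\mu)|_{G_i}\cong P_{G_i}(L(\mu))$ (valid for $SL_2$ for all $p$ and all $i\ge 1$) together with the normality/Steinberg argument to get $\soc_{SL_2}T(2p^i-2-\mu)\cong L(\mu)$, and the direct treatment of $T(2p)$ are all correct (for the last one, exactness of the multiplicity follows cleanly by applying $\Hom(-,L(2p-2))$ to $0\to\Delta(2p)\to T(2p)\to\Delta(2p-2)\to 0$, since $\Hom(\Delta(2p),L(2p-2))=0$). Since the paper's own ``proof'' is only a citation to \cite[Lemma~5.3.3]{Selecta} and \cite[Lemma~3.6]{BEO}, a self-contained argument is a genuinely different route. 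However, your final step has a real gap. The principal-block weights lying in the gaps between Donkin intervals form an \emph{infinite} family ($a=2kp$ and $a=2kp-2$ for unboundedly many $k$, spread over all gaps $[2p^i-1,\,p^{i+1}-2]$), and the socle of $T(a)$ for such $a$ is \emph{not} a function of $a$ modulo $p$ (or $2p$): it depends on the full $p$-adic expansion of $a$. For instance, for $p\ge 5$ both $2p$ and $4p$ are gap weights in the same class modulo $2p$, yet $\soc T(2p)\cong L(2p-2)$, while $T(4p)\cong T(p)\otimes T(3)^{[1]}$ (Donkin's tensor product theorem) has socle $L(p-2)\otimes L(3)^{[1]}\cong L(4p-2)$; similarly $T(4p-2)\cong T(2p-2)\otimes T(2)^{[1]}$ has socle $L(2p)$ although $T(2p-2)$ has socle $k$. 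So the claimed reduction to ``finitely many residue-class checks modulo $p$'' is false, and as written your proof does not establish the vanishing clauses for any gap weight beyond $a=2p$ --- which is exactly where the content of the two ``otherwise $0$'' statements lives.

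The missing idea is a recursion in the $p$-adic digits, and it is already implicit in your own tools. Write $a=(p-1)+\lambda_0+p\lambda_1$ with $0\le\lambda_0\le p-1$; Donkin's tensor product theorem gives $T(a)\cong T(p-1+\lambda_0)\otimes T(\lambda_1)^{[1]}$, and your $G_1$-socle argument then yields $\soc_{SL_2}T(a)\subset L(p-1-\lambda_0)\otimes\bigl(\soc_{SL_2}T(\lambda_1)\bigr)^{[1]}$, with equality of multiplicities because $\End_{G_1}(L(\mu_0))=k$ for restricted $\mu_0$. By Steinberg's theorem, $k$ can occur in this socle only if $\lambda_0=p-1$ and $k\subset\soc T(\lambda_1)$, and $L(2p-2)=L(p-2)\otimes L(1)^{[1]}$ can occur only if $\lambda_0=1$ and $L(1)\subset\soc T(\lambda_1)$; unwinding these two recursions (with the trivial base cases $T(a)=L(a)$ for $a\le p-2$) gives precisely $a=2p^i-2$ in part (1) and $a\in\{2p\}\cup\{2p^i-2p:\,i\ge 2\}$ in part (2), with all Hom-spaces one-dimensional. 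This single recursion subsumes both your Donkin-interval analysis and the problematic gap analysis, and it is essentially the method of \cite[Lemma~3.6]{BEO} that the paper invokes; adding it would make your argument complete.
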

\begin{proof}
For part (1), see \cite[Lemma~5.3.3]{Selecta} or \cite[Lemma~3.6]{BEO}. The methods for the latter lemma can also be applied to prove part (2).
\end{proof}

\begin{lemma}\label{LemTilt} Assume $p>2$.
Consider $\beta:T(2p-2)\tto k$ associated with the simple top $k=L(0)$ of $T(2p-2)$.
\begin{enumerate}
\item The induced morphism $\beta^p:\Sym^{p}T(2p-2)\tto k$ is split (in~$\Rep SL_2$).
\item Let the first line of the following diagram represent the canonical presentation of $\Sym^{p}T(2p-2)$: 
$$\xymatrix{
(\otimes^p T(2p-2))^{p-1}\ar[r]& \otimes^p T(2p-2)\ar[r] &\Sym^p T(2p-2)\ar[r]\ar@{->>}[d]^{\beta^p}&0\\
T(i)\ar[r]\ar@{-->}[u]& T(2p-2)\ar@{^{(}->}[u]& k.
}$$
There exists a direct summand $T(2p-2)$ of $\otimes^p T(2p-2)$ as indicated above, such that the composite from $T(2p-2)$ to $k$ is not zero, and for which every non-isomorphism $T(i)\to T(2p-2)$ with $p-1\le i< p^2-1$ leads to the above type of commutative diagram.
\end{enumerate}
\end{lemma}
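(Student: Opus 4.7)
The plan is to establish part~(2) first, and then to deduce part~(1) as a short consequence.

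For part~(2), the plan is to work inside the Karoubian category of tilting $SL_2$-modules. Since $\otimes^p T(2p-2)$ is tilting, it decomposes as $\bigoplus_j T(j)^{\oplus n_j}$; let $W$ denote its $T(2p-2)$-isotypic summand. The map $\beta^{\otimes p}:\otimes^p T(2p-2)\to k$ is $S_p$-invariant, hence factors as $\beta^p\circ\pi$, where $\pi:\otimes^p T(2p-2)\twoheadrightarrow\Sym^p T(2p-2)$ is the canonical projection with kernel equal to the image of $(\otimes^p T(2p-2))^{p-1}$. By Lemma~\ref{LemTop}(1), only summands of the form $T(2p^j-2)$ for $j\ge 0$ contribute to $\Hom_{SL_2}(\otimes^p T(2p-2),k)$. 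The first step is to show that $\beta^{\otimes p}$ does not vanish on $W$, yielding a summand $T(2p-2)\subset W$ on which the composite with $\beta^p\circ\pi$ is non-zero; this establishes property~(a).

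To verify the lifting property~(b), we enumerate the possible non-iso maps $T(i)\to T(2p-2)$ with $p-1\le i<p^2-1$. Using the short exact sequence $0\to L(0)\to T(2p-2)\to\nabla(2p-2)\to 0$ together with the Ext-vanishing $\Ext^1(T(i),\nabla(\mu))=0$, the analysis splits into maps into the socle (classified by Lemma~\ref{LemTop}(1) and restricting $i$ to $2p-2$ within the range) and maps into $\nabla(2p-2)$ (classified via its top $L(0)$ and socle $L(2p-2)$ using Lemma~\ref{LemTop}(1) and~(2); within the range the latter restricts $i$ to $2p$, since the weights $2p^j-2p$ for $j\ge 2$ exceed $p^2-2$ whenever $p>2$). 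Thus the lifting condition reduces to two concrete checks, namely for the socle-factoring endomorphism $\iota\beta$ of $T(2p-2)$ and for any non-zero map $T(2p)\to T(2p-2)$. Since two choices of $T(2p-2)$-summand differ by a map with image in $\ker\pi$, the plan is to adjust the initial summand by an $S_p$-antisymmetric correction absorbing both obstructions while preserving~(a).

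For part~(1), let $\alpha:T(2p-2)\to\Sym^p T(2p-2)$ be the composition of the summand inclusion from~(2) with $\pi$. Property~(b) applied to $\iota\beta$ forces $\alpha|_{\mathrm{soc}}=0$, so $\alpha$ descends to $\bar\alpha:T(2p-2)/\mathrm{soc}\to\Sym^p T(2p-2)$, with $\beta^p\circ\bar\alpha\ne 0$ by~(a). The image of $\bar\alpha$ is a quotient of $T(2p-2)/\mathrm{soc}$ (which has top $L(0)$ and socle $L(2p-2)$), hence lies in $\{0,\,L(0),\,T(2p-2)/\mathrm{soc}\}$; the first is excluded by~(a). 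For the third, property~(b) applied to a non-iso $T(2p)\to T(2p-2)$, combined with $\Hom(T(2p),k)=0$ (from Lemma~\ref{LemTop}(1), as $2p\ne 2p^j-2$ for any $j$), shows that such a map has image not contained in the socle of $T(2p-2)$; so an injective $\bar\alpha$ would yield a non-zero composite $T(2p)\to T(2p-2)\to \Sym^p T(2p-2)$, contradicting~(b). Hence the image of $\bar\alpha$ is $L(0)=k$, which produces a section of $\beta^p$ and the desired splitting.

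The principal obstacle is the simultaneous verification of (a) and (b) in part~(2), requiring careful control over the multiplicity space of $T(2p-2)$ in $\otimes^p T(2p-2)$ and over the action of the symmetrizer. The reduction of~(1) to~(2) is comparatively clean, as sketched.
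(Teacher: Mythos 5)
Your strategy inverts the paper's order of proof, and the inversion is exactly where it breaks. The paper proves part~(1) \emph{first}, by a direct construction: $T(2p-2)$ is a summand of $\otimes^2L(p-1)$ with $\dim\Hom_{SL_2}(\otimes^2L(p-1),k)=1$, and the determinant of the self-dual $p$-dimensional module $L(p-1)$ gives an explicit $SL_2$-morphism $k\to\Sym^p(\otimes^2L(p-1))$ splitting $\beta^p$. That splitting is then the essential input to part~(2): it is what produces a summand $T(2p-2)\subset\otimes^pT(2p-2)$ whose composite to $\Sym^pT(2p-2)$ has kernel \emph{exactly} the radical, which is precisely the condition making both obstruction composites vanish in $\Sym^pT(2p-2)$ (the images of $\iota\beta$ and of a nonzero $T(2p)\to T(2p-2)$ are the socle and the radical, respectively). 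Having deferred~(1), you need an independent mechanism for this, and the one you propose --- adjusting the summand inclusion $\iota_1$ by a correction $\delta$ with image in $\ker\pi$ --- provably cannot work: if $\im\,\delta\subset\ker\pi$ then $\pi\circ(\iota_1+\delta)=\pi\circ\iota_1$, so the composite to $\Sym^pT(2p-2)$, and in particular its kernel, is unchanged by any such correction (and the premise that two summand inclusions always differ by such a $\delta$ is itself false in general). So the requirement ``kernel $=$ radical'' must be secured at the outset, and without part~(1) your plan has no way to secure it; your derivation of~(1) from~(2) is clean (though it tacitly uses the existence of a nonzero $T(2p)\to T(2p-2)$), but it is moot, and the overall scheme is circular.

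There are two further gaps inside part~(2). First, for property~(a) you must show that $\beta^{\otimes p}$ does not vanish on the $T(2p-2)$-isotypic part $W$, but your own reduction via Lemma~\ref{LemTop}(1) leaves $T(0)$ (the case $j=0$) in play, and a priori $\beta^{\otimes p}$ could be supported entirely on $T(0)$-summands. The paper's first claim disposes of this: $T(0)$ cannot occur as a summand of $\otimes^pT(2p-2)$ at all, by the structure of tensor ideals (negligibility), while $T(2p^l-2)$ with $l>1$ is excluded by the weight bound $2p^l-2>2(2p-2)$, iterated from $\otimes^2$; nothing in your plan supplies either argument, and it is the step you explicitly leave unproved. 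Second, property~(b) demands an actual lift of the composite $T(i)\to T(2p-2)\to\otimes^pT(2p-2)$ through $(\otimes^pT(2p-2))^{p-1}$ (the dashed arrow), not merely that it lands in $\ker\pi=\im\bigl((\otimes^pT(2p-2))^{p-1}\to\otimes^pT(2p-2)\bigr)$. This lifting is the hardest step of the paper's proof: one observes that the relation map factors through the summand $\bigl(\wedge^2T(2p-2)\otimes(\otimes^{p-2}T(2p-2))\bigr)^{p-1}$, of highest weight strictly below $p(2p-2)$, and invokes Lemma~\ref{LemTop}(2) (the lowest weight $i>2p$ with $L(2p-2)$ in the top of $T(i)$ is $p(2p-2)$) to conclude that only $T(2p)$-summands of the source are relevant, whence the lift exists. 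Your plan does not address this step at all.
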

\begin{proof}
The tilting module $T(2p-2)$ has length three, with simple top and socle $k=L(0)$ and $L(2p-2)$ in the middle. Note also that $T(2p-2)$ is a direct summand of the tilting module $\otimes^2 T(p-1)=\otimes^2L(p-1)$ and since $\Hom_{SL_2}(\otimes^2L(p-1),k)$ is one-dimensional, for part (1) it is clearly sufficient to prove that the $p$-th power of $\otimes^2L(p-1)\to k$ splits. Since $L(p-1)$ has dimension $p$ and is self-dual, the (dual of the) determinant gives us an $SL_2$-morphism $k\to \Sym^p(\otimes^2 L(p-1))$ from which to derive the splitting in (1).

Now we commence the proof of part (2). Firstly we claim that, for any $i>0$, we can choosen a decomposition of $\otimes^i T(2p-2)$ into a direct sum $T(2p-2)$ and a complement on which $\otimes^i\beta$ vanishes. This we can prove by iteration on $i$, so that it suffices to deal with the case $i=2$. This case follows immediately, since for $l>1$, we have $2p^l-2>2(2p-2)$, the claim follows from Lemma~\ref{LemTop}(1). We also used that $T(0)$ cannot appear as a direct summand by the structure of tensor ideals, see \cite[Theorem~5.3.1]{Selecta}.

By part (1) there thus exists a direct summand $T(2p-2)$ of $\otimes^p T(2p-2)$ for which the composite $T(2p-2)\to k$ is not zero and, by using the splitting in (1), we know there must be one for which the kernel of $T(2p-2)\to \Sym^p T(2p-2)$ is exactly the radical of $T(2p-2)$.

By Lemma~\ref{LemTop}, the only $T(i)$ (with $p-1\le i\le p^2-1$) which admit non-zero morphisms to $T(2p-2)$ are $T(2p-2)$ and $T(2p)$. Since a non-zero $T(2p)\to T(2p-2)$ has as image the radical, it suffices to consider this morphism and show it leads to a diagram as in the lemma. By what we know already, the composition $T(2p)\to\Sym^p T(2p-2)$ is zero, so the morphism from $T(2p)$ factors through the image $I$ of the left upper arrow in the diagram.

We can also observe that this upper left morphism in the diagram factors via the summand of the source, written up to braid isomorphism, $$(\wedge^2 T(2p-2) \otimes \otimes^{p-2}T(2p-2))^{p-1},$$
which has highest weight strictly lower than $p(2p-2)$. On the other hand, by Lemma~\ref{LemTop}(2) the lowest weight $i>2p$ for which $T(i)$ has $L(2p-2)$ in its top is $p(2p-2)$. Hence, the only summands in~$(\otimes^p T(2p-2))^{p-1}$ which are not sent to zero and which have a non-zero morphism to $L(2p-2)$ are those isomorphic to $T(2p)$. Therefore we can lift any morphism $T(2p)\to I$ to $(\otimes^p T(2p-2))^{p-1}$.
\end{proof}

\subsection{Maximal nilpotency}
Consider the following potential properties on $\cC$:
\begin{enumerate}
\item[{\bf(MN1)}] For every simple $L\not=\unit$ in~$\cC$, the algebra $\Sym L$ is finite.
\item[{\bf (MN2)}] For every non-split $\alpha:\unit\hookrightarrow X$ in~$\cC$, there exists $n$ for which $\alpha^n:\unit\to\Sym^nX$ is zero.
\end{enumerate}

If a tensor category satisfies both {\bf (MN1)} and {\bf (MN2)}, we say it is {\bf maximally nilpotent}, or simply {\bf MN}.

\begin{lemma} Condition {\bf (MN2)} is equivalent to:

{\rm If $\mathrm{char}(k)=0$:} $\cC$ is semisimple;

{\rm If $\mathrm{char}(k)=p>0$:} For every non-split $\alpha:\unit\hookrightarrow X$ in~$\cC$, there exists $j\in\mN$ for which $\alpha^n:\unit\to\Sym^nX$ is zero if and only if $n\ge p^j$.
\end{lemma}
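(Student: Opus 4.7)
The plan is to prove both implications of the equivalence. The implication from the explicit description to {\bf (MN2)} is immediate (choose $n = p^j$), so the content lies in the converse.

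In characteristic zero, the claim reduces to showing {\bf (MN2)} forces semisimplicity. Assuming $\cC$ is not semisimple, rigidity and adjunction yield a non-split monomorphism $\alpha: \unit \hookrightarrow X$; I then show $\alpha^n \neq 0$ for all $n \geq 1$, contradicting {\bf (MN2)}. Since $\unit$ is simple, a non-zero morphism from $\unit$ is a monomorphism, and by exactness of $\otimes$ the iterate $\alpha^{\otimes n}: \unit \to X^{\otimes n}$ is again a monomorphism. In characteristic zero the symmetrizer $\tfrac{1}{n!}\sum_\sigma \sigma$ is a well-defined idempotent on $X^{\otimes n}$ exhibiting $\Sym^n X$ as a direct summand; since $\alpha^{\otimes n}$ is $S_n$-invariant, it lies in this summand and is identified with $\alpha^n$, hence $\alpha^n \neq 0$.

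For characteristic $p > 0$, fix a non-split $\alpha$ and let $N \geq 2$ be the minimal integer with $\alpha^N = 0$, which exists by {\bf (MN2)}. Multiplying $\alpha^N$ with $\alpha^{n-N}$ inside $\Sym X$ shows $\alpha^n = 0$ for every $n \geq N$, so it remains to prove that $N$ is a power of $p$. The tool is the (cocommutative) Hopf algebra structure on $\Sym X$: the comultiplication $\Delta$ is the algebra morphism determined by $x \mapsto x \otimes 1 + 1 \otimes x$ on $X$, and since $\alpha \otimes 1$ and $1 \otimes \alpha$ commute in $\Sym X \otimes \Sym X$ the binomial theorem gives
\[
\Delta(\alpha^N) \;=\; \sum_{k=0}^{N} \binom{N}{k}\, \alpha^k \otimes \alpha^{N-k}.
\]
Projecting onto $\Sym^k X \otimes \Sym^{N-k} X$ and using $\Delta(\alpha^N) = 0$, I obtain $\binom{N}{k}\, \alpha^k \otimes \alpha^{N-k} = 0$ for every $0 < k < N$. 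By minimality of $N$ the morphisms $\alpha^k$ and $\alpha^{N-k}$ are non-zero, hence monomorphisms (their source $\unit$ is simple), and so their tensor product is still a monomorphism, in particular non-zero. This forces $\binom{N}{k} \equiv 0 \pmod p$ for every $0 < k < N$, and by Lucas' theorem $N$ must then be a power of $p$; since $N \geq 2$, we get $N = p^j$ with $j \geq 1$.

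The main obstacle I anticipate is precisely the last step: one has to rule out that the vanishing $\binom{N}{k}\, \alpha^k \otimes \alpha^{N-k} = 0$ takes place at the morphism level rather than at the scalar level. Once the simplicity of $\unit$ combined with exactness of $\otimes$ is invoked to ensure tensor products of non-zero morphisms from $\unit$ remain non-zero, the Lucas-theoretic conclusion is automatic, and everything else is a mechanical unpacking of the Hopf-algebra structure on $\Sym X$.
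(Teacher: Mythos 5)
Your proof is correct. The characteristic-zero half is essentially the paper's argument: the paper proves this lemma by referring to the proof of Lemma~\ref{LemGR}, whose first part is precisely your symmetrizer/direct-summand argument, combined with rigidity to pass from $\Ext^1(-,\unit)$ to all of $\Ext^1$. In characteristic $p$, however, you take a genuinely different route. The paper's template (part (2) of the proof of Lemma~\ref{LemGR}) realizes $\Sym^n X$ as a direct summand of $\bigotimes_j(\Sym^{p^j}X)^{\otimes n_j}$, where $n=\sum_j n_jp^j$ is the $p$-adic expansion; under this splitting $\alpha^n$ corresponds to a nonzero scalar multiple of $\bigotimes_j(\alpha^{p^j})^{\otimes n_j}$, so $\alpha^n=0$ exactly when $\alpha^{p^j}=0$ for some digit $n_j>0$, and the stated cut-off at $p^j$ follows by taking $j$ minimal with $\alpha^{p^j}=0$. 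You instead work with the minimal vanishing exponent $N$ and exploit the Hopf structure of $\Sym X$: expanding $\Delta(\alpha^N)=\sum_{k}\binom{N}{k}\,\alpha^k\otimes\alpha^{N-k}$ and comparing bigraded components, the minimality of $N$ (together with simplicity of $\unit$ and exactness of $\otimes$, which keep $\alpha^k\otimes\alpha^{N-k}$ nonzero) forces $p\mid\binom{N}{k}$ for all $0<k<N$, whence $N$ is a $p$-power by Lucas' theorem. Both arguments ultimately rest on mod-$p$ arithmetic of binomial coefficients---the paper's direct-summand claim is itself verified via the comultiplication and the nonvanishing of the relevant multinomial coefficient mod $p$---but yours is self-contained and slightly more elementary, while the paper's has the advantage of reusing verbatim the decomposition already set up for the {\bf (GR)} lemma, giving a uniform treatment of {\bf (GR)} and {\bf (MN2)}.
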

\begin{proof}
The ideas are the same as in the proof of Lemma~\ref{LemGR}.
\end{proof}

\begin{prop}\label{Prop4Bas}
The following conditions on $\cC$ are equivalent:
\begin{enumerate}
\item[(a)] $\cC$ is {\bf MN}.
\item[(b)] For every $A\in \Alg\cC$, we have $R(A)\subset \Nil A$, with $R(A)$ defined in \ref{A0A0}.
\item[(c)] Every domain in~$\Alg\cC$ is an integral domain (in~$\Alg_k$).
\end{enumerate}
\end{prop}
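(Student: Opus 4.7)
The plan is to run the cycle $(a)\Rightarrow(c)\Rightarrow(b)\Rightarrow(a)$, with the main substance in $(a)\Rightarrow(c)$.

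For $(c)\Rightarrow(b)$, I would observe that for any prime $\p<A$ the quotient $\pi:A\twoheadrightarrow A/\p$ sends $\widetilde{A}$ into $\widetilde{A/\p}$, since every $g:A/\p\to\unit$ pulls back to $g\pi:A\to\unit$, which vanishes on $\widetilde{A}$. Hence $\pi(R(A))=\pi(A)\pi(\widetilde{A})\subset R(A/\p)$. Under (c), the domain $A/\p$ is integral, so lies in $\Vecc^\infty$, forcing $R(A/\p)=0$, whence $R(A)\subset\p$. Intersecting over primes and using Lemma~\ref{LemNil} gives $R(A)\subset\Nil A$.

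For $(b)\Rightarrow(a)$, in both MN1 and MN2 the strategy is to exhibit a subobject of an appropriate symmetric algebra lying in $\widetilde{A}\subset R(A)\subset\Nil A$. For MN1, take $L\neq\unit$ simple and $A:=\Sym L$; since $\Hom(L,\unit)=0$, every $f:A\to\unit$ vanishes on $L\subset A$, so $L\subset\widetilde{A}$. By (b), the compact subobject $L$ of $\Nil A$ is nilpotent in $A$, and the identification $L^N=\Sym^N L$ inside $\Sym L$ yields MN1. For MN2, take non-split $\alpha:\unit\hookrightarrow X$, set $A:=\Sym X$, and let $\gamma:\unit\to A$ be the composite with the degree-one inclusion. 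Non-splitness of $\alpha$ is equivalent to $\beta\alpha=0$ for every $\beta:X\to\unit$; since every $f:A\to\unit$ restricts in degree one to such a $\beta$, we obtain $f\gamma=0$, whence $\gamma(\unit)\subset\widetilde{A}$. Nilpotence of $\gamma(\unit)$ inside $A=\Sym X$ is precisely the statement $\alpha^n=0$ for some $n$.

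For $(a)\Rightarrow(c)$, assume $\cC$ is MN and let $A$ be a domain; I claim every compact $Y\subset A$ is isomorphic to $\unit^k$, which forces $A\in\Vecc^\infty$ as a filtered colimit of its compact subobjects, and hence $A$ is an integral domain. First, MN1 controls simple subobjects: if some simple $L\neq\unit$ embedded in $A$, then $\Sym^N L=0$ for large $N$, so $L^N=0$ in $A$, and iterating the domain condition forces $L=0$, a contradiction. So every simple subobject of $A$ is $\unit$, and in particular $\soc Y=\unit^k$ for some $k$. Second, MN2 controls non-split $\unit$-inclusions: if $\alpha:\unit\hookrightarrow Y$ were non-split in $Y$, then MN2 gives $\alpha^N=0$ in $\Sym Y$, so the image $\gamma^N:\unit\to A$ vanishes (where $\gamma$ is the composition $\unit\xrightarrow{\alpha}Y\hookrightarrow A$), contradicting $\gamma\neq 0$ and the domain property. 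Hence every $\unit\hookrightarrow Y$ admits a retraction $Y\to\unit$; equivalently, the restriction map $\Hom(Y,\unit)\to\Hom(\soc Y,\unit)$ is surjective. Lifting each of the $k$ coordinate projections $\soc Y=\unit^k\to\unit$ to some $Y\to\unit$ and packaging them yields a retraction $Y\twoheadrightarrow\soc Y$, so $Y=\soc Y\oplus Y'$; from $\soc Y=\soc Y\oplus\soc Y'$ we deduce $\soc Y'=0$, whence $Y'=0$ and $Y=\unit^k$.

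The main obstacle is $(a)\Rightarrow(c)$: it requires simultaneously MN1 (to exclude nontrivial simple subobjects of $A$) and MN2 (to split $\unit$-inclusions in compact subobjects of $A$), together with the structural observation that a finite-length object in $\cC$ whose $\unit$-isotypic socle splits off as a direct summand must itself be $\unit$-isotypic.
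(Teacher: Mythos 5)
Your proof is correct, and while two of its three legs coincide with the paper's argument, the third is genuinely different. Your (b)$\Rightarrow$(a) (apply (b) to $A=\Sym L$ and $A=\Sym X$) is exactly the paper's, and your (c)$\Rightarrow$(b) (pass to $A/\p$, note $R(A/\p)=0$ because an integral domain in $\Vecc^\infty$ has $\widetilde{A/\p}=0$, then intersect over primes via Lemma~\ref{LemNil}) is precisely the justification of what the paper dismisses as ``clearly'' in its last paragraph. The real divergence is the remaining leg: the paper closes the cycle by proving (a)$\Rightarrow$(b) head-on, by induction on the length of a compact $Y\subset\widetilde{A}$ in an \emph{arbitrary} algebra $A$ (base case MN1/MN2; induction step passes to the quotient $A/Y'A$ by a nilpotent ideal), whereas you prove (a)$\Rightarrow$(c) by a structure argument special to domains: MN1 excludes simple subobjects other than $\unit$, MN2 together with primality of $0$ forces every $\unit\hookrightarrow Y$ to split for compact $Y\subset A$, and the socle-retraction argument gives $Y\simeq\unit^k$, hence $A\in\Vecc^\infty$. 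Your route buys a cleaner structural picture of {\bf MN} domains and, notably, sidesteps a subtlety that the paper's base case glosses over: for $Y=\unit\subset\widetilde{A}$ the paper must invoke MN2, which requires producing a compact $X\supset Y$ with $\unit\hookrightarrow X$ non-split (ruling out that all such inclusions split needs a small inverse-limit argument on the finite-dimensional affine spaces of splittings), while in your setting non-splitness is the contradiction hypothesis itself. What the paper's induction buys is the quantitative statement (b) directly -- nilpotence of compact subobjects of $\widetilde{A}$ in any algebra -- which is the form reused later (e.g.\ in Corollary~\ref{NPNak}, where nilness of $R(A)$ is the key input); your cycle of course also delivers (b), just less transparently. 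Two steps you compress are worth spelling out, though both are routine: that $A/\p$ is again a domain (compact subobjects of $A/\p$ lift to compact subobjects of $A$ because $A/\p$ is the filtered union of images of compacts), and the ``equivalently'' identifying splitness of all $\unit\hookrightarrow Y$ with surjectivity of $\Hom(Y,\unit)\to\Hom(\soc Y,\unit)$: if the image were a proper subspace, its annihilator under the perfect pairing $\Hom(\soc Y,\unit)\times\Hom(\unit,\soc Y)\to k$ would contain a nonzero $\iota:\unit\to\soc Y\subset Y$ killed by every morphism $Y\to\unit$, contradicting splitness of $\iota$.
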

\begin{proof}
First we show that (b) implies (a). When applying (b) to $A=\Sym L$, for a simple $L\not=\unit$, the observation $L\subset R(\Sym L)$ proves {\bf (MN1)}. Similarly, applying (b) to $A=\Sym X$, with $X$ as in {\bf (MN2)}, implies {\bf (MN2)}.

To show that (a) implies (b), we need to prove that for any $\cC\ni Y\subset \widetilde{A}\subset A$, we have $Y^n=0$ for some $n\in\mN$. We prove this by induction on the length of $Y$. If $Y$ has length 1, then depending on whether $Y\not=\unit$ or $Y=\unit$, {\bf (MN1)} or {\bf (MN2)} implies the required $Y^n=0$.

Now take an arbitrary $\cC\ni Y\subset \widetilde{A}\subset A$ and assume that for some $Y'\subset Y$ with $Y/Y'$ simple we know that $(Y')^n=0$ for some $n$. Since the ideal $ Y'A$ is nilpotent in~$A$ is now sufficient to prove that the image of $Y/Y'\to A/Y'A$ is nilpotent. Since that image is either zero or simple, we are back at the case from the previous paragraph.

Finally we show that (b) and (c) are equivalent. Clearly, (c) is equivalent with the condition $R(A)\subset \fp$ for every prime ideal $\fp$ in every $A\in\Alg \cC$. This is indeed equivalent to (b) by Lemma~\ref{LemNil}.
\end{proof}

\begin{conjecture}
The category $\Ver_{p^\infty}$ is {\bf MN}.
\end{conjecture}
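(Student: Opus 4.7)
The plan is to reduce the conjecture to proving that each finite Verlinde category $\Ver_{p^n}$ is {\bf MN}, and then to exploit the symmetric monoidal functor $\Tilt SL_2\to\Ver_{p^n}$ that underlies the proof of Proposition~\ref{Propp2}.

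First I would carry out the reduction. The category $\Ver_{p^\infty}$ is the filtered colimit of the tensor subcategories $\Ver_{p^n}$ along fully faithful tensor inclusions, so every simple object $L\neq\unit$ of $\Ver_{p^\infty}$ lies in some $\Ver_{p^n}$, and every short exact sequence $0\to Y\to X\to\unit\to 0$ in $\Ver_{p^\infty}$ already lies in some $\Ver_{p^n}$ (using finite length of $X$). Since each inclusion $\Ver_{p^n}\hookrightarrow\Ver_{p^\infty}$ is exact and monoidal, symmetric powers are preserved, so it suffices to verify {\bf (MN1)} and {\bf (MN2)} in each $\Ver_{p^n}$.

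For fixed $n$, I would lift questions along the symmetric monoidal functor $\Tilt SL_2\to\Ver_{p^n}$. For {\bf (MN1)}, every simple $L\neq\unit$ of $\Ver_{p^n}$ arises as a constituent of the image of some tilting module; the cleanest route is to lift $L$ to a suitable $SL_2$-module and show that its symmetric powers, once the degree is high enough, lie in the kernel of $\Tilt SL_2\to\Ver_{p^n}$. For {\bf (MN2)}, given a non-split $\alpha:\unit\hookrightarrow X$ in $\Ver_{p^n}$, one would pick a tilting module $T$ with a surjection $T\tto X$, lift $\alpha$ to a morphism $\unit\to T$, and then run an analogue of Lemma~\ref{LemTilt}, this time targeting $\Sym^{p^j}\alpha$ with $j$ large in terms of $n$ rather than $\Sym^p\alpha$ as sufficed for $n=2$.

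The main obstacle is the precise control over which tilting summands of $\Sym^{p^j}T(i)$ survive under $\Tilt SL_2\to\Ver_{p^n}$. In the proof of Proposition~\ref{Propp2}, the case $n=2$ already required a delicate weight-combinatorial analysis (Lemmas~\ref{LemTop} and~\ref{LemTilt}); for general $n$ one would need a systematic statement about the tensor ideals in $\Tilt SL_2$ (classified via \cite{Selecta, BEO}), combined with a uniform bound, depending on $n$ and on the highest weight $i$, on how large $p^j$ must be for $\Sym^{p^j}$ of the relevant tilting module to fall into the appropriate tensor ideal. The passage from control over individual tilting summands to control over their symmetric powers is what looks hardest.
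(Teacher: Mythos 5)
The statement you are addressing is one of the paper's open conjectures: the paper gives no proof of it, and records only that $\Ver_p$ and $\Ver_4$ are {\bf MN}, deferring even $\Ver_9$ and $\Ver_{2^\infty}$ to the forthcoming \cite{CEO2}. Your proposal is not a proof either. The one step you actually carry out --- reducing to the finite categories $\Ver_{p^n}$, using that $\Ver_{p^\infty}$ is the filtered union of the tensor subcategories $\Ver_{p^n}$ and that the exact monoidal inclusions preserve symmetric powers, simplicity and non-splitness --- is correct, but it is the easy part of the problem. Everything after that is a research plan whose core step, verifying {\bf (MN1)} and {\bf (MN2)} in each $\Ver_{p^n}$, is precisely the open question; you acknowledge yourself that the needed control of tilting summands ``is what looks hardest.'' An argument that leaves this unresolved has not proved the conjecture.

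There is also a substantive reason to doubt that the method of Proposition~\ref{Propp2} transfers in the way you suggest. That proposition establishes {\bf (GR)}, a positive existence statement: a section of $\Sym^p P\tto\unit$, produced ultimately from an invariant (the determinant morphism $\unit\to\Sym^p(\otimes^2 L(p-1))$) together with fullness of $\Tilt SL_2\to\Ver_{p^2}$ on Hom-spaces between the relevant tilting modules. The {\bf MN} conditions are vanishing statements: $\Sym^m L=0$ for simple $L\neq\unit$, and $\alpha^m=0$ for non-split $\alpha:\unit\hookrightarrow X$ and large $m$. Since $\Tilt SL_2$ is additive but not abelian, and in characteristic $p$ the symmetric power $\Sym^m$ is not a direct summand of $\otimes^m$ once $m\geq p$, a symmetric power taken in $\Ver_{p^n}$ cannot simply be lifted to one taken in $\Rep SL_2$; the paper's Lemma~\ref{LemTilt} circumvents this with a tilting presentation of $\Sym^p T(2p-2)$ plus the fullness results of \cite{BEO}, which are available only for restricted families of tilting modules. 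To make your plan work you would need (i) an analogue of that presentation argument proving vanishing rather than splitting, and (ii) uniform control, in terms of $n$ and the highest weight, of which summands of $\Sym^{p^j}T(i)$ survive into $\Ver_{p^n}$, i.e. a quantitative statement about tensor ideals of $\Tilt SL_2$ that the paper neither proves nor claims is known. Until both are supplied, the proposal is a plausible strategy with a genuine gap at its center.
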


\begin{example}
The categories $\Ver_p$ and $\Ver_4$ are {\bf MN}. It will be proved in \cite{CEO2} that this extends to $\Ver_{2^\infty}$ and $\Ver_9$.
\end{example}

We apply some results from \cite{CEO2} (that do not logically depend on the following theorem) to demonstrate further the close connection between the condition {\bf (MN)} and incompressible categories.

\begin{theorem}\label{MNIncomp}
Let $k$ be algebraically closed and $\cC$ an {\bf MN} pretannakian category over $k$.
\begin{enumerate}
\item $\cC$ is of moderate growth.
\item If $\mathrm{char}(k)=0$, then $\cC$ is $\Vecc$ or $\sVec$. In particular $\cC$ is incompressible.
\item If $\cC$ admits a tensor functor to a union of finite tensor categories, then $\cC$ is incompressible.
\item If $\mathrm{char}(k)=p>0$ and $\cC$ is Frobenius exact, then $\cC\subset\Ver_p$. In particular $\cC$ is incompressible.
\end{enumerate}
\end{theorem}
\begin{proof}
For (1), assume first that $\mathrm{char}(k)=0$. Then the claim follows from \cite[Proposition~0.5]{Del02}. In case $\mathrm{char}(k)=p>0$, take an arbitrary $X\in\cC$ and consider a short exact sequence
$$0\to Y\to X\to Z\to 0$$
such that $Y$ is the minimal subobject for which the quotient $Z$ only has simple constituents isomorphic to $\unit$. In particular $\Hom(Y,\unit)=0$. It follows from Proposition~\ref{Prop4Bas}(b) applied to $\Sym Y$ that $\Sym^nY=0$ for some $n$. Hence the $n$-th symmetriser acts trivially on $Y^{\otimes n}$ and it follows from \cite[Proposition~4.11(5) and (6)]{CEO} that the length of $Y^{\otimes n}$ can be bounded by $C^n$ for some $C\in \mathbb{R}$. If we let $\ell$ denote the length of $Z$, then it follows that the length of $X^{\otimes n}$ is bounded by $(C+\ell)^n$.

Next we observe, and henceforth use freely, that every tensor functor from $\cC$ to another pretannakian category is fully faithful, by \cite[Theorem~7.2.3(1)]{CEO2}.

Now we prove parts (2) and (4). By \cite{CEO, Del02}, there exists a tensor functor $F:\cC\to\Ver_k$. The target is semisimple, so fullness of $F$ makes $\cC$ a tensor subcategory of $\Ver_k$.

Finally, we prove part (3). If $\cC$ is finitely generated as a tensor category, then a tensor functor to a union of finite tensor categories must actually take values in a finite tensor category. A union of incompressible tensor categories is incompressible, so we can assume that there is a tensor functor $F:\cC\to\cD$ to a finite tensor category $\cD$. Without loss of generality we can assume it is surjective in the sense that every object in $\cD$ is a subquotient of an object in the image of $F$. Since projectives in $\cD$ are injective, it means that every projective $P$ in $\cD$ is a direct summand of an object $F(X)$. By fullness, we can produce a corresponding idempotent in $\End(X)$, so that there is actually $Y\in\cC$ with $F(Y)\simeq P$, and fullness of $F$ shows that $Y$ is projective. It now follows quickly (for instance via \cite[Corollary~3.1.6(2)]{CEO2}) that $F$ is actually an equivalence of categories, so the result follows from \cite[Theorem~7.2.3(2)]{CEO2}.
\end{proof}

\begin{remark}
By \cite[Conjecture~1.4]{BEO}, the assumption in (3) is expected to be satisfied for any pretannakian category $\cC$ is of moderate growth. In particular, for $\mathrm{char}(k)=2$, the combination of Theorem~\ref{MNIncomp} and \cite[Theorem~9.2.1]{CEO2} show that \cite[Conjecture~1.4]{BEO} would imply that a pretannakian category of moderate growth is incompressible if and only if it is {\bf MN}.
\end{remark}

%%%%%%%%%%%%%%%%%%%%%%%%%%%%%%%%%%%%%%%%%%%%%%%%%%%%%%%%%%%%%%%%%%%%%%%%%%%%%%%%%%%%%%%%%%%%%%%%%%%%%%%%%%%%%%%%%%%%%%%%%%%%%%%%%%%%%%%%%%%%%%%%%%%%%%%%%%%%%%%%%%%%%%%%%%%%%%%%%%%%%%%%%%%%%%%%%%%%%%%%%%%%%%%%%%%%%%%%%%%%%%%%%%%%%%%%%%%%%%%%%%%%%%%%%%%%%%%%%%%%%%%%%%%%%%%%%%%%%%%%%%%%%%%%%%%%%%%%%%%%%%%%%%%%%%%%%%%%

\section{Localisation and faithfully flat morphisms}\label{SecLoc}
 Given a prime ideal $\p<A\in\Alg\cC$, in this section we investigate to which extent the idea of the classical localisation $A\to A_{\fp}$ extends.

\subsection{Classical localisation}

\subsubsection{}\label{DefLoc}For a $k$-algebra $R$, $f\in R$ and $M\in\Mod_{\cC}R$, we can define the localisation
\begin{equation}\label{LocM}M_f\;:=\;R_f\otimes_RM\;\simeq\; \varinjlim M\end{equation}
where the direct limit is taken along $f:M\to M$. Clearly $(M^0)_f\simeq (M_f)^0$.

% and $R\to R_f$ is flat as an algebra morphism in~$\Alg\cC$.

By equation~\eqref{EndA}, we can apply this to $A\in\Alg\bC$ and $f\in A^0$. Then we have isomorphisms
$$A_f\;\simeq\; A\otimes_{k[x]}k[x,x^{-1}]\;\simeq\; A^0_f\otimes_{A^0} A$$
with $k[x]=\Sym\unit\to A$ the algebra morphism defined by $f$. In particular, $A_f$ has a canonical algebra structure and $A\to A_f$ is flat, since $$A_f\otimes_AM\;\simeq\; M_f\;\simeq\; \varinjlim M.$$

More generally, for a multiplicative subset $S\subset A^0$, we define the localisation 
\begin{equation}\label{eqAS}
A[S^{-1}]=\varinjlim A_f=A^0[S^{-1}]\otimes_{A^0} A.
\end{equation}
As usual, the direct limit runs over the quotient of $S$ by the equivalence relation coming from the quasi-order on $S$ where $s\le t$ if $s$ divides a power of $t$.

\begin{remark}\label{RemLocInv}
We could define localisation more generally with respect to morphisms $L\to A$ for invertible objects $L\in \cC$, rather than only morphisms $\unit\to A$.
\end{remark}

\subsection{Visibility of prime ideals}

\begin{definition}
We say that a prime ideal $\p\in\Spec A$ is {\bf visible} if it is the kernel of an algebra morphism $A\to S$ to a simple algebra $S$.
\end{definition}

\begin{conjecture}\label{ConjVis}
Assume that $k$ is algebraically closed and $\cC$ of moderate growth. Then every prime ideal in every algebra in $\Alg\cC$ is visible.
\end{conjecture}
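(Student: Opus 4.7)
\medskip

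\textbf{Plan.} The first move is a standard reduction: replacing $A$ by $A/\fp$, it suffices to show that every domain $A\in\Alg\cC$ admits an algebra morphism to a simple algebra with zero kernel. Note that under hypothesis \textbf{(MN)}, Proposition~\ref{Prop4Bas} already gives this, since a domain in $\Alg\cC$ is then an integral $k$-algebra and its ordinary fraction field is simple. Thus the interesting content of the conjecture is to handle $\cC$ which is not a priori known to be \textbf{MN}, and my plan is to produce a substitute for the fraction field directly inside $\Ind\cC$.

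The construction is as follows. Given a domain $A$, let $T$ be the filtered poset of nonzero morphisms $f\colon L\to A$ with $L$ invertible in $\cC$ (such $f$ is automatically mono because $A$ is a domain), ordered by divisibility of one by a power of another. Generalised localisation (Remark~\ref{RemLocInv}) produces $A[f^{-1}]\in\Alg\cC$, each flat over $A$, and we set
\[
K(A)\;:=\;\varinjlim_{f\in T} A[f^{-1}].
\]
As a filtered colimit of faithfully flat $A$-algebras, $K(A)$ is itself faithfully flat over $A$, in particular $A\hookrightarrow K(A)$ is a monomorphism, so $\ker(A\to K(A))=0$. What remains is to argue that $K(A)$ is a simple algebra.

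The simplicity argument would proceed by contradiction: let $J<K(A)$ be a proper nonzero ideal and pick a compact nonzero subobject $X\subset J$; since $K(A)$ is a filtered colimit, $X$ factors through some $A[f^{-1}]$ and represents an actual ideal there. The key claim is that over an algebraically closed $k$ every such nonzero compact subobject $X\subset A[g^{-1}]$ admits, after further localisation at some $h\in T$, a nonzero morphism from an invertible object of $\cC$ into $X$; once such a morphism exists, it in turn gets inverted in a later stage of the colimit, forcing the image of $X$ in $K(A)$ to contain $\unit$ and hence $J=K(A)$, the contradiction we want. This is where algebraic closedness and moderate growth should enter: in $\Vecc$ it is the fact that a nonzero element of a domain generates an invertible fractional ideal, while in general it should follow from the conjectured presence of enough invertible objects in compact subobjects of domains.

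\textbf{Main obstacle.} The hard step is precisely this last claim, that every nonzero compact subobject of a localisation of a domain in $\Alg\cC$ receives a nonzero morphism from an invertible object of $\cC$ (after possibly localising further). It is equivalent, roughly, to a well-behaved theory of ``generic points'' for domains in $\Alg\cC$ and seems to require either (a) structural input via a tensor functor to an incompressible category (along the lines of Conjecture~\ref{ConjNag} and the classification program, reducing to the known case where the target is \textbf{MN} so that one can pull back an ordinary fraction field through $F_\ast$, with the further subtlety that one must check $F_\ast K$ is simple in $\Alg\cC$), or (b) a direct argument in $\cC$ using moderate growth to force any compact $X\subset A[g^{-1}]$ to contain a nonzero map from some invertible $L$. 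Either route hinges on structural results about $\cC$ beyond what is stated so far; this is why the statement remains a conjecture.
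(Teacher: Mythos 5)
First, a point of order: the statement you were asked to prove is a \emph{conjecture} in the paper. The paper offers no proof of it, only partial results: Theorem~\ref{NPVis} establishes visibility when $\cC$ is {\bf MN}, or admits a tensor functor to a semisimple {\bf MN} category, or to an {\bf MN} category with $F_\ast$ faithful and exact; Example~\ref{ExVis1}(3) notes this covers characteristic zero and Frobenius exact categories. Your opening reduction (pass to $A/\fp$; a domain must embed in a simple algebra) matches the paper's Lemma~\ref{LemVis0}, and your observation that {\bf (MN)} suffices via Proposition~\ref{Prop4Bas} is exactly Theorem~\ref{NPVis}(1). You honestly flag that your key claim is unproven, so the proposal is not a complete argument — but the situation is worse than ``open'': the specific mechanism you propose can be refuted.

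Your key claim — that every nonzero compact subobject of (a localisation of) a domain receives, after further localisation, a nonzero morphism from an invertible object of $\cC$ — fails outside pointed categories, even in cases where the conjecture is known to hold. Take $\cC=\Rep SL_2$ over $k=\mC$ and $A=\Sym V$ with $V$ the tautological representation (the paper's own example). The only invertible object of $\cC$ is $\unit$, and $A^0=k$ since each $\Sym^nV$ is simple and nontrivial; hence your poset $T$ consists only of scalar multiples of the unit $\eta$, every localisation is trivial, and $K(A)=A$. But $A$ is not simple: the augmentation ideal is a proper nonzero ideal, and its compact subobject $V$ admits no nonzero morphism from $\unit$, with no further localisation available. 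Yet the conjecture is true for this $\cC$ (characteristic zero, so Theorem~\ref{NPVis}(2) applies via the fibre functor): the zero ideal of $A$ \emph{is} visible, but the witnessing simple algebra has the form $F_\ast(K)\simeq K\otimes \cO(SL_2)$ with $S=F_\ast\unit=\cO(SL_2)$, as in the proof of Theorem~\ref{NPVis}(3) — an algebra that cannot be reached by inverting morphisms from invertible objects. This is consistent with Remark~\ref{RemVis}(2): localisation at invertibles is the correct mechanism precisely for pointed categories, and your construction is in effect that special case. So your route (b) is dead, and your route (a) is not an alternative but is exactly the content of Theorem~\ref{NPVis}; the genuinely open part of the conjecture is what to do when no suitable tensor functor (equivalently, no suitable simple algebra $F_\ast\unit$) is known to exist. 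Two incidental errors: localisations $A\to A[f^{-1}]$ are flat but not faithfully flat in general (consider $k[x]\to k[x,x^{-1}]$), so $K(A)$ is not a filtered colimit of faithfully flat $A$-algebras; injectivity of $A\to K(A)$ does hold, but because multiplication by a nonzero $f:L\to A$ is injective on a domain, not by faithful flatness.
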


\begin{example}\label{ExVis1}
\begin{enumerate}
\item Every maximal ideal is visible.
\item If $\cC$ is unipotent (all simple objects are isomorphic to $\unit$), then every prime ideal is visible. Indeed, for every algebra $A\in\Alg\cC$ such that $A^0$ is simple, clearly $A$ is simple as well. In particular we can apply Lemma~\ref{LemVis0} below.
\item Conjecture~\ref{ConjVis} is true in characteristic zero and for Frobenius exact categories, by Theorem~\ref{NPVis}(2) and the main results of \cite{Del02, CEO}.
\end{enumerate}
\end{example}

\begin{remark}\label{RemVis}
\begin{enumerate}
\item For an algebra morphism $A\to B$, the induced map $\Spec B\to \Spec A$ sends visible prime ideals to visible prime ideals.
\item One can easily extend the proof of Example~\ref{ExVis1}(2) to the case where $\cC$ is pointed (every simple object is invertible), see for instance Remark~\ref{RemLocInv}.
\end{enumerate}

\end{remark}

\begin{lemma}\label{LemVis0}
Assume that every domain~$D\in\Alg\cC$ with $D^0$ simple, is simple itself. Then every prime ideal in every algebra in~$\Alg\cC$ is visible.
\end{lemma}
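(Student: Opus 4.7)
The plan is to reduce, via the quotient $A/\p$, to showing that any domain in $\Alg\cC$ embeds into a simple algebra, and then to produce this embedding by localising at all the nonzero elements of $A^0$.

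First, given $\p\in\Spec A$, I pass to the quotient domain $A/\p$. If one can exhibit a monomorphism $A/\p\hookrightarrow S$ into a simple algebra $S$, then the composite $A\twoheadrightarrow A/\p\hookrightarrow S$ realises $\p$ as the kernel of an algebra morphism to a simple algebra, so $\p$ is visible. Hence we may assume $A$ is itself a domain and look for $A\hookrightarrow S$.

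Next, I would observe that $A^0$ is an integral domain: if $f,g\in A^0=\Hom(\unit,A)$ satisfy $fg=0$, the domain property applied with $X,Y$ the images of $f,g:\unit\to A$ forces $f=0$ or $g=0$. Set $S:=A^0\setminus\{0\}$ and
\[
B\;:=\;A[S^{-1}]\;=\;\varinjlim_{f\in S}A_f,
\]
as in \eqref{eqAS}. Using that $H^0$ commutes with filtered colimits (\S\ref{Fs}) together with the identity $(A_f)^0=(A^0)_f$ from \S\ref{DefLoc}, one obtains $B^0\cong A^0[S^{-1}]=\Frac(A^0)$, which is a field and therefore a simple $k$-algebra.

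It then remains to check that $A\to B$ is a monomorphism and that $B$ is a domain, so as to apply the hypothesis of the lemma to $B$. For each $f\in S$, multiplication by $f$ on $A$ is a monomorphism: any compact $X$ contained in its kernel satisfies $\langle f\rangle\cdot X=0$ with $\langle f\rangle=\unit$, so the domain property of $A$ forces $X=0$. Hence every $A\to A_f$, as well as the transition maps $A_f\to A_{fg}$, are monomorphisms, whence $A\hookrightarrow B$. For the domain property of $B$, given $X,Y\in\cC$ with $X,Y\subset B$ and $XY=0$, compactness of $X,Y$ in $\Ind\cC$ plus injectivity of each $A_f\to B$ allows us to lift $X,Y$ to subobjects of a common $A_f$, and further to subobjects $X',Y'\subset A$; the relation $XY=0$ then pulls back to $f^mX'Y'=0$ in $A$ for some $m$, and the injectivity of multiplication by $f$ together with the domain property of $A$ yields $X'=0$ or $Y'=0$, hence $X=0$ or $Y=0$. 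The assumption of the lemma, applied to $B$, then makes $B$ simple, completing the construction.

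The main obstacle I foresee is the bookkeeping of the last paragraph: moving subobjects between $B$, the various $A_f$, and $A$ while tracking monomorphisms. This is routine once one records that objects of $\cC$ are compact in $\Ind\cC$ and that multiplication by every $f\in S$ acts injectively on $A$, but it is the only step that requires genuine care.
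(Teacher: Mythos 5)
Your proposal is correct and follows essentially the same route as the paper: reduce to embedding a domain $A$ into a simple algebra, localise at $S=A^0\setminus\{0\}$ to get $A[S^{-1}]$ with invariants $\Frac(A^0)$, and apply the hypothesis. The paper states this very tersely (asserting without proof that $A[S^{-1}]$ is a domain with $D^0=\Frac A^0$ and that $A$ embeds into it); your verification of these points, via compactness and injectivity of multiplication by elements of $S$, correctly fills in exactly what the paper leaves implicit.
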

\begin{proof}
It suffices to show that every domain can be embedded into a simple algebra. For a domain~$A$, it follows that $A^0$ is an integral domain. Consequently 
$$D:=\Frac A^0\otimes_{A^0}A=A[S^{-1}],$$
for $S=A^0\backslash\{0\}$, is a domain with $D^0=\Frac A^0$ and hence $D$ is simple.
\end{proof}

\begin{theorem}\label{NPVis}Assume that $k$ is algebraically closed. Every prime ideal of every algebra in~$\Alg\cC$ is visible whenever one of the following assumptions on $\cC$ is satisfied:
\begin{enumerate}
\item $\cC$ is {\bf MN}.
\item There exists a tensor functor $F:\cC\to\cD$ to a semisimple {\bf MN} pretannakian category~$\cD$ (morally $\cD=\Ver_{k}$).
\item There exists a tensor functor $F:\cC\to\cD$ to an {\bf MN} pretannakian category $\cD$ for which~$F_\ast$ is faithful and exact (for instance~$\cC$ is finite and $F$ is surjective), and $k$ is perfect.
\end{enumerate}
\end{theorem}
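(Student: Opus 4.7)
By Lemma~\ref{LemVis0}, it suffices in each case to show that any domain $D\in\Alg\cC$ with $D^0$ a simple algebra (necessarily a field $K$) is itself simple. For (1), Proposition~\ref{Prop4Bas}(c) forces $D$ to be an integral domain in $\Vecc^\infty$; combined with $D^0=K$ being a field, this gives $D=D^0=K$ as objects of $\Ind\cC$, so $D$ is a field, hence simple.

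For (2) and (3), the plan is to reduce to case (1) applied to the target $\cD$ (which is MN in both cases). Given such $D\in\Alg\cC$ with $D^0=K$ a field, consider $FD\in\Alg\cD$, pick a maximal ideal $\m<FD$ (exists by Zorn), and use case (1) for $\cD$ to conclude that $L:=FD/\m$ is a field extension of $k$ (since in an MN category a simple commutative algebra is a domain in $\Vecc^\infty$, hence a field). The adjunction $F\dashv F_\ast$ yields an algebra morphism $\varphi:D\to F_\ast L$, and we must verify that $\m$ can be chosen so that $\varphi$ is injective and that $F_\ast L$ is simple in $\Alg\cC$, thereby realising $D$ as a subalgebra of a simple algebra.

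For simplicity of $F_\ast L$, observe that since $L\in\Vecc^\infty\subset\Ind\cD$ and $F_\ast$ commutes with filtered colimits, we have $F_\ast L\simeq L\otimes_k F_\ast\unit_\cD$, reducing the claim to simplicity of $F_\ast\unit_\cD$. In case (3), faithfulness of $F_\ast$ makes the counit $\epsilon_{\unit_\cD}:FF_\ast\unit_\cD\to\unit_\cD$ an epimorphism by Proposition~\ref{Propadj0}; an adjunction argument will then show that any non-zero ideal $J<F_\ast\unit_\cD$ has $\epsilon(FJ)=\unit_\cD$ by simplicity of $\unit_\cD$, and tracing through the adjunction forces $J=F_\ast\unit_\cD$. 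In case (2), semisimplicity of $\cD$ combined with (MN1) implies that every $B\in\Alg\cD$ has all non-trivial simple sub-$\cD$-objects nilpotent, so $B/\Nil B\in\Vecc^\infty$; this provides analogous control over $F_\ast L$.

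The main obstacle lies in the injectivity of $\varphi$: one needs to choose $\m$ such that $\eta_D^{-1}(F_\ast\m)=(0)$ in $D$. In case (3), this should follow from a faithful-flatness-type property of $\eta_D:D\hookrightarrow F_\ast FD$ arising from $F_\ast$ being faithful and exact, ensuring that some maximal ideal of $FD$ contracts to the generic point $(0)$ of the domain $D$. In case (2), the explicit description of algebras in a semisimple MN category (where the reduction modulo nilpotents lies in $\Vecc^\infty$) permits a direct construction. Making these ideas precise, together with the adjunction argument for simplicity of $F_\ast\unit_\cD$, constitutes the technical heart of the proof.
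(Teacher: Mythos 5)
Your case (1) is correct, and is in fact a cleaner argument than the paper's (which obtains (1) as the special case $F=\mathrm{Id}$ of (3)). For cases (2) and (3), however, the proposal is not a proof, and the step you defer --- choosing a maximal ideal $\m<FD$ so that $\varphi:D\to F_\ast(FD/\m)$ is injective --- is not merely unproven but unachievable as formulated. Take $\cC=\Rep\mG_m$ over $\mC$, $\cD=\Vecc$, $F=\omega$ (the hypotheses of (3) hold), and $D=k[x,y]$ with $x,y$ both of weight $1$. Then $D$ is a domain with $D^0=k$, but the kernel of $D\to F_\ast(FD/\m)$ is the largest $\mG_m$-stable ideal contained in $\m$, i.e.\ the ideal of the closure of the $\mG_m$-orbit of the corresponding point of $\mA^2$; since orbits are at most one-dimensional, this kernel is non-zero for \emph{every} maximal ideal of $k[x,y]$ (it contains $(bx-ay)$ when $\m=\m_{(a,b)}\neq(x,y)$, and $(x,y)$ is itself stable). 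Triviality of invariants does not yield a dense orbit, and a dense orbit is exactly what your requirement amounts to; no ``faithful-flatness-type property'' of $\eta_D$ can produce one. This is precisely why the paper's Proposition~\ref{BetterProp} produces only a \emph{prime} ideal $P<FA$ with prescribed contraction, via a Zorn's lemma argument on $J\mapsto J_F:=\ker(A\to F_\ast(FA/J))$, using $J_FJ'_F\subset(JJ')_F$ to see that a maximal such $J$ is prime. And since $FD/P$ is then merely a domain, the proof needs the further steps your plan avoids: {\bf MN} for $\cD$ makes $FD/P$ an integral domain (Proposition~\ref{Prop4Bas}), one embeds it into its field of fractions $K$, and one proves that $F_\ast(K)$ is simple --- in the paper via Beck monadicity, $\Ind\cD\simeq\Mod_{\cC}F_\ast\unit$, applied to the composite $\cC\to\cD\to\cD_K$.

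There are secondary problems as well. First, your reduction is internally inconsistent: Lemma~\ref{LemVis0} requires showing that $D$ \emph{is} simple, whereas your plan only embeds $D$ into a simple algebra; this is repairable (the proof of Lemma~\ref{LemVis0} really only uses the embedding statement), but as written the logic does not close. Second, in case (2) nothing guarantees that $F_\ast$ is faithful or exact, which any version of your simplicity argument needs; the paper reduces (2) to (3) by replacing $\cD$ with the tensor subcategory generated by the image of $F$, where semisimplicity makes $F_\ast$ automatically faithful and exact (Proposition~\ref{Propadj0}). Your remark that $B/\Nil B\in\Vecc^\infty$ for $B\in\Alg\cD$ concerns algebras in $\Ind\cD$ and says nothing about the algebra $F_\ast L$ in $\Ind\cC$. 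Third, the simplicity of $F_\ast\unit_\cD$ is only sketched; in case (3) your adjunction idea can in fact be completed (exactness of $F_\ast$ plus the projection formula $F_\ast FJ\simeq J\otimes F_\ast\unit$ show that a non-zero ideal $J<F_\ast\unit$ satisfies $J\cdot F_\ast\unit=F_\ast\unit$, while $J\cdot F_\ast\unit\subset J$), but this must be written out --- and in any case it does not touch the contraction problem, which is the real obstruction.
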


We start the proof with the following proposition.

\begin{prop}\label{BetterProp}
Consider a tensor functor $F:\cC\to\cD$, an algebra $A\in\Alg\cC$ and $\fp\in\Spec A$. There exists $P\in\Spec (FA)$ so that $\fp$ is the preimage of $F_\ast P$ under the canonical algebra morphism $A\to F_\ast FA$.
\end{prop}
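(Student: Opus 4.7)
The plan is to reduce to the case where $\fp = 0$ and $A$ is a domain, and then to construct the required $P$ by a Zorn's lemma argument inside $FA$.

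For the reduction, set $\bar{A} = A/\fp$, a domain in $\cC$. Since $F$ is exact, $F\bar{A} \simeq FA/F\fp$. A prime $\bar{P} \in \Spec F\bar{A}$ whose contraction under $\eta_{\bar{A}}$ is zero pulls back to a prime $P < FA$ containing~$F\fp$, and naturality of~$\eta$ identifies the contraction of~$P$ under~$\eta_A$ with~$\fp$. So we may assume~$A$ is a domain and produce $P \in \Spec FA$ with $\eta_A^{-1}(F_\ast P) = 0$.

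For this, let $\Sigma = \{J < FA \,\mid\, \eta_A^{-1}(F_\ast J) = 0\}$. It is non-empty (contains~$0$) and closed under directed unions: if $\{J_i\}$ is directed in $\Sigma$, then because $F_\ast$ commutes with filtered colimits (see~\ref{Fs}), we have $F_\ast(\bigcup_i J_i) = \bigcup_i F_\ast J_i$, and since preimages commute with unions, $\bigcup_i J_i$ lies in~$\Sigma$. Zorn's lemma thus yields a maximal $P \in \Sigma$, and $P \neq FA$ because $A \neq 0$ and $\eta_A$ is a monomorphism.

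It remains to show $P$ is prime. Suppose for contradiction that subobjects $X, Y \subset FA$ in $\cD$ satisfy $XY \subset P$ while $X \not\subset P$ and $Y \not\subset P$. Then the ideals $J_1 := P + X \cdot FA$ and $J_2 := P + Y \cdot FA$ strictly contain $P$, so maximality yields non-zero $X' := \eta_A^{-1}(F_\ast J_1)$ and $Y' := \eta_A^{-1}(F_\ast J_2)$ in~$A$. A direct verification using the lax-monoidal structure of~$F_\ast$ shows that for any ideals $I_1, I_2 < FA$ one has $F_\ast I_1 \cdot F_\ast I_2 \subset F_\ast(I_1 I_2)$ as subobjects of $F_\ast FA$, because multiplication on~$F_\ast FA$ factors as the lax comparison $F_\ast FA \otimes F_\ast FA \to F_\ast(FA \otimes FA)$ followed by $F_\ast$ of multiplication on $FA$. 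Applied to $J_1, J_2$, together with $J_1 J_2 \subset P + (XY) \cdot FA \subset P$, this gives $F_\ast J_1 \cdot F_\ast J_2 \subset F_\ast P$. Since $\eta_A$ is an algebra morphism, $\eta_A(X'Y') = \eta_A(X') \cdot \eta_A(Y') \subset F_\ast P$, so $X'Y' \subset \eta_A^{-1}(F_\ast P) = 0$. As $A$ is a domain, $X' = 0$ or $Y' = 0$, a contradiction.

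The principal obstacle I expect is establishing the lax-monoidal inclusion $F_\ast I_1 \cdot F_\ast I_2 \subset F_\ast(I_1 I_2)$: it is formal but requires carefully unwinding how multiplication on $F_\ast FA$ is built from the lax comparison and $F_\ast$ of multiplication on $FA$. Once this is in hand, the remainder is a standard Zorn extraction of a prime powered by~$A$ being a domain.
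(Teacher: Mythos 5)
Your proof is correct and takes essentially the same route as the paper's: both contract ideals of $FA$ back to $A$ via $J\mapsto \eta_A^{-1}(F_\ast J)=\ker(A\to F_\ast(FA/J))$, extract a maximal such ideal by Zorn's lemma, and establish primality from the lax-monoidal submultiplicativity $F_\ast J_1\cdot F_\ast J_2\subset F_\ast(J_1J_2)$ together with primality downstairs. The only differences are cosmetic: the paper works with $\fp$ directly (using $(F\fp)$ contracting back to $\fp$ for non-emptiness of the Zorn family) rather than first reducing to the case $\fp=0$ of a domain, while you make explicit the filtered-colimit compatibility of $F_\ast$ needed for chains in Zorn's lemma, which the paper leaves implicit.
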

\begin{proof}
The lax monoidal structure of $F_\ast$ allows us to consider $F_\ast$ also as a functor $\Alg\cD\to \Alg\cC$, and for any algebra morphism $FA\to A'$, the associated $A\to F_\ast A'$ is again an algebra morphism. Applying this to $A\to F_\ast FA$ allows us to associate to any ideal $J<FA$ an ideal
$$A>J_F\;:=\; \ker(A\to F_\ast FA\to F_\ast (F(A)/J)).$$

Using the unit $\Id\Rightarrow F_\ast F$ (which evaluates to monomorphisms by faithfulness of $F$) and counit $F_\ast F\Rightarrow\Id$, we find that for ideals $J,J'<FA$ and $I<A$,
$$F(J_F)\subset J,\quad (F I)_F=I,\quad \mbox{and}\quad J_FJ'_F \subset (JJ')_F.$$
 
 By the second property, the set of ideals $J<FA$ with $J_F=\fp$ is not empty.  
 Zorn's lemma then implies existence of an ideal $P<FA$ which is maximal under the condition $\fp=P_F$. If~$P$ were not prime, we would have ideals $J_1,J_2<FA$ not included in~$P$ with $J_1J_2\subset P$. By maximality of $P$, we find that neither of $(P+J_i)_F$ is in the prime $\fp$. Consequently, we find
$$\fp=P_F\;\supset\; ((P+J_1)(P+J_2))_F \;\supset\; (P+J_1)_F (P+J_2)_F \;\not\subset\; \fp,$$
a contradiction. Hence $P$ is prime indeed.
\end{proof}

\begin{proof}[Proof of Theorem~\ref{NPVis}]
Part (1) is immediate and essentially a special case of part (3). Part (2) is also a special case of part (3), if we replace by $\cD$ by a tensor subcategory whenever possible (so that $F$ becomes surjective), and if we observe that \cite[Theorem~5.4.1]{CEOP} eliminates the requirement for $k$ to be perfeect). Hence we only prove part (3).

 Essentially by Beck monadicity (see e.g. \cite[Lemma~6.2.1]{CEO2}), 
$$\Ind\cD\;\simeq\; \Mod_{\cC}S$$ 
 for $S:=F_\ast \unit$. In particular, the algebra $S$ is simple. By applying the same reasoning to the composition $\cC\to\cD\to \cD_K$ where the second functor is the extension of scalars along a field extension $K/k$ and $\cD_K$ is again a pretannakian category over $K$ by \cite[Corollary~5.2.4]{CEOP}, we observe that $K\otimes S:=F_\ast(K)$ is also simple.

 Now consider $\fp\in\Spec A$. Consider the prime ideal $P<FA$ from Proposition~\ref{BetterProp}. Since $FA/P$ is a domain, it is an integral domain by Proposition~\ref{Prop4Bas}, so we can embed it into its field of fractions $K$. We thus obtain an inclusion
 $$A/\fp\;\hookrightarrow\; F_\ast (FA/P)\;\hookrightarrow\; F_\ast (K)=K\otimes S,$$
 concluding the proof.
\end{proof}

Note that the condition that $\cO(G)$ be an integral domain is tautologically also a necessary condition for the following consequence.
\begin{corollary}
Let $G$ be an affine group scheme with $\cO(G)\in\Alg_k$ an integral domain and assume that $k$ is algebraically closed. For any $A\in\Alg\Rep_k G$, the prime ideals in $A$ are precisely the prime ideals in $\omega A$ which are $G$-stable.
\end{corollary}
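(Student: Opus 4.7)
The plan is to establish the bijection $\fp \mapsto \omega \fp$ between categorical primes of $A$ and $G$-stable classical primes of $\omega A$. This map is well-defined (a subobject-ideal of $A$ in $\Ind\Rep_k G$ has, as underlying space, a $G$-stable ideal of $\omega A$) and injective (subobjects of $A$ in $\Ind\Rep_k G$ correspond bijectively to $G$-stable subspaces of $\omega A$). Thus only the two directions of the correspondence require verification.

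The direction classical $\Rightarrow$ categorical is formal: given a $G$-stable classical prime $I \subset \omega A$ and subobjects $X,Y \in \cC$ of $A$ with $XY \subset \tilde{I}$ (where $\tilde{I}$ denotes the corresponding subobject of $A$), if $X \not\subset \tilde{I}$ I pick any $x \in \omega X$ not in $I$; then for every $y \in \omega Y$ one has $xy \in I$, which classical primality of $I$ forces to $y \in I$, so $Y \subset \tilde{I}$.

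For the reverse direction, I plan to invoke Theorem~\ref{NPVis}(3) applied to the forgetful tensor functor $\omega : \Rep_k G \to \Vecc_k$. The hypotheses hold: $\Vecc_k$ is trivially {\bf MN} (both conditions are vacuous), and the right adjoint $\omega_* V = V \otimes_k \cO(G)$ is exact (as $\cO(G)$ is $k$-flat) and faithful (as $\cO(G)$ is nonzero). Unravelling the proof of that theorem, any categorical prime $\fp \in \Spec A$ produces a classical prime $P$ of $\omega A$ and a monomorphism $A/\fp \hookrightarrow \omega_*(K) = K \otimes_k \cO(G)$ in $\Ind\Rep_k G$, where $K = \Frac(\omega A/P)$. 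Applying the faithful exact functor $\omega$ yields an inclusion of $k$-algebras $\omega A/\omega\fp \hookrightarrow K \otimes_k \cO(G)$.

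The key algebraic input needed to conclude is that $K \otimes_k \cO(G)$ is an integral domain. Since $k$ is algebraically closed it is perfect and is its own algebraic closure in any field extension, so the integral domain $\cO(G)$ is geometrically integral and therefore remains a domain after base change to $K$. It follows that $\omega A/\omega\fp$ embeds in an integral domain, hence is itself an integral domain, so $\omega\fp$ is a classical prime of $\omega A$; it is manifestly $G$-stable because $\fp$ is a subobject in $\Ind\Rep_k G$. The main obstacle is really just this geometric-integrality input, which is standard commutative algebra once $k$ is assumed algebraically closed; the rest is bookkeeping around the already-established Theorem~\ref{NPVis}.
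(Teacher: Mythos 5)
Your proof is correct and is essentially the paper's own argument: the paper likewise treats the direction ``$G$-stable classical prime $\Rightarrow$ categorical prime'' as immediate, and for the converse applies Proposition~\ref{BetterProp} directly to obtain an injective algebra morphism $\omega(A/\fp)\hookrightarrow \cO(G)\otimes(\omega A)/P$, concluding with the same key fact that a tensor product of integral domains over an algebraically closed field is an integral domain. Your detour through the proof of Theorem~\ref{NPVis}(3) (which itself rests on Proposition~\ref{BetterProp}) and through the fraction field $K=\Frac(\omega A/P)$ is only a cosmetic variation of that argument.
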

\begin{proof}
The only thing which requires proof is that a prime ideal $\fp$ in $A$ is still prime when considered as an ideal of $\omega A$. By Proposition~\ref{BetterProp}, there exists a prime ideal $P<\omega A$ so that the co-action induces an injective algebra morphism
$$\omega (A/\fp)\;\hookrightarrow\; \cO(G)\otimes (\omega A)/P. $$
The right-hand side is the tensor product of two integral domains and therefore also an integral domain.
\end{proof}

%\begin{example}
%If $\cC=\Rep G$ for an affine group scheme over $k$, then every prime ideal is visible.
%\end{example}

%
%We continue with the notation and observations from the proof of the latter. 
%
%
%Take $A\in\Alg\cC$.
%We take an arbitrary prime $\q$ in~$A$. We can write $\q={\p}_G$ for a prime ${\p}<FA$ (and in fact we can take ${\p}=F \q$). The algebra monomorphism \eqref{coacA} then induces an algebra monomorphism
%$$A/{\p}_G\;\hookrightarrow\; \mathrm{O}(G)\otimes FA/{\p}.$$
%By taking the field of fractions of the integral domain~$FA/{\p}$ (see Proposition~\ref{Prop4Bas}), it now suffices to show that $O(G)\otimes K$ is a simple algebra in~$\Ind\cC$, for a field extension $K/k$.
%
%
%Let $J$ therefore be an ideal in~$\mathrm{O}(G)\otimes K$. Then $\omega J$ is contained in some maximal ideal of $\cO(G)\otimes K$. Corresponding to this maximal ideal we have a field extension $L/K$ and a $k$-algebra morphism $\cO(G)\to L$ (so that the maximal ideal is the kernel of the induced $\cO(G)\otimes K\to L$). Let $g\in G(L)$ denote this morphism. We have a corresponding isomorphism (with inverse coming from $g^{-1}$)
%$$\cO(G)\otimes L\;\xrightarrow{\Delta\otimes L}\; \cO(G)\otimes\cO(G)\otimes L\;\xrightarrow{\cO(G)\otimes g\otimes L}\;\cO(G)\otimes L\otimes L\;\xrightarrow{\cO(G)\otimes \mu_L }\;\cO(G)\otimes L.$$
%Since $J$ is an equivariant ideal and contained in the kernel of $\cO(G)\otimes K\to L$, it follows that $J\otimes_KL$ is in the kernel of this isomorphism, and hence $J$ is zero.

\subsection{Faithfully flat morphisms}

\begin{prop}\label{LemFlat}
For a flat algebra morphism $f:A\to B$ in~$\Ind\bC$, the following are equivalent:
\begin{enumerate}
\item $B$ is faithfully flat over $A$;
\item every maximal ideal in~$A$ is in the image of $\Spec f$;
\item $\Spec f$ is a surjection when restricted to visible prime ideals (see Remark~\ref{RemVis}(1));
\item $B\otimes_A (A/\p)\not=0$ for all $\p\in\Spec A$.
\end{enumerate}
\end{prop}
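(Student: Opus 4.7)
The plan is to close the cycle (1)$\Rightarrow$(4)$\Rightarrow$(3)$\Rightarrow$(2)$\Rightarrow$(1). The main substantive step is (2)$\Rightarrow$(1), which I would deduce from Lemma~\ref{FaithMax} applied to the base-change functor $B\otimes_A-\colon\Mod_\cC A\to\Mod_\cC B$; this functor is exact by flatness and monoidal via $(B\otimes_AM)\otimes_B(B\otimes_AN)\simeq B\otimes_A(M\otimes_AN)$. That lemma says the functor is faithful if and only if $B\otimes_A(A/\m)\neq 0$ for every maximal ideal $\m<A$, and right exactness identifies this object with $B/\m B$, so the condition is $\m B\neq B$. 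By Zorn's lemma (valid in $\Ind\cC$ since $\unit$ is compact, so a directed union of proper ideals remains proper), $\m B\neq B$ is equivalent to the existence of a maximal ideal $\q<B$ containing $\m B$. In that case $\q\cap A$ is a proper prime of $A$ containing $\m$, hence equal to $\m$; conversely if $\m=\q\cap A$ then $\m B\subset\q$ is automatically proper.

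The implication (1)$\Rightarrow$(4) is immediate from faithfulness applied to the nonzero $A$-module $A/\p$, while (3)$\Rightarrow$(2) is free from Example~\ref{ExVis1}(1). The heart of the proof is therefore (4)$\Rightarrow$(3). Given a visible $\p\in\Spec A$, fix an algebra morphism $A\to S$ with kernel $\p$ and $S$ simple, so $A/\p\hookrightarrow S$. Flatness turns this into a monomorphism $B\otimes_A(A/\p)\hookrightarrow B\otimes_A S$, and (4) gives $B\otimes_A(A/\p)\neq 0$, hence $B\otimes_A S\neq 0$. I would then pick a maximal ideal $\mathfrak{Q}<B\otimes_A S$ by Zorn and let $\q<B$ be its contraction; then $\q$ is prime, and is even visible because the composite $B\to B\otimes_A S\twoheadrightarrow(B\otimes_A S)/\mathfrak{Q}$ has kernel $\q$ and simple target. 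To identify $\q\cap A$ with $\p$, note that $A\to B\otimes_A S$ factors both through $A\to B$ and through $A\to S$; the contraction of $\mathfrak{Q}$ via the second route is a proper prime of $S$, which must equal $0$ since $S$ is simple and hence a domain, so $\q\cap A=\ker(A\to S)=\p$.

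The main obstacle I anticipate is the categorical version of the classical fact that a simple commutative algebra is a domain, used in the last step of (4)$\Rightarrow$(3). I would prove it directly: if $X,Y\subset S$ are nonzero subobjects with $XY=0$, then the ideal $SY$ is nonzero (it contains $Y$ via the unit $\unit\to S$), so $SY=S$ by simplicity, and then $X\subset SX=(SY)X=S(XY)=0$ contradicts $X\neq 0$. The remaining ingredients, namely that preimages of prime ideals under algebra morphisms are prime and that a nonzero algebra in $\Ind\cC$ admits maximal proper ideals by Zorn, are routine consequences of the definitions in Section~\ref{SecDefs}.
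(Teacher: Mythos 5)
Your proof is correct and follows essentially the same route as the paper: the same cycle (1)$\Rightarrow$(4)$\Rightarrow$(3)$\Rightarrow$(2)$\Rightarrow$(1), with (2)$\Rightarrow$(1) obtained from Lemma~\ref{FaithMax} and (4)$\Rightarrow$(3) obtained from the pushout $B\otimes_A S$ along $A\to S$ and contraction of a maximal ideal, which is exactly the content of the paper's Lemmata~\ref{LemObv} and~\ref{LemObv2}. The only superfluous ingredient is the ``simple implies domain'' digression: since $S$ is simple, the contraction of $\mathfrak{Q}$ to $S$ is a proper ideal and hence zero, so no primality of $0<S$ is ever needed.
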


We start the proof with the following lemmata.

\begin{lemma}\label{LemObv}For an algebra morphism $f:A\to B$ and $\p\in\Spec A$, the following are equivalent:
\begin{enumerate}
\item $B\otimes_A (A/\p)\not=0$;
\item $Bf(\p)\not=B$;
\item there is $\q\in\Spec A$ with $\p\subset \q$ and $\q$ in the image of $\Spec f$.
\end{enumerate}

\end{lemma}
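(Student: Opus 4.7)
The plan is to prove the three conditions equivalent by establishing \textbf{(1)} $\Leftrightarrow$ \textbf{(2)}, \textbf{(3)} $\Rightarrow$ \textbf{(2)}, and \textbf{(2)} $\Rightarrow$ \textbf{(3)}, with the last implication being the only one requiring real work.

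For \textbf{(1)} $\Leftrightarrow$ \textbf{(2)}, I would apply $B\otimes_A-$ to the short exact sequence $0\to\fp\to A\to A/\fp\to 0$. By right exactness this yields $B\otimes_A(A/\fp)\simeq B/Bf(\fp)$, since the image of the multiplication $B\otimes_A\fp\to B\otimes_AA\simeq B$ is exactly $Bf(\fp)$. The left-hand side is thus non-zero precisely when $Bf(\fp)\neq B$.

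For \textbf{(3)} $\Rightarrow$ \textbf{(2)}, suppose $\fq=f^{-1}(Q):=\ker(A\to B/Q)$ for $Q\in\Spec B$ and $\fp\subset\fq$. Then the composite $\fp\hookrightarrow A\to B/Q$ is zero, so $f(\fp)\subset Q$, hence $Bf(\fp)\subset Q\subsetneq B$.

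The main step is \textbf{(2)} $\Rightarrow$ \textbf{(3)}: given $Bf(\fp)\neq B$, I must produce a prime of $B$ containing $Bf(\fp)$, whose preimage in $A$ is then the desired $\fq$. I would invoke Zorn's lemma on the poset of proper ideals of $B$ containing $Bf(\fp)$. The one point needing verification is that a directed union of proper ideals of $B$ is still proper, i.e.\ does not contain the unit map $\eta:\unit\to B$; but this follows because $\unit$ is compact in~$\Ind\cC$, so any factorisation of $\eta$ through the union would already factor through some term of the chain. This gives a maximal ideal $M\supset Bf(\fp)$, which is automatically prime by the standard argument adapted to this setting: if $X,Y\subset B$ with $XY\subset M$ but neither $X$ nor $Y$ contained in~$M$, then by maximality $M+XB=B=M+YB$, so $B=(M+XB)(M+YB)\subset M+XYB\subset M$, a contradiction. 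Setting $\fq:=f^{-1}(M)$ then yields a prime in the image of $\Spec f$ containing $\fp$ (since $f(\fp)\subset Bf(\fp)\subset M$), completing the cycle.

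The only conceptual obstacle is the Zorn's lemma step, which in the tensor category context requires the compactness of~$\unit$; everything else is essentially a routine transcription of the classical argument using the ideal arithmetic already developed in the paper.
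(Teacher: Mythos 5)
Your proposal is correct and takes essentially the same approach as the paper: (1)$\Leftrightarrow$(2) via right-exactness of $B\otimes_A-$ applied to $0\to\p\to A\to A/\p\to 0$, and (2)$\Leftrightarrow$(3) by producing a maximal (hence prime) ideal $\fm<B$ containing $Bf(\p)$ and pulling it back along $f$. The paper merely compresses your Zorn's-lemma and primality verifications into the one-line remark that condition (2) is equivalent to $f(\p)\subset\fm$ for some maximal ideal $\fm<B$, these facts having been treated as standard earlier in the paper.
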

\begin{proof}
We have an exact sequence
$$B\otimes_A \p\to B\to B\otimes_A(A/\p)\to 0,$$
where the image of the first morphism is $Bf(\p)$, showing that (1) and (2) are equivalent. 

Condition (2) is equivalent to $f(\p)\subset \fm$ (or $\fp\subset f^{-1}(\fm)$) for some maximal ideal $\fm<B$. This shows the equivalence of (2) and (3).
\end{proof}

\begin{lemma}\label{LemObv2}
For a flat algebra morphism $f:A\to B$ and $\p\in\Spec A$ a visible prime ideal, the following are equivalent:
\begin{enumerate}
\item the conditions in Lemma~\ref{LemObv} are satisfied;
\item $\p$ is in the image of $\Spec f$;
\item $\p$ is the image of a visible prime ideal under $\Spec f$.
\end{enumerate}
\end{lemma}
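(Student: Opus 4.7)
The plan is to close the cycle $(3) \Rightarrow (2) \Rightarrow (1) \Rightarrow (3)$. The first step is immediate from the definition of visibility, and $(2) \Rightarrow (1)$ is obtained by taking $\fq = \fp$ in condition (3) of Lemma~\ref{LemObv}. All the substance therefore lies in $(1) \Rightarrow (3)$.

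For $(1) \Rightarrow (3)$, I would exploit the visibility of $\fp$ by fixing an embedding $A/\fp \hookrightarrow S$ into a simple algebra $S$, and then form the base-change $B' := B \otimes_A S$. Since $B$ is flat over $A$, the injection $A/\fp \hookrightarrow S$ yields an injection $B \otimes_A (A/\fp) \hookrightarrow B'$, and the left hand side is nonzero by hypothesis (1), so $B' \neq 0$. Pick a maximal ideal $\fm < B'$ by Zorn's lemma, so that $T := B'/\fm$ is a simple algebra, and let $\fq := \ker(B \to T)$. One verifies directly, by a short ideal-theoretic argument using that the ideal generated in $T$ by the image of any $\cC \ni X \subset A$ is either $0$ or all of $T$, that the kernel of any nonzero morphism of algebras into a simple algebra is a prime ideal. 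Hence $\fq \in \Spec B$, and it is visible via the embedding $B/\fq \hookrightarrow T$.

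It then remains to check $f^{-1}(\fq) = \fp$. The composite $S \to B' \twoheadrightarrow T$ is nonzero (it sends $1_S$ to $1_T$), so by simplicity of $S$ its kernel must vanish, giving $S \hookrightarrow T$. The map $A \to T$ therefore factors as $A \twoheadrightarrow A/\fp \hookrightarrow S \hookrightarrow T$, with the last two maps injective, so $\ker(A \to T) = \fp$, whence $f^{-1}(\fq) = \fp$. The main subtlety, and the reason one must tensor specifically with the simple algebra $S$ rather than, say, try to extract a maximal ideal of $B$ or of $B \otimes_A (A/\fp)$ directly, is that simplicity of $S$ is precisely what forces $S \hookrightarrow T$ and pins down the contracted prime to be $\fp$ exactly, rather than some larger prime above it.
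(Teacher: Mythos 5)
Your proof is correct and takes essentially the same route as the paper: both base-change along the simple algebra $S$ witnessing visibility of $\p$, use flatness together with hypothesis (1) to conclude $B\otimes_A S\neq 0$, and then contract a maximal ideal of $B\otimes_A S$ back to a visible prime $\q<B$ lying over $\p$. The only cosmetic difference is that you verify $f^{-1}(\q)=\p$ and the visibility of $\q$ by hand via the factorization $A/\p\hookrightarrow S\hookrightarrow T$, whereas the paper applies $\Spec$ to the pushout square and cites Remark~\ref{RemVis}(1) and Example~\ref{ExVis1}(1).
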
\begin{proof}
It is obvious that (3) implies (2) and that (2) implies (1).

Now we show that (1) implies (3).
Let $A\to S$ be a morphism to a simple algebra with kernel $\p$. We have the pushout
$$\xymatrix{
A\ar[rr]^f\ar[d]&& B\ar[d]\\
S\ar[rr]&& B\otimes_AS.
}$$
We can observe that $B\otimes_AS$ is not zero since, by flatness of $f$, the kernel of the morphism from $B$ is precisely $Bf(\p)\simeq B\otimes_A\p$ (by assumption a proper ideal). We apply $\Spec$ to the diagram. Any maximal ideal $\m$ in~$B\otimes_AS$ goes to the zero ideal in~$S$ and hence to $\p$ in~$A$, showing $\p$ must also be the image under $\Spec(f)$ of some $\q<B$. As $\q$ is the image of a maximal ideal, it is visible, see Remark~\ref{RemVis}(1).
\end{proof}

\begin{proof} [Proof of Proposition~\ref{LemFlat}]
It is obvious that (1) implies (4). That (4) implies (3) follows from Lemma~\ref{LemObv2}. That (3) implies (2) is also obvious.

That (2) implies (1) follows from the combination of Lemmata~\ref{LemObv2} and~\ref{FaithMax}.
\end{proof}

\begin{prop}\label{PropLazard}
Consider a morphism $f:R\to S$ in~$\Alg_k$. If $f$ is flat (resp. faithfully flat), then $f$ is also flat (resp. faithfully flat) when considered as a morphism in~$\Alg\cC$.
\end{prop}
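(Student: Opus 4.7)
The plan is to reduce both statements to their classical counterparts by exploiting that $R,S\in\Alg_k$ sit inside $\Vecc^\infty\subset\Ind\cC$ and that $\Ind\cC$ is a Grothendieck category, in which filtered colimits are exact.

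For the flatness claim, I would invoke the classical Govorov--Lazard theorem to write $S=\varinjlim_i R^{n_i}$ as a filtered colimit of finite-rank free $R$-modules. For any $M\in\Mod_\cC R$, since $-\otimes_R M$ commutes with colimits,
$$S\otimes_R M\;\simeq\;\varinjlim_i\,R^{n_i}\otimes_R M\;\simeq\;\varinjlim_i M^{n_i}.$$
Exactness of $S\otimes_R-:\Mod_\cC R\to\Mod_\cC S$ then follows because finite direct sums and filtered colimits are both exact in $\Ind\cC$.

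For faithful flatness, I would apply Lemma~\ref{FaithMax} to the exact monoidal functor $F:=S\otimes_R-$, reducing the problem to showing $F(R/\m)\neq 0$ for every maximal ideal $\m<R$ \emph{in the sense of $\Alg\cC$}. The key reduction is then to identify such $\m$ with classical maximal ideals of $R$. Any subobject $W\subset R$ in $\Ind\cC$ is the filtered colimit $W=\varinjlim_\alpha(W\cap R_\alpha)$ over the finite-dimensional $k$-subspaces $R_\alpha\subset R$, so each $W\cap R_\alpha$ is a subobject of $\unit^{\dim R_\alpha}\in\cC$; by simplicity of $\unit$ together with the identification $\Hom_\cC(\unit,\unit)=k$, these subobjects are precisely the $k$-linear subspaces of $R_\alpha$. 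Hence ideals of $R$ in $\Alg\cC$ coincide with ordinary ideals, and in particular the maximal ideals agree. The identity $F(R/\m)=S/\m S\neq 0$ (by classical faithful flatness of $f$) then closes the argument.

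The only step requiring any real care is the subobject identification above, which is where the pretannakian hypothesis on $\cC$ enters; after this, both assertions are essentially formal consequences of the colimit-preservation of tensor products and the exactness of filtered colimits in the Grothendieck category $\Ind\cC$.
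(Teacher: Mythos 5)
Your proof is correct, and its first half is precisely the paper's own argument: both invoke Govorov--Lazard to present $S$ as a filtered colimit of finite-rank free $R$-modules and deduce exactness of $S\otimes_R-$ from cocontinuity of the tensor product in~$\Ind\cC$. For faithfulness the routes differ in emphasis rather than substance. The paper disposes of it in one sentence, citing Proposition~\ref{LemFlat} together with the claim that its characterisations (e.g.\ every maximal ideal of $R$ lies in the image of $\Spec f$) are independent of the ambient tensor category; you instead apply Lemma~\ref{FaithMax} directly to the exact monoidal functor $S\otimes_R-$ and reduce to showing $S\otimes_R R/\m\neq 0$ for every maximal ideal $\m<R$ in the sense of $\Alg\cC$. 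Since the relevant implication of Proposition~\ref{LemFlat} is itself proved via Lemma~\ref{FaithMax}, your argument is essentially the paper's with the middleman removed. What your version adds is a proof of the point the paper leaves implicit: that ideals (hence maximal ideals) of $R\in\Alg_k$ computed in~$\Ind\cC$ coincide with the classical ones. Your justification --- subobjects of $\unit^{\dim R_\alpha}$ are exactly the $k$-subspaces --- is right, though the phrase ``by simplicity of $\unit$'' compresses the real reason: $\unit^n$ is semisimple of finite length, so any subobject is again semisimple, hence of the form $\unit^m$, and monomorphisms $\unit^m\hookrightarrow\unit^n$ are injective matrices over $k=\End(\unit)$. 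This identification is exactly what the paper's ``independence'' assertion needs, so your write-up is, if anything, the more complete of the two.
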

\begin{proof}
Assume that $f$ is flat. By the Govorov - Lazard Theorem, the $R$-module $S$ is a filtered colimit of finitely generated free $R$-modules. Since the tensor product in~$\Ind\cC$ is cocontinuous, the fact that $f$ remains flat in~$\Alg\cC$ thus follows from the observation that $M\otimes_R-$ is exact on $\Mod_{\cC}R$ for a free $R$-module $M\simeq R^\beta$ (for some cardinality $\beta$).

If additionally $f$ is faithful, then the conclusion follows since the characterisations in Proposition~\ref{LemFlat} are independent of the ambient tensor category.
\end{proof}

\begin{lemma}\label{AIMI}
Consider $A\in\Alg\cC$ with a nilpotent ideal $I<A$ and $M\in \Mod_{\cC}A$ such that $M/IM$ is flat as an $A/I$-module and $I\otimes_A M\to IM$ is a monomorphism. Then $M$ is flat as an $A$-module.
\end{lemma}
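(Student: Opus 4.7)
My plan is to prove $M$ is flat by showing that $\Tor_1^A(N,M)=0$ for every $N\in\Mod_\cC A$. This Tor is well-defined because $\Ind\cC$ is Grothendieck and $\Mod_\cC A$ has enough flat objects: by rigidity of $\cC$, $-\otimes X$ is exact on $\Ind\cC$ for every $X$, so every ``free'' $A$-module $A\otimes X$ is flat over $A$, and such modules generate.

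The first reduction is to compute $\Tor_1^A(A/I,M)$ from the short exact sequence $0\to I\to A\to A/I\to 0$: the induced long exact sequence identifies $\Tor_1^A(A/I,M)$ with the kernel of $I\otimes_A M\to M$, which is zero by the hypothesis that $I\otimes_AM\to IM$ is a monomorphism. The second step is to deduce $\Tor_1^A(N',M)=0$ for every $A/I$-module $N'$. Presenting $N'$ as a quotient of the free $A/I$-module $(A/I)\otimes N'$ with kernel $K$ (an $A/I$-module), the long exact sequence reads
\[
\Tor_1^A\!\bigl((A/I)\otimes N',M\bigr)\;\to\;\Tor_1^A(N',M)\;\to\;K\otimes_A M\;\to\;\bigl((A/I)\otimes N'\bigr)\otimes_A M.
\]
The leftmost term vanishes because for any $X\in\Ind\cC$ one has $\Tor_*^A(X\otimes (A/I),M)\cong X\otimes\Tor_*^A(A/I,M)$, using exactness of $X\otimes-$ to commute it through the flat resolution computing Tor. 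The rightmost map is $K\otimes_{A/I}M/IM\to ((A/I)\otimes N')\otimes_{A/I}M/IM$, which is a monomorphism by the assumed flatness of $M/IM$ over $A/I$. Hence $\Tor_1^A(N',M)=0$.

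The third step extends this to arbitrary $N$ by dévissage. Since $I$ is nilpotent with $I^n=0$, any $N\in\Mod_\cC A$ carries the finite filtration $N\supset IN\supset\cdots\supset I^nN=0$ whose successive quotients $I^kN/I^{k+1}N$ are $A/I$-modules. Descending induction on $k$, applied to $0\to I^{k+1}N\to I^kN\to I^kN/I^{k+1}N\to0$ together with the vanishing from the second step, gives $\Tor_1^A(I^kN,M)=0$ for all $k$, and in particular $\Tor_1^A(N,M)=0$. Thus $M\otimes_A-$ is exact, proving flatness.

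The main obstacle is the tensor-categorical bookkeeping in the second step, namely the identification $\Tor_*^A(X\otimes A/I,M)\cong X\otimes\Tor_*^A(A/I,M)$; I expect this to fall out cleanly from rigidity of $\cC$ (which forces $X\otimes-$ to be exact, and hence to commute with homology of a flat resolution), but a little care will be needed to set up the Tor functors in $\Mod_\cC A$ and verify the projection formula in this generality. The rest is a standard application of the long exact sequence of Tor.
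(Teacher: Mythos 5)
Your proof is correct and takes essentially the same route as the paper: reformulate the hypothesis as $\Tor_1^A(A/I,M)=0$, deduce $\Tor_1^A(N,M)=0$ for every $A/I$-module $N$ using flatness of $M/IM$ over $A/I$, and conclude by d\'evissage along the finite filtration coming from nilpotency of $I$. The only difference is cosmetic: where the paper invokes the standard base-change surjection $\Tor_1^{A}(N,M)\twoheadrightarrow \Tor_1^{A/I}(N,M/IM)$ becoming an isomorphism, you prove the same vanishing directly from a presentation of $N$ and the projection formula, which amounts to unwinding the proof of that cited fact.
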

\begin{proof}
The condition on $I\otimes_A M\to IM$ can be reformulated as $\Tor_1^A(A/I,M)=0$. This in turn implies that the surjection, for any $A/I$-module $N$,
$$\Tor_1^{A}(N,M)\;\tto\; \Tor_1^{A/I}(N,M/IM)$$
is in fact an isomorphism. As by assumption the right-hand side is zero, $\Tor_1^A(N,M)=0$ for all $A/I$-modules $N$. Since $I$ is nilpotent, the claim then follows by iteration.
\end{proof}

\subsection{Maximal ideals}

\subsubsection{} We denote by $\LocAlg\cC$ the category of local algebras in~$\Alg\cC$ with local morphisms (the maximal ideal is sent into the maximal ideal). We have the forgetful functor
\begin{equation}
\label{DefU}
U:\LocAlg\cC\to\Alg\cC,
\end{equation}
which we will mostly omit from notation.

\subsubsection{}\label{DefAm}Let $\m$ be a maximal ideal in~$A\in\Alg\cC$. We denote by $A_{\m}\in\LocAlg\cC$ (when it exists) the representing object of the subfunctor
$$F_{\fm}\subset\Alg(A,U-):\;\LocAlg\cC\to\Set$$
such that $F_{\fm}(B)$ comprises the algebra morphisms $A\to B$ for which $\m$ is in the image of $\Spec B\to\Spec A$ (and so automatically the image of the maximal ideal in~$B$).

Then we can consider the following condition on $\cC$:
\begin{enumerate}
\item[{\bf (Loc)}] For every $A\in\Alg\cC$ and every maximal ideal $\m<A$, the morphism $A\to A_{\m}$ exists and is flat.
\end{enumerate}

We will see that the combination of {\bf (GR)} and {\bf (MN)} implies {\bf (Loc)}.

\begin{lemma}\label{LemLoc}
If {\bf (Loc)} is satisfied, then for every $A\in\Alg\cC$ the induced functor
$$\Mod_{\cC}A\;\to\; \prod_{\m} \Mod_{\cC}A_{\m},$$
with product taken over all maximal ideals $\m<A$, is exact and faithful. 
\end{lemma}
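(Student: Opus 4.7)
The plan is to establish the two assertions separately and, in both cases, to reduce to properties of the individual factors $M \mapsto A_{\m}\otimes_A M$.

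For exactness, I would observe that exactness in the product category $\prod_{\m}\Mod_{\cC}A_{\m}$ is tested componentwise, so it suffices to check that each functor $A_{\m}\otimes_A -\colon \Mod_{\cC}A \to \Mod_{\cC}A_{\m}$ is exact. This is precisely the assertion that $A\to A_{\m}$ is flat, which is part of hypothesis {\bf (Loc)}.

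For faithfulness, the idea is to apply Lemma~\ref{FaithMax} to the combined functor $F\colon \Mod_{\cC}A \to \prod_{\m}\Mod_{\cC}A_{\m}$. For this I need to check that the target is an abelian monoidal category (which it is, taking componentwise $\otimes_{A_{\m}}$) and that $F$ is exact and monoidal. Exactness has just been established. For the monoidal structure, in each component we have the natural isomorphism
\[
A_{\m}\otimes_A(M\otimes_A N)\;\simeq\;(A_{\m}\otimes_A M)\otimes_{A_{\m}}(A_{\m}\otimes_A N),
\]
and $A_{\m}\otimes_A A\simeq A_{\m}$ is the unit of $\Mod_{\cC}A_{\m}$, so $F$ is symmetric monoidal.

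It then remains to verify the criterion of Lemma~\ref{FaithMax}, namely that $F(A/\m')\ne 0$ for every maximal ideal $\m'<A$. I would do this by looking at the $\m'$-component: by the very definition~\ref{DefAm} of $A_{\m'}$, the structural morphism $A\to A_{\m'}$ lies in $F_{\m'}$, so $\m'$ is in the image of $\Spec A_{\m'}\to\Spec A$. Hence Lemma~\ref{LemObv} (taking $\q=\p=\m'$) gives $A_{\m'}\otimes_A(A/\m')\neq 0$, as required. I do not anticipate any real obstacle here; the only subtle point is confirming that the product category genuinely has the abelian and monoidal structure that allows Lemma~\ref{FaithMax} to be applied verbatim, which is routine once the componentwise formulas above are recorded.
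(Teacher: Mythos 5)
Your proposal is correct and follows essentially the same route as the paper: exactness comes componentwise from the flatness required by {\bf (Loc)}, and faithfulness comes from Lemma~\ref{FaithMax} together with the non-vanishing of $A_{\m}\otimes_A(A/\m)$, which you derive via Lemma~\ref{LemObv} from the fact that $\m$ lies in the image of $\Spec A_{\m}\to\Spec A$, while the paper notes directly that $A_{\m}\otimes_A(A/\m)=A_{\m}/\m A_{\m}\neq 0$ because $\m$ is sent into the maximal ideal --- the same fact in two equivalent guises.
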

\begin{proof}
By assumption the functor is exact. That it is faithful then follows from Lemma~\ref{FaithMax} and the observation
$$A_{\fm}\otimes_A (A/\m)=A_{\fm}/\fm A_{\m}\not=0,$$
since $\m$ is sent into the maximal ideal under $A\to A_{\m}$.
\end{proof}

Now we demonstrate that {\bf (Loc)} is not an empty condition, since already the existence of $A_{\m}$ (without demanding flatness) is not guaranteed.

\begin{lemma}\label{LemNoRea}
Consider $A\in\Alg\cC$ with maximal ideal $\m<A$ such that $\hat{A}:=\varprojlim A/\m^n$ is such that the image of $\Spec \hat{A}\to\Spec A$ contains more than one maximal ideal (in particular $\hat{A}$ is not local). Then $A_{\m}$ does not exist.
\end{lemma}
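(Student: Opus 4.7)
The approach is proof by contradiction. Assuming $A_\m$ exists, the plan is to use the universal property to produce a canonical morphism $A_\m\to\hat A$, and then exploit a prime of $\hat A$ lying over a second maximal ideal of $A$ to force two distinct maximal ideals of $A$ into an inclusion.

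The first step is to verify that each $A/\m^n$ is local in $\Alg\cC$. Any prime of $A$ containing $\m^n$ must already contain $\m$, by induction on $n$ applied to $\m\cdot\m^{n-1}\subset\m^n$ via the subobject definition of prime, and hence equals $\m$ by maximality. The standard Zorn argument produces a maximal ideal above any proper ideal, with chain unions remaining proper because $\unit\in\cC$ is compact (a morphism $\unit\to\varinjlim\fp_i$ must factor through some $\fp_i$), so $\m/\m^n$ is also the unique maximal ideal of $A/\m^n$. Consequently the quotient $A\to A/\m^n$ lies in $F_\m(A/\m^n)$ and, by assumed representability, factors uniquely as $A\to A_\m\to A/\m^n$ with $A_\m\to A/\m^n$ a local morphism. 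These factorisations are compatible in $n$ by uniqueness, so they assemble to a morphism $A_\m\to\hat A$ (using completeness of $\Ind\cC$), and the composite $A\to A_\m\to\hat A$ agrees with the natural map $A\to\hat A$ since the two become the canonical quotient after composition with each projection to $A/\m^n$.

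Now invoke the hypothesis: pick a prime $P\subset\hat A$ with $P\cap A=\m'$ for some maximal ideal $\m'\neq\m$ of $A$, and let $J\subset A_\m$ be its preimage along $A_\m\to\hat A$. Preimages of primes are primes (read off the subobject definition), so $J$ is prime. Zorn embeds $J$ in a maximal ideal of $A_\m$, which coincides with the unique $\mathfrak M_{A_\m}$ by locality, giving $J\subset\mathfrak M_{A_\m}$. Pulling back along $A\to A_\m$, the left side becomes $\m'$ (since $A\to A_\m\to\hat A$ equals $A\to\hat A$, whose pullback of $P$ is $\m'$) while the right side becomes $\m$ (since $A\to A_\m$ is the universal element of $F_\m$, so the preimage of $\mathfrak M_{A_\m}$ is $\m$). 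Thus $\m'\subset\m$ between two distinct maximal ideals of $A$, the desired contradiction.

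The real delicacy is in the preparatory categorical facts used almost transparently — the locality of $A/\m^n$, the Zornification to maximal ideals (which needs compactness of $\unit$), and the inclusion of every prime of a local algebra inside its maximal ideal — once these are in place, the argument is a faithful translation of the classical story into the tensor-categorical setting.
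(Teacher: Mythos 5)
Your proof is correct and takes essentially the same route as the paper's: both use the universal property of $A_\m$ applied to the inverse system of local algebras $\{A/\m^n\}$ to produce a factorisation $A\to A_\m\to\hat A$ of the natural map, and then derive the contradiction from the fact that the only maximal ideal of $A$ in the image of $\Spec A_\m\to\Spec A$ is $\m$. The paper states this last point in one line, while you spell out the supporting details (locality of $A/\m^n$, the Zorn/compactness argument, and that primes of the local algebra $A_\m$ pull back into $\m$), which is a faithful elaboration rather than a different argument.
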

\begin{proof}
Consider the inverse system $\{A/\m^n\,|\, n\in\mZ_{>0}\}$ in~$\LocAlg\cC$. Assume for a contradiction that $A_{\m}$ exists. By universality of $A_{\m}$ there exists a cone $\{A_{\m}\to A/\m^n\}_{n}$, such that the composites $A\to A_{\m}\to A/\m^n$ are just the quotient maps. In particular, we have algebra morphisms
$$A\;\to\;A_{\m}\;\to\; \hat{A},$$
which compose to the natural morphism $A\to\hat{A}$. However, considering the corresponding map
$$\Spec \hat{A}\;\to\;\Spec A_{\m}\;\to\;\Spec A,$$
yields a contradiction, since the only maximal ideal in the image of the right map is $\m$.
\end{proof}

\begin{example}\label{ExAm}
Set $k=\mC$ and $\cC=\Rep\mG_a$ and consider $A:=\Sym V$ with $V=M_2$, so $A=\mC[x,y]$, with $Dx=y$. Then $A_{\m}$ does not exist. Indeed, consider the maximal ideal $\m=\Sym^+V$. By Example~\ref{ExInverse}, we have 
$$\hat{A}:=\varprojlim \Sym V/\Sym^nV\simeq \mC[x][[y]],$$ with natural $\mG_a$-action.
Then the image of $\Spec \hat{A}\to\Spec A$ contains every maximal ideal of the form $(x-\mu,y)$ with $\mu\in \mC$. The conclusion follows from Lemma~\ref{LemNoRea}.
\end{example}

\subsection{The geometric spectrum}

Although our definition of $\Spec A$ is classical and logical, there is another potential definition which is perhaps more meaningful (in particular, it would be the expected underlying topological space of a geometric realisation of $A$ as a locally ringed space). We present it here but do not dwell on it for two reasons. Firstly, as we show in Example \ref{ExGSpec}, it need not always be defined. Secondly, we have no examples of tensor categories where it is defined but different from $\Spec A$.

\subsubsection{}\label{Defmulti} We use the terminology and notation from \cite{Diers}.
Assume that $U$ in \eqref{DefU} has a left multi-adjoint ({\it i.e.} the subcategory $\LocAlg\cC$ of $\Alg\cC$ is multireflexive), then we define the {\bf geometric spectrum} of $A\in\Alg\cC$ as
$$\GSpec A\;:=\; \Spec_UA.$$

More concretely, when $\GSpec A$ is defined, it labels a family of algebra morphisms $A\to A_\alpha$, with $A_\alpha$ local, so that
$$\Alg\cC(A,U-)\;\simeq\; \bigsqcup_{\alpha\in\GSpec A}\LocAlg\cC(A_\alpha,-).$$
There is a function
\begin{equation}\label{Gmap}
\GSpec A\;\to\;\Spec A
\end{equation} which sends $\alpha$ to the preimage of the maximal ideal of $A_\alpha$ in~$A$.

\begin{lemma}\label{LemGSpec}
Keep the assumption in \ref{Defmulti} on $\cC$ (so that $\GSpec$ is defined).
\begin{enumerate}
\item The image of \eqref{Gmap} is precisely the set of visible prime ideals.
\item The fibre in~$\GSpec A$ of every maximal ideal in~$\Spec A$ is a singleton.
\end{enumerate}
\end{lemma}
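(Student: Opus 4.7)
The approach is to exploit the universal property of the multi-adjoint together with two simple observations: any simple algebra is automatically local (its only proper ideal being $0$), and the quotient $A_\alpha/\fm_\alpha$ of a local algebra by its maximal ideal is simple.

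For part (1), if $\alpha \in \GSpec A$ maps to $\fp$ via the local algebra $A_\alpha$, then $\fp$ is by definition the preimage of $\fm_\alpha$ in $A$, equivalently the kernel of the composite $A \to A_\alpha \tto A_\alpha/\fm_\alpha$; since $A_\alpha/\fm_\alpha$ is simple, this exhibits $\fp$ as visible. Conversely, given a visible prime $\fp$ with witness $A \to S$ having kernel $\fp$ and $S$ simple, I would note that $S$ itself is local, apply the multi-adjoint to factor $A \to S$ as $A \to A_\alpha \to S$ with local second map, and then observe that $\fm_\alpha \subseteq \ker(A_\alpha \to S) =: P$. Since $\fp$ is prime (hence proper), $A \to S$ is nonzero, so $P \neq A_\alpha$; maximality of $\fm_\alpha$ then forces $P = \fm_\alpha$. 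Pulling back to $A$ identifies $\fp$ with the image of $\alpha$ under \eqref{Gmap}.

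For part (2), non-emptiness of the fibre over a maximal ideal $\fm$ is immediate from part (1). For uniqueness, suppose $\alpha, \beta \in \GSpec A$ both map to $\fm$. I would apply the multi-adjoint to the surjection $A \tto A/\fm$ (a morphism to a simple, hence local, algebra) to obtain a factorization $A \to A_\gamma \to A/\fm$ with local second map. For each $\delta \in \{\alpha, \beta\}$, the composite $A \to A_\delta \tto A_\delta/\fm_\delta$ has kernel $\fm$ and therefore factors as $A \tto A/\fm \hookrightarrow A_\delta/\fm_\delta$; prepending $A \to A_\gamma$ then yields a factorization $A \to A_\gamma \to A_\delta/\fm_\delta$ whose second map is local. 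Since $A \to A_\delta \to A_\delta/\fm_\delta$ is another local factorization of the same morphism $A \to A_\delta/\fm_\delta$, uniqueness in the multi-adjoint forces $\gamma = \delta$, and we conclude $\alpha = \gamma = \beta$. The main delicate point throughout is to check that each constructed morphism is genuinely a \emph{local} morphism (in particular, that maps into simple algebras $S$, viewed as local with $\fm_S = 0$, are trivially local), so that the uniqueness clause of the multi-adjoint property from \cite{Diers} actually applies.
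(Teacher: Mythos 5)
Your proof is correct and follows essentially the same route as the paper: part (1) is the straightforward unwinding that the paper dismisses as ``easy'', and your part (2) is exactly the paper's argument --- factor $A\to A/\fm$ through its (unique) universal morphism and observe that for any $\delta$ in the fibre the morphism $A\to A_\delta/\fm_\delta$ factors both through $A_\delta$ and, via $A/\fm$, through that universal morphism, so uniqueness in the multi-adjoint collapses the fibre. One correction to your closing parenthetical: it is \emph{not} true that morphisms into a simple algebra are automatically local (locality requires the source's maximal ideal to land in the zero ideal, which can fail --- classically $\mathbb{Z}_{(p)}\to\mathbb{Q}$); the trivial statement you actually need is that morphisms \emph{out of} a simple algebra are local, and this, together with locality of the quotient maps $A_\delta\tto A_\delta/\fm_\delta$, of the maps supplied by the multi-adjoint itself, and of composites of local morphisms, covers every map appearing in your argument, so the proof stands as written.
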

\begin{proof}
Part (1) is easy. For part (2), fix a maximal ideal $\m <A$. Consider one of the universal $A\to A_\alpha$, which is sent to $\fm$ under \eqref{Gmap}, and denote the maximal ideal in~$A_\alpha$ by $\m_\alpha$. The morphism $A\to A_\alpha/\fm_\alpha$ has kernel~$\m$, yielding a commutative square
$$\xymatrix{
&A_\alpha\ar[rd]&\\
A\ar[ru]\ar[rd]&&A_\alpha/\m_\alpha\\
&A/\m\ar[ru].&
}$$ Now $A\to A/\fm$ must factor through precisely one of the $A\to A_\beta$. It can only be $A_\alpha$, since otherwise $A\to A_\alpha/\m_\alpha$ would factor through 2 of the universal morphisms. However, $\alpha$ was arbitrary, so it must be unique.
\end{proof}

\begin{example}\label{ExGSpec}
Set $k=\mC$ and $\cC=\Rep\mG_a$, then $\LocAlg\cC\subset \Alg\cC$ is not multireflexive. Indeed, this follows from Lemma~\ref{LemGSpec}(2) and Example~\ref{ExAm}.
\end{example}

%%%%%%%%%%%%%%%%%%%%%%%%%%%%%%%%%%%%%%%%%%%%%%%%%%%%%%%%%%%%%%%%%%%%%%%%%%%%%%%%%%%%%%%%%%%%%%%%%%%%%%%%%%%%%%%%%%%%%%%%%%%%%%%%%%%%%%%%%%%%%%%%%%%%%%%%%%%%%%%%%%%%%%%%%%%%%%%%%%%%%%%%%%%%%%%%%%%%%%%%%%%%%%%%%%%%%%%%%%%%%%%%%%%%%%%%%%%%%%%%%%%%%%%%%%%%%%%%%%%%%%%%%%%%%%%%%%%%%%%%%%%%%%%%%%%%%%%%%%%%%%%%%%%%%%%%%%%%

\section{Comparing spectra}\label{SecComp}

\subsection{Some natural maps}
Consider $A\in\Alg\cC$.

\subsubsection{}\label{diagSpec}
We consider the diagram of algebra morphisms (see~\ref{A0A0}) and its image under $\Spec$:
$$
\xymatrix{
&A\ar@{->>}[rd]&         &&&\Spec A\ar[ld]_{\rho_A}\\
A^0\ar@{^{(}->}[ru]\ar@{->>}[rd]&&A_0  &&\Spec A^0&&\Spec A_0\ar[ld]\ar@{}[lu]|-*[@]{{ \subset}}\\
&A^0_0\ar@{^{(}->}[ru]&    &&&\Spec A^0_0,\ar@{}[lu]|-*[@]{\subset}
}
$$
where $A^0_0$ is just defined as the image of the composite $A^0\hookrightarrow A\tto A_0$.
Alternatively we can regard $\{\rho_A\}$ as the natural transformation 
$$\rho:\;\Spec \Rightarrow \Spec\circ H^0,\quad \rho_A:\;\Spec A\to \Spec A^0,$$
of functors $\Alg\cC\to\Top$.

%\begin{lemma}\label{LemII}
%The following conditions on $\cC$ are equivalent:
%\begin{enumerate}
%\item The map $\rho_A$ is the inclusion of a subspace, for all $A\in\Alg\cC$.
%\item For every two ideals $I,J$, with $J$ radical, in an arbitrary $A\in \Alg\cC$, we have
%$$I\subset J\;\;\Leftrightarrow\;\; I^0\subset J^0.$$
%
%\end{enumerate}
%\end{lemma}
%\begin{proof}
%Assume first that $\cC$ satisfies the condition in (1). For every ideal $N<A$, the closed subspace $V(N)\subset \Spec A$ must be the inverse image of some closed subset of $\Spec A^0$. Let $N'<A^0$ denote an ideal corresponding to this closed subset. It follows that
%$\sqrt{N}=\sqrt{AN'}.$
%Since then obviously $N'\subset \sqrt{N^0}=(\sqrt{N})^0$, we can in fact conclude that
%$$\sqrt{N}=\sqrt{AN^0}$$
%for every ideal $N<A$. Applying the equation to the assumption $I^0\subset J^0$ thus implies $\sqrt{I}\subset J$, which concludes the proof that (1) implies (2).
%
%
%
%That (2) implies (1) is straightforward.
%\end{proof}

\subsubsection{}\label{3rho}We will be interested in situations where the maps are homeomorphisms. It will be convenient to consider this condition as the combination of the following three conditions
\begin{enumerate}
\item $\rho_A$ is injective;
\item $\rho_A$ is surjective;
\item the topology on $\Spec A$ is the pullback topology under $\Spec A\to\Spec A^0$.
\end{enumerate}
We start with some technical observations, implying in particular that condition (3) actually implies condition (1).

\begin{lemma}\label{LemSur}
The following conditions on $\cC$ are equivalent:
\begin{enumerate}
\item For every $A\in\Alg\cC$, the map $\rho_A$ is surjective.
\item For every $A\in\Alg\cC$ and proper ideal $I<A^0$, we have $AI\not=A$.

\item For every $A\in\Alg\cC$, the image of $\rho_A$ contains all maximal ideals in~$ A^0$.
%\item For every $A\in\Alg\cC$ and collection $\{f_i\in A^0\}$, the family $\{A\to A_{f_i}\}$ is a Zariski cover in~$\Aff\cC$ if and only if the family $\{A^0\to A^0_{f_i}\}$ is a Zariski cover in~$\Aff_k$.
\end{enumerate}
\end{lemma}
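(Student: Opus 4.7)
The plan is to close the loop $(1)\Rightarrow(3)\Rightarrow(2)\Rightarrow(1)$. The implication $(1)\Rightarrow(3)$ is tautological, since $(3)$ is just $(1)$ restricted to maximal ideals of $A^0$.

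For $(3)\Rightarrow(2)$ I would argue by contradiction: given a proper ideal $I<A^0$, Zorn's lemma applied inside the ordinary commutative $k$-algebra $A^0$ furnishes a maximal ideal $\fq<A^0$ containing $I$. By $(3)$, some $\p\in\Spec A$ contracts to $\fq$, whence $I\subseteq\fq\subseteq\p$ and therefore $AI\subseteq\p\neq A$.

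The substantive step is $(2)\Rightarrow(1)$, which I would handle by localisation. Given $\fq\in\Spec A^0$, set $S:=A^0\setminus\fq$ and form $B:=A[S^{-1}]$ as in \eqref{eqAS}. The key preliminary identification is $B^0\simeq A^0_\fq$: this follows from $(A_f)^0\simeq (A^0)_f$ in \ref{DefLoc} together with the fact that $H^0$ commutes with filtered colimits, which holds because $\unit$ is compact in $\Ind\cC$. Thus $B^0$ is a local $k$-algebra with proper maximal ideal $\fq A^0_\fq$, and applying $(2)$ to $B$ shows that the extension $B\cdot\fq A^0_\fq$ is a proper ideal of $B$. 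A standard Zorn's lemma argument, using compactness of $\unit$ to guarantee that unions of chains of proper ideals remain proper, then produces a maximal ideal $\mathfrak{M}<B$ with $B\cdot\fq A^0_\fq\subseteq\mathfrak{M}$. Setting $\p:=\mathfrak{M}\cap A$, I get a prime of $A$; the containment $\fq\subseteq\p\cap A^0$ is immediate from $\fq\subseteq\mathfrak{M}$, while conversely any element of $S$ is invertible in $B^0\subseteq B$ and hence cannot lie in $\mathfrak{M}$, forcing $\p\cap A^0\subseteq A^0\setminus S=\fq$.

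The main subtlety I anticipate is the identification $B^0\simeq A^0_\fq$, which hinges on the interaction of $H^0$ with filtered colimits via compactness of $\unit$. Once this classical-style behaviour of localisation on invariants is in hand, the rest of the argument is a direct adaptation of the familiar prime-contraction reasoning from ordinary commutative algebra.
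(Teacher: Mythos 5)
Your proof is correct and takes essentially the same route as the paper's: the substantive step in both is the localisation $B=A[S^{-1}]$ for $S=A^0\setminus\fq$ together with the identification $B^0\simeq (A^0)_{\fq}$ (via compactness of $\unit$ and $(A_f)^0\simeq (A^0)_f$), and your $(3)\Rightarrow(2)$ is word-for-word the paper's. The only difference is bookkeeping: the paper closes the loop by proving $(3)\Rightarrow(1)$, applying $(3)$ to $B$ and chasing a commutative square of spectra, whereas you prove $(2)\Rightarrow(1)$ by applying $(2)$ to $B$ and then producing a maximal ideal of $B$ by Zorn's lemma and contracting it — which amounts to running the paper's $(2)\Rightarrow(3)$ step inside $B$ first.
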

\begin{proof}
First we prove that (2) implies (3). Let $M$ be a maximal ideal in~$A^0$. Let $\fm$ be a maximal ideal which contains $AM\not=A$. Then $\fm$ is prime and clearly $\fm^0=M$.

To see that (3) implies (2), we can observe that $I$ is contained in a maximal ideal $M<A^0$. Under assumption (3), there is a proper (prime) ideal $\fp$ in~$A$ with $\fp^0=M$,
implying $AI\subset AM \subset \fp$.

Obviously (1) implies (3). We prove that (3) implies (1).
Consider an arbitrary $A\in\Alg\cC$ and $P\in\Spec A^0$. Set $B=A[S^{-1}]$ for $S=A^0\backslash P$. Then $B^0$ is the localisation of $A^0$ at the prime ideal $P$, so it has unique maximal ideal $M$, by assumption in the image of $\rho_B$. The commutative diagram
$$\xymatrix{
\Spec B\ar[rr]^{\rho_B}\ar[d]&& \Spec B^0\ar[d]\\
\Spec A\ar[rr]^{\rho_A}&& \Spec A^0
}$$
shows that $P$, which is in the image of the upper path, must be in the image of $\rho_A$.
\end{proof}
%That (1) and (4) are equivalent follows immediately from the definition of the Zariski topology.

\begin{example}\label{exss}
 If $\cC$ is semisimple, it follows from Lemma~\ref{LemSur} that $\rho_A$ is always surjective.
\end{example}

\begin{lemma}\label{LemPull}
For $A\in\Alg\cC$, the following conditions are equivalent:
\begin{enumerate}
\item The topology on $\Spec A$ is the pullback of the topology on $\Spec A^0$;
\item $V(I)=V(AI^0)$ for every ideal $I<A$.
\item $\rho_A:\Spec A\to \Spec A^0$ is the inclusion of a subspace.
\end{enumerate}
\end{lemma}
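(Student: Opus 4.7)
The plan is to prove the cycle $(2)\Rightarrow(1)\Rightarrow(3)\Rightarrow(2)$ after first identifying explicitly what the pullback-closed subsets of $\Spec A$ are. A direct computation shows that for any ideal $J<A^0$,
$$\rho_A^{-1}(V(J))\;=\;\{\p\in\Spec A\,|\,J\subset\p\}\;=\;V(AJ),$$
since $J\subset A^0$ sits inside a prime $\p$ of $A$ if and only if the ideal $AJ$ it generates does. In particular the pullback topology is generated by closed sets of the form $V(AJ)$ and is always coarser than the Zariski topology, so condition~(1) amounts to the assertion that \emph{every} Zariski closed set $V(I)$ is of the form $V(AJ)$ for some ideal $J<A^0$.

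With this reformulation, $(2)\Rightarrow(1)$ is immediate, since $V(I)=V(AI^0)$ is already of the required form. For $(1)\Rightarrow(2)$, I would start from some $J<A^0$ with $V(I)=V(AJ)$, and apply Lemma~\ref{LemNil} to get $\sqrt{I}=\sqrt{AJ}\supset J$. Taking $H^0$ and invoking Corollary~\ref{CorNil}(3) promotes this to $J\subset(\sqrt{I})^0=\sqrt{I^0}$. Order-reversal of $V$ inside $\Spec A^0$ then yields $V(J)\supset V(I^0)$, and pulling back along $\rho_A$ gives $V(AJ)\supset V(AI^0)$. Combined with the trivial inclusion $V(I)\subset V(AI^0)$ coming from $AI^0\subset I$, this forces $V(I)=V(AI^0)$.

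For $(1)\Leftrightarrow(3)$ I would observe that the subspace topology on the image of $\rho_A$ agrees with the pullback topology exactly when $\rho_A$ is injective, so (3) is equivalent to (1) together with injectivity. That injectivity is automatic under (1) follows from the $T_0$ property of spectral spaces: if $\p_1\neq\p_2$ in $\Spec A$, some Zariski closed set separates them, under (1) this set has the form $\rho_A^{-1}(V(J))$, and such a set can only separate $\p_1,\p_2$ if $\rho_A(\p_1)\neq\rho_A(\p_2)$. Thus (1) upgrades to (3), and the reverse implication is immediate.

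The main obstacle is precisely the step $(1)\Rightarrow(2)$, where the mere existence of \emph{some} witnessing $J$ must be upgraded to the canonical choice $J=I^0$. The key input is the equality $(\sqrt{I})^0=\sqrt{I^0}$ from Corollary~\ref{CorNil}(3); everything else is formal manipulation of the inclusion-reversing correspondence between radical ideals and closed subsets of the spectrum.
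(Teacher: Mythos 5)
Your proof is correct, and for the core equivalence $(1)\Leftrightarrow(2)$ it follows essentially the same route as the paper: both arguments rest on the identification $\rho_A^{-1}(V(J))=V(AJ)$ and on Corollary~\ref{CorNil}(3) to upgrade an arbitrary witnessing ideal $J<A^0$ to the canonical choice $I^0$ (the paper phrases this as a reduction to radical $I$ followed by the computation $I^0=\cap_{P\in V(I)}P^0\supset J$; your manipulation $J\subset\sqrt{I}$ together with $(\sqrt{I})^0=\sqrt{I^0}$ is the same argument in different clothing). Where you genuinely depart from the paper is the handling of (3): the paper deduces injectivity of $\rho_A$ from condition (2), observing that $V(I)=V(AI^0)$ forces $\p\subset\q$ if and only if $\p^0\subset\q^0$ for primes, whereas you deduce injectivity directly from condition (1) via the $T_0$ property of $\Spec A$ (no pullback closed set can separate two primes with the same image). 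Your argument is purely topological and gives $(1)\Rightarrow(3)$ without passing through (2), which is arguably cleaner; the paper's argument has the mild advantage of making explicit the order-reflection property of $\rho_A$ under (2), which is what injectivity ultimately amounts to. One cosmetic slip: the equality $\sqrt{I}=\sqrt{AJ}$ you attribute to Lemma~\ref{LemNil} is immediate from the paper's definition of $\sqrt{\cdot}$ as the intersection of the primes in $V(\cdot)$; Lemma~\ref{LemNil} concerns $\Nil A=\sqrt{0}$ and is not needed at that step.
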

\begin{proof}
If $V(I)=V(AI^0)$, then $V(I)=\rho_A^{-1}(V(I^0))$, hence (2) implies (1).

To prove that (1) implies (2), it suffices to assume that $I$ is radical, by Corollary~\ref{CorNil}(3). If the topology on $\Spec A$ is the pullback topology, then
for every radical ideal $I<A$, there exists an ideal $J<A^0$ with $V(I)=\rho_A^{-1}(V(J))$. Clearly $V(I)=V(AJ)$. But now
$$I^0=\cap_{P\in V(I)}P^0 \supset J$$
from which $V(AI^0)\subset V(AJ)$. As we always have $V(AI^0)\supset V(I)$, it follows that $V(AI^0)=V(I)$.

Clearly (3) implies (1), so we conclude by proving that (1) + (2) implies (3). In fact, we can just show that (2) implies that $\rho_A$ is injective. For this, observe that (2) implies that for prime ideals $\fp,\fq <A$ we have $\fp\subset\fq$ if and only $\fp^0\subset\fq^0$.
\end{proof}

\subsection{Tannakian examples}

We will show that, for tannakian categories, in general all three conditions in \ref{3rho} fail, although $\rho$ is an isomorphism for infinitesimal group schemes.

\begin{example}
Set $k=\mC$ and $\cC=\Rep \mG_a$ (see~\ref{Ga}). Then $\rho_A$ is in general neither surjective nor injective:
\begin{enumerate}
\item Consider $B=k[x,y,u,v]$ with $Dx=u$ and $Dy=v$. 
Set $A:=B/(uy-vx-1)$. Let $I$ be the ideal of $A^0$ generated by $u$ and $v$, then clearly $AI=A$. On the other hand, we can calculate directly that $A^0=k[u,v]$, so $I\not=A^0$. By Lemma~\ref{LemSur}, $\rho$ is not always surjective. In particular, we can check directly that for $A$ as above, so $\Spec A^0=\mA^2$, we have $\im \rho_A=\mA^2\backslash\{0\}$.
\item
Set $A=\Sym M_2$. More concretely, $A=k[x,y]$ with $Dx=y$, so $A^0=k[y]$. Then, for each $\lambda\in\mC$, we have the maximal $(\mG_a$-stable) ideal $(x-\lambda,y)$. Every such ideal is sent to the ideal $(y)$ by $\rho_A$. So $\rho_A$ is not injective, but easily seen to be surjective.
\end{enumerate}
\end{example}

\begin{example}
Set $\cC=\Rep \mG_m$. Then $\rho$ is surjective (by Example~\ref{exss}), but not injective:

 Let $V$ be a 1 dimensional simple representation in degree 1 and set $A=\Sym V=k[x]$, so $A^0=k$. Then, the maximal $(\mG_a$-stable) ideal $(x)$ and the zero (prime) ideal are both sent to the zero ideal by $\rho_A$.
\end{example}

\begin{example}
Assume that $k$ is algebraically closed and set $\cC=\Rep SL_2$. For $A=\Sym V$ with $V$ the tautological representation, $\rho_A$ is not injective and does not define the topology on $\Spec A$. Indeed, we have $A^0=\unit$, since $\Sym^nV$ is the costandard module $\nabla(n)$. Moreover $\Spec A$ contains precisely two elements, the zero ideal and the augmentation ideal. The latter follows easily from the observation that, if char$(k)=0$, then $\Sym^nV$ is always simple, whereas if char$(k)=p>0$, then the Steinberg modules $\Sym^{p^n-1}V$ are simple.
\end{example}

\begin{prop}\label{PropG} Let $G$ be an infinitesimal group scheme and $A\in\Alg\Rep G$.
The two maps
$$\Spec A\;\xrightarrow{\rho_A}\; \Spec A^0 \;\leftarrow\; \Spec \omega A$$
are homeomorphisms.
\end{prop}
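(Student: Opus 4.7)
My plan is to reduce to $\charr(k)=p>0$, since in characteristic zero the only infinitesimal group scheme is trivial and then $A=A^0=\omega A$ makes the claim vacuous. The key input is that by~\ref{DefInf}(4) a single power $q=p^n$ of $p$ satisfies $f^q=\varepsilon(f^q)$ for every $f\in\cO(G)$. Applied to the coaction $\rho(a)=\sum_i f_i\otimes a_i$ this yields $\rho(a^q)=\sum_i f_i^q\otimes a_i^q=1\otimes a^q$ for every $a\in A$, so the $q$th Frobenius on $\omega A$ factors through $A^0\hookrightarrow \omega A$. This makes $A^0\hookrightarrow\omega A$ an integral radicial extension of $\mF_p$-algebras, and the standard Frobenius argument then shows that $\Spec\omega A\to\Spec A^0$ is bijective: if $a\in P$ then $a^q\in P\cap A^0=Q\cap A^0\subset Q$ forces $a\in Q$, while for $p\in\Spec A^0$ the set $\{a\in\omega A:a^q\in p\}$ is a prime pulling back to $p$.

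For bijectivity of $\rho_A$, I would use the commutative triangle
$$\Spec\omega A\;\xrightarrow{\;\pi\;}\;\Spec A\;\xrightarrow{\;\rho_A\;}\;\Spec A^0,\qquad \pi(P):=P_\omega,$$
where $P_\omega\subset A$ denotes the largest $G$-subcomodule contained in $P$ (equivalently the kernel of $A\to\omega_\ast(\omega A/P)$). I need to verify that $P_\omega$ is a categorical prime ideal of $A$: it is an ideal because $A\cdot P_\omega$ is a $G$-subcomodule of $A$ contained in $P$ and hence in $P_\omega$; and if $G$-subobjects $X,Y$ of $A$ satisfy $XY\subset P_\omega\subset P$ then classical primality of $P$ forces, say, $X\subset P$, whereupon being a $G$-subobject puts $X\subset P_\omega$. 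Since $A^0$ consists of $G$-fixed elements, $P_\omega\cap A^0=P\cap A^0$, so the triangle commutes. Its composite is the bijection of the previous paragraph and $\pi$ is surjective by Proposition~\ref{BetterProp}; a surjection whose postcomposition is bijective is itself bijective and forces the remaining map to be bijective, so $\rho_A$ is a bijection.

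For the topological upgrade, by Lemma~\ref{LemPull} it suffices to show $V(I)=V(AI^0)$ for every $G$-stable ideal $I<A$. The inclusion $V(I)\subset V(AI^0)$ is immediate from $AI^0\subset I$. For the reverse, Corollary~\ref{CorNil}(1) demands that every $X\in\cC$ with $X\subset I$ satisfy $X^N\subset AI^0$ for some $N$. Picking a $k$-basis $e_1,\dots,e_d$ of $X$, we have $e_j\in I$ and $e_j^q\in A^0$, hence $e_j^q\in I\cap A^0=I^0$. Any element of $X^N$ is, by commutativity, a linear combination of monomials $\prod_j e_j^{m_j}$ with $\sum_j m_j=N$; choosing $N=d(q-1)+1$ forces some $m_j\geq q$ by pigeonhole, so that the monomial factors as $e_j^q\cdot(\text{element of }A)\in I^0\cdot A\subset AI^0$. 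The analogous argument in $\Vecc$ handles the topology for $\Spec\omega A\to\Spec A^0$. The main obstacle is this final step: converting the elementwise identity $a^q\in A^0$ into the uniform statement $X^N\subset AI^0$ relies crucially on both the uniform choice of $q$ guaranteed by the infinitesimal hypothesis and the finite $k$-dimensionality of objects of $\Rep G$; without either, the pigeonhole argument breaks down.
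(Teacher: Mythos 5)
Your proposal is correct and takes essentially the same route as the paper: the key observation $a^q\in A^0$, the Frobenius argument for $\Spec \omega A\to\Spec A^0$, surjectivity of $\rho_A$ via the map $P\mapsto P_\omega$ supplied by Proposition~\ref{BetterProp}, and the pigeonhole argument feeding into Lemma~\ref{LemPull} for the subspace topology. The only cosmetic differences are that the paper obtains the right-hand homeomorphism from the factorisation $A^0\hookrightarrow\omega A\to A^0\hookrightarrow\omega A$, whose consecutive composites are Frobenius and hence induce the identity on spectra, and phrases the topology step as the equivalence of $V\subset I$, $v_i^q\in I^0$ and $V^{\ell q}\subset I$ for radical ideals $I$.
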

\begin{proof}
The crucial observation is the following. Let $p>0$ denote the characteristic of $k$, with power $q$ as in \ref{DefInf}. For each $a\in A$ (more precisely $a\in \omega A$), we have $a^q\in A^0$. 

In particular, we can combine the inclusion $A^0\subset \omega A$ with an algebra morphism (in~$\Alg_k$) to get a sequence
$$A^0\;\hookrightarrow\; \omega A\;\to\; A^0\;\hookrightarrow \omega A$$
in which every composition of two consecutive arrows is the Frobenius homomorphism $a\mapsto a^q$. As the latter yields the identity map on spectra, it follows that the right map is a homeomorphism.

That $\rho_A$ is surjective can be seen directly, or by observing that Proposition~\ref{BetterProp} yields a map $\Spec \omega A\to \Spec A$ turning the above two maps into a commutative diagram.

We thus only need to show that $\rho_A$ is the inclusion of a subspace. Observe therefore that for a radical ideal $I<A$, the following are equivalent conditions on an $\ell$-dimensional subrepresentation $V\subset A$ with basis $\{v_1,v_2,\cdots,v_{\ell}\}$
\begin{enumerate}
\item[(i)] $V\subset I$;
\item[(ii)] $v_i^q\in I^0$ for $1\le i\le \ell$;
\item[(iii)] $V^{\ell q}\subset I$.
\end{enumerate}
Indeed, that (i) implies (ii) and (ii) implies (iii) follows just by using the fact that $I$ is an ideal. That (iii) implies (i) follows since $I$ is radical. In particular, it follows that $I\subset J$ is equivalent to $I^0\subset J^0$ for radical ideals $I,J$.
\end{proof}

\begin{lemma}
Let $\Gamma$ be an abelian torsion group, then for $\Vecc_{\Gamma}\simeq\Rep \Gamma^\vee$, the natural transformation $\rho$ is an isomorphism.
\end{lemma}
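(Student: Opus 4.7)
The plan is to identify an algebra $A$ in $\Vecc_\Gamma\simeq\Rep\Gamma^\vee$ with a $\Gamma$-graded commutative $k$-algebra $A=\bigoplus_{g\in\Gamma}A_g$, under which $A^0$ becomes the degree-$0$ subalgebra $A_0$, subobjects of $A$ in $\cC$ correspond to graded subspaces, and the paper's notion of prime ideal coincides with the elementary notion of graded prime ideal: a graded ideal $\p<A$ such that, for homogeneous $a,b\in A$, $ab\in\p$ implies $a\in\p$ or $b\in\p$. I would then exploit the torsion hypothesis through the following key observation: for any homogeneous $a\in A_g$ with $g\in\Gamma$ of order $n$, the element $a^n$ has degree $ng=0$ and therefore lies in $A^0$. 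This is the exact analogue of the identity $a^q\in A^0$ used in the proof of Proposition~\ref{PropG}, with the grading-order $n$ in place of the Frobenius exponent $q$.

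To show $\rho_A$ is a homeomorphism for every $A$, I would verify surjectivity and the pullback-topology condition using the criteria of Lemmata~\ref{LemSur} and~\ref{LemPull}. For surjectivity, by Lemma~\ref{LemSur}(2) it suffices to check that $AI\neq A$ for every proper ideal $I<A^0$: if $1=\sum_i r_i s_i$ with $r_i\in A$ and $s_i\in I\subset A_0$, taking degree-$0$ components yields $1=\sum_i (r_i)_0\, s_i\in A^0\cdot I=I$, contradicting $I\neq A^0$. Note that this step does not even use torsion. For the topology, by Lemma~\ref{LemPull}(2) it suffices to show $V(I)=V(AI^0)$ for every graded ideal $I<A$: the inclusion $V(I)\subseteq V(AI^0)$ is trivial, and conversely, if $\p\supseteq AI^0$ and $a\in I$ is homogeneous of degree $g$ of order $n$, then $a^n\in I\cap A^0=I^0\subseteq\p$, whence graded-primality of $\p$ yields $a\in\p$; since every element of $I$ is a finite sum of such homogeneous elements of $I$, this forces $I\subseteq\p$.

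Combining these, $\rho_A$ is a continuous bijection whose source carries the pullback of the topology on $\Spec A^0$, hence it is a homeomorphism. The only step requiring a moment's care is the translation between the categorical definition of prime ideal used throughout the paper and the standard notion of graded prime ideal; this reduces to the observation that a subobject of $A$ in $\Vecc_\Gamma$ is precisely a graded subspace, so primality in the sense of subobjects $XY\subset\p$ can be tested on one-dimensional graded subspaces, \emph{i.e.} homogeneous elements. Once that is settled, the argument proceeds exactly in parallel with the infinitesimal case of Proposition~\ref{PropG}, and there is no real obstacle.
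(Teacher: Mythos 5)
Your proposal is correct and takes essentially the same route as the paper: surjectivity is handled through Lemma~\ref{LemSur} (the paper invokes the semisimple case, Example~\ref{exss}, whose mechanism is exactly your degree-zero-component argument), and the subspace/injectivity part rests on the same torsion observation that $a^n\in A^0$ for $a$ homogeneous of degree of order $n$. The paper phrases this last step as ``$f\in J_g$ if and only if $f^n\in J_0$'' for radical ideals $J$, which, via Lemma~\ref{LemPull} and Corollary~\ref{CorNil}(2), is equivalent to your verification that $V(I)=V(AI^0)$ using graded primality.
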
 
\begin{proof}
That $\rho$ is surjective follows from Example~\ref{exss}. That $\rho$ is the inclusion of a subspace follows from the observation that $J\mapsto J^0$ restricts to an isomorphism on the poset of radical ideals in~$A$, since for any radical ideal $J$ of an algebra in~$\Vecc_\Gamma$ and $g\in \Gamma$, we have $f\in J_g$ if and only if $f^n\in J_0$, for $n$ the order of $g$.
\end{proof}

\subsection{Reinterpreting the convenient assumptions}

\begin{theorem}\label{Thm4Spec} 
\begin{enumerate}
\item $\cC$ is {\bf MN} if and only if $\Spec A_0\subset\Spec A$ is an equality for every $A\in\Alg\cC$.
\item If $\cC$ is {\bf MN}, then $\Spec A_0^0\subset\Spec A^0$ is an equality for every $A\in\Alg\cC$.
\item If $\Spec A_0^0\subset\Spec A^0$ is an equality for every $A\in\Alg\cC$, then {\bf(MN2)} is satisfied.
\item The following conditions are equivalent:
\begin{enumerate}
%\item The composite $\Spec A_0\subset \Spec A\xrightarrow{\rho_A}\Spec A^0 $ is the inclusion of a subspace.
%\item The composite $\Spec A_0\subset \Spec A\xrightarrow{\rho_A}\Spec A^0 $ is injective.
\item The map $\Spec A_0\to \Spec A^0$ is injective.
\item The map $\Spec A_0\to \Spec A_0^0$ is a homeomorphism.
\item $\cC$ is {\bf GR}.
\end{enumerate}
\end{enumerate}
\end{theorem}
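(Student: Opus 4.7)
The plan is to treat the four parts separately, with only the last direction of part~(4) requiring any real work. Parts~(1) and~(2) are routine applications of earlier results: part~(1) follows by combining the characterisation of {\bf MN} in Proposition~\ref{Prop4Bas} with the identification $\Nil A=\bigcap_{\fp\in\Spec A}\fp$ from Lemma~\ref{LemNil}, since $\Spec A_0\subset\Spec A$ is an equality precisely when every prime of $A$ contains $R(A)$. Part~(2) relies on the evident equality $A^0\cap\Nil A=\Nil A^0$ (nilpotence in $A^0$ and in~$A$ coincide for elements of~$A^0$); under {\bf MN}, this gives $A^0\cap R(A)\subset\Nil A^0$, and therefore $V(A^0\cap R(A))=\Spec A^0$, i.e.\ $\Spec A_0^0=\Spec A^0$.

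For part~(3), given a non-split monomorphism $\alpha:\unit\hookrightarrow X$, I will set $A=\Sym X$ and view $\alpha$ as a degree-one element of $A^0$. Any $\cC$-morphism $f:A\to\unit$ restricts to some $g:X\to\unit$, and the scalar $c=g\circ\alpha\in\End(\unit)=k$ must vanish, since otherwise $c^{-1}g$ would be a retraction of $\alpha$. Hence $\alpha$ factors through $\widetilde A$, so $\alpha^n\in A^0\cap R(A)$ for every $n\geq 1$; the hypothesis $\Spec A_0^0=\Spec A^0$ combined with Lemma~\ref{LemNil} then forces $A^0\cap R(A)\subset\Nil A^0$, whence $\alpha$ is nilpotent in $A^0$, as required by~{\bf (MN2)}.

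For part~(4), I plan to prove the cycle (c)$\Rightarrow$(b)$\Rightarrow$(a)$\Rightarrow$(c). The first implication follows from Theorem~\ref{ThmGR}(2): in characteristic zero one has $A_0^0=A_0$ trivially, while in characteristic $p>0$ the inclusion $A_0^0\subset A_0$ is integral and radicial, hence induces a universal homeomorphism on spectra by a standard commutative algebra argument. For (b)$\Rightarrow$(a), since $A_0^0=A^0/(A^0\cap R(A))$ is a quotient of $A^0$, the map $\Spec A_0\to\Spec A^0$ factors as $\Spec A_0\to\Spec A_0^0\hookrightarrow\Spec A^0$; both factors are injective under~(b).

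The main obstacle is (a)$\Rightarrow$(c), which I will prove by contraposition. Assume $\cC$ is not {\bf GR}; by Lemma~\ref{LemGR} and the chain argument in its proof there is a surjection $\alpha:X\twoheadrightarrow\unit$ such that $\alpha^n:\Sym^nX\to\unit$ admits no section for any $n\geq 1$. Set $A=\Sym X$. Since $\widetilde X\subset\widetilde A$, one has $\widetilde X\cdot A\subset R(A)$, so there is a quotient $A_0\twoheadrightarrow\Sym(X/\widetilde X)=k[t_1,\ldots,t_d]$ where $d=\dim\Hom_\cC(X,\unit)$ and where we arrange that $t_1$ corresponds to $\alpha$. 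For distinct $\lambda_1\neq\lambda_2\in k$, the maximal ideals $\fm_i=(t_1-\lambda_i,t_2,\ldots,t_d)$ pull back to distinct primes $P_1,P_2$ of $A_0$. The crucial computation is that for $\gamma\in\Hom(\unit,\Sym^nX)$ with $n\geq 1$, the coefficient of $t_1^n$ in the image of $\gamma$ in $k[t_1,\ldots,t_d]$ equals $\alpha^n\circ\gamma\in\Hom(\unit,\unit)=k$, which vanishes by hypothesis; consequently the image of $A^0$ under the evaluation $(t_1,\ldots,t_d)\mapsto(\lambda_i,0,\ldots,0)$ consists only of the degree-zero constant and is independent of~$i$, so $P_1\cap A^0=P_2\cap A^0$, contradicting~(a).
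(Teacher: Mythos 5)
Your proposal is correct, and for parts (1)--(3) it is essentially the paper's own proof: both reduce (1) and (2) to Proposition~\ref{Prop4Bas} together with Lemma~\ref{LemNil}, and both prove (3) by viewing a non-split $\alpha:\unit\hookrightarrow X$ as a degree-one element of $(\Sym X)^0$ lying in the kernel of $A^0\to A_0$, whose nilpotence is exactly {\bf (MN2)}. The genuine differences are in part (4). For (c)$\Rightarrow$(b) the paper, starting from Theorem~\ref{ThmGR}(2) just as you do, verifies the homeomorphism by hand: it shows radical ideals of $A_0$ are determined by their traces on $A_0^0$, and for a prime $P<A_0^0$ constructs the prime $P'=\{a\in A_0: a^{q}\in P \text{ for some $p$-power } q\}$ lying over it; you instead invoke the classical fact that an injective, integral, radicial extension of ordinary commutative rings is a universal homeomorphism on spectra, which is a legitimate shortcut since $A_0^0\subset A_0$ are ordinary $k$-algebras. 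For (a)$\Rightarrow$(c) the paper first replaces $X$ by a subobject with simple top $\unit$ (a reduction it asserts without proof), so that $A_0\simeq k[x]$ and $A_0^0=k$, and concludes that $\Spec A_0\to\Spec A^0$ factors through a point; you keep $X$ arbitrary, identify $A_0$ with $k[t_1,\dots,t_d]$, $d=\dim\Hom(X,\unit)$, and exhibit two distinct closed points with equal preimage in $\Spec A^0$ -- the same underlying computation (a section of $\Sym^nX\tto\unit$ is exactly an element of $A^0$ whose image has non-zero $t_1^n$-coefficient), but your version avoids the simple-top reduction entirely. One cosmetic slip: the inclusion $A\widetilde X\subset R(A)$ by itself yields a surjection $\Sym(X/\widetilde X)\tto A_0$, the opposite direction to the one you wrote; the map $A_0\tto\Sym(X/\widetilde X)$ you actually use comes from the universal property of $A_0$ as left adjoint applied to $A\tto\Sym(X/\widetilde X)$ (the two maps are in fact mutually inverse, so $A_0\simeq k[t_1,\dots,t_d]$), and your argument is unaffected.
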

\begin{proof}
Part (1) and (2) are immediate applications of Proposition~\ref{Prop4Bas}.

For part (3), we observe first that $\Spec A_0^0\subset\Spec A^0$ being an equality is equivalent to the kernel of $A^0\to A_0$ being a nil ideal.
Now assume that $\Spec A_0^0\subset\Spec A^0$ is an equality for $A=\Sym X$ with some non-split $\unit\hookrightarrow X$. It follows that $a\in A^0$, corresponding to $\unit\hookrightarrow X\subset A$, is nilpotent, which implies {\bf (MN2)}.

Now we prove part (4). First show that (c) implies (b), so we assume that $\cC$ is {\bf GR}. If $\mathrm{char}(k)=0$ then $A^0_0=A_0$, see Theorem~\ref{ThmGR}(2), so we assume $p=\mathrm{char}(k)>0$.
 Theorem~\ref{ThmGR}(2) then shows that for any $A\in\Alg\cC$ and $a\in A_0$, there exists  a power $q$ of $p$ such that $a^q\in A^0_0\subset A_0$. It follows that for radical ideals $I,J$ in~$A_0$, we have $(I\cap A^0_0)\subset (J\cap A^0_0)$ if and only if $I\subset J$. Hence $\Spec A_0\to \Spec A_0^0$ is the inclusion of a subspace. Moreover, for a prime ideal $P$ of $A_0^0$, denote by $P'\subset A_0$ the subset of all $a\in A_0$ for which $a^q\in P$ for some power $q$ of $p$. Clearly $P'$ is a prime ideal in~$A_0$ and $P'\cap A^0_0=P$. 
It follows that (c) implies (b). Obliviously (b) implies (a).

To prove that (4)(a) implies 4(c), assume that {\bf (GR)} is not satisfied. That means we can construct $X\in\cC$ with simple top $\unit$ for which $\Sym^nX\tto \unit$ has no section for all $n\in \mN$. This implies that, for $A:=\Sym X$, we have $A^0_0=k$. This implies in turn that $\Spec A_0\to \Spec A^0$ factors as
$$\mA^1=\Spec A_0 \;\to\; \ast\;\to\; \Spec A^0$$
with $\ast$ the singleton. Hence the map is not injective, concluding the proof.
\end{proof}

\begin{corollary}\label{CorNPGR}
Assume that $\cC$ is {\bf MN}. Then the following are equivalent:
\begin{enumerate}
\item[(a)] $\cC$ is {\bf GR};
\item[(b)] $\rho_A$ is always a homeomorphism;
\item[(c)] $\rho_A$ is always an injection.
\end{enumerate}
\end{corollary}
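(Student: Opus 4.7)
The plan is to reduce the statement directly to Theorem~\ref{Thm4Spec} by exploiting the identifications that {\bf MN} provides on spectra.

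First, I would record the following. Under the {\bf MN} hypothesis, Theorem~\ref{Thm4Spec}(1) gives $\Spec A_0 = \Spec A$ and Theorem~\ref{Thm4Spec}(2) gives $\Spec A_0^0 = \Spec A^0$, both as subspaces. Under these identifications, the natural map $\rho_A : \Spec A \to \Spec A^0$ from diagram~\ref{diagSpec} becomes (up to homeomorphism on source and target) the map $\Spec A_0 \to \Spec A_0^0$ induced by the inclusion $A_0^0 \hookrightarrow A_0$.

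Now I would invoke Theorem~\ref{Thm4Spec}(4). That result tells us that $\cC$ is {\bf GR} if and only if the map $\Spec A_0 \to \Spec A_0^0$ is a homeomorphism for every $A$, if and only if the map $\Spec A_0 \to \Spec A^0$ is injective for every $A$. Combined with the identifications of the previous paragraph, this gives exactly the three equivalences of the corollary: (a)$\Leftrightarrow$(b) follows from the homeomorphism characterisation in Theorem~\ref{Thm4Spec}(4)(b)$\Leftrightarrow$(c), and (a)$\Leftrightarrow$(c) follows from the injectivity characterisation in Theorem~\ref{Thm4Spec}(4)(a)$\Leftrightarrow$(c).

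There is essentially no obstacle here: the real content lives in Theorem~\ref{Thm4Spec}, and this corollary is just the observation that {\bf MN} lets us collapse the full diagram in~\ref{diagSpec} to a single arrow, so the three conditions of Theorem~\ref{Thm4Spec}(4) become literally the statements (a), (b), (c) for $\rho_A$. The only small subtlety worth spelling out is that when MN fails, injectivity of $\rho_A$ need not imply surjectivity; but under MN, surjectivity of $\rho_A$ is automatic (the composition $\Spec A = \Spec A_0 \to \Spec A_0^0 = \Spec A^0$ lands in $\Spec A_0^0$, and by Theorem~\ref{Thm4Spec}(4) under {\bf GR} is a homeomorphism onto $\Spec A_0^0 = \Spec A^0$), which is why the injective and homeomorphism conditions end up equivalent.
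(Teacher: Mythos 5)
Your proposal is correct and follows essentially the same route as the paper: both arguments rest on the commutative square of spectra from \ref{diagSpec}, use Theorem~\ref{Thm4Spec}(1)--(2) to identify $\Spec A_0=\Spec A$ and $\Spec A_0^0=\Spec A^0$ under {\bf MN}, and then invoke Theorem~\ref{Thm4Spec}(4) to transfer the homeomorphism/injectivity characterisations of {\bf GR} to $\rho_A$. The paper phrases (a)$\Rightarrow$(b) as ``three of the four maps in the square are homeomorphisms, forcing the fourth,'' which is exactly your collapse-the-diagram observation in different words.
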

\begin{proof}
If $\cC$ is {\bf MN} and {\bf GR}, then by Theorem~\ref{Thm4Spec} three of the four maps in the diagram in~\ref{diagSpec} are homeomorphisms, forcing the fourth one, $\rho_A$, to be a homeomorphism too. Hence (a) implies (b). Obviously (b) implies (c).

Finally, if $\rho_A$ is an injection, then so is $\Spec A_0\to \Spec A^0$. Hence Theorem~\ref{Thm4Spec}(4) shows that (c) implies (a).
\end{proof}

\subsection{Connection with localisation}

We start with some technical results, mainly for future use.

\begin{lemma}\label{LemfInv}
For $A\in\Alg\cC$ and $f\in A^0$, the following are equivalent:
\begin{enumerate}
\item No proper ideal of $A$ contains $f$.
\item As an element of $A^0$, $f$ is invertible.
\end{enumerate}
\end{lemma}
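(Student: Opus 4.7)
The plan is to dispatch $(2)\Rightarrow(1)$ directly and to concentrate the real work in $(1)\Rightarrow(2)$, where the key step is to promote an epimorphism to an isomorphism in the absence of projectivity of $\unit$.

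For $(2)\Rightarrow(1)$: assuming $f$ has inverse $g\in A^0$ (so $fg=\eta$), the general identity $\im(fg)=\im(f)\cdot\im(g)$ places $\unit=\im(\eta)$ inside $Af$, giving $A=A\cdot\unit\subset Af$, and hence $Af=A$. Any ideal $I\subset A$ containing $\im(f)$ must then contain $Af=A$, forcing $I=A$.

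For $(1)\Rightarrow(2)$: the principal ideal $Af$ is itself an ideal containing $f$, so hypothesis $(1)$ forces $Af=A$. Equivalently, the multiplication-by-$f$ endomorphism $\phi\colon A\to A$ is an epimorphism in $\Ind\cC$. Under the identification $\End_A(A)\simeq A^0$ from~\eqref{EndA}, $\phi$ corresponds to $f$, so producing an $A$-linear inverse of $\phi$ will supply the sought inverse of $f$ in $A^0$.

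The main obstacle is that $\phi$ being epi does not directly yield $\phi$ iso, since $\unit$ is typically not projective in $\cC$ and a naive lift of $\eta$ across $\phi$ is unavailable. The plan to circumvent this is to show $\phi$ is also mono, via a zero-divisor argument exploiting commutativity of $A$. Concretely, for any $h\colon X\to A$ with $\phi\circ h=0$ (equivalently $hf=\mu\circ(h\otimes f)=0$), the subobject identity $\im(hf)=\im(h)\cdot\im(f)$ combined with $A^2=A$ gives $(Ah)(Af) = A\cdot\im(hf) = 0$. Substituting $Af=A$ yields $Ah=(Ah)(A)=(Ah)(Af)=0$, whence $\im(h)\subset Ah=0$ and $h=0$. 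Thus $\phi$ is mono, hence an isomorphism of $A$-modules, and its $A$-linear inverse lies in $\End_A(A)\simeq A^0$ and furnishes the desired inverse of $f$.
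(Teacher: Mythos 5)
Your proof is correct, and its engine is the same as the paper's: once the ideal generated by $f$ is all of $A$, commutativity kills any ``zero divisor'' against $f$ --- your computation $Ah=(Ah)A=(Ah)(Af)=0$ is the global form of the paper's $0=Xf(Y)=f(XY)=Yf(X)\supset Y$. The packaging, however, is genuinely different. The paper works inside $\cC$: since $\unit$ is compact and $Af=A$, there exists a compact subobject $X\subset A$ with $f(X)\supseteq\im\eta$; the zero-divisor trick shows $f\colon X\to f(X)$ is an isomorphism, and pulling $\eta$ back through this isomorphism produces $g\in A^0$ with $fg=1$ explicitly. You instead treat multiplication by $f$ as a global endomorphism $\phi$ of $A$: it is epi because $Af=A$, mono by the zero-divisor argument, hence an isomorphism because $\Ind\cC$ is abelian, and the identification $\End_A(A)\simeq A^0$ of \eqref{EndA} converts its (automatically $A$-linear) inverse into an inverse of $f$. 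What your route buys is that the compactness step disappears entirely: you never need a compact witness, only that $\Ind\cC$ is abelian with exact tensor product and that products of subobjects are associative and commutative. What the paper's route buys is explicitness: the inverse $g$ is located inside a specific compact subobject of $A$, with no appeal to mono-plus-epi-implies-iso or to the module-theoretic identification of $A^0$.
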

\begin{proof}
Obviously (2) implies (1). To prove the converse, we can observe that (1) implies that the ideal generated by $f$ is $A$. Consequently, there must exist $\cC\ni X\subset A$ such that $f(X)$ contains $\im \eta$. Let $Y\subset X$ denote the kernel of $f:X\tto f(X)$. Then 
$$0=Xf(Y)=f(XY)=Yf(X)\supset Y,$$
so $Y=0$ and there must be some $g:\unit\hookrightarrow X$, so in particular $g\in A^0$, such that $fg=1$.
\end{proof}

\begin{corollary}\label{CorLoc}
If $A\in\Alg\bC$ is local with maximal ideal $\fm$, then $A^0$ is local with maximal ideal $\fm^0$. 
\end{corollary}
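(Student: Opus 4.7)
The plan is to show that $\fm^0$ coincides with the set of non-invertible elements of $A^0$, whence it is automatically the unique maximal ideal of $A^0$.

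First, I would verify that $\fm^0$ is a proper ideal of $A^0$. Since $\fm \hookrightarrow A$ is an ideal, the lax monoidal functor $H^0$ sends the multiplication $A \otimes \fm \to \fm$ to a morphism $A^0 \otimes \fm^0 \to \fm^0$, exhibiting $\fm^0$ as an $A^0$-submodule of $A^0$. That $\fm^0$ is proper amounts to showing that the unit $\eta : \unit \to A$ does not factor through $\fm \hookrightarrow A$, which is immediate from $\fm \neq A$ together with simplicity of $\unit$.

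Next, I would show that every $f \in A^0$ not lying in $\fm^0$ is invertible in $A^0$. By construction of $\fm^0$ as $H^0(\fm)$, the condition $f \notin \fm^0$ says precisely that the morphism $f : \unit \to A$ does not factor through the subobject $\fm \subset A$. As $\fm$ is the unique maximal ideal of $A$ and every proper ideal of $A$ is contained in some maximal ideal (by Zorn's lemma, as in the proof of Lemma~\ref{LemNil}), no proper ideal of $A$ can contain the image of $f$. Lemma~\ref{LemfInv} then provides an inverse of $f$ in $A^0$.

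Combining the two steps, $\fm^0$ is exactly the complement of the units in $A^0$, so $A^0$ is local with maximal ideal $\fm^0$. There is no real obstacle: the work is done entirely by Lemma~\ref{LemfInv}, which has been engineered precisely to transfer the non-vanishing condition at the categorical level into invertibility in $A^0$.
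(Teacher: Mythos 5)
Your proof is correct and takes essentially the same route as the paper: both rest on Lemma~\ref{LemfInv}, applied via the observation that locality of $A$ forces any $f\in A^0\setminus\fm^0$ to lie in no proper ideal of $A$, so that $\fm^0$ is exactly the set of non-units of $A^0$. The paper's proof is simply a terser version of yours, leaving the properness of $\fm^0$ and the Zorn's lemma step implicit.
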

\begin{proof}
Lemma~\ref{LemfInv} implies that, for $f\in A^0$, either $f\in \fm^0$ or $f$ is invertible. In particular $\fm^0<A^0$ is the unique maximal ideal.
\end{proof}

\begin{corollary}\label{CorLoc2}
Assume that the topology on $\Spec A$ is the pullback of the topology on $\Spec A^0$. The following conditions are equivalent on $A\in\Alg\cC$:
\begin{enumerate}
\item $A$ is local;
\item $A^0$ is local and $AN\not=A$ for the maximal ideal $N\subset A^0$.
\end{enumerate}
\end{corollary}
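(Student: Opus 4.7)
The plan is to handle the two implications separately; only the second is substantive.

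For $(1)\Rightarrow(2)$: if $A$ is local with maximal ideal $\mathfrak{m}$, then Corollary~\ref{CorLoc} immediately gives that $A^0$ is local with maximal ideal $\mathfrak{m}^0$, so $N=\mathfrak{m}^0$, and $AN=A\mathfrak{m}^0\subset\mathfrak{m}\neq A$ is automatic.

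For $(2)\Rightarrow(1)$ I would first produce a maximal ideal of $A$ lying over $N$. Since by hypothesis $AN$ is a proper ideal, Zorn's lemma (applicable because $\unit$ is compact in $\Ind\cC$, so an increasing chain of ideals none of which contains $\unit$ has union not containing $\unit$) furnishes a maximal ideal $\mathfrak{m}<A$ with $AN\subset\mathfrak{m}$. Then $N\subset\mathfrak{m}^0$, while on the other hand $\mathfrak{m}^0$ is a proper prime ideal of the local ring $A^0$ and is therefore contained in $N$; hence $\mathfrak{m}^0=N$.

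The remaining point is that this $\mathfrak{m}$ is the \emph{only} maximal ideal of $A$, and this is where the pullback-topology hypothesis is used. For any other maximal ideal $\mathfrak{m}'<A$, the pullback $\mathfrak{m}'^0$ is a proper prime of $A^0$, hence contained in the unique maximal ideal $N=\mathfrak{m}^0$; consequently $A\mathfrak{m}'^0\subset A\mathfrak{m}^0\subset\mathfrak{m}$. Invoking Lemma~\ref{LemPull}(2), the pullback-topology hypothesis yields $V(\mathfrak{m}')=V(A\mathfrak{m}'^0)$, and this set contains $\mathfrak{m}$ by the previous sentence. But $V(\mathfrak{m}')=\{\mathfrak{m}'\}$ since $\mathfrak{m}'$ is maximal, so $\mathfrak{m}=\mathfrak{m}'$, completing the proof that $A$ is local.

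I expect no serious obstacle: the whole argument reduces to the two already-established tools (Corollary~\ref{CorLoc} for the trivial direction and Lemma~\ref{LemPull} for the nontrivial one) together with a standard Zorn application. The one place one has to be a little careful is the sanity check that $\mathfrak{m}'^0\subset N$, which only uses that $\mathfrak{m}'^0$ is a proper ideal of the local ring $A^0$.
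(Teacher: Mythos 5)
Your proof is correct and follows essentially the same route as the paper's: both directions rest on Corollary~\ref{CorLoc} for $(1)\Rightarrow(2)$ and, for $(2)\Rightarrow(1)$, on producing a maximal ideal $\fm$ over $AN$ with $\fm^0=N$ and then invoking Lemma~\ref{LemPull} to identify $V(\fp)$ with $V(A\fp^0)$. The only (inessential) difference is that the paper runs the uniqueness argument over all prime ideals $\fp$, concluding $\fp\subset\fm$, whereas you restrict to maximal ideals; both suffice.
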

\begin{proof}
That (1) implies (2) is always true, see Corollary~\ref{CorLoc}.

Under assumption (2), it follows that $N\subset A^0$ is of the form $\fm^0$ for a maximal ideal $\fm\subset A$. For any other prime ideal $\fp\in \Spec A$, the observation $\fp^0\subset N=\fm^0$ 
implies $\fm\in V(A\fp^0)$. However, Lemma~\ref{LemPull} implies $V(A\fp^0)=V(\fp)$, implying that $\fm$ is the unique maximal ideal.
\end{proof}

\begin{prop}\label{PropLoc}
Assume that $\rho_A$ is the inclusion of a subspace for every $A\in\Alg\cC$, for instance~$\cC$ is {\bf GR} and {\bf MN}.
\begin{enumerate}
\item For every $A\in\Alg\cC$, the function $\GSpec A\to\Spec A$ from \eqref{Gmap} is a bijection. Concretely, for each $\p\in\Spec A$, there exists $A\to A_{\fp}$ in~$\Alg\cC$ which is the universal morphism to a local algebra for which the preimage in~$A$ of the maximal ideal is $\fp$. 
\item The morphism $A\to A_{\fp}$ from (1) is flat and an epimorphism.
\item Property {\bf (Loc)} is satisfied in $\cC$.
\end{enumerate}
\end{prop}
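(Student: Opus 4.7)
The plan is to realise each $A_\fp$ as the categorical analogue of classical localisation,
$$A_\fp \;:=\; A[S^{-1}] \;=\; A \otimes_{A^0} A^0_{\fp^0}, \qquad S \;:=\; A^0 \setminus \fp^0,$$
and then to leverage the subspace hypothesis applied at both $A$ and $A_\fp$. Since $A_\fp^0 = A^0_{\fp^0}$ is classically local with maximal ideal $N = \fp^0 A^0_{\fp^0}$, Corollary~\ref{CorLoc2} applied to $A_\fp$ reduces the localness of $A_\fp$ to checking $A_\fp N \neq A_\fp$. Because $\fp^0 \subset \fp$, one has $A_\fp N \subset \fp A_\fp$, and the needed non-equality becomes the assertion that
$$A_\fp / \fp A_\fp \;\simeq\; (A/\fp)[S^{-1}]$$
is nonzero. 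Since $\fp$ is prime, $A/\fp$ is a domain in the sense of the paper, so multiplication by any nonzero element of $A/\fp$ is injective (its kernel would be annihilated by a nonzero element); the image of every $s \in S$ in $A/\fp$ is nonzero, hence the directed colimit defining the localisation has injective transition maps and is nonzero.

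After this I will identify the maximal ideal $\fm < A_\fp$ and check the universal property. The equality $\fm^0 = N$ combined with the classical contraction $N \cap A^0 = \fp^0$ and the injectivity of $\rho_A$ forces $\fm \cap A = \fp$. For universality, given any local morphism $f : A \to L$ with $f^{-1}(\fm_L) = \fp$, Corollary~\ref{CorLoc} gives $L^0$ local, while Lemma~\ref{LemfInv} combined with $f(S) \cap \fm_L = \emptyset$ forces $f^0(S)$ to consist of units in $L^0$. The classical universal property of $A^0 \to A^0_{\fp^0}$ then yields a unique factorisation of $f^0$, and pushing out along $A^0 \to A$ produces the required unique $g : A_\fp \to L$ factoring $f$. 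That $g$ is itself local uses once more the injectivity of $\rho_{A_\fp}$ applied to $g^{-1}(\fm_L)$ and $\fm$. This establishes (1), and in particular identifies $\GSpec A$ with $\Spec A$ via \eqref{Gmap}, since the universal properties assemble into the required coproduct decomposition of $\Alg\cC(A, U-)$.

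For part (2), the classical localisation $A^0 \to A^0_{\fp^0}$ is flat, hence flat in $\Alg\cC$ by Proposition~\ref{PropLazard}, and base change along $A^0 \to A$ preserves flatness, yielding flatness of $A \to A_\fp$. Likewise, $A^0 \to A^0_{\fp^0}$ is a classical epimorphism of commutative $k$-algebras and remains an epimorphism in $\Alg\cC$, because any two maps out of $A^0_{\fp^0}$ in $\Alg\cC$ factor through the $(\cdot)^0$-part of the target and are then determined by their restriction to $A^0$; base change turns this into the epimorphism $A \to A_\fp$. Part (3) is then the restriction of (1) and (2) to maximal ideals.

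The main obstacle will be the non-vanishing $A_\fp / \fp A_\fp \neq 0$, which genuinely uses the domain property of $A/\fp$ in the categorical sense; once that is handled, the rest is a combination of the subspace hypothesis, Corollary~\ref{CorLoc2}, Lemma~\ref{LemfInv}, and Proposition~\ref{PropLazard}.
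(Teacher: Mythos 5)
Your proposal is correct and is essentially the paper's own argument: the paper likewise takes $A_\fp := A[S^{-1}]$ with $S = A^0\setminus\fp^0$, deduces localness from Corollary~\ref{CorLoc2}, obtains the universal property from Lemma~\ref{LemfInv} together with the classical universal property of $A^0\to A^0[S^{-1}]$, and notes flatness and the epimorphism property of the construction. The only real divergence is the properness step, where the paper computes $(\fp^0 A[S^{-1}])^0 = \fp^0[S^{-1}]$ using that $H^0$ commutes with localisation, whereas you show $\fp A_\fp \neq A_\fp$ via the domain property of $A/\fp$; both are valid, and your write-up additionally spells out details the paper leaves implicit (that the preimage of the maximal ideal is $\fp$, via injectivity of $\rho_A$, and that the induced factorisation is a local morphism, via injectivity of $\rho_{A_\fp}$).
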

\begin{proof}
Part (3) is an obvious consequence of parts (1) and (2).

For part (1) and (2), consider a prime ideal $\fp<A$. It suffices to show that $A\to A[S^{-1}]$, for $S=A^0\backslash \fp^0$, as defined in \ref{DefLoc} is local; that the preimage in~$A$ of the maximal ideal is $\fp$; and that every such morphism from $A$ to a local algebra factors uniquely via $A[S^{-1}]$. Indeed, by construction $A\to A[S^{-1}]$ is flat and an epimorphism.

That $A[S^{-1}]$ is local follows from Corollary~\ref{CorLoc2}. Indeed that $\fp^0A[S^{-1}]\not=A[S^{-1}]$ follows from 
$$(\fp^0A[S^{-1}])^0\;=\; (\fp^0A)^0[S^{-1}]\;=\; \fp^0[S^{-1}].$$

Now consider a morphism $A\to B$ in~$\Alg\cC$ where $B$ is local with maximal ideal $\fm$ such that its preimage is $\p$. It follows from Lemma~\ref{LemfInv} that the image of every $f\in S$ in~$B^0$ is invertible. Consequently, the morphism factors via $A[S^{-1}]$. Uniqueness is obvious, which concludes the proof.
\end{proof}

We also record the following observation for future use.
\begin{lemma}\label{CorLocMor}
Assume that $\rho_A$ is the inclusion of a subspace for every $A\in\Alg\cC$.
Consider an algebra morphism $\phi:A\to B$ in $\Alg\cC$ for which
\begin{enumerate}
\item $A^0$ and $B$ (so also $B^0$, see \ref{CorLoc2}) are local;
\item $\phi^0:A^0\to B^0$ is local. 
\end{enumerate} 
Then $A$ and $\phi$ are local.
\end{lemma}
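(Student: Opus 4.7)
The plan is to reduce both assertions to the criterion of Corollary~\ref{CorLoc2} together with the injectivity of $\rho_A$, which follows from the subspace hypothesis via Lemma~\ref{LemPull}.

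First I would verify that $A$ is local. Let $N \subset A^0$ be the maximal ideal. By Corollary~\ref{CorLoc2} (whose hypothesis is in force because $\rho_A$ is assumed to be the inclusion of a subspace) it suffices to show $AN \ne A$. To this end I push along $\phi$: since $\phi^0$ is local we have $\phi(N) \subset \fm_B^0 \subset \fm_B$, where $\fm_B$ denotes the maximal ideal of $B$. If $AN = A$ held, applying $\phi$ would force $1 = \phi(1) \in B\phi(N) \subset B\fm_B = \fm_B$, a contradiction. Hence $A$ is local, with some maximal ideal $\fm_A$, and Corollary~\ref{CorLoc} then gives $\fm_A^0 = N$.

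Next I would check that $\phi$ is local. Consider the prime $\fq := \phi^{-1}(\fm_B) \subset A$. Using the general compatibility $\phi^{-1}(J)^0 = (\phi^0)^{-1}(J^0)$ for any ideal $J<B$, we find that $\fq^0 = (\phi^0)^{-1}(\fm_B^0)$ is a prime of $A^0$ which, by locality of $\phi^0$, contains $N$ and is proper. Since $N$ is maximal, this forces $\fq^0 = N = \fm_A^0$. The subspace hypothesis, through Lemma~\ref{LemPull}, implies $\rho_A$ is injective, so $\fq = \fm_A$. Equivalently $\phi(\fm_A) \subset \fm_B$, i.e.\ $\phi$ is local.

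There is no serious obstacle; the only points requiring care are recognising that the subspace hypothesis provides the needed injectivity of $\rho_A$ via Lemma~\ref{LemPull}, and tracking the elementary compatibility $\phi^{-1}(J)^0 = (\phi^0)^{-1}(J^0)$ used in the second step.
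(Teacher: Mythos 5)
Your proposal is correct and follows essentially the same route as the paper: locality of $A$ is obtained exactly as in the paper by applying Corollary~\ref{CorLoc2} and pushing the hypothetical equality $AN=A$ through $\phi$ to contradict the locality of $B$ and $\phi^0$. For locality of $\phi$, your use of injectivity of $\rho_A$ (via Lemma~\ref{LemPull}) to identify $\phi^{-1}(\fm_B)$ with $\fm_A$ is only a cosmetic variant of the paper's step, which invokes the same Lemma~\ref{LemPull} in its equivalent form $V(\fm_A)=V(A\fm_A^0)$ to reduce the containment $\fm_A\subset\phi^{-1}(\fm_B)$ to $\fm_A^0\subset\phi^{-1}(\fm_B^0)$.
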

\begin{proof}
Consider the maximal ideal $N<A^0$. By Corollary~\ref{CorLoc2}, to show that $A$ is local it suffices to show that $AN\not=A$. If $AN=A$, it follows that $B\phi(N)=B$. However, since $\phi^0$ and $B$ are local, Corollary~\ref{CorLoc2} yields a contradiction.

Now we prove that $\phi$ is also local. We need to show that (for the maximal ideals $\fm_A<A$, $\fm_B<B$), we have $\fm_A\subset \phi^{-1}(\fm_B)$. By Lemma~\ref{LemPull}, the latter is equivalent with $\fm^0_A\subset \phi^{-1}(\fm_B)$. This is clearly satisfied since already $\fm^0_A\subset \phi^{-1}(\fm_B^0)$.
\end{proof}

%%%%%%%%%%%%%%%%%%%%%%%%%%%%%%%%%%%%%%%%%%%%%%%%%%%%%%%%%%%%%%%%%%%%%%%%%%%%%%%%%%%%%%%%%%%%%%%%%%%%%%%%%%%%%%%%%%%%%%%%%%%%%%%%%%%%%%%%%%%%%%%%%%%%%%%%%%%%%%%%%%%%%%%%%%%%%%%%%%%%%%%%%%%%%%%%%%%%%%%%%%%%%%%%%%%%%%%%%%%%%%%%%%%%%%%%%%%%%%%%%%%%%%%%%%%%%%%%%%%%%%%%%%%%%%%%%%%%%%%%%%%%%%%%%%%%%%%%%%%%%%%%%%%%%%%%%%%%

\section{Some hereditary properties}\label{SecHer}

Denote a hypothetical potential property of a pretannakian category by {\bf (P)}. We say that~{\bf (P)} is {\bf hereditary} if for every tensor functor $F:\cC_1\to \cC_2$ to a pretannakian category~$\cC_2$ which satisfies {\bf (P)}, it follows that $\cC_1$ satisfies {\bf (P)} as well. An obvious example is the moderate growth property and Frobenius exactness (see for instance \cite{CEO}). Obvious examples of properties which are {\em not} hereditary are {\bf (GR)}, {\bf (MN)} and~{\bf (Loc)}.

\subsection{The Nakayama property}

The following potential property of a pretannakian category $\cC$ is clearly hereditary.

\begin{enumerate}
\item[{\bf (Nak)}] For every algebra $A$, ideal $I<A$ and finitely generated $A$-module $N$ with $IN=N$, we have $$\Ann_A(N)+I=A.$$
\end{enumerate}

If this `Nakayama property' is satisfied, then Nakayama's lemma holds:

\begin{theorem}\label{ThmNak}
\begin{enumerate}
\item If {\bf (Nak)} is satisfied in~$\cC$, then the following equivalent properties are satisfied:
\begin{enumerate}
\item For every $A\in\Alg\cC$, if $JN=N$ for finitely generated $A$-module $N$ and $J$ the intersection of all maximal ideals in~$A$, then $N=0$.
\item For every $A\in\Alg\cC$, if $N_1+JN=N$ for $A$-modules $N_1\subset N$, with $N$ finitely generated, and $J$ the intersection of all maximal ideals in~$A$, then $N_1=N$.
\end{enumerate}
\item If $\cC$ satisfies {\bf (Loc)}, then the equivalent conditions in (1) imply {\bf (Nak)}.
\end{enumerate}
\end{theorem}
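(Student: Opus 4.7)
The plan is to first establish the equivalence of (1a) and (1b), then show (Nak) implies (a), and finally derive (Nak) from (a) plus (Loc) for part (2).

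For the equivalence, (b)~$\Rightarrow$~(a) is immediate on taking $N_1=0$, while (a)~$\Rightarrow$~(b) follows by applying (a) to the finitely generated quotient $N/N_1$, noting that $N_1 + JN = N$ is equivalent to $J(N/N_1) = N/N_1$. For (Nak)~$\Rightarrow$~(a), assume $JN=N$ with $J$ the Jacobson radical of $A$; then (Nak) gives $\Ann_A(N) + J = A$. A proper $\Ann_A(N)$ would sit inside some maximal ideal $\m$, which also contains $J$, contradicting the sum being $A$. Thus $\Ann_A(N) = A$, and since the identity on $N$ factors as $N \simeq \unit \otimes N \hookrightarrow A \otimes N \to N$ (the last map vanishing when $\Ann_A(N) = A$), we conclude $N = 0$.

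For part (2), assume (Loc) and (a). Given $A \in \Alg\cC$, an ideal $I<A$, and a finitely generated $N$ with $IN=N$, suppose for contradiction that $\Ann_A(N) + I$ is contained in some maximal ideal $\m$. By (Loc), the flat morphism $A \to A_{\m}$ exists, and applying $A_{\m} \otimes_A -$ to the surjection $I \otimes_A N \twoheadrightarrow N$ yields $I_{\m} N_{\m} = N_{\m}$, where $N_{\m} = A_{\m} \otimes_A N$ is finitely generated over $A_{\m}$. The maximal ideal $\fm'$ of $A_{\m}$ pulls back to $\m$ by the universal property of \ref{DefAm}, so $I \subset \m$ forces $I_{\m} \subset \fm'$, whence $\fm' N_{\m} = N_{\m}$. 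Applying (a) to the local algebra $A_{\m}$ (whose Jacobson radical is $\fm'$) yields $N_{\m}=0$.

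The main obstacle is to derive a contradiction from $N_{\m}=0$ using the remaining hypothesis $\Ann_A(N) \subset \m$, by adapting the classical ``no element outside $\m$ annihilates $N$'' argument to the categorical setting. Using the colimit description of \eqref{LocM}, $N_{\m}$ is a filtered colimit of copies of $N$ with transition maps given by multiplication by elements of $S := A^0 \setminus \m^0$. Pick $X \in \cC$ with $A \otimes X \twoheadrightarrow N$, witnessing finite generation. Compactness of $X$ forces the composition $X \to N \to N_{\m}$ to vanish at some finite stage: there exists $f \in S$ such that $X \to N \xrightarrow{\cdot f} N$ is the zero morphism. Since the image of $X$ generates $N$ as an $A$-module, this forces $fN=0$, placing $f \in \Ann_A(N)^0 \subset \m^0$ and contradicting $f \in S$.
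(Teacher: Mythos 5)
Your part (1) and the overall structure of part (2) (localise at a maximal ideal $\m$ containing $\Ann_A(N)+I$, deduce $\fm'N_{\m}=N_{\m}$, and apply (1)(a) to the local algebra $A_{\m}$ to get $N_{\m}=0$) agree with the paper's argument. The gap is in your very last step. Under hypothesis {\bf (Loc)}, $A_{\m}$ is only the representing object of the functor $F_{\m}$ of \ref{DefAm}: a universal \emph{flat} morphism $A\to A_{\m}$ to a local algebra whose maximal ideal pulls back to $\m$. Nothing in {\bf (Loc)} identifies $A_{\m}$ with the multiplicative localisation $A[S^{-1}]$, $S=A^0\setminus\m^0$, of \eqref{eqAS}; that identification is proved in the paper only under the stronger hypothesis that $\rho_A$ is always the inclusion of a subspace (Proposition~\ref{PropLoc}, e.g. when $\cC$ is {\bf GR} and {\bf MN}), which is not assumed in Theorem~\ref{ThmNak}(2). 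Consequently your compactness argument --- writing $N_{\m}$ as a filtered colimit of copies of $N$ along multiplication by elements of $S$ and extracting $f\in S$ with $fN=0$ --- is unjustified: a priori one only has a factorisation $A\to A[S^{-1}]\to A_{\m}$ (every $f\in S$ becomes invertible in the local algebra $A_{\m}$ by Lemma~\ref{LemfInv}), and $N_{\m}=0$ does not imply $A[S^{-1}]\otimes_A N=0$.

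The step can be repaired using only the flatness guaranteed by {\bf (Loc)}, which is exactly the paper's route. Since $N$ is generated by some $\cC\ni X\subset N$, there is an exact sequence $0\to\Ann_A(N)\to A\to X^\vee\otimes N$, and applying the exact functor $A_{\m}\otimes_A-$ yields $\Ann_{A_{\m}}(N_{\m})=\Ann_A(N)A_{\m}$. Your conclusion $N_{\m}=0$ then says $\Ann_A(N)A_{\m}=A_{\m}$, while the assumption $\Ann_A(N)\subset\m$ forces $\Ann_A(N)A_{\m}\subset\m A_{\m}\subset\fm'$, a proper ideal --- the desired contradiction. (The paper packages this without contradiction: it proves $\Ann_{A_{\m}}(N_{\m})+IA_{\m}=A_{\m}$ for \emph{every} maximal ideal $\m$, and then concludes $\Ann_A(N)+I=A$ from the exactness-and-faithfulness statement of Lemma~\ref{LemLoc}; your single-maximal-ideal contradiction is an equivalent formulation once the annihilator identity above is in place.)
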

\begin{proof}
We start by observing that (1)(a) and (b) are equivalent. Clearly (a) is a special case of (b). On the other hand, (b) follows from applying (a) to $N/N_1$.

That {\bf (Nak)} implies (1)(a) follows from the observation that for an arbitrary ideal $K<A$ the property $J+K=A$ implies $K=A$.

Now we prove part (2). Consider $A,N,I$ as in {\bf (Nak)}. If {\bf (Loc)} holds, then for every maximal ideal $\m<A$, we have an equality of ideals
$$\Ann_{A_{\m}}(N_{\m})\;=\; \Ann_A(N)A_{\m}\;<\; A_{\m}.$$ Indeed, by taking a generator $\cC\ni X\subset N$, this follows from applying the exact functor $A_{\m}\otimes_A-$ to the exact sequence of $A$-modules
$$0\;\to\; \Ann_A(N)\;\to\; A\;\to\; X^\vee\otimes N.$$
This means that by Lemma~\ref{LemLoc}, it suffices to show that
$$\Ann_{A_{\fm}}(N_{\m})+IA_{\m}\;=\; A_{\m},$$
for every maximal ideal $\m<A$. But since $IA_{\m}N_{\m}=N_{\m}$, there are only two options. Either~$IA_{\m}$ is contained in the maximal ideal of $A_{\m}$, so that assumption (1)(a) implies $\Ann_{A_{\m}}(N_{\m})=A_{\m}$, or $IA_{\fm}=A_{\m}$, concluding the proof.
\end{proof}

\begin{remark}
It is not obvious that Nakayama's lemma itself is a hereditary property.
\end{remark}

\begin{lemma}\label{Lem4Nak}
Consider $A\in\Alg\cC$, an ideal $I<A$ and an $A$-module $N$ with $IN=N$. Then $N=0$ whenever
\begin{enumerate}
\item $I$ is a nilpotent ideal; or
\item $I$ is a nil ideal and $N$ is finitely generated.
\end{enumerate}
\end{lemma}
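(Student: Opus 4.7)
\textbf{Proof plan for Lemma~\ref{Lem4Nak}.}

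For part (1), the plan is a direct iteration of the hypothesis $IN=N$. Multiplying both sides by $I$ yields $IN = I(IN) = I^2N$, so $N = I^2N$, and inductively $N = I^nN$ for every $n\ge 1$. Since $I$ is nilpotent, $I^n=0$ for $n$ large, giving $N=0$. No compactness or finite generation is needed here because the associativity $I(IN)=I^2N$ is immediate from the definition of the product of subobjects of $A$ acting on $N$.

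For part (2), the strategy is to reduce to part (1) by replacing the nil ideal $I$ with a \emph{finitely generated} sub-ideal $J\subset I$ with the same effect on a generator of $N$. Finite generation of $N$ means there is a compact $X\in\cC$ with $X\subset N$ and $AX=N$; then $N=IN=I\cdot AX = IX$ as subobjects of $N$. Writing $I$ as the filtered colimit of its finitely generated (hence compact) sub-ideals $J\subset I$, we have $IX = \bigcup_J JX$ as a filtered union of subobjects of $N$, so by compactness of $X$ the inclusion $X\subset IX$ factors through some $X\subset JX$ for a finitely generated $J\subset I$. Since $J$ is finitely generated and nil, the statement recalled just before Lemma~\ref{LemNil} gives that $J$ is nilpotent, so $J^n=0$ for some $n$.

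To close the argument I would iterate $X\subset JX$ up to $X\subset J^nX=0$. This uses only that for subobjects $Y\subset Y'$ of $N$, one has $JY\subset JY'$, which follows from exactness of $J\otimes-$ on $\Ind\cC$ (so $J\otimes Y\hookrightarrow J\otimes Y'$) and functoriality of the multiplication map $J\otimes N\to N$. Starting from $X\subset JX$ this yields $JX\subset J(JX)=J^2X$ by associativity of $X\mapsto JX$, and by induction $X\subset J^mX$ for every $m$. Taking $m=n$ gives $X=0$ and hence $N=AX=0$.

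The only step requiring care is the reduction from $I$ to a finitely generated $J$ via compactness of $X$; everything else is routine. Note that this is precisely where the hypothesis that $N$ be finitely generated is used — in the general nil (not nilpotent) setting without finite generation, one cannot commute the unions needed to localise the hypothesis to a nilpotent sub-ideal.
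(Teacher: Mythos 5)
Your proof is correct and follows essentially the same route as the paper: part (1) by direct iteration, and part (2) by using compactness of a generator $X\subset N$ to replace $I$ by a finitely generated (hence nilpotent) nil sub-ideal $J$ with $X\subset JX$. The only cosmetic difference is that the paper takes $J=AI_\alpha$ for a compact subobject $I_\alpha\subset I$ and then reduces to part (1) via $JN=N$, whereas you work with finitely generated sub-ideals directly and iterate on $X$; these are the same argument.
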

\begin{proof}
Part (1) is obvious, as by iteration, we have $I^nN=N$ for all $n\in\mN$. Now assume that the $A$-module $N$ is generated by $\cC\ni X\subset N$. In particular $IX=N$. By writing $I=\varinjlim_\alpha I_\alpha$ for $\cC\ni I_\alpha \subset I$ and compactness of $X$, we find that $I_\alpha X\subset N$ must contain~$X$ for some $\alpha$. In particular, we have $JN=N$ for the nilpotent ideal $J:=AI_\alpha$, so we can reduce to part~(1).
\end{proof}

\begin{corollary}\label{NPNak}
If $\cC$ is {\bf MN}, it satisfies Nakayma's lemma (the properties in \ref{ThmNak}(1)).
\end{corollary}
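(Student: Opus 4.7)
The plan is to exploit the \textbf{MN} hypothesis to reduce to Nakayama's lemma for modules in $\Ind\cC$ over a \emph{classical} commutative $k$-algebra, and then to establish that sublemma by induction on the length of a generator.

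First I reduce to the classical quotient. Proposition~\ref{Prop4Bas} gives $R(A) \subset \Nil A$, so $\bar A := A/\Nil A$ is a quotient of $A_0 \in \Alg_k$ and therefore itself lies in $\Alg_k$. The maximal ideals of $A$ and $\bar A$ correspond bijectively, so the image $\bar J$ of $J$ in $\bar A$ is contained in the classical Jacobson radical $J(\bar A)$. Passing to $\bar N := N/(\Nil A)N$, a finitely generated $\bar A$-module in $\Ind\cC$, the hypothesis $JN = N$ becomes $\bar J \bar N = \bar N$. Granting the sublemma below that $\bar N = 0$, I obtain $N = (\Nil A)N$; choosing a generator $X \in \cC$ of $N$ and writing $\Nil A = \varinjlim I_\alpha$ with $I_\alpha \in \cC$, compactness of $X$ produces some $\alpha$ with $X \subset I_\alpha X$ as subobjects of $N$. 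Then the finitely generated ideal $AI_\alpha \subset \Nil A$ satisfies $(AI_\alpha)N = N$; being finitely generated and nil it is nilpotent, so Lemma~\ref{Lem4Nak}(1) delivers $N = 0$.

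The sublemma to prove is the following: if $B \in \Alg_k$, $M$ is a finitely generated $B$-module in $\Ind\cC$, and $I \subset J(B)$ satisfies $IM = M$, then $M = 0$. I will induct on the length in $\cC$ of a generator $Y \in \cC$ (so $BY = M$). For the base case with $Y$ simple, writing $B$ as a filtered colimit of finite-dimensional subspaces presents $B \otimes Y$ as a filtered colimit of direct sums of copies of $Y$; by simplicity of $Y$, subobjects of $B \otimes Y$ in $\Ind\cC$ correspond to vector subspaces of $B$, and $B$-submodules to ideals of $B$. Hence $M \cong (B/K) \otimes Y$ for some ideal $K \subset B$, and $IM = M$ becomes $I + K = B$, forcing $K = B$ by $I \subset J(B)$. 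For the inductive step, choose a simple subobject $L \subset Y$ and let $L' \subset M$ be the image of $L$; the quotient $M/BL'$ is generated by the image of $Y/L$, an object of strictly smaller length, and $I(M/BL') = M/BL'$ follows from $IM = M$, so by induction $M/BL' = 0$. Hence $M = BL'$ is cyclic on a quotient of the simple $L$, and the base case applies.

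The main obstacle is the structural observation underpinning the base case: for classical $B \in \Vecc^\infty$ and simple $Y \in \cC$, the $B$-submodules of $B \otimes Y$ are precisely the $K \otimes Y$ for ideals $K \subset B$. Once this is in hand, the remainder is a direct transcription of the classical generator-induction proof of Nakayama's lemma, with the \textbf{MN} hypothesis entering only through the reduction $A \rightsquigarrow A/\Nil A$.
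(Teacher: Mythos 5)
Your overall skeleton is the same as the paper's: use Proposition~\ref{Prop4Bas} to see that modding out a nil ideal ($\Nil A$ for you, $R(A)$ for the paper) lands in a classical algebra, invoke a Nakayama statement for modules in $\Ind\cC$ over a \emph{classical} algebra, and finish with Lemma~\ref{Lem4Nak} (you re-prove part (2) of that lemma inline via compactness, which is fine but unnecessary). The divergence is in how the classical-algebra case is handled, and that is where your proof has a genuine gap.

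The base case of your induction rests on the claim that, for $B\in\Alg_k$ and $Y\in\cC$ simple, the $B$-submodules of $B\otimes Y$ in $\Ind\cC$ are precisely the $K\otimes Y$ for ideals $K<B$ (equivalently, that subobjects of $B\otimes Y$ correspond to subspaces of $B$). This is only true when $\End(Y)=k$, which Schur's lemma guarantees over an algebraically closed field but not in general --- and the corollary (and your sublemma) is stated over an arbitrary field $k$. If $D:=\End(Y)$ is a nontrivial division algebra, then applying $\Hom(Y,-)$ identifies $B$-submodules of $B\otimes Y$ with left ideals of $B\otimes_k D^{\mathrm{op}}$, and these are strictly more plentiful than ideals of $B$: for instance, with $B=\mathbb{R}[x]$ and $D=\mathbb{H}$ (as occurs for the $4$-dimensional simple real representation of the quaternion group), the left ideal of $\mathbb{H}[x]$ generated by $x-i$ is not of the form $K\otimes_{\mathbb{R}}\mathbb{H}$ for any ideal $K<\mathbb{R}[x]$. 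So the structural observation you yourself flag as the "main obstacle" is false in the stated generality, and your base case is unjustified unless one separately proves that simples in {\bf MN} categories have trivial endomorphism algebras (which you do not address).

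The gap is repairable in two ways. Within your framework: instead of classifying submodules, apply the exact functor $\Hom(Y,-)$ to $M$; since $D$ is finite dimensional over $k$, the cyclic $B\otimes_k D^{\mathrm{op}}$-module $\Hom(Y,M)$ is a finitely generated $B$-module with $I\cdot\Hom(Y,M)=\Hom(Y,M)$, so classical Nakayama gives $\Hom(Y,M)=0$ and hence $M=0$ by isotypicity. But the paper's route is shorter and avoids your induction entirely: since $\cC$ is pretannakian it admits a faithful exact (\emph{non-monoidal}) functor $\omega:\Ind\cC\to\Vecc^\infty$, and for $B\in\Alg_k$ one has $\omega(B\otimes M)\simeq B\otimes_k\omega(M)$ because $B$ is a union of objects $\unit^n$; thus $\omega(M)$ is an honest finitely generated $B$-module with $J\,\omega(M)=\omega(M)$, classical Nakayama gives $\omega(M)=0$, and faithfulness gives $M=0$. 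That one observation replaces your entire sublemma, including its problematic base case.
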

\begin{proof}
Firstly, we observe that the conditions are always satisfied for $A\in \Alg_k\subset \Alg\cC$, by the classical Nakayama's lemma. Indeed, this follows using a (non-monoidal) faithful exact functor $\cC\to\Vecc$.

For $A,J$ as in \ref{ThmNak}(1)(a), we can take the $A_0$-module $N':=N/R(A)N$. Then $J':=J/R(A)$ is the Jacobson radical of $A_0$. Furthermore, $J'N'=N'$. By the above paragraph, we find $N'=0$, or equivalently $N=R(A)N$. 

Since $R(A)$ is nil (Proposition~\ref{Prop4Bas}), Lemma~\ref{Lem4Nak}(2) shows that $N=0$.
\end{proof}

\begin{theorem}
If $\cC$ is {\bf MN} and {\bf GR}, or more generally admits a tensor functor to a pretannakian category which satisfies those properties, then it satisfies {\bf (Nak)}.
\end{theorem}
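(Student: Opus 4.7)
The plan is to chain together the hereditary reduction, the localisation property, and Nakayama's lemma, all of which are already available in the paper. Since the section opens by flagging \textbf{(Nak)} as (clearly) hereditary, any tensor functor $F:\cC\to\cC'$ into a category satisfying \textbf{(Nak)} transfers the property to $\cC$: given $A,I,N$ in $\cC$ with $IN=N$ and $N$ finitely generated, one applies $F$ to get $FI\cdot FN=FN$ with $FN$ finitely generated over $FA$, uses \textbf{(Nak)} in $\cC'$ to obtain $\Ann_{FA}(FN)+FI=FA$, and pulls this back via the faithfulness/exactness of $F$. So without loss of generality I would assume from the start that $\cC$ itself is both \textbf{MN} and \textbf{GR}.

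Under these two assumptions, Corollary~\ref{CorNPGR} says that $\rho_A$ is a homeomorphism for every $A\in\Alg\cC$; in particular it is the inclusion of a subspace, so Proposition~\ref{PropLoc}(3) yields that $\cC$ satisfies \textbf{(Loc)}. Independently, the \textbf{MN} hypothesis alone gives, via Corollary~\ref{NPNak}, the Nakayama property of Theorem~\ref{ThmNak}(1)(a) in $\cC$.

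The closing step is to invoke Theorem~\ref{ThmNak}(2), which is precisely the assertion that \textbf{(Loc)} combined with Theorem~\ref{ThmNak}(1)(a) implies \textbf{(Nak)}. Concatenating the three reductions completes the argument. I do not anticipate any real obstacle here: the technical work has already been distributed across Theorem~\ref{ThmNak}(2) (the localise-and-descend step), Proposition~\ref{PropLoc} (which shows \textbf{MN}+\textbf{GR} forces localisations at maximal ideals to exist and be flat) and Corollary~\ref{NPNak} (which derives classical Nakayama from nil-ness of $R(A)$). The only point to double-check is the routine heredity claim sketched above, which is exactly what the author called "clearly hereditary".
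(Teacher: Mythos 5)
Your proposal is correct and takes essentially the same route as the paper, whose entire proof is the one-line combination of Corollary~\ref{NPNak}, Theorem~\ref{ThmNak}(2) and Proposition~\ref{PropLoc}(3), with the tensor-functor case handled exactly by the heredity of \textbf{(Nak)} noted at the start of the subsection. The details you supply beyond that (checking $F(\Ann_A(N))=\Ann_{FA}(FN)$ via a generator for the heredity step, and passing through Corollary~\ref{CorNPGR} to verify the hypothesis of Proposition~\ref{PropLoc}) are precisely what the paper leaves implicit.
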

\begin{proof}
This follows from the combination of Corollary~\ref{NPNak}, Theorem~\ref{ThmNak}(2) and Proposition~\ref{PropLoc}(3).
\end{proof}

\subsection{The free property}

The following potential property of a pretannakian category $\cC$ is clearly hereditary.

\begin{enumerate}
\item[{\bf (Free)}] For every algebra $A$ and object $X\in \cC$, every epimorphic $A$-module morphism 
$$A\otimes X\;\tto\; A\otimes X$$
is an automorphism.
\end{enumerate}

\begin{remark}
One can quickly prove that an epimorphism $A\tto A$ is automatically an automorphism.
\end{remark}

\begin{theorem}\label{ThmFree}
If $\cC$ is {\bf MN} and {\bf GR}, or more generally admits a tensor functor to such a pretannakian category, then it satisfies {\bf (Free)}.
\end{theorem}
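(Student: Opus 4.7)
The plan is to reduce to the case where $\cC$ itself is {\bf MN} and {\bf GR} (using the heredity of {\bf (Free)} already noted in the paper), and then to transcribe the classical Vasconcelos argument that an epimorphic endomorphism of a finitely generated module is injective, using the Nakayama property {\bf (Nak)} available from the immediately preceding theorem.

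Given an epimorphism $f:A\otimes X\twoheadrightarrow A\otimes X$, I would write $M := A\otimes X$ and form the ``polynomial'' algebra $B := A\otimes \Sym\unit$. Endow $M$ with a $B$-module structure extending its $A$-module structure by letting the generator $\unit\hookrightarrow\Sym^1\unit\subset\Sym\unit$ act on $M$ as $f$. Then $M$ is finitely generated over $B$ (generated by the subobject $X\subset M$), and writing $(t) < B$ for the ideal $A\otimes\Sym^+\unit$, one has $(t)\cdot M = f(M) = M$. Applying {\bf (Nak)} to the $B$-module $M$ and the ideal $(t)$ yields
$$\Ann_B(M) + (t)\;=\;B$$
as subobjects of $B$.

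Now put $K := \ker f$, an $A$-submodule of $M$. Since $f|_K = 0$, and more generally $f^n|_K = 0$ for all $n\geq 1$, the $B$-action on $M$ restricts to $K$ with the ideal $(t)$ acting as zero. The action map $B\otimes K\to K$ is a split epimorphism (via $\eta\otimes\id_K$), so its image is all of $K$. On the other hand, tensoring the sum-epimorphism $\Ann_B(M)\oplus(t)\twoheadrightarrow B$ with $K$ and composing with the action factors this map through the sum of $\Ann_B(M)\otimes K\to K$ and $(t)\otimes K\to K$; the first vanishes because $\Ann_B(M)\cdot M = 0$ and $K\subset M$, while the second vanishes because $(t)$ acts trivially on $K$. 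Hence $K=0$, so $f$ is injective.

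The only thing to be careful about is phrasing the classical element-based argument entirely in terms of subobjects, ideals, and tensor products, and checking that the relevant tensor products of epimorphisms remain epimorphisms (which is clear, since the tensor product in $\Ind\cC$ is cocontinuous and right-exact in each variable). Once this translation is done, no further obstacle arises.
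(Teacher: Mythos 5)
Your proposal is correct, but it takes a genuinely different route from the paper. You derive {\bf (Free)} purely formally from {\bf (Nak)}: after the (legitimate) heredity reduction, you run the classical Vasconcelos trick in categorical form --- view $M=A\otimes X$ as a module over $B=A\otimes\Sym\unit$ with $t$ acting as the epimorphism $f$, note $(t)M=M$ and $M$ finitely generated over $B$, invoke {\bf (Nak)} to get $\Ann_B(M)+(t)=B$, and kill $K=\ker f$ by observing it is a $B$-submodule annihilated by both summands, while $B\otimes K\to K$ is split epi. All the categorical translations you flag (right-exactness of $\otimes$, $K$ being $t$-stable, $\Ann_B(M)\otimes K\to K$ vanishing because $K\subset M$) do go through, and since the theorem immediately preceding this one establishes {\bf (Nak)} under exactly the same hypotheses, your argument closes. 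The paper instead argues via the algebra morphism $\alpha:\End_A(A\otimes X)\to\End_{A_0}(A_0\otimes X)\simeq A_0\otimes_k\End(X)$: it first settles the case of ordinary algebras $A_0\in\Alg_k$ by a non-monoidal fibre-functor argument, then uses {\bf (GR)} (via Theorem~\ref{ThmGR}(2)) to lift a power of the inverse of $\alpha(\phi)$, and {\bf (MN)} (via Proposition~\ref{Prop4Bas}, $R(A)$ nil) to see that $\ker\alpha$ consists of nilpotents, concluding that $\psi\phi^n$ differs from $1$ by a nilpotent and hence is invertible. The trade-off: your proof is more modular --- it isolates the general implication {\bf (Nak)}~$\Rightarrow$~{\bf (Free)} for arbitrary pretannakian categories, so any future enlargement of the class of categories known to satisfy {\bf (Nak)} immediately yields {\bf (Free)} --- but it routes through the localisation machinery ({\bf (Loc)}, Proposition~\ref{PropLoc}) hidden inside the preceding theorem; the paper's proof is independent of that machinery, using only Theorem~\ref{ThmGR}(2), Proposition~\ref{Prop4Bas} and the classical case, and in addition exhibits the concrete structure that an epimorphic endomorphism admits an inverse up to nilpotents.
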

\begin{proof}
First we observe that (for arbitrary $\cC$) the claim is true for $A=R\in \Alg_k\subset\Alg\cC$ Indeed, consider an exact faithful functor $\Ind\cC\to \Vecc^\infty$. It might not be monoidal, but since $R\otimes X$ can be interpreted as coproduct of $\dim_kR$ copies of $X$ this allows us to interpret $R\otimes X$ as an ordinary free $R$-module of finite rank. The conclusion is then standard and follows for instance by localising at every maximal ideal.

For the general case we consider the algebra morphism
$$\End_A(A\otimes X)\;\xrightarrow{\zeta}\; \End_{A_0}(A_0\otimes X)\,\simeq\, A_0\otimes_k\End(X),$$
where $\zeta$ is the composite of
$$\End_A(A\otimes X)\;\simeq\; \Hom(X,A\otimes X)\;\to\; \Hom(X,A_0\otimes X)\;\simeq\;\End_{A_0}(A_0\otimes X),$$
where the middle morphism is induced from $A\tto A_0$.

It is straightforward to see that $\zeta$ sends epimorphisms to epimorphisms. Moreover, for every $\theta$ in the right-hand side, $\theta^n$ is in the image of $\zeta$ for some $n\in\mZ_{>0}$. Indeed, it suffices to consider the subalgebra $A^0\otimes \End(X)$ of the left-hand side and use Theorem~\ref{ThmGR}(2). Finally, if a morphism $\psi$ is in the kernel of $\zeta$, then it must be nilpotent. Indeed, if $\psi$ is in the kernel of $\zeta$, the associated $X\to A\otimes X$ lands in~$R(A)\otimes X$. Since $R(A)$ is a nil ideal (see Proposition~\ref{Prop4Bas}) and $X$ is compact, this morphism thus factors through $X\to I\otimes X$ for a nilpotent $I\subset A$.

Now consider an epimorphism $\phi:A\otimes X\tto A\otimes X$. By the above paragraph $\zeta(\phi)$ is an epimorphism and thus has an inverse $\theta$ by the first paragraph. By the previous paragraph, $\theta^n =\zeta(\psi)$ for some $n>0$. Hence $\psi\phi^n$ is sent to $1$ by $\zeta$. Again by the previous paragraph,  $1-\psi\phi^n$ is nilpotent, so $\psi \phi^n$ is an isomorphism, which indeed forces $\phi$ to be a monomorphism.
\end{proof}

\subsection{The Hopf property}
The following potential property of a pretannakian category $\cC$ is hereditary by Remark~\ref{RemFaith}.
\begin{enumerate}
\item[{\bf (Hopf)}] For every commutative Hopf algebra $A$ in $\Ind\cC$ and Hopf subalgebra $B\subset A$, the algebra morphism $B\to A$ is faithfully flat.
\end{enumerate}

The following is a generalisation of \cite[Theorem~5.1 and Corollary~5.1]{Zu}, which proves that $\sVec$ satisfies {\bf (Hopf)}, with similar proof.
\begin{theorem}\label{ThmHopfFP}
Assume that $k$ is algebraically closed. If $\cC$ is {\bf MN} and {\bf GR}, or more generally admits a tensor functor to such a pretannakian category, then it satisfies {\bf (Hopf)}.
\end{theorem}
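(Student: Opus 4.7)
By the hereditary nature of \textbf{(Hopf)} noted in Remark \ref{RemFaith}, it suffices to prove the statement under the assumption that $\cC$ itself is \textbf{MN} and \textbf{GR}. The plan is then to adapt Zubkov's argument \cite[Theorem~5.1 and Corollary~5.1]{Zu} --- which handles $\cC=\sVec$ and in turn generalises the classical Takeuchi--Masuoka proof for commutative Hopf algebras over a field --- to our setting, making critical use of the machinery of Sections \ref{SecLoc}--\ref{SecHer}.

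Set $B^+ := \ker \varepsilon_B$ and $\overline{A} := A/AB^+$. Commutativity together with the Hopf compatibility shows that $AB^+$ is a coideal of $A$, so $\overline{A}$ inherits a right $A$-comodule (and quotient-coalgebra) structure, with projection $\pi\colon A\twoheadrightarrow \overline{A}$. The first step is to establish the \emph{normal-basis isomorphism} of $(A,A)$-bimodules
$$\kappa\;:\; A\otimes_B A \;\xrightarrow{\ \sim\ }\; A\otimes \overline{A},\qquad a\otimes a' \;\mapsto\; (a\otimes 1)(\mathrm{id}\otimes \pi)\Delta(a'),$$
with two-sided inverse $a\otimes \overline{b}\mapsto a\, S(b_{(1)})\otimes_B b_{(2)}$ involving the antipode $S$ of $A$. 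The verification is a purely formal Hopf-theoretic calculation (made possible by commutativity) that transfers verbatim to any symmetric tensor category.

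The second, substantially harder step is to deduce faithful flatness of $B\hookrightarrow A$ from $\kappa$. By Proposition \ref{PropLoc} I reduce to showing that, for each maximal ideal $\m<B$, the localisation $A_\m:=B_\m\otimes_B A$ is faithfully flat over $B_\m$; applying $B_\m\otimes_B-$ to $\kappa$ gives $A_\m\otimes_{B_\m}A_\m\simeq A_\m\otimes \overline{A}$. The target is to exhibit a $B_\m$-linear isomorphism $\tilde\sigma\colon B_\m\otimes \overline{A} \xrightarrow{\sim} A_\m$, which would display $A_\m$ as a free (hence faithfully flat) $B_\m$-module. Concretely, choose a splitting of $\pi$ modulo $\m$ (available since $\cC$ is pretannakian, using Proposition~\ref{Propadj0}), lift it iteratively along a filtration of $\overline{A}$ by compact sub-coalgebras via Nakayama's lemma (Theorem \ref{ThmNak}) to obtain a $B_\m$-linear section $\sigma\colon \overline{A}\to A_\m$ of $\pi$, and let $\tilde\sigma$ be its $B_\m$-linear extension. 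Surjectivity of $\tilde\sigma$ is built in; injectivity is forced by combining the \textbf{(Free)} property (Theorem \ref{ThmFree}), which makes the surjective $A_\m$-module endomorphism $A_\m\otimes_{B_\m}\tilde\sigma$ of $A_\m\otimes \overline{A}$ (identified via $\kappa$) an automorphism, with a restricted faithfulness assertion extracted from $\kappa$ on modules of the form $B_\m\otimes X$. Finally, faithfulness is automatic, since $\overline{A}\neq 0$ via the induced counit $\overline{\varepsilon_A}\colon \overline{A}\twoheadrightarrow k$.

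The chief technical obstacle is the final injectivity step, namely descending ``$A_\m\otimes_{B_\m}\tilde\sigma$ is an isomorphism'' to ``$\tilde\sigma$ is an isomorphism'' without circularly assuming the faithful flatness being proved. The plan is to exploit the fact that $\tilde\sigma$ is a morphism between free modules of the form $B_\m\otimes X$, for which the normal-basis isomorphism supplies the requisite localised faithfulness. A secondary difficulty is the coherent lifting of the splitting along a compact exhaustion of the (typically non-compact) $\overline{A}$, which demands careful control of colimits and of localisation in $\Ind\cC$ (cf.\ Example \ref{ExInverse}).
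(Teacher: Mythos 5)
Your proposal has the same skeleton as the paper's proof (both adapt Zubkov): your normal-basis isomorphism $\kappa$ is exactly the paper's isomorphism \eqref{EqAB}, and the mechanism ``section $+$ Nakayama (Theorem~\ref{ThmNak}) for surjectivity, then \textbf{(Free)} (Theorem~\ref{ThmFree}) plus a monomorphism statement for injectivity, all after localisation'' is the paper's mechanism verbatim. The genuine gap is in the localisation step. You propose to prove freeness of $A_{\m}$ over $B_{\m}$ at \emph{every} maximal ideal $\m<B$, and for this you need ``a splitting of $\pi$ modulo $\m$''. Such a splitting is available only at the distinguished ideal $B^+=\ker\varepsilon_B$, where the counit gives $B\simeq\unit\oplus B^+$ in $\Ind\cC$ and the Hopf structure turns this into a section of $A\tto A/AB^+$. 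At a general maximal ideal $\m$ (whose residue algebra $B/\m$ can be a nontrivial simple algebra, e.g.\ a large field extension when $B$ is not finitely generated) there is no counit to split anything; worse, the fibre $A\otimes_B(B/\m)$ has no a priori relation to $\overline{A}$, and producing an identification of it with $(B/\m)\otimes\overline{A}$ is essentially the local triviality you are trying to prove, so the Nakayama step has no input. Your citation of Proposition~\ref{Propadj0} here is a non sequitur: that proposition characterises tensor functors whose right adjoint is faithful and says nothing about splitting a module morphism modulo an ideal. It is telling that your argument never uses the hypothesis that $k$ is algebraically closed; that hypothesis enters precisely at the point you are missing.

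What the paper does instead is reduce everything to the single ideal $B^+$: by Corollary~\ref{CorProAlg} one may assume $A$ finitely generated; then, since $k$ is algebraically closed and $\cC$ is \textbf{MN}, Proposition~\ref{Prop4Bas} shows that the maximal ideals of $A$ are the kernels of algebra maps $A\to\unit$, i.e.\ points of $G(\unit)$; right translation by these points preserves $B$ (because $\Delta(B)\subset B\otimes B$) and moves any maximal ideal of $A$ to $A^+$, so the flatness of $B\to A_{\m}$ required by Lemma~\ref{LemLoc} and Proposition~\ref{PropLoc}(3) need only be checked at $\m=A^+$, which in turn reduces to proving that $A\otimes_BB_{B^+}$ is free over $B_{B^+}$ --- the one place where the counit splitting exists. (In your $B$-side setup the translation trick is not even available without first knowing that maximal ideals of $B$ lift to points of $G$, which requires the surjectivity of $\Spec A\to\Spec B$; the paper proves that surjectivity separately, from the classical theory applied to $B_0\to A_0$ together with Theorem~\ref{Thm4Spec}(1), and then invokes Proposition~\ref{LemFlat} to reduce faithful flatness to flatness.) A secondary unresolved point in your plan is the ``iterative lifting along a compact exhaustion of $\overline{A}$'': Nakayama applies to finitely generated modules, and making the lifts compatible across the exhaustion is an inverse-limit problem of exactly the kind that misbehaves in $\Ind\cC$ (cf.\ Example~\ref{ExInverse}); the paper avoids it by obtaining the section on all of $\overline{A}$ at once from the counit splitting.
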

\begin{proof}
We will also use the dual language of affine group schemes from Section~\ref{SecGrp} below.

Consider a commutative Hopf algebra $A$ and Hopf subalgebra $B\subset A$.
It follows from the definitions that $A_0$ (and $B_0$) inherits the structure of a Hopf algebra over $k$ and the induced morphism
$B_0\to A_0$ is one of Hopf algebras.
By Proposition~\ref{Prop4Bas}(b), its kernel is a nil ideal, so $B_0$ has the same spectrum as its image in~$A_0$, which is a Hopf subalgebra of $A_0$. Since the latter inclusion is faithfully flat by the classical theory, it follows from Theorem~\ref{Thm4Spec}(1) that 
$$\Spec A\to \Spec B$$
 is surjective. By Proposition~\ref{LemFlat}, it is thus sufficient to prove that the inclusion is flat.

%We denote the affine group schemes associated with $A$ and $B$ by $G$ and $H$.
By Lemma~\ref{LemLoc} and Proposition~\ref{PropLoc}(3), it is sufficient to show that $B\to A\to A_{\m}$ is flat for every maximal ideal $\m<A$. 
By Corollary~\ref{CorProAlg} below, it is sufficient to consider $A$ finitely generated, so that (by Proposition~\ref{Prop4Bas}) maximal ideals are in bijection with $G(\unit)$ for $G$ the affine group scheme corresponding to $A$. It thus follows easily that we only need to prove $B\to A_{\fm}$ is flat for one maximal ideal, which we choose to be the ideal $A^+:=\ker \epsilon$. By construction $A_{A^+}$ is flat over $A_{B^+}:=A\otimes_BB_{B^+}$, so we will instead prove that $A_{B^+}$ is flat over $B$.

Consider the algebra morphism
\begin{equation}\label{EqAB}
A\otimes_BA\;\to\; A\otimes A/AB^+
\end{equation}
which on the first factor $A$ is given by the obvious $A\to A\otimes \unit\to A\otimes A/B^+A$. On the second factor, it is given by $A\xrightarrow{\Delta} A\otimes A\tto A\otimes A/B^+A$. It is easily verified that~\eqref{EqAB} is in fact an isomorphism.

Using $\epsilon\circ \eta=1$, it follows that there exists a section in~$\Ind\cC$ to the projection $B\tto B/B^+\simeq \unit$. Inducing via $A\otimes_B-$ thus produces a section $A/AB^+\to A$ to $A\tto A/AB^+$. From this section, we obtain the (left) $B$-module morphism
$$B\otimes A/AB^+\;\to\; A,$$
which is clearly an epimorphism modulo $B^+$. By applying the property in \ref{ThmNak}(1)(b), which is satisfied by Corollary~\ref{NPNak}, we thus obtain an epimorphism of $B_{B^+}$-modules from $B_{B^+}\otimes A/AB^+$ to $A_{B^+}$ which fits into a commutative square
$$\xymatrix{
B_{B^+}\otimes A/AB^+\ar@{->>}[rr]\ar@{^{(}->}[d]&& A_{B^+}\ar[d]\\
A_{B^+}\otimes A/AB^+\ar@{->>}[rr]&& A_{B^+}\otimes_{B_{B^+}} A_{B^+}\simeq A_{B^+}\otimes_B A,
}$$
where the second row is obtained from the first by inducing from $B_{B^+}$ to $A_{B^+}$. Since~\eqref{EqAB} is an isomorphism, the epimorphism on the second row is an epimorphism from a free module to itself. By Theorem~\ref{ThmFree}, the second row is therefore an isomorphism. Since the left arrow is a monomorphism (by exactness of localisation), it follows that the first row is also an isomorphism, proving that $A_{B^+}$ is a free $B_{B^+}$-module and hence flat over $B$.
\end{proof}

\subsection{The Hilbert basis property}
We define the Hilbert basis property following \cite{Ve}.
A module $M$ over an algebra $A\in\Alg\cC$ is {\bf noetherian} if it satisfies the ascending chain condition on submodules or equivalently if every submodule is finitely generated.

\begin{definition}\label{DefNoeth}
An algebra $A\in\Alg\cC$ in a pretannakian category $\cC$ is {\bf noetherian} if one of the following equivalent conditions is satisfied:
\begin{enumerate}
\item Every finitely generated $A$-module is noetherian.
\item Every $A\otimes X$ for $X\in \cC$ is noetherian as an $A$-module.
\item Every $A\otimes L$ for simple $L\in \cC$ is noetherian as an $A$-module.
\end{enumerate}
\end{definition}
Equivalence of these properties follows from the standard observation that an extension of two modules is noetherian if and only if the two modules are both noetherian.

\begin{remark}\label{RemNoe}
\begin{enumerate}
\item The above definition of noetherian algebras is currently not known to be different from the weaker definition which only imposes that the regular module be noetherian.
\item A finitely generated algebra in~$\Alg_k$ remains noetherian as an algebra in~$\Alg\cC$.
\end{enumerate}
\end{remark}

The following potential property of a pretannakian category $\cC$ is clearly hereditary.
\begin{enumerate}
\item[{\bf (HBP)}] Every finitely generated algebra $A\in\Alg\cC$ is noetherian.
\end{enumerate}

\begin{lemma}\label{LemNoe}
The following conditions are equivalent on a pretannakian category $\cC$:
\begin{enumerate}
\item $\cC$ satisfies {\bf(HBP)};
\item For every semisimple object $X\in\cC$, the algebra $\Sym X$ is noetherian.
\end{enumerate}
\end{lemma}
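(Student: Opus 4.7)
The direction (1) $\Rightarrow$ (2) is immediate, as $\Sym X$ is finitely generated for every $X \in \cC$. For the converse, the plan is first to reduce to showing that $\Sym Y$ is noetherian for each $Y \in \cC$. Any finitely generated algebra $A$ fits into a surjection $\pi\colon \Sym Y \tto A$, and for a simple $L \in \cC$ the exactness of $-\otimes L$ presents $A \otimes L$ as the $\Sym Y$-module quotient of $\Sym Y \otimes L$ by $(\ker \pi) \otimes L$. The $A$- and $\Sym Y$-submodule lattices of $A \otimes L$ coincide, so by Definition~\ref{DefNoeth}(3) it will suffice to prove noetherianness of $\Sym Y$ for every $Y \in \cC$.

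Since $\cC$ is pretannakian, $Y$ has finite length and admits a finite radical filtration $0 = Y_0 \subset Y_1 \subset \cdots \subset Y_n = Y$ with semisimple subquotients $L_i := Y_i / Y_{i-1}$. Set $X := \bigoplus_i L_i$, a semisimple object, graded by placing $L_i$ in degree $i$; by hypothesis~(2), $\Sym X$ is noetherian. Placing $Y_i$ in filtration degree $\le i$ induces an exhaustive $\mN$-filtration $F_\bullet \Sym Y$ compatible with the algebra structure (concretely, $F_k \Sym Y$ is the subobject spanned by the images of $Y_{i_1} \otimes \cdots \otimes Y_{i_m}$ with $i_1 + \cdots + i_m \le k$), whose associated graded should be canonically isomorphic to $\Sym X$. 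For a simple $L \in \cC$, filter $\Sym Y \otimes L$ by $F_\bullet \Sym Y \otimes L$; the associated graded is the noetherian $\Sym X$-module $\Sym X \otimes L$.

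The final step is the standard filtered-to-graded argument: given an ascending chain $N_1 \subset N_2 \subset \cdots$ of $\Sym Y$-submodules of $\Sym Y \otimes L$, intersecting with $F_\bullet$ and passing to associated gradeds produces an ascending chain of $\Sym X$-submodules of $\Sym X \otimes L$, which stabilizes. Exhaustiveness of $F_\bullet$ together with the lower bound $F_{-1} \Sym Y = 0$ (since $Y_{i_j} \geq Y_1$ forces $\sum i_j \ge m \ge 0$) then force the original chain to stabilize as well, giving noetherianness of $\Sym Y \otimes L$ as a $\Sym Y$-module.

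The main technical obstacle I foresee is the identification $\gr \Sym Y \simeq \Sym X$. In characteristic zero this is transparent, since $\Sym^m Y$ is a direct summand of $Y^{\otimes m}$ and $\gr$ commutes with tensor products. In positive characteristic it requires verifying that the quotient map $Y^{\otimes m} \tto \Sym^m Y$ is strict for the tensor product filtration, equivalently that $\gr$ commutes with $S_m$-coinvariants here; this reduces to the observation that the tensor-product filtration on $Y^{\otimes m}$ depends only on the unordered multiset of filtration indices and is therefore $S_m$-stable, so that a suitable $S_m$-equivariant Rees-module argument yields strictness.
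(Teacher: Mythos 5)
Your overall route is the same as the paper's: reduce to showing $\Sym Y$ is noetherian (quotients of noetherian algebras are noetherian), filter $\Sym Y$ by the radical/Loewy filtration of $Y$, and transfer ascending chains of submodules of $(\Sym Y)\otimes L$ to the associated graded, using exhaustiveness to conclude stabilisation. Those parts are correct. The gap is precisely the step you flag as the main obstacle: the claimed isomorphism $\gr \Sym Y\simeq \Sym(\gr Y)$. Your proposed justification does not work: in positive characteristic $\Sym^m Y$ is the quotient of $Y^{\otimes m}$ by $S_m$-coinvariants, a right exact but non-exact operation, and passing to associated graded need not commute with it. $S_m$-stability of the tensor-product filtration only gives that the coinvariants of each filtered piece \emph{surject onto} its image in $\Sym^m Y$; it does not give strictness. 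Concretely, what one gets canonically is a surjection $\Sym^m(\gr Y)\tto \gr\Sym^m Y$, not an isomorphism. In a tannakian category one could upgrade this by a dimension count along the fiber functor, but a general pretannakian category has no fiber functor, and carrying out the Rees-module argument (flatness over $k[t]$ of the relevant Rees object in $\Ind\cC$, behaviour of $\Sym$ under specialisation, etc.) would be a genuine additional undertaking that your sketch does not supply.

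The repair is to notice that the isomorphism is never needed. The algebra $\gr\Sym Y$ is commutative and generated by $\gr Y$, so the universal property of the symmetric algebra gives the canonical surjection $\Sym(\gr Y)\tto\gr\Sym Y$ for free. Since $\gr Y$ is semisimple, hypothesis (2) makes $\Sym(\gr Y)$ noetherian, and then $\gr\Sym Y$ is noetherian as its quotient (the $\Sym(\gr Y)$- and $\gr\Sym Y$-submodule lattices of $(\gr\Sym Y)\otimes L$ coincide). This is exactly how the paper closes the argument, and with this one substitution the rest of your proof goes through unchanged.
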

\begin{proof}
Obviously (1) implies (2). Since all quotients of noetherian algebras are again noetherian, it suffices to prove that condition (2) implies that $\Sym Y$ is noetherian for every $Y\in\cC$. We will use characterisation~\ref{DefNoeth}(3).

Take a Loewy filtration of $Y$, which induces a filtration on $\Sym Y$. We denote the associated graded objects by $\gr Y$ and $\gr \Sym Y$. Each chain of submodules in~$(\Sym Y)\otimes L$, for some simple $L\in\cC$, induces one in~$(\gr \Sym Y)\otimes L$ (where one stabilises if and only if the other does). It is therefore sufficient to show that $\gr \Sym Y$ is noetherian.

Now $\gr \Sym Y$ is a quotient of $\Sym(\gr Y)$, where the latter is noetherian by assumption~(2). This concludes the proof.
\end{proof}

The case $\cC=\Ver_p$ of the following lemma was observed in \cite{Ve}.
\begin{lemma}\label{HBT}
If $\cC$ satisfies {\bf (MN1)}, then $\cC$ satisfies {\bf (HBP)}.
\end{lemma}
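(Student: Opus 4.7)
The plan is to invoke Lemma~\ref{LemNoe} and reduce to proving that $\Sym X$ is noetherian whenever $X \in \cC$ is semisimple. Since $\cC$ is pretannakian, such an $X$ has finite length and so decomposes as $X \simeq \unit^{\oplus m} \oplus Y$, where $Y$ is a finite direct sum $\bigoplus_i L_i^{\oplus n_i}$ of non-trivial simple objects. Using the standard identity $\Sym(V\oplus W)\simeq \Sym V\otimes \Sym W$ in a symmetric monoidal category, we get
$$\Sym X\;\simeq\;\Sym(\unit^{\oplus m})\otimes\Sym Y\;\simeq\; R\otimes \Sym Y,\quad R:=k[x_1,\ldots,x_m].$$

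The first key observation is that {\bf (MN1)} forces $\Sym Y$ to be a compact object of $\cC$ (of finite length). Indeed, for each non-trivial simple $L_i$, $\Sym L_i$ is finite, hence a finite-length object of $\cC$, so $\Sym(L_i^{\oplus n_i})\simeq (\Sym L_i)^{\otimes n_i}$ has finite length, and $\Sym Y$ is a tensor product of finitely many such factors.

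The second key observation is that $R$ is noetherian as an algebra in $\Alg\cC$: it is noetherian in $\Alg_k$ by the classical Hilbert basis theorem and hence, by Remark~\ref{RemNoe}(2), it remains noetherian in $\Alg\cC$. We then verify that any finitely generated $\Sym X$-module $M$ is finitely generated as an $R$-module. For if $\cC\ni Z\subset M$ generates $M$ over $\Sym X$, then
$$M\;=\;(\Sym X)\cdot Z\;=\; R\cdot\big((\Sym Y)\cdot Z\big),$$
and $(\Sym Y)\cdot Z$ is a quotient of $\Sym Y\otimes Z$, hence lies in $\cC$, so it generates $M$ as an $R$-module.

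Finally, since any $\Sym X$-submodule of $M$ is in particular an $R$-submodule, noetherianity of $M$ over $R$ (which follows from $R$ being noetherian together with the previous paragraph applied to all the verification cases of Definition~\ref{DefNoeth}) implies noetherianity of $M$ over $\Sym X$. Thus $\Sym X$ is noetherian, and the hypothesis of Lemma~\ref{LemNoe} is verified. The main technical point to be careful about is that the notion of noetherian algebra in Definition~\ref{DefNoeth} is formulated in terms of modules $A\otimes L$ in $\cC$ — but the reduction from $\Sym X$-submodules to $R$-submodules is uniform in $L$, so Remark~\ref{RemNoe}(2) applied to $R$ transfers directly.
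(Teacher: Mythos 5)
Your proof is correct and takes essentially the same route as the paper: reduce via Lemma~\ref{LemNoe} to semisimple $X$, decompose $\Sym X\cong k[x_1,\ldots,x_m]\otimes \Sym Y$ with $\Sym Y$ a finite (compact) algebra thanks to {\bf (MN1)}, and conclude by noetherianity of the polynomial algebra in~$\Alg\cC$ from Remark~\ref{RemNoe}(2). The only difference is that you spell out the step the paper dismisses as ``clearly,'' namely that every finitely generated $\Sym X$-module is finitely generated over $k[x_1,\ldots,x_m]$, so that ACC for submodules transfers.
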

\begin{proof}
By Lemma~\ref{LemNoe} it suffices to show that for $X$ semisimple, the algebra $\Sym X$ is noetherian. In this case 
$$\Sym X\;\cong\; \Sym(\unit^n\oplus X_1)\;\cong\;k[x_1,\cdots, x_n]\otimes A,$$
where $X_1$ is a direct sum of simples which are not isomorphic to $\unit$, so that $A$ is a finite algebra. This is clearly a noetherian algebra by Remark~\ref{RemNoe}(2).
\end{proof}

\begin{example}
The typical example of a tensor category which does {\em not} satisfy {\bf (HBP)} is $\cC=(\Rep GL)_t$ for $k=\mC$ and $t\not\in\mZ$. Indeed, consider $A\in\Alg\cC$ as in Remark~\ref{Counter}(2). The pentagonal number recurrence relation for the partition function $p(n)$ implies that 
$$p(n)\;>\; p(n-2)+p(n-12)+p(n-15)+p(n-35)+\cdots.$$
Consequently, we can construct an infinite strictly ascending chain of ideals in~$A$ which are obtained iteratively by adding a generator in~$A^0$ in degree $2,12,15$ etc.
\end{example}

%%%%%%%%%%%%%%%%%%%%%%%%%%%%%%%%%%%%%%%%%%%%%%%%%%%%%%%%%%%%%%%%%%%%%%%%%%%%%%%%%%%%%%%%%%%%%%%%%%%%%%%%%%%%%%%%%%%%%%%%%%%%%%%%%%%%%%%%%%%%%%%%%%%%%%%%%%%%%%%%%%%%%%%%%%%%%%%%%%%%%%%%%%%%%%%%%%%%%%%%%%%%%%%%%%%%%%%%%%%%%%%%%%%%%%%%%%%%%%%%%%%%%%%%%%%%%%%%%%%%%%%%%%%%%%%%%%%%%%%%%%%%%%%%%%%%%%%%%%%%%%%%%%%%%%%%%%%%

%%%%%%%%%%%%%%%%%%%%%%%%%%%%%%%%%%%%%%%%%%%%%%%%%%%%%%%%%%%%%%%%%%%%%%%%%%%%%%%%%%%%%%%%%%%%%%%%%%%%%%%%%%%%%%%%%%%%%%%%%%%%%%%%%%%%%%%%%%%%%%%%%%%%%%%%%%%%%%%%%%%%%%%%%%%%%%%%%%%%%%%%%%%%%%%%%%%%%%%%%%%%%%%%%%%%%%%%%%%%%%%%%%%%%%%%%%%%%%%%%%%%%%%%%%%%%%%%%%%%%%%%%%%%%%%%%%%%%%%%%%%%%%%%%%%%%%%%%%%%%%%%%%%%%%%%%%%%

\section{Affine group schemes}\label{SecGrp}

\subsection{Groups and subgroups}

\subsubsection{}
An {\bf affine group scheme} in~$\cC$ is a representable functor from $\Alg\cC$ to the category of groups. Equivalently an affine group scheme can be interpreted as a group object in~$(\Alg\cC)^{\op}$. We typically denote an affine group scheme in~$\cC$ by $G$ and its representing commutative Hopf algebra in~$\Ind\cC$ by $\cO(G)$. We use the standard notation $\mu,\eta, \Delta,\varepsilon$ and $S$ for the multiplication, unit, comultiplication, counit and antipode on $\cO(G)$.

\subsubsection{}
A {\bf Subgroup} of $G$ is a subfunctor of
$$G:\,\Alg\cC\;\to\; \Grp$$ which correspond to quotient (Hopf) algebra of $\cO(G).$
A subgroup is {\bf normal} if the corresponding subfunctor yields normal subgroups.

\begin{lemma}\label{LemSub}
Assume that {\bf (Hopf)} holds in~$\cC$. Then every monomorphism $H\to G$ of affine group schemes ({\it i.e.} the kernel is trivial in the terminology of~\ref{DefKer}) corresponds to a subgroup.
\end{lemma}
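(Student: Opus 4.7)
The plan is to show that the Hopf algebra morphism $\phi:\cO(G)\to\cO(H)$ dual to $f:H\to G$ is surjective; this is the condition for $H\to G$ to correspond to a subgroup. I would begin by letting $B\subset \cO(H)$ denote the image of $\phi$. Standard arguments show $B$ is a Hopf subalgebra: its being a subalgebra is automatic; the relation $\Delta_H\circ\phi = (\phi\otimes\phi)\circ \Delta_G$ combined with exactness of the tensor product in $\Ind\cC$ (so that $\mathrm{im}(\phi\otimes\phi)=B\otimes B$) forces $\Delta_H(B)\subset B\otimes B$, and similarly for $S$ and $\varepsilon$.

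Next, I would translate the hypothesis that $\ker f$ is trivial into a statement about $B$. By the standard description of the kernel as the pullback $H\times_G e$, one has $\cO(\ker f)\simeq \cO(H)\otimes_{\cO(G)}\unit$, where the morphism $\cO(G)\to\unit$ is the counit $\varepsilon_G$. Since $\phi$ is a Hopf morphism with $\phi(\cO(G))=B$, a direct check gives $\phi(\cO(G)^+)=B^+$, so this simplifies to
$$\cO(\ker f)\;\simeq\; \cO(H)\otimes_B \unit\;\simeq\;\cO(H)/\cO(H)B^+.$$
Triviality of $\ker f$ thus reads $\cO(H)/\cO(H)B^+\simeq\unit$. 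Now I would apply property {\bf (Hopf)} to the inclusion $B\hookrightarrow\cO(H)$ to obtain faithful flatness, and by Remark~\ref{epifp} deduce an equaliser
$$B\;\to\;\cO(H)\;\rightrightarrows\;\cO(H)\otimes_B\cO(H),$$
whose two parallel arrows are $a\mapsto a\otimes 1$ and $a\mapsto 1\otimes a$.

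To finish, I would invoke the Hopf-algebra isomorphism \eqref{EqAB} (established and used in the proof of Theorem~\ref{ThmHopfFP}):
$$\cO(H)\otimes_B\cO(H)\;\xrightarrow{\sim}\;\cO(H)\otimes \cO(H)/\cO(H)B^+\;\simeq\;\cO(H)\otimes\unit\;\simeq\;\cO(H),$$
where the middle step uses our translation of the hypothesis. Chasing the definition of \eqref{EqAB}, the first parallel arrow $a\mapsto a\otimes 1$ becomes the identity on $\cO(H)$, while the second arrow $a\mapsto 1\otimes a$ becomes $(\mathrm{id}\otimes\bar\pi)\circ\Delta_H$, which under the identification $\cO(H)/\cO(H)B^+\simeq\unit$ collapses to $(\mathrm{id}\otimes\varepsilon_H)\circ\Delta_H=\mathrm{id}$. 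The two arrows therefore coincide, and the equaliser forces $B=\cO(H)$, so $\phi$ is surjective. The main obstacle I anticipate is carrying out the final identification of the two parallel arrows entirely at the level of morphisms in $\Ind\cC$, without element-chasing; this is a routine but fiddly diagram check using coassociativity, the counit axiom, and the explicit recipe for \eqref{EqAB} given in the proof of Theorem~\ref{ThmHopfFP}.
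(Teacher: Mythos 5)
Your proof is correct, and it shares the paper's skeleton---form the image $B=\mathrm{im}(\phi)$, note it is a Hopf subalgebra, apply {\bf (Hopf)} to get faithful flatness of $B\hookrightarrow\cO(H)$, and conclude via Remark~\ref{epifp}---but it handles the key step genuinely differently. The paper converts the hypothesis directly: triviality of the kernel says $H(A)\to I(A)$ is injective for every $A$ (where $\cO(I)=B$), which by Yoneda is precisely the statement that $B\hookrightarrow\cO(H)$ is an epimorphism in $\Alg\cC$; then the second assertion of Remark~\ref{epifp} (faithfully flat $+$ epimorphism $\Rightarrow$ isomorphism) finishes in one line. You instead translate triviality of the kernel into the identity $\cO(H)B^+=\cO(H)^+$ via the explicit presentation $\cO(\ker f)\simeq\cO(H)/\cO(H)B^+$, and then show by hand---using the descent equaliser of Remark~\ref{epifp} together with the isomorphism \eqref{EqAB}---that the two coface maps $\cO(H)\rightrightarrows\cO(H)\otimes_B\cO(H)$ coincide. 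Both arguments ultimately rest on those two maps agreeing, so that the equaliser $B\to\cO(H)$ is forced to be an isomorphism; the paper gets this formally from the epimorphism property, whereas you get it computationally from \eqref{EqAB} and the counit axiom. The paper's route buys brevity and needs no Hopf-algebra computation at all; yours buys an explicit description of the kernel and stays at the level of concrete structure maps, at the cost of invoking \eqref{EqAB}---which is legitimate, since its verification inside the proof of Theorem~\ref{ThmHopfFP} is purely formal and uses none of that theorem's standing hypotheses ({\bf MN}, {\bf GR}, $k$ algebraically closed).
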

\begin{proof}
Denote by $I<G$ the affine group scheme represented by the image of $\cO(G)\to \cO(H)$. It suffices prove that $H\to I$ is an isomorphism.

By construction, $H\to I$ is still a monomorphism, in other words $\cO(I)\hookrightarrow \cO(H)$ is an epimorphism in~$\Alg\cC$, and by {\bf (Hopf)} that same morphism is faithfully flat. The conclusion thus follows from Remark~\ref{epifp}.
\end{proof}

\begin{remark}
An alternative formulation of Lemma~\ref{LemSub} is that a morphism $A\to B$ of commutative Hopf algebras is an epimorphism in~$\Alg\cC$ if and only if it is an epimorphism in~$\Ind\cC$.
\end{remark}

\begin{example} \label{ExGLX} Fix $X\in\cC$.
\begin{enumerate}
\item 
The general linear group $GL_X$ is given by the functor
$$A\mapsto GL_X(A):=\Aut_A(A\otimes X).$$
It is represented by a quotient of
$\Sym(X^\vee\otimes X\oplus X\otimes X^\vee).$
\item Denote by $\mA_X$ the group functor
$$A\mapsto \Hom(\unit, A\otimes X),$$
with the additive group structure on the right-hand side. It is represented by $\Sym(X^\vee)$.
\end{enumerate}
\end{example}

%Consider a homomorphism of affine group schemes $H\to G$ which is a mononorphism in the category of functors $\Alg\cC\to\Grp$ (or $\Alg\cC\to\Set)$. 

\subsection{Quotients and kernels}

\subsubsection{}\label{DefKer} The {\bf kernel} $N$ of a homomorphism $f:G\to H$ is defined as the equaliser in the category of affine group schemes of $f:G\to H$ and $G\to\ast\to H$. That this exists follows from the fact that
the coequaliser of
$$\cO(H)\rightrightarrows \cO(G)$$
can be computed in~$\Alg\cC$ and yields a representing quotient Hopf algebra (it is given by the quotient of $\cO(G)$ by the ideal generated by $\cO(H)^+=\ker\varepsilon$). A consequence of this observation is that
$N(A)$ is the kernel of $G(A)\to H(A)$ for all $A\in\Alg\cC$. In particular the kernel is normal.

A {\bf factor group} of $G$ is a homomorphism $G\to Q$ such that the corresponding algebra morphism $\cO(Q)\to \cO(G)$ is faithfully flat. If {\bf (Hopf)} is satisfied then of course factor groups of $G$ are in bijection with Hopf subalgebras of $\cO(G)$. 

For a factor group $f:G\to Q$, we denote the kernel of $f$ by $N_Q$.

\begin{theorem}\label{ThmQN}
Assume that $\cC$ satisfies {\bf(Hopf)}. Then, for any affine group scheme~$G$ in~$\cC$, the above association yields a bijection between the set of kernels and the set of (isomorphism classes of) factor groups
$$\xymatrix{\{\mbox{factor groups}\}\ar@/^/[rrr]^{(G\to Q)\mapsto N_Q}&&&\{\mbox{kernels}\}\ar@/^/[lll]^{(N\lhd G)\mapsto G/N},
}$$
with inverse defined in \ref{DefQuo} below.
\end{theorem}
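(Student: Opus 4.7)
The plan is to construct the inverse $N\mapsto G/N$ via a coinvariants equaliser description (this is the content of \ref{DefQuo}) and then verify that both composites are the identity, with {\bf (Hopf)} providing the crucial faithful flatness input. Given a normal subgroup $N\lhd G$ with defining Hopf ideal $I<\cO(G)$ and quotient $\pi:\cO(G)\twoheadrightarrow \cO(G)/I=\cO(N)$, define $\cO(G/N)$ as the equaliser in $\Ind\cC$
\[
\cO(G/N)\;\to\;\cO(G)\;\rightrightarrows\;\cO(G)\otimes\cO(N)
\]
of $a\mapsto a\otimes 1$ against $(\mathrm{id}\otimes\pi)\circ\Delta$. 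Using normality of $N$ (which ensures compatibility with the coalgebra structure in the required way) one checks that $\cO(G/N)\subset\cO(G)$ is a Hopf subalgebra; the hypothesis {\bf (Hopf)} then upgrades this inclusion to a faithfully flat map, so that $G\to G/N$ is a factor group. This already shows the proposed inverse map lands in the correct set.

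For the round trip $Q\mapsto N_Q\mapsto G/N_Q$, set $B:=\cO(Q)\hookrightarrow A:=\cO(G)$ and consider the \emph{translation map}
\[
\tau\colon A\otimes_B A\;\to\;A\otimes\cO(N_Q),\qquad a\otimes b\;\mapsto\;(a\otimes 1)\cdot(\mathrm{id}\otimes\pi)\Delta(b).
\]
The map $\tau$ intertwines the equaliser diagram from Remark~\ref{epifp} (which, by faithful flatness of $B\hookrightarrow A$, identifies $B$ with the equaliser of $A\rightrightarrows A\otimes_B A$) with the equaliser diagram defining $\cO(G/N_Q)$. It thus suffices to prove that $\tau$ is an isomorphism. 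This is the standard Hopf--Galois identity: after base change along the faithfully flat $B\to A$ on the left, $\tau$ becomes explicitly invertible using the antipode of $\cO(G)$, and faithful flatness lets one conclude that $\tau$ itself is an isomorphism.

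For the round trip $N\mapsto G/N\mapsto N_{G/N}$, set $B=\cO(G/N)$; we must show $I=A\cdot B^+$. The inclusion $A\cdot B^+\subset I$ follows directly from the defining equaliser condition on $B$: applying $\varepsilon\otimes\mathrm{id}$ to $(\mathrm{id}\otimes\pi)\Delta(b)=b\otimes 1$ yields $\pi(b)=\varepsilon(b)$, so $\pi(B^+)=0$. For the reverse inclusion, the surjection $A/A\cdot B^+\twoheadrightarrow A/I$ becomes an isomorphism after applying $A\otimes_B-$ (again by the translation-map argument, now packaged from the equaliser characterisation of $B$), and since $A$ is faithfully flat over $B$ by {\bf (Hopf)}, the surjection itself is already an isomorphism.

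The main obstacle is the translation-map isomorphism $\tau$: this is the Hopf--Galois identity that underlies the whole correspondence, and it must be formulated and verified internally in the tensor category $\Ind\cC$, using only the structural maps $\mu,\eta,\Delta,\varepsilon,S$ of $\cO(G)$. The faithful flatness supplied by {\bf (Hopf)} is precisely what enables the descent step in both round trips, converting an isomorphism-after-base-change statement into an isomorphism on the nose.
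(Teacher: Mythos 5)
Your first round trip (factor group $\to$ kernel $\to$ quotient) is correct, and it takes a genuinely different route from the paper: you identify $\cO(Q)$ with $\cO(G)^{N_Q}$ by intertwining the descent equaliser of Remark~\ref{epifp} with the equaliser defining $\cO(G/N_Q)$ via the translation map $\tau$, whereas the paper (Proposition~\ref{PropQN}(1)) shows instead that $Q$ is the fpqc sheafification of $A\mapsto G(A)/N_Q(A)$, using Lemma~\ref{LemDodu} and a pushout of faithfully flat morphisms, and then invokes the universal property of the coequaliser. Your $\tau$ is exactly the paper's morphism \eqref{EqAB} from the proof of Theorem~\ref{ThmHopfFP}; note that its invertibility is purely formal and needs no flatness or descent: the inverse is $a\otimes \overline{b}\mapsto \sum aS(b_{(1)})\otimes b_{(2)}$, well defined precisely because $B$ is a Hopf subalgebra of the commutative Hopf algebra $A$. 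So the base-change-plus-descent step you propose for proving $\tau$ invertible is superfluous (though not wrong).

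The second round trip (kernel $\to$ quotient $\to$ kernel) has a genuine gap: you must prove $I=AB^+$ for $N$ a \emph{kernel}, but your argument never uses that hypothesis. Both ingredients you cite --- the equaliser characterisation of $B=\cO(G)^N$ and the translation-map isomorphism $A\otimes_BA\simeq A\otimes A/AB^+$ --- are available for an \emph{arbitrary} normal subgroup $N$. If your claimed step (that $A/AB^+\tto A/I$ becomes an isomorphism after applying $A\otimes_B-$) held in that generality, then faithful flatness from {\bf (Hopf)} would show that every normal subgroup is a kernel; this is exactly the problem discussed in Remark~\ref{RemChev}, which is why Proposition~\ref{PropQN}(2) only asserts the inclusion $N<N_{G/N}$ in general. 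In fact the step is circular: transporting along $\tau$, the assertion that $A\otimes_B(A/AB^+)\to A\otimes_B(A/I)$ is an isomorphism amounts to $(\pi'\otimes\pi')\Delta(I)=0$, where $\pi':A\tto A/AB^+$; applying $\overline{\varepsilon}\otimes\mathrm{id}$ (with $\overline{\varepsilon}$ induced by $\varepsilon$, which kills $AB^+$) recovers $\pi'(I)=0$, i.e.\ $I\subset AB^+$, which is precisely what was to be proved. The repair is short and makes the second round trip a corollary of the first: if $N=N_{Q'}$ for a factor group $G\to Q'$ with $C=\cO(Q')$, then $\Delta(C)\subset C\otimes C$ and $\pi(c)=\varepsilon(c)$ for $c\in C$ (by the definition of the kernel in \ref{DefKer}) show $C\subset\cO(G)^N=B$, whence $I=AC^+\subset AB^+\subset I$. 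Alternatively, argue as the paper does in Proposition~\ref{PropQN}(2): the morphism $G\to Q'$ coequalises $N\rightrightarrows G$, hence factors through $G/N$, giving $N_{G/N}\subset N_{Q'}=N$.
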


\subsubsection{}\label{DefQuo} For a normal subgroup $N$ of an affine group scheme $G$ in~$\cC$, we define the {\bf quotient}~$G/N$ as the coequaliser (in the category of affine group schemes in~$\cC$) of $N\rightrightarrows G$ (considering the inclusion and $N\to\ast\to G$). We can equivalently express this as the coequaliser of $G\times N\rightrightarrows G$. The latter demonstrates existence of the quotient, since the equaliser
$$\cO(G)^N\to\cO(G)\rightrightarrows \cO(G)\otimes \cO(N)$$
in~$\Alg\cC$ (even in~$\Ind\cC$) yields a Hopf subalgebra of $\cO(G)$. 

Under condition {\bf (Hopf)}, quotients are factor groups. Hence the following proposition implies Theorem~\ref{ThmQN}.

\begin{prop}\label{PropQN}
\begin{enumerate}
\item Consider a factor group $f: G\to Q$. Then
$$G/N_Q\;\simeq\;Q,$$ and $Q$ is the sheafification of $A\mapsto G(A)/N_Q(A)$ in the fpqc topology.
\item For any normal subgroup $N\lhd G$, we have 
$$N\;<\; N_{G/N}\;<\; G,$$
where the left inclusion is an equality in case $N$ is a kernel.
\end{enumerate}
\end{prop}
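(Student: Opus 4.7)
The plan is to prove the two parts separately. For part (1), set $A := \cO(G)$ and $B := \cO(Q) \hookrightarrow A$, faithfully flat by the definition of factor group. By~\ref{DefKer} we have $\cO(N_Q) = A/B^+A$, and by Remark~\ref{epifp} there is an equaliser in $\Ind\cC$,
$$B \;\to\; A \;\rightrightarrows\; A \otimes_B A,$$
with the standard maps $a \mapsto a \otimes 1$ and $a \mapsto 1 \otimes a$. I would identify this with the equaliser defining $\cO(G)^{N_Q} = \cO(G/N_Q)$ in~\ref{DefQuo}. For this, consider the algebra morphism
$$\Phi:\; A \otimes_B A \;\longrightarrow\; A \otimes (A/B^+A),$$
defined exactly as in~\eqref{EqAB}, i.e.\ by $\Phi(a \otimes 1) = a \otimes \overline{1}$ and $\Phi(1 \otimes a) = a_{(1)} \otimes \overline{a_{(2)}}$. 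That $\Phi$ is an isomorphism was essentially established inside the proof of Theorem~\ref{ThmHopfFP}. Under $\Phi$ the two structure maps $A \rightrightarrows A \otimes_B A$ are transported, respectively, to the first-factor inclusion $a \mapsto a \otimes \overline{1}$ and to the right $\cO(N_Q)$-coaction $a \mapsto a_{(1)} \otimes \overline{a_{(2)}}$ on $A$; these are precisely the two maps whose equaliser defines $\cO(G)^{N_Q}$. Hence $\cO(G)^{N_Q} = B = \cO(Q)$ and $G/N_Q \simeq Q$.

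For the sheafification claim I would apply Lemma~\ref{LemDodu} to the obvious inclusion $F \hookrightarrow Q$ with $F(R) := G(R)/N_Q(R)$ for $R \in \Alg\cC$. Both presheaves commute with finite products, and injectivity of $F(R) \to F(R')$ along faithfully flat $R \to R'$ reduces to the corresponding injectivity for $G$ (which holds since $Q$ is a sheaf). For the lifting condition, given $\xi \in Q(R) = \Hom(B,R)$, form $R' := R \otimes_B \cO(G)$; base change preserves faithful flatness, so $R \to R'$ is faithfully flat, and the natural algebra morphism $\cO(G) \to R'$ supplies an element of $G(R')$ whose image in $Q(R')$ is the pullback of $\xi$, as required.

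For part (2), the inclusion $N < N_{G/N}$ is immediate from the coequaliser description in~\ref{DefQuo}: the two composites $N \to G \to G/N$ and $N \to \ast \to G/N$ coincide, so on $R$-points $N(R)$ maps trivially into $(G/N)(R)$, placing $N(R)$ inside $N_{G/N}(R)$ for every $R \in \Alg\cC$. For the equality assertion when $N = N_f$ is the kernel of some $f: G \to H$, the restriction of $f$ to $N$ is trivial by the definition of $N_f$, so the universal property of the coequaliser $G \to G/N$ provides a factorisation $f = \bar f \circ (G \to G/N)$; then for any $g \in N_{G/N}(R)$ the image in $(G/N)(R)$ is trivial, hence $f(g) \in H(R)$ is trivial, yielding $g \in N_f(R) = N(R)$. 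This gives $N_{G/N} < N$ and hence equality.

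The main subtlety is the isomorphism $\Phi$ together with the bookkeeping of the two structure maps under it. This is a classical Hopf-algebraic computation, and it transfers unchanged to $\Ind\cC$ because the relevant formulas only involve $\Delta$, $\varepsilon$, $S$, and the quotient $A \twoheadrightarrow A/B^+A$; moreover, the paper has essentially already carried out this work inside the proof of Theorem~\ref{ThmHopfFP}, so I would simply cite that computation rather than redo it.
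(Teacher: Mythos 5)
Your proof is correct, and for the isomorphism $G/N_Q\simeq Q$ in part (1) it takes a genuinely different route from the paper. The paper's proof of part (1) consists entirely of the sheafification argument: it embeds $A\mapsto G(A)/N_Q(A)$ into $Q$ as a subfunctor via \ref{DefKer} and applies Lemma~\ref{LemDodu} with the pushout $A\otimes_{\cO(Q)}\cO(G)$ — exactly your second paragraph — leaving the identification $G/N_Q\simeq Q$ to follow implicitly from the universal property of the sheaf quotient (the coequaliser of $N_Q\rightrightarrows G$ in fpqc sheaves is affine, hence is also the coequaliser among affine group schemes). You instead identify $\cO(G)^{N_Q}$ with $\cO(Q)$ directly, by transporting the descent equaliser $\cO(Q)\to\cO(G)\rightrightarrows\cO(G)\otimes_{\cO(Q)}\cO(G)$ of Remark~\ref{epifp} across the isomorphism \eqref{EqAB}; this gives a self-contained Hopf-algebraic proof of the isomorphism that makes explicit a step the paper glosses over, at the cost of the bookkeeping you acknowledge. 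One point worth stating more carefully: \eqref{EqAB} occurs inside the proof of Theorem~\ref{ThmHopfFP}, which carries standing hypotheses ({\bf MN}, {\bf GR}, $k$ algebraically closed) that Proposition~\ref{PropQN} does not assume; as you indicate, the isomorphism itself is hypothesis-free — the inverse $a\otimes\bar a'\mapsto aS(a'_{(1)})\otimes_{\cO(Q)}a'_{(2)}$ is well defined and inverse to $\Phi$ by a diagrammatic computation using only $\Delta$, $\varepsilon$, $S$ and the fact that $\cO(Q)$ is a Hopf subalgebra — but since you are citing a step from a theorem with stronger hypotheses, this independence should be said explicitly rather than left as ``essentially established''. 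Your part (2) agrees with the paper's argument in substance: the paper phrases the containment $N<N_{G/N}$ and the equality for kernels via the pushout/pullback diagrams, while you run the same universal properties on $R$-points (legitimate, since kernels are computed pointwise by \ref{DefKer}); the two formulations are interchangeable.
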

\begin{proof}We prove part (1).
By \ref{DefKer}, the assignment $A\mapsto G(A)/N_Q(A)$ yields a subfunctor of~$Q$. 
 By Lemma~\ref{LemDodu}, it is therefore sufficient to prove that for every algebra $A\in\Alg\cC$ and every element in~$Q(A)$, there exists a faithfully flat $A$-algebra~$B$ with an appropriate element in~$G(B)$.
For an algebra morphism $\cO(Q)\to A$, consider therefore the commutative diagram
\begin{equation}\label{eq724}\xymatrix{
\cO(Q)\ar@{^{(}->}[rr]\ar[d]&&\cO(G)\ar[d]\\
A\ar[rr]&& A\otimes_{\cO(Q)} \cO(G).
}\end{equation}
The upper horizontal arrow is faithfully flat by assumption, so the lower horizontal arrow is also faithfully flat. This proves part (1).

Now we prove part (2). We have a commutative diagram in the category of affine group schemes in~$\cC$:
$$\xymatrix{
N\ar[rr]\ar[d]&& G\ar[d]\\
\ast\ar[rr]&& G/N.
}$$
By definition, this is a pushout diagram. Since $N_{G/N}$ is defined as the fibre product of the two arrows to $G/N$ it follows that $N\to G$ factors through $N_{G/N}\to G$. Also the equality in case $N$ is a kernel follows from this diagram.
\end{proof}

\subsubsection{} We define a {\bf short exact sequence} of affine group schemes to be a sequence
$$1\to N\to G\to Q\to 1$$
where $G\to Q$ is a factor group and $N$ its kernel. Under condition {\bf (Hopf)} this is equivalent, by Theorem~\ref{ThmQN}, to demanding that $N$ is a kernel and $Q$ its quotient.

\begin{porism}\label{CorGN}
Assume that $\cC$ is {\bf MN} and {\bf GR} and that $k$ is algebraically closed. Consider a
short exact sequence $N\to G\to Q$ where $\cO(G)$ is finitely generated, then 
$$1\to N(\unit)\to G(\unit)\to Q(\unit)\to 1 $$
is exact.
\end{porism}
\begin{proof}
We consider the commutative diagram \eqref{eq724}, for $A=\unit$ so that $\cO(Q)\to\unit$ represents an element of $Q(\unit)$. The algebra in the lower right corner $\unit\otimes_{\cO(Q)}\cO(G)$ is finitely generated, so any quotient by a maximal ideal must be isomorphic to $\unit$ by Proposition~\ref{Prop4Bas}. The corresponding composite $\cO(G)\to \unit$ demonstrates that $G(\unit)\to Q(\unit)$ is indeed surjective.
\end{proof}

\begin{corollary}
Assume that $\cC$ satisfies {\bf (Hopf)} and consider an affine group scheme $G$ in~$\cC$ with kernels $N_1\lhd G \rhd N_2$ with $N_1\cap N_2=1$.
 We have an isomorphism 
$$G\;\xrightarrow{\sim}\; (G/N_1)\times_{G/N_1N_2}(G/N_2)$$

\end{corollary}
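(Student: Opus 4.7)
The plan is to translate the statement to the Hopf algebra level and prove it using faithful flatness from {\bf (Hopf)} together with the Galois-type isomorphism recorded as equation~\eqref{EqAB} in the proof of Theorem~\ref{ThmHopfFP}. Write $A=\cO(G)$ and $B_i=\cO(G/N_i)\subset A$, which are Hopf subalgebras by Theorem~\ref{ThmQN}. Their intersection $C=B_1\cap B_2$ is again a Hopf subalgebra whose associated kernel is normal and contains both $N_i$ (since $C\subset B_i$) and hence $N_1N_2$; conversely $\cO(G/N_1N_2)$ is a Hopf subalgebra lying in both $B_i$, so $C=\cO(G/N_1N_2)$. Under {\bf (Hopf)} all inclusions among $C,B_1,B_2,A$ are faithfully flat, and the corollary amounts to asserting that the multiplication map
$$m:\,B_1\otimes_C B_2\to A,\qquad b_1\otimes b_2\mapsto b_1 b_2$$
is an isomorphism.

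The surjectivity of $m$ is immediate: the image $B_1 B_2\subset A$ is a Hopf subalgebra (stable under $\Delta$ since each $B_i$ is), and by Theorem~\ref{ThmQN} its corresponding kernel $N'$ lies in both $N_i$, so $N'\subset N_1\cap N_2=1$ forces $B_1 B_2 = A$. Thus $m$ is an epimorphism in $\Alg\cC$, and by Remark~\ref{epifp} it is enough to show $m$ is faithfully flat, after which $m$ is automatically an isomorphism.

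To obtain faithful flatness I would base change $m$ along the faithfully flat inclusion $B_1\hookrightarrow A$; the base-changed map reduces to the canonical morphism
$$j:\,A\otimes_C B_2\;\to\;A\otimes_{B_1} A,\qquad a\otimes b\mapsto a\otimes b,$$
so it suffices to prove $j$ is an isomorphism. The Galois-type isomorphism~\eqref{EqAB} for $B_1\subset A$ identifies $A\otimes_{B_1}A\simeq A\otimes\cO(N_1)$ via $a\otimes a'\mapsto \sum a a'_{(1)}\otimes\bar{a'}_{(2)}$, where $\bar{\cdot}$ is the projection $A\to A/AB_1^+=\cO(N_1)$. Applied to the inclusion $C\subset B_2$, which corresponds to the factor group $G/N_2\to G/N_1N_2$ with kernel $N_1N_2/N_2\simeq N_1$ (invoking $N_1\cap N_2=1$), the same construction yields $B_2\otimes_C B_2\simeq B_2\otimes(B_2/B_2 C^+)$, and base changing along $B_2\hookrightarrow A$ then gives $A\otimes_C B_2\simeq A\otimes\cO(N_1)$. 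Under these two identifications of the source and target with $A\otimes\cO(N_1)$ the explicit formulas coincide, so $j$ becomes the identity and is indeed an isomorphism.

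The main obstacle is the compatibility of the two Galois-type isomorphisms, which hinges on the canonical identification $B_2/B_2 C^+\simeq A/AB_1^+$ induced by $B_2\hookrightarrow A$, equivalently on the equality $B_2\cap AB_1^+=B_2 C^+$. This is precisely where the hypothesis $N_1\cap N_2=1$ enters, guaranteeing that $N_1\hookrightarrow G\twoheadrightarrow G/N_2$ realises $N_1$ as the full kernel of $G/N_2\to G/N_1N_2$; once this Hopf-algebraic identity is verified, the two Galois isomorphisms are given by the same formula $a\otimes b\mapsto \sum a b_{(1)}\otimes \bar b_{(2)}$ on representatives and the remainder of the argument is formal.
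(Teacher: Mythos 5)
Your reduction to showing that the multiplication map $m\colon B_1\otimes_C B_2\to A$ is an isomorphism is sound, and several steps are genuinely fine: the identification $C=\cO(G/N_1N_2)$, the surjectivity of $m$ via the bijection of Theorem~\ref{ThmQN} (the kernel attached to the Hopf subalgebra $B_1B_2$ is contained in $N_1\cap N_2=1$, so $B_1B_2=A$), and the descent along the faithfully flat inclusion $B_1\hookrightarrow A$. The genuine gap is the identity $B_2\cap AB_1^+=B_2C^+$, which you correctly isolate as the crux but then merely assert, claiming that the hypothesis $N_1\cap N_2=1$ ``guarantees'' it. It does not, at least not formally. What trivial intersection gives you is that $N_1\to G/N_2$ has trivial kernel (using that $N_2$ is a kernel, Proposition~\ref{PropQN}(2)), hence by Lemma~\ref{LemSub} it is a closed immersion whose image lies inside $\ker(G/N_2\to G/N_1N_2)$; Hopf-algebraically this is only the easy containment $B_2C^+\subset B_2\cap AB_1^+$. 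The reverse containment says that this kernel is no \emph{larger} than the image of $N_1$ --- the second isomorphism theorem for affine group schemes in $\cC$ --- and it carries essentially the whole content of the corollary: it follows from the corollary (if $A\simeq B_1\otimes_C B_2$, then $AB_1^+=\im(B_1^+\otimes_C B_2\to A)$ and flatness of $B_2$ over $C$ recover the identity), and every attempt to prove it by your methods reproduces the same difficulty one level down, e.g.\ one then needs that the kernel of $G\to G/N_1N_2$, i.e.\ the closed subgroup cut out by $AC^+$, is no larger than the subgroup functor $N_1N_2$. So as written the proposal reduces the corollary to an equivalent unproven statement rather than proving it.

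For contrast, the paper's proof takes a different route that sidesteps this: by Proposition~\ref{PropQN}(1) each of $G/N_1$, $G/N_2$, $G/N_1N_2$ is the fpqc sheafification of the corresponding naive quotient presheaf, and one copies the proof of \cite[Proposition~6.3.4]{Tann}, exploiting $N_1\cap N_2=1$ pointwise at the presheaf level (where the assertion is elementary group theory) together with the exactness properties of sheafification; since the sheaf fibre product of the representable sheaves is represented by $B_1\otimes_C B_2$, the isomorphism follows. Your missing Hopf-algebraic identity would itself most naturally be proved by exactly this sheaf-level argument, so to close the gap you would in effect have to carry out the paper's proof anyway.
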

\begin{proof}
One can copy the proof of \cite[Proposition~6.3.4]{Tann} by Proposition~\ref{PropQN}(1). 
\end{proof}

Note that we did not address the following natural question:
\begin{question}\label{RemChev}
Is every normal subgroup of an affine group scheme in an {\bf MN} and {\bf GR} pretannakian category a kernel?
\end{question}

The main difficulty in proving that all normal subgroups are kernels is the absence of Chevalley's Theorem, see \cite[\S 4.h]{Milne}. This is already problematic in~$\sVec$, as one can not use exterior powers to `create invertible objects'. That all normal subgroups are kernels for $\cC=\sVec$ was proved in \cite{Ma, Zu}.

\subsection{Representations and algebraic groups}

\subsubsection{}For an affine group scheme $G$ in  $\cC$, we denote by $\Rep_{\cC}G$ the category of $\cO(G)$-comodules in~$\cC$. We can view the objects equivalently as pairs of $(X,\rho)$, with $X\in \cC$ and $\rho:G\to GL_X$ a morphism of affine group schemes in~$\cC$. By abuse of notation we will also use~$\rho$ for the associated coaction $X\to\cO(G)\otimes X$. Then $\Rep_{\cC}G$ is clearly again a tensor category, equipped with a (forgetful) tensor functor to $\cC$.

%A $G$-representation $X$ is {\bf faithful} if $G\to GL_X$ is a monomorphism, or equivalently\footnote{need (Hopf)?}  if $\cO(GL_X)\to\cO(G)$ is an epimorphism in~$\Ind\cC$.

\begin{lemma}\label{LemIndRep}
\begin{enumerate}
\item The category of $\cO(G)$-comodules in~$\Ind\cC$ is equivalent to $\Ind\Rep_{\cC}G$.
\item For an $\cO(G)$-comodule $X$ in~$\Ind\cC$ and a subobject $\cC\ni X_0\subset X$, there exists a subcomodule $X_1\subset X$ with $X_0\subset X_1\in\cC$.
\end{enumerate}
\end{lemma}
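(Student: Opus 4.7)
The plan for part~(2) is the tensor-categorical finiteness theorem for comodules. Given a compact $X_0\subset X$ in~$\Ind\cC$, compactness of~$X_0$ combined with the presentation of $\cO(G)\otimes X$ as the filtered colimit of subobjects $A\otimes X_\beta$ with $A\subset\cO(G)$ and $X_\beta\subset X$ compact forces the restricted coaction $\rho|_{X_0}:X_0\to\cO(G)\otimes X$ to factor through some such $A\otimes X_\beta$, and the counit axiom $(\varepsilon\otimes\mathrm{id})\rho=\mathrm{id}$ already places $X_0$ inside $X_\beta$. I would then arrange $A$ to be a compact subcoalgebra, meaning $\Delta(A)\subset A\otimes A$. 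Using rigidity of the compact object~$A$, the adjoint $\tilde\rho:A^\vee\otimes X_0\to X_\beta$ of $\rho|_{X_0}$ is defined, and I set $X_1:=\im(\tilde\rho)\subset X_\beta$, a compact subobject of~$X$. Precomposing $\tilde\rho$ with the map $\unit\to A^\vee$ adjoint to $\varepsilon|_A:A\to\unit$ recovers the inclusion $X_0\hookrightarrow X_\beta$, so $X_0\subset X_1$. For the subcomodule property, coassociativity $(\mathrm{id}\otimes\rho)\rho=(\Delta\otimes\mathrm{id})\rho$, together with $\Delta(A)\subset A\otimes A$, lets one rewrite $\rho\circ\tilde\rho$ as $(\phi\otimes\mathrm{id}_{X_\beta})\circ(\mathrm{id}_{A^\vee}\otimes\rho|_{X_0})$ for a canonical $\phi:A^\vee\otimes A\to A$ built from $\Delta_A$ and the evaluation $A^\vee\otimes A\to\unit$. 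Since $\rho|_{X_0}:X_0\to A\otimes X_\beta$ factors through $A\otimes X_1$ by the adjunction defining~$X_1$, this composite lands in $A\otimes X_1\subset\cO(G)\otimes X_1$, and $X_1$ is a subcomodule.

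The main obstacle is the subcoalgebra step: given a compact $A_0\subset\cO(G)$, producing a compact subcoalgebra $A\supset A_0$. This is the tensor-categorical analogue of the classical fact that every element of a coalgebra over a field lies in a finite-dimensional subcoalgebra. My plan is to adapt the Sweedler-style argument inductively: first factor $\Delta(A_0)\subset B\otimes B$ for some compact $B\supset A_0$, then iterate coassociativity $(\Delta\otimes\mathrm{id})\Delta=(\mathrm{id}\otimes\Delta)\Delta$ together with the identity $(B_1\otimes\cO(G))\cap(\cO(G)\otimes B_2)=B_1\otimes B_2$, which is valid in any tensor category by exactness of the tensor product in each variable, to bound successively the images $\Delta(B_n)$ and exhibit a compact subcoalgebra containing $A_0$ after finitely many steps.

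Once part~(2) is in hand, part~(1) follows formally. Part~(2) implies that every $\cO(G)$-comodule in~$\Ind\cC$ is the filtered union of its compact subcomodules, which are by construction objects of $\Rep_\cC G$; this defines a functor from $\cO(G)$-comodules in~$\Ind\cC$ to $\Ind\Rep_\cC G$. In the opposite direction, any ind-object of $\Rep_\cC G$ admits a colimit in~$\Ind\cC$, and the compatible coactions on the diagram induce a canonical coaction on the colimit. A routine verification, using compactness of the objects of $\Rep_\cC G$ inside $\Ind\cC$, then shows that these two assignments are mutually quasi-inverse equivalences.
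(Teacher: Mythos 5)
Your skeleton for part~(2) --- factor the restricted coaction through $A\otimes X_\beta$ with $A\subset\cO(G)$ compact, set $X_1:=\im(\tilde\rho)$ for the adjoint $\tilde\rho:A^\vee\otimes X_0\to X$, use the counit to get $X_0\subset X_1$, use coassociativity for the subcomodule property, and deduce part~(1) formally --- is exactly the paper's proof. The genuine gap is the step you yourself flag as the main obstacle: replacing $A$ by a compact \emph{subcoalgebra}. Your proposed proof of that step does not work as sketched. Iterating ``factor $\Delta(B_n)\subset B_{n+1}\otimes B_{n+1}$'' produces a strictly increasing chain of compact subobjects with no reason to stabilise after finitely many steps; its union is a subcoalgebra but need not be compact. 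The classical Sweedler argument terminates only because it fixes a basis and matches coefficients against linearly independent outer tensor legs, an operation with no meaning for subobjects in a general tensor category; the intersection identity $(B_1\otimes\cO(G))\cap(\cO(G)\otimes B_2)=B_1\otimes B_2$ is correct (by exactness of $\otimes$) but by itself produces no bound. Worse, the natural categorical proof of the compact-subcoalgebra theorem goes through coefficient coalgebras of compact subcomodules of the regular comodule, i.e.\ it presupposes precisely statement~(2); as a stepping stone towards (2) it is circular.

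Fortunately the subcoalgebra condition is not needed, and deleting it turns your argument into the paper's. You invoked $\Delta(A)\subset A\otimes A$ only to define $\phi:A^\vee\otimes A\to A$, i.e.\ to evaluate $A^\vee$ against the first output factor of $\Delta$; that evaluation can be avoided. By the adjunction defining $X_1$ you already know that $\rho|_{X_0}=(\mathrm{id}_A\otimes\tilde\rho)\circ(\mathrm{coev}_A\otimes\mathrm{id}_{X_0})$ factors through $A\otimes X_1$. Hence $(\Delta\otimes\mathrm{id}_X)\circ\rho|_{X_0}$ lands in $\cO(G)\otimes\cO(G)\otimes X_1$. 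By coassociativity of the comodule, this map coincides with $(\mathrm{id}\otimes\rho)\circ\rho|_{X_0}$, which visibly lands in $A\otimes\cO(G)\otimes X$; by your own intersection identity it therefore lands in $A\otimes\cO(G)\otimes X_1$. Now $\rho\circ\tilde\rho=(\mathrm{ev}_A\otimes\mathrm{id}_{\cO(G)\otimes X})\circ\bigl(\mathrm{id}_{A^\vee}\otimes[(\mathrm{id}_A\otimes\rho)\circ\rho|_{X_0}]\bigr)$, in which the evaluation pairs $A^\vee$ only with the original $A$-factor, so $\rho\circ\tilde\rho$ lands in $\cO(G)\otimes X_1$; since $A^\vee\otimes X_0\tto X_1$ is an epimorphism, $X_1$ is a subcomodule. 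This is the paper's argument (which also shows the auxiliary compact $X_\beta\subset X$ is unnecessary); with this repair your proposal is correct and agrees with the paper.
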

\begin{proof}
Part (1) is an immediate consequence of part (2). For part (2) we start from the composition of the inclusion and coaction
$$X_0\hookrightarrow X\to \cO(G)\otimes X.$$
We write $\cO(G)$ as $\varinjlim H_\alpha$ for $\cC\ni H_\alpha\subset \cO(G)$. The above composite factors through $H_\alpha\otimes X\to\cO(G)\otimes X$ for some $\alpha$. We define $X_1$ as the image of the corresponding
$$H_\alpha^\vee\otimes X_0\to X.$$
It follows easily that $X_0\subset X_1$ and that $X_1$ does not depend on the choice of $\alpha$.

Using coassociativity we can then observe that the composite
$$H_\alpha^\vee\otimes X_0\tto X_1\to \cO(G)\otimes X$$
of the defining epimorphism and the coaction, lands in~$\cO(G)\otimes X_1$. Consequently, $X_1\subset X$ is a subrepresentation.
\end{proof}

\begin{lemma}\label{LemAlgG}
For an affine group scheme $G$ in~$\cC$ the following conditions are equivalent:
\begin{enumerate}
\item The algebra $\cO(G)$ is finitely generated;
\item $G$ is a subgroup of $GL_X$ for some $X\in\cC$.
\end{enumerate}
We call such affine group schemes {\bf algebraic groups}.  If $\cC$ satisfies {\bf (Hopf)}, the conditions are furthermore equivalent to:
\begin{enumerate}
\item[(3)] The tensor category $\Rep G$ is finitely generated over $\cC\subset \Rep G$.
\end{enumerate}

\end{lemma}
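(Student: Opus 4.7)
\medskip
\noindent
\textbf{Proof plan.} The implication (2)$\Rightarrow$(1) is immediate: $\cO(GL_X)$ is a quotient of the finitely generated algebra $\Sym(X^\vee\otimes X\oplus X\otimes X^\vee)$ (see Example~\ref{ExGLX}(1)), so $\cO(G)$ is finitely generated as a further quotient. For (1)$\Rightarrow$(2), start with a finite generator $Y\in\cC$ of the algebra $\cO(G)$. Apply Lemma~\ref{LemIndRep}(2) to enlarge $Y$ to a subcomodule $X\in\cC$ of $\cO(G)$. The restricted coaction $\Delta|_X:X\to\cO(G)\otimes X$ exhibits $X$ as an object of $\Rep_\cC G$, giving a morphism $\rho:G\to GL_X$ and, dually, an algebra morphism $\phi:\cO(GL_X)\to\cO(G)$ sending the universal matrix coefficients to the matrix coefficients of $\Delta|_X$. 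The image of $\phi$ contains $X$: by the counit axiom of the comodule structure, $X\subset \cO(G)$ is recovered from the matrix coefficients by applying $\mathrm{id}\otimes\varepsilon$ to $\Delta|_X$. Since $Y\subset X$ generates $\cO(G)$ as an algebra and the image of $\phi$ is a subalgebra, $\phi$ is an epimorphism, exhibiting $\cO(G)$ as a quotient Hopf algebra of $\cO(GL_X)$, i.e.\ $G<GL_X$.

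\medskip
Now assume {\bf (Hopf)}. For (2)$\Rightarrow$(3), suppose $G<GL_X$ and let $\bD\subset\Rep_\cC G$ be the tensor subcategory generated by $\cC$ and $X$. For any $V\in\Rep_\cC G$, the standard inclusion $V\hookrightarrow \cO(G)\otimes V$ (with trivial action on the second factor) realises $V$ as a subobject of $\cO(G)\otimes V$. The Hopf algebra $\cO(GL_X)$, being a quotient of $\Sym(X^\vee\otimes X\oplus X\otimes X^\vee)$, is a union of finite-dimensional subcomodules each lying in~$\bD$ (as quotients of objects built from $X$ and $X^\vee$); since $\cO(G)$ is a quotient of $\cO(GL_X)$, Lemma~\ref{LemIndRep} implies $\cO(G)\in\Ind\bD$, hence $\cO(G)\otimes V\in\Ind\bD$. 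As $V\in\cC$ is compact, the inclusion $V\hookrightarrow\cO(G)\otimes V$ factors through an object of $\bD$, forcing $V\in\bD$. Thus $\bD=\Rep_\cC G$.

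\medskip
For (3)$\Rightarrow$(2), let $X\in\Rep_\cC G$ be a tensor generator over $\cC$, with underlying object also denoted $X$, and consider $\rho:G\to GL_X$. Let $K<G$ denote its kernel (in the sense of~\ref{DefKer}), so $K$ acts trivially on $X$ and on~$\cC$. Since trivially-acting subgroups form a tensor subcategory of $\Rep_\cC G$ containing $\cC$ and $X$, the assumption in (3) implies $K$ acts trivially on every $G$-representation, in particular on $\cO(G)$ with the right regular coaction~$\Delta$. Writing $\pi:\cO(G)\to\cO(K)$ for the quotient map, the triviality translates to $(\pi\otimes\mathrm{id})\Delta=\eta\otimes\mathrm{id}$; applying $\mathrm{id}\otimes\varepsilon$ yields $\pi=\eta\circ\varepsilon$, whence $\cO(K)=\unit$ by surjectivity of $\pi$, i.e.\ $K=1$. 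Lemma~\ref{LemSub}, which requires exactly {\bf (Hopf)}, then converts the monomorphism $G\to GL_X$ into a subgroup inclusion. The principal subtlety is verifying that $\cO(G)\in\Ind\bD$ in the (2)$\Rightarrow$(3) step, since this is the only place where the structure of $\cO(GL_X)$ as an ind-object of $\langle X,X^\vee\rangle$ enters; once this is in hand, the remaining arguments are formal Hopf-algebraic manipulations in the ind-completion of $\cC$.
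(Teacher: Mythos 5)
Your proposal is correct and follows essentially the same route as the paper: (1)$\Rightarrow$(2) via Lemma~\ref{LemIndRep} and the counit trick to show the matrix-coefficient map $\cO(GL_X)\to\cO(G)$ hits the generating subcomodule $X$; (2)$\Rightarrow$(3) via the embedding $V\hookrightarrow\cO(G)\otimes V$; and (3)$\Rightarrow$(2) by showing the kernel of $G\to GL_X$ acts trivially on $\cO(G)$, hence is trivial, and invoking Lemma~\ref{LemSub} under {\bf (Hopf)}. Your write-up merely fills in details the paper leaves implicit (e.g.\ that $\cO(G)\in\Ind\bD$ and the explicit computation $\pi=\eta\circ\varepsilon$), so there is nothing substantive to add.
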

\begin{proof}
Condition (2) implies that $\cO(G)$ is a quotient algebra of $\cO(GL_X)$, where the latter is finitely generated.

If condition (1) is satisfied, we can take $\cC\ni Y\subset \cO(G)$ which generates the algebra $\cO(G)$. By Lemma~\ref{LemIndRep}, we can replace $Y$ by a subrepresentation $\cC\ni X\subset \cO(G)$ in which it is contained. By adjunction applied to the co-action, we find a morphism
$$X\otimes X^\vee\to \cO(G).$$
Denote by $\beta:\unit\to X^\vee$, the adjoint of the composite $X\hookrightarrow \cO(G)\xrightarrow{\epsilon}\unit$. It then follows that the precomposition with $X\otimes \beta$ of the displayed morphisms is just the inclusion of $X$ into $\cO(G)$. In particular, the image of the displayed morphism contains $X$. It follows that $\cO(GL_X)\to \cO(G)$ is an epimorphism (in~$\Ind\cC$), which implies that (2) is satisfied.

Now we prove that (2) implies (3). Since every representation is a subrepresentation of $\cO(G)\otimes V$ for some $V\in\cC$, it follows easily that $\Rep\cC$ is generated over $\cC$ by an $X$ as in (2).

Now assume that {\bf (Hopf)} holds. Under condition (3) there exists a $G$-representation $X$ which generates $\Rep G$ together with $\cC$. Let $N$ be the kernel of $G\to GL_X$. Then $N$ acts trivially on any $G$-representation (including $\cO(G)$). It follows that $N$ is trivial, so that $G\to GL_X$ is a monomorphism. By Lemma~\ref{LemSub}, condition (2) follows.
\end{proof}

\begin{corollary}\label{CorProAlg}
Every affine group scheme $G$ is isomorphic to $\varprojlim G_\alpha $ for an inverse system of algebraic groups $G_\alpha$. 
\end{corollary}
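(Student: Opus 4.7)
The plan is to reduce the claim to showing that the commutative Hopf algebra $\cO(G)\in\Ind\cC$ is a filtered colimit of its finitely generated sub-Hopf algebras. Since $\Spec$ sends filtered colimits in $\Alg\cC$ to cofiltered limits in $\Aff\cC$, and each finitely generated sub-Hopf algebra of $\cO(G)$ corresponds to an algebraic group quotient of $G$ by Lemma~\ref{LemAlgG}, this would give the desired presentation $G\simeq\varprojlim G_\alpha$.

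To prove the colimit presentation, it suffices to show that every compact subobject $X\in\cC$ with $X\subset \cO(G)$ is contained in some finitely generated sub-Hopf algebra. First, I would apply Lemma~\ref{LemIndRep}(2), viewing $\cO(G)$ as a right $\cO(G)$-comodule via $\Delta$, to enlarge $X$ to a compact subcomodule $V$ with $X\subset V\subset \cO(G)$. Next I would form the ``coefficient subobject''
\[
C(V)\;:=\;\mathrm{im}\bigl(V^\vee\otimes V \xrightarrow{V^\vee\otimes \Delta|_V} V^\vee\otimes V\otimes \cO(G)\xrightarrow{\mathrm{ev}\otimes 1}\cO(G)\bigr),
\]
a compact subobject of $\cO(G)$. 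Using the morphism $\unit\to V^\vee$ adjoint to the restricted counit $\epsilon|_V:V\to\unit$, the counit axiom $(\epsilon\otimes 1)\circ\Delta|_V=\mathrm{id}_V$ forces the inclusion $V\hookrightarrow \cO(G)$ to factor through $C(V)$, so that $V\subset C(V)$; and coassociativity of $\Delta$ applied to $V$ yields $\Delta(C(V))\subset C(V)\otimes C(V)$, so $C(V)$ is a subcoalgebra.

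Finally, I would set $W:=C(V)+S(C(V))$, which is compact and $S$-stable, and still a subcoalgebra since $S$ is an anti-coalgebra morphism. The subalgebra $B\subset\cO(G)$ generated by $W$ is then automatically a sub-bialgebra (the subalgebra generated by any subcoalgebra is $\Delta$-closed, as $\Delta$ is an algebra map), is $S$-closed (as $S$ is an anti-algebra morphism and $W$ is $S$-stable), and contains the unit; hence $B$ is a finitely generated sub-Hopf algebra containing $X$, as required. The main technical obstacle is the clean construction of $C(V)$ and the verification that it is a subcoalgebra containing $V$: the classical counit and coassociativity identities must be recast as commutative diagrams in $\Ind\cC$ using rigidity, after which the remaining closure checks for $B$ are essentially formal.
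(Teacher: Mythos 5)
Your proof is correct, and its core strategy --- writing $\cO(G)$ as a filtered union of finitely generated Hopf subalgebras, each obtained by fattening a compact subobject to a compact subcomodule via Lemma~\ref{LemIndRep}(2) --- is exactly the paper's. The difference is in the packaging: the paper considers, for each representation $G\to GL_X$, the image of $\cO(GL_X)\to\cO(G)$; this is automatically a quotient Hopf algebra of $\cO(GL_X)$, hence corresponds to a subgroup $G'<GL_X$, which is algebraic by Lemma~\ref{LemAlgG}, and the proof of that lemma (the morphism $X\otimes X^\vee\to\cO(G)$ adjoint to the coaction, whose image contains $X$ by the counit trick) shows these images exhaust $\cO(G)$. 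Your subalgebra $B$ generated by $C(V)+S(C(V))$ is precisely this image for $X=V$, so the two constructions produce the same Hopf subalgebras; what the paper's route buys is that all your hand-made closure verifications (that $C(V)$ is a subcoalgebra, that $W$ is $S$-stable, that $B$ is a sub-bialgebra) come for free from the formal fact that the image of a Hopf algebra morphism is a Hopf subalgebra, while your route has the virtue of being self-contained. Three small points in your write-up: first, the counit identity should say that $(\epsilon|_V\otimes 1)\circ\Delta|_V$ equals the \emph{inclusion} $V\hookrightarrow\cO(G)$, not $\mathrm{id}_V$ (it is a map $V\to\cO(G)$); that is what gives the factorization through $C(V)$. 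Second, $S$-stability of $W=C(V)+S(C(V))$ tacitly uses $S^2=\mathrm{id}$, which does hold here because $\cO(G)$ is commutative --- a standard convolution-inverse argument that is purely diagrammatic and so valid in $\Ind\cC$ --- but it should be said. Third, a finitely generated Hopf subalgebra $B\subset\cO(G)$ yields an algebraic group $G_B$ together with a homomorphism $G\to G_B$, which need not be a ``quotient'' (factor group) in the paper's technical sense absent \textbf{(Hopf)}; fortunately your argument never needs more than the homomorphism and the identification $\Alg\cC(\varinjlim B_\alpha,A)\simeq\varprojlim\Alg\cC(B_\alpha,A)$.
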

\begin{proof}
Consider all representations $G\to GL_X$, $X\in\cC$. Taking the image of $\cO(GL_X)\to \cO(G)$ yields a subgroup $G'<GL_X$ through which the morphism factors. Moreover, by the proof of Lemma~\ref{LemAlgG}, the subalgebra $\varinjlim\cO(G')$ of $\cO(G)$ will be the entire algebra.
\end{proof}

\subsection{Some constructions}
We focus on some constructions of affine group schemes associated to an affine group scheme $G$ in~$\cC$ and a representation $(X,\rho)\in\Rep_{\cC}G$.

\subsubsection{} 
We define the semidirect product $G\ltimes \mA_X$ (represented by $\cO(G)\otimes \Sym X^\vee$),
with obvious group law using the defining $\rho(A):G(A)\to \Aut_A(A\otimes X)$.

\subsubsection{Stabiliser}\label{DefStab} Consider a monomorphism (in~$\cC$)
$$\nu:\unit\hookrightarrow X.$$
We have the corresponding morphisms $ \eta\otimes \nu$ and $\rho\circ \nu$
$$\unit\rightrightarrows \cO(G)\otimes X.$$
Denote by $\cO(G_\nu)$ the quotient algebra of $\cO(G)$ with respect to the ideal corresponding to the relation $X^\vee\rightrightarrows \cO(G)$. This corresponds to a subgroup $G_\nu<G$ given by
$$G_\nu(A)\;=\; \{g\in G(A)\,|\,\rho(g)\circ(A\otimes\nu) = A\otimes\nu\}.$$
In particular, $G_\nu$ is the maximal subgroup $H<G$ for which $\nu$ is a morphism in~$\Rep_{\cC}H$.

We have the following standard alternative realisation of stabiliser subgroups.
\begin{lemma}\label{LemStabEq}
We have an equaliser diagram
$$G_\nu\;\to\;G\;\rightrightarrows \; G \ltimes \mA_X$$
in the category of affine group schemes in~$\cC$.
\end{lemma}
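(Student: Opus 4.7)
The plan is to exhibit the two morphisms $G\rightrightarrows G\ltimes \mA_X$ explicitly and then check on $A$-points that their equaliser is precisely the subfunctor $G_\alpha$ described in~\ref{DefStab}.

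First I would define $\iota:G\to G\ltimes \mA_X$ as the canonical inclusion, which on $A$-points sends $g\in G(A)$ to $(g,0)$. This is manifestly a morphism of affine group schemes, since it is dual to the obvious projection $\cO(G)\otimes \Sym X^\vee\tto \cO(G)$. Second, I would define $\tau_\alpha:G\to G\ltimes\mA_X$ on $A$-points by $g\mapsto (g,\rho(g)(A\otimes\alpha)-(A\otimes\alpha))$, where the difference is taken in the abelian group $\mA_X(A)=\Hom(\unit,A\otimes X)$, and $\rho(g)\in\Aut_A(A\otimes X)$ acts pointwise. That $\tau_\alpha$ is a group scheme homomorphism is the standard $1$-cocycle computation: writing $c(g):=\rho(g)(A\otimes\alpha)-(A\otimes\alpha)$, the multiplication in the semidirect product gives
$$(g,c(g))(h,c(h))=(gh,\,c(g)+\rho(g)c(h))=(gh,\,\rho(g)\alpha-\alpha+\rho(g)\rho(h)\alpha-\rho(g)\alpha)=(gh,c(gh)).$$
Naturality in $A$ is automatic, so $\tau_\alpha$ is represented by a well-defined algebra morphism $\cO(G)\otimes \Sym X^\vee\to \cO(G)$.

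Next, I would observe that $\iota(g)=\tau_\alpha(g)$ in $(G\ltimes\mA_X)(A)$ holds if and only if $c(g)=0$, i.e.\ $\rho(g)(A\otimes\alpha)=A\otimes\alpha$, which is exactly the defining condition for $g$ to lie in $G_\alpha(A)$. So on $A$-points, the equaliser of $(\iota,\tau_\alpha)$ is the subfunctor $G_\alpha$. Finally, equalisers in the category of affine group schemes in $\cC$ may be computed at the level of representing Hopf algebras (dually as coequalisers in $\Alg\cC$): the coequaliser of the two algebra maps $\cO(G)\otimes\Sym X^\vee\rightrightarrows \cO(G)$ inherits a Hopf algebra structure and presents the pointwise equaliser. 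Since that pointwise equaliser is representable by $\cO(G_\alpha)$ (by the very definition in~\ref{DefStab}, which quotients by the ideal generated by the image of $(\rho-\eta\otimes\alpha)^\vee:X^\vee\to\cO(G)$, corresponding precisely to $c(g)=0$), we conclude that $G_\alpha\to G\rightrightarrows G\ltimes\mA_X$ is the equaliser.

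No real obstacle arises: the only point requiring care is the $1$-cocycle check that $\tau_\alpha$ respects multiplication, and matching the ideal used in~\ref{DefStab} with the one cutting out the equaliser (which amounts to unwinding the adjunction between $\Hom(\unit,\cO(G)\otimes X)$ and $\Hom(X^\vee,\cO(G))$).
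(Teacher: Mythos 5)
Your proof is correct, and it is precisely the standard argument the paper has in mind — the lemma is stated there without proof, as a ``standard alternative realisation'' of stabilisers: the two homomorphisms are the inclusion $g\mapsto(g,0)$ and the cocycle-twisted section $g\mapsto(g,\rho(g)\circ(A\otimes\alpha)-(A\otimes\alpha))$, their pointwise equaliser is the subgroup functor $G_\alpha$ of~\ref{DefStab}, and representability (the coequaliser of the two algebra maps $\cO(G)\otimes\Sym X^\vee\rightrightarrows\cO(G)$ in $\Alg\cC$ is the quotient of $\cO(G)$ by exactly the ideal of~\ref{DefStab}) together with the full embedding of affine group schemes into group-valued functors identifies this pointwise equaliser with the equaliser in the category of affine group schemes. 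Your cocycle verification uses the multiplication convention $(g,x)(h,y)=(gh,\,x+\rho(g)y)$, which is the ``obvious group law'' the paper intends, so nothing is missing.
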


\subsubsection{Transporter} For a subobject $U\subset X$ in~$\cC$, we have the corresponding composite morphism
$$U\hookrightarrow X\xrightarrow{\rho}\cO(G)\otimes X\tto \cO(G)\otimes X/U.$$
Denote by $\cO(G_U)$ the quotient of $\cO(G)$ with respect to the ideal generated by the image of the corresponding morphism
$$U\otimes (X/U)^\vee\;\to\; \cO(G).$$
It follows that $G_U$ is the maximal subgroup $H<G$ for which $U$ is a subobject of $X$ in~$\Rep_{\cC} H$.

The most natural case of the following lemma is $H=G_U$, implying $G_U=G_\nu$, but we will need it in the following generality.

 \begin{lemma}\label{LemTransStab}
Consider $U\subset X$ in~$\cC$ and a subgroup $H<G_U$. If $\Res^{G_U}_HU^\vee\subset \Res^{G}_{H}Y$ (as $H$-representations) for some $Y\in \Rep G$, then $H<G_\mu<G_U$ for the composite in $\cC$
 $$\mu: \unit\xrightarrow{\co_U} U\otimes U^\vee\hookrightarrow X\otimes Y.$$

 \end{lemma}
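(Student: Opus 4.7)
The plan is to prove the two inclusions $H<G_\alpha$ and $G_\alpha<G_U$ separately. The first is essentially a check that each constituent of $\alpha$ is $H$-equivariant, while the second proceeds by passing to the map adjoint to $\alpha$ and observing that its kernel is forced to be $G_\alpha$-stable.

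For the inclusion $H<G_\alpha$, I would simply verify that the three arrows composing $\alpha$ are all $H$-equivariant. The coevaluation $\co_U:\unit\to U\otimes U^\vee$ is $\mathrm{Aut}(U)$-equivariant and hence equivariant for any group acting on $U$; since $H<G_U$, the object $U$ carries a canonical $H$-module structure (as a subobject of $\Res X$), so $\co_U$ is $H$-equivariant. The inclusion $U\hookrightarrow X$ is $G_U$-equivariant by the very definition of $G_U$, hence $H$-equivariant. Finally, $U^\vee\hookrightarrow Y$ is $H$-equivariant by the hypothesis $\Res^{G_U}_HU^\vee\subset \Res^G_HY$. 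Composing, $\alpha$ lies in $\Hom^H(\unit,X\otimes Y)$, so $H$ stabilises $\alpha$, which gives $H<G_\alpha$.

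For the inclusion $G_\alpha<G_U$, I would apply the adjunction $\Hom(\unit,X\otimes Y)\simeq \Hom(X^\vee,Y)$ to translate $\alpha$ into a morphism $f:X^\vee\to Y$ in $\cC$. Unwinding definitions, $f$ is precisely the composite $X^\vee\twoheadrightarrow U^\vee\hookrightarrow Y$, so its kernel is the annihilator $U^\perp:=\ker(X^\vee\twoheadrightarrow U^\vee)\subset X^\vee$. Now take any $A\in\Alg\cC$ and any $g\in G_\alpha(A)$; then $g$ fixes $A\otimes f$ in the sense that the diagram of $A$-modules involving $g_{X^\vee}$ and $g_Y$ commutes. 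Since $g_{X^\vee}$ is an $A$-module automorphism and $g_Y\circ(A\otimes f)=(A\otimes f)\circ g_{X^\vee}$, it follows that $g_{X^\vee}$ preserves $\ker(A\otimes f)=A\otimes U^\perp$. Dualising (or, equivalently, passing to the quotient $X\twoheadrightarrow X/U$ which is dual to the inclusion $U^\perp\hookrightarrow X^\vee$), $g_X$ preserves $A\otimes U\subset A\otimes X$, which is precisely the condition $g\in G_U(A)$.

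The two inclusions together give $H<G_\alpha<G_U$. The only delicate step is the second: one must remember that the stabiliser of a morphism automatically stabilises its kernel, and that preservation of $U^\perp\subset X^\vee$ by $g_{X^\vee}$ is equivalent (via the perfect pairing $X\otimes X^\vee\to\unit$, which is $G$-equivariant) to preservation of $U\subset X$ by $g_X$. No subtlety involving sheafification or flatness enters, since everything is formulated pointwise on $A$-points in the definitions of $G_\alpha$ and $G_U$ recalled in \ref{DefStab}.
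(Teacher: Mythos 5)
Your proof is correct in substance. The first inclusion $H<G_\alpha$ is handled exactly as in the paper: both you and the paper check that $\co_U$, $U\hookrightarrow X$ and $U^\vee\hookrightarrow Y$ are morphisms in $\Rep_{\cC}H$ and invoke the maximality characterisation of $G_\alpha$ from \ref{DefStab}. For the second inclusion $G_\alpha<G_U$ your route genuinely differs. The paper stays at the level of the universal coaction: it writes down the defining vanishing condition of $\cO(G_\alpha)$ applied to $\alpha$, then ``un-tensors'' the factor $U^\vee$ using the coevaluation/zigzag identity, directly producing the vanishing of $U\to\cO(G_\alpha)\otimes(X/U)$ that defines $G_U$. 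You instead pass to $A$-points, transport $\alpha$ through the adjunction $\Hom(\unit,X\otimes Y)\simeq\Hom(X^\vee,Y)$ to the morphism $f:X^\vee\tto U^\vee\hookrightarrow Y$ (this identification of $f$ is correct, again by the zigzag identity), observe that a point stabilising $\alpha$ intertwines $A\otimes f$ and hence preserves its kernel $A\otimes U^\perp$, and dualise back. Your version is arguably more transparent; the paper's version is, in effect, your argument run at the universal point $A=\cO(G_\alpha)$, and it avoids the (standard, but suppressed in your write-up) Yoneda step that pointwise containment $G_\alpha(A)\subseteq G_U(A)$ for all $A$ implies containment of the corresponding closed subgroups.

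One slip should be repaired. Your final equivalence is stated slightly wrongly: with the contragredient action $g_{X^\vee}=\left((g_X)^{-1}\right)^t$, preservation of $A\otimes U^\perp$ by $g_{X^\vee}$ is equivalent, via the double-annihilator argument, to preservation of $A\otimes U$ by $(g_X)^{-1}$, not by $g_X$. This is harmless, since $G_\alpha(A)$ is a subgroup: apply the argument to $g^{-1}\in G_\alpha(A)$ to conclude that $g_X$ preserves $A\otimes U$. Alternatively, you can sidestep the inverse entirely by working with the dual morphism $f^\vee:Y^\vee\to X$, whose image is $U$: a point stabilising $\alpha$ also intertwines $A\otimes f^\vee$, so $g_X$ preserves the image $A\otimes U$ directly.
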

\begin{proof}
Since $\co_U$, $U\hookrightarrow X$ and $U^\vee\hookrightarrow Y$ are morphisms in~$\Rep H$, it follows by definition of~$G_\mu$ that $H<G_\mu$.

To show that $G_\mu<G_U$ it suffices to show that
$$U\hookrightarrow X\xrightarrow{c}\cO(G_\mu)\otimes X\stackrel{\cO(G_\mu)\otimes\pi}{\tto} \cO(G_\mu)\otimes (X/U)$$
composes to $0$. Here and below, we denote any co-action by $c$ write $\pi:X\tto X/U$. From the definition of $\cO(G_\mu)$, it follows in particular that 
$$\unit \xrightarrow{\co_U} U\otimes U^\vee\hookrightarrow X\otimes Y\xrightarrow{c} \cO(G_\mu)\otimes X\otimes Y\stackrel{\cO(G_\mu)\otimes \pi\otimes Y}{\tto} \cO(G_\mu)\otimes (X/U)\otimes Y$$
composes to zero. We can reorder this composition as the composition of 
$$\unit \xrightarrow{\co_U} U\otimes U^\vee\hookrightarrow X\otimes U^\vee\xrightarrow{c \otimes U^\vee}\cO(G_\mu)\otimes X\otimes U^\vee\stackrel{\cO(G_\mu)\otimes\pi\otimes U^\vee}{\tto}\cO(G_\mu)\otimes (X/U)\otimes U^\vee$$
followed by a monomorphism coming from $U^\vee\hookrightarrow Y$ and an isomorphism coming from the coaction isomorphism $\cO(G_\mu)\otimes Y\xrightarrow{\sim}\cO(G_\mu) \otimes Y$. Hence the last displayed composite must be zero, which is equivalent to the desired composition being zero.
\end{proof}

We conclude this section by observing that under some natural assumptions all subgroups are transporters.
\begin{lemma}\label{LemTransp}
Assume that $G$ is an algebraic group in~$\cC$ and $H<G$ a subgroup corresponding to a finitely generated ideal $J$ in~$\cO(G)$. Then $H=G_U$ for some $(Z,\rho)\in \Rep_{\cC}G$ and $\cC\ni U\subset Z$.
\end{lemma}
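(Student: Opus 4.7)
The plan is to mimic the classical Chevalley-style construction of a subgroup as the transporter of a subobject inside a subrepresentation of the regular representation, staying with transporters (not stabilisers) because one cannot in general produce $1$-dimensional exterior powers in a tensor category (cf.\ Remark~\ref{RemChev}). Concretely, I would first use the finite generation of $J$ to choose a subobject $\cC\ni U_0\subset J$ generating $J$ as an ideal. Viewing $\cO(G)$ as an $\cO(G)$-comodule in $\Ind\cC$ via $\Delta$ (the right regular representation of $G$), Lemma~\ref{LemIndRep}(2) then yields a subrepresentation $\cC\ni Z\subset\cO(G)$ with $U_0\subset Z$. Setting $U:=J\cap Z$ gives a subobject of $Z$ in $\cC$ (it is the kernel of $Z\hookrightarrow\cO(G)\twoheadrightarrow\cO(H)$) still containing $U_0$, hence still generating $J$ as an ideal. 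The claim to verify is then $H=G_U$.

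For the inclusion $H<G_U$, I would check that $H$ preserves $U$ inside $Z$. The object $Z$ is automatically $H$-stable since it is $G$-stable, and $J$ is $H$-stable under the right regular coaction because the Hopf-ideal condition is precisely the vanishing of the composite $J\hookrightarrow\cO(G)\xrightarrow{\Delta}\cO(G)\otimes\cO(G)\twoheadrightarrow\cO(H)\otimes\cO(H)$ (already the second factor $J\to\cO(H)$ is zero). Intersecting these two stabilities shows $U$ is $H$-stable in $Z$.

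The reverse inclusion $G_U<H$ is where the content lies. Writing $\pi_U:\cO(G)\twoheadrightarrow\cO(G_U)$ for the defining quotient and $\rho:=(1\otimes\pi_U)\circ\Delta$ for the induced $G_U$-coaction on $\cO(G)$, the universal property of $G_U$ reads $\rho(U)\subset U\otimes\cO(G_U)$. Because $\rho$ is an algebra morphism and $J=\cO(G)\cdot U$, this immediately upgrades to $\rho(J)\subset J\otimes\cO(G_U)$. Applying $\epsilon\otimes 1$, together with $\epsilon(J)=0$ (as $J$ is a Hopf ideal) and the counit identity $(\epsilon\otimes 1)\rho=\pi_U$, yields $\pi_U(J)=0$, so $J\subset\ker\pi_U$ and hence $G_U<H$. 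This last step is the main obstacle, and also the exact place where finite generation of $J$ is genuinely used: via the reduction that preservation of the generator $U$ forces preservation of all of $J$. Every other manipulation is formal, following from exactness of $-\otimes-$, the fact that $\Delta$ is an algebra morphism, and standard counit manipulations.
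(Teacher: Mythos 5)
Your proof is correct and takes essentially the same approach as the paper: the identical construction (a subobject of $\cO(G)$ in $\cC$ generating $J$, enlarged to a subrepresentation $Z\subset\cO(G)$ of the regular representation via Lemma~\ref{LemIndRep}, with $U=J\cap Z$). The only difference is that where the paper simply cites the classical argument of \cite[II.\S 2, no 3]{DG} for the verification that $H=G_U$, you have written out that standard Hopf-algebraic verification explicitly, and correctly.
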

\begin{proof}
By assumption, there is a generator $\cC\ni Z_0\subset \cO(G)$ so that $J\cap Z_0$ generates $J$. We choose $Z=Z_1\subset\cO(G)$ a subrepresentation containing $Z_0$, guaranteed to exist by Lemma~\ref{LemIndRep}, and $U=Z\cap J$. That $H=G_U$ follows as in the classical case, see \cite[II.\S2, no 3]{DG}.
\end{proof}

\subsection{Observable and epimorphic subgroups}\label{SecObs}
Consider an affine group scheme $G$ in~$\cC$ with subgroup $H$. We denote the restriction functor by
$$\Phi=\Res^G_H:\Rep_{\cC}G\to \Rep_{\cC}H.$$
We have
$$\Phi_\ast=\Ind^G_H:\Rep^\infty_{\cC} H\to\Rep^\infty_{\cC}G$$
with
$$\Ind^G_HX:=\mathrm{Eq}(\cO(G)\otimes X\rightrightarrows\cO(G)\otimes \cO(H)\otimes X).$$
Here, one arrow is given by $\cO(G)\otimes \rho$ for the coaction $\rho$ and the other by $\Delta\otimes X$ followed by projection onto $\cO(H).$ In particular, we find that $\Ind^G_H\unit$ is given by the right $H$-invariants $\cO(G)^H$ in~$\cO(G)$.

%The canonical free $\cO(G)$-module structure on $\cO(G)\otimes X$ restricts to an $\cO(G)^H:=\Ind^G_H\unit$-module structure on $\Ind^G_HX$. This follows from direction computation, or alternatively from the general observation that the monad $\Phi_\ast\Phi$ acts naturally on $\Phi_\ast X$.

\begin{prop}\label{PropEpi}
The following conditions on $H<G$ are equivalent:
\begin{enumerate}
\item $H\to G$ is an epimorphism in the category of affine group schemes in~$\cC$.
\item $\Phi$ is full on isomorphisms.
\item $\Phi$ is full.
\item $\cO(G)^H=\unit$.
\item If $H$ is contained in a stabliser subgroup $G_\nu<G$, then $G_\nu=G$.
\end{enumerate}
Such subgroups are called {\bf epimorphic}.
\end{prop}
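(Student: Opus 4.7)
My plan is to establish the cycle $(1)\Rightarrow (5)\Rightarrow (4)\Rightarrow (3)\Rightarrow (1)$, the trivial $(3)\Rightarrow (2)$, and separately $(2)\Rightarrow (5)$ to bring in the remaining condition.

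For the easy pieces: $(1)\Rightarrow (5)$ is immediate from Lemma~\ref{LemStabEq}, since the two morphisms $G\rightrightarrows G\ltimes\mA_X$ with equaliser $G_\alpha$ agree on $H$ by the hypothesis $H<G_\alpha$, hence agree on $G$ by the epimorphism property, forcing $G_\alpha=G$. For $(3)\Rightarrow(1)$, given $f,g:G\to K$ agreeing on $H$, I reduce to the case of $K$ algebraic via Corollary~\ref{CorProAlg}, embed $K\hookrightarrow GL_X$ for some $X\in\cC$ via Lemma~\ref{LemAlgG}, and note that $\id_X\colon X_f\to X_g$ is an $H$-equivariant map (because $f|_H=g|_H$), hence $G$-equivariant by (3) combined with the faithfulness of $\Phi$, so $f=g$. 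For $(2)\Rightarrow (5)$, given an $H$-equivariant monomorphism $\alpha:\unit\hookrightarrow X$ with $X\in\Rep_\cC G$, I consider the upper triangular $H$-automorphism
\[
\phi \;=\; \begin{pmatrix} \id_X & \alpha \\ 0 & \id_\unit \end{pmatrix} \;\in\; \Aut_H(X\oplus\unit).
\]
By (2) it lifts to some $\psi\in\Aut_G(X\oplus\unit)$; faithfulness of $\Phi$ on Hom-groups forces the diagonal blocks of $\psi$ to be $\id$ and the lower-left block to be zero, while the upper-right block is a $G$-invariant morphism $\unit\to X$ whose restriction to $H$ is $\alpha$. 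Hence $\alpha\in X^G$ and $G_\alpha=G$.

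For $(4)\Rightarrow (3)$, fullness of $\Phi$ on Hom-groups reduces via the duality $\Hom(X,Y)\simeq\Hom(\unit,Y\otimes X^\vee)$ to showing $Z^H=Z^G$ for every $Z\in\Rep_\cC G$. I use the $G$-equivariant coaction embedding $Z\hookrightarrow Z_{\mathrm{triv}}\otimes\cO(G)$, where $Z_{\mathrm{triv}}\in\cC$ denotes the underlying object endowed with trivial $G$-action and $G$ acts only through $\cO(G)$. Since tensoring with the dualizable $Z_{\mathrm{triv}}$ commutes with the invariants functor, $(Z_{\mathrm{triv}}\otimes\cO(G))^H=Z_{\mathrm{triv}}\otimes\cO(G)^H$ and similarly for $G$; using (4) together with the standard identity $\cO(G)^G=\unit$ (from $\Ind^G_G=\id$), both expressions collapse to $Z_{\mathrm{triv}}$, and pulling back along the embedding yields $Z^H=Z\cap Z_{\mathrm{triv}}=Z^G$.

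The hardest step is $(5)\Rightarrow (4)$. Assuming $\cO(G)^H\neq\unit$, the inclusion $\eta(\unit)\hookrightarrow\cO(G)^H$ is not an isomorphism, so I pick a compact subobject $\cC\ni V\subseteq\cO(G)^H$ whose image in the quotient $\cO(G)/\eta(\unit)$ is non-zero, and by Lemma~\ref{LemIndRep}(2) enlarge it to a $G$-subrepresentation $X_1\in\cC$ of $\cO(G)$ with $V+\eta(\unit)\subseteq X_1$. Equipping $V^\vee$ with the trivial $G$-action, I form
\[
\alpha:\; \unit \xrightarrow{\co_V} V\otimes V^\vee \;\hookrightarrow\; X_1\otimes V^\vee,
\]
which is an $H$-equivariant monomorphism (the coevaluation is split mono and $V$ is $H$-trivial). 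Hence $H<G_\alpha$, and (5) gives $G_\alpha=G$, so $\alpha$ factors through $(X_1\otimes V^\vee)^G=X_1^G\otimes V^\vee=\eta(\unit)\otimes V^\vee$ (using $\cO(G)^G=\unit$ and $\eta(\unit)\subseteq X_1$). Composing $\alpha$ with the quotient $X_1\otimes V^\vee\twoheadrightarrow(X_1/\eta(\unit))\otimes V^\vee$ must therefore vanish; but by duality this composite corresponds to the map $V\hookrightarrow X_1\twoheadrightarrow X_1/\eta(\unit)$, which is non-zero by the choice of $V$, giving a contradiction. The main technical difficulty is precisely this conversion of an excess of $H$-invariants inside the Ind-object $\cO(G)$ into a concrete stabiliser violation at the level of a compact subrepresentation $X_1$, for which the trivial-$G$-action companion $V^\vee$ is indispensable.
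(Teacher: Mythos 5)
Your architecture is genuinely different from the paper's. The paper proves $(3)\Leftrightarrow(4)$ in one stroke from the adjunction $\Res^G_H\dashv\Ind^G_H$, gets $(3)\Rightarrow(2)\Rightarrow(1)$ by testing pairs of homomorphisms against $GL_X$ and using pro-algebraicity (Lemma~\ref{LemAlgG}, Corollary~\ref{CorProAlg}), and closes the circle with $(1)\Rightarrow(5)\Rightarrow(3)$ via stabilisers. You instead run the cycle $(1)\Rightarrow(5)\Rightarrow(4)\Rightarrow(3)\Rightarrow(1)$ and attach $(2)$ by $(3)\Rightarrow(2)\Rightarrow(5)$. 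Of these, $(1)\Rightarrow(5)$, $(3)\Rightarrow(1)$, $(3)\Rightarrow(2)$, the unipotent-matrix argument for $(2)\Rightarrow(5)$ (which has the pleasant byproduct that ``full on isomorphisms'' implies ``full'' without passing through $(1)$), and the proof of $(4)\Rightarrow(3)$ by embedding $Z$ into $Z_{\mathrm{triv}}\otimes\cO(G)$ via the coaction are all correct.

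The gap is in $(5)\Rightarrow(4)$, at the parenthetical ``$V$ is $H$-trivial'', and it is exactly the left/right issue on which this step turns. There are two regular representation structures on $\cO(G)$. The one for which $\cO(G)^H=\Ind^G_H\unit$ is a $G$-subrepresentation --- the structure the paper uses for induction, with $\Delta$ read so that the coefficients sit on the side not used for the $H$-invariance condition --- restricts to $H$ \emph{nontrivially} on $\cO(G)^H$: classically $\cO(G)^H$ is the algebra of functions on $G/H$, on which $H$ acts by left translation (already for $G=S_3$, $H=S_2$ this permutes the three cosets nontrivially). So if $X_1$ is taken to be a $G$-subrepresentation of $\cO(G)$ for that structure --- the only structure your text makes available --- then your $\alpha$ is not $H$-equivariant and the step collapses. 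What the argument needs is the other regular structure, with coaction $f\mapsto\sum S(f_{(2)})\otimes f_{(1)}$: this is a coaction only thanks to the antipode, its $H$-fixed points are precisely the paper's equaliser $\cO(G)^H$, and its $G$-fixed points are $\eta(\unit)$. Taking $X_1$ to be a compact $G$-subrepresentation of $\cO(G)$ for \emph{this} structure (Lemma~\ref{LemIndRep}(2) applies verbatim), your computation goes through and reproduces the classical argument of Brion and Grosshans. The same silent identification occurs harmlessly in your $(4)\Rightarrow(3)$: the invariants appearing there are the left invariants ${}^H\cO(G)$ rather than $\cO(G)^H$, but the antipode interchanges them, so one is $\unit$ if and only if the other is. In $(5)\Rightarrow(4)$, by contrast, the distinction is the entire content of the step, so the choice of structure and the two fixed-point identifications must be made explicit for the proof to stand.
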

\begin{proof}
First we show that (3) and (4) are equivalent. By duality and adjunction, property~(3) is equivalent to
$$\Hom_G(X,\unit)\to \Hom_G(X,\Ind^G_H\unit)$$
being an isomorphism for all $X\in \Rep_{\cC}G$. This is equivalent to condition in (4).

Obviously (3) implies (2).

Now we prove that (2) implies (1). Under condition (2) it follows easily that for a diagram $H\to G\rightrightarrows GL_X$ (with $X\in\cC$) with the two composites equal, the two morphisms $G\rightrightarrows GL_X$ have to be equal. It follows by Lemma~\ref{LemAlgG} that we can replace $GL_X$ by an arbitrary algebraic group and keep the same conclusion. We can then extend to arbitrary affine group schemes by Corollary~\ref{CorProAlg}.

%Consider a diagram $H\to G\rightrightarrows G_1$ in the category of affine group schemes such that both composites are the same. In particular the composites in
%$$\Rep_{\cC}G_1\rightrightarrows \Rep_{\cC}G\to \Rep_{\cC} H$$
%are equal. However, by assumption, the right arrow is the inclusion of a full subcategory, so the left two arrows already have to be equal. It follows easily that the morphisms $G\to G_1$ should then also be equal? -- Alternatively, work with $H\to G\rightrightarrows GL(X)$

Finally, that (1)$\Rightarrow$(5)$\Rightarrow$(3) can be proved as in \cite[Section~2]{Br}. Indeed, assume that $\Phi$ is not full. This means there is a $G$-representation $(X,\rho)$ with some morphism $\nu:\unit\hookrightarrow X$ in~$\cC$ which is $H$-linear but not $G$-linear. Consequently, $H$ is a subgroup of $G_\nu<G$. Thus (5) implies (3). Since stabilisers are equalisers, see Lemma~\ref{LemStabEq}, subgroups of them cannot yield epimorphisms to $G$, showing that (1) implies (5).
\end{proof}

%
%\begin{remark}
%One can also observe, via \ref{PropEpi}(2), that $H<G$ is epimorphic if and only if
%$$\Rep_{\cC}G\;\to\; (\Rep_{\cC}G) \times_{\Rep_{\cC}H}(\Rep_{\cC}G)$$
%is an equivalence.
%\end{remark}

\begin{definition}\label{DefObs}
The subgroup $H<G$ is called {\bf observable} if $\Phi$ satisfies the equivalent conditions in  Proposition~\ref{Propadj0}, {\it i.e.} if either
\begin{enumerate}
\item $\Ind^G_H$ is faithful; or
\item Every $X\in \Rep_{\cC}H$ is a subobject of $\Res^G_HY$ for some $Y\in\Rep_{\cC}G$.
\end{enumerate}
\end{definition}

\begin{prop}\label{PropObsMono}
For a subgroup $H<G$, we have the following implications between the properties below:
$$(1)\;\Rightarrow\; (2)\;\Leftrightarrow \; (3)\;\Rightarrow (4).$$
\begin{enumerate}
\item $H<G$ is observable;
\item $H$ is the intersection of a family of stabiliser subgroups $G_\nu<G$;
\item $H<G$ is the maximal subgroup which acts trivially on the subobject ${}^H\cO(G)$ of the left regular representation $\cO(G)$.
\item $H\to G$ is a regular monomorphism in the category of affine group schemes in~$\cC$.
\end{enumerate}
\end{prop}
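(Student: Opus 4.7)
The plan is to prove the chain $(1)\Rightarrow(2)\Leftrightarrow(3)\Rightarrow(4)$ in three pieces. For the equivalence $(2)\Leftrightarrow(3)$, I would transport everything via matrix coefficients: given an $H$-equivariant $\alpha:\unit\hookrightarrow Y$ with $Y\in\Rep_{\cC}G$ and any $\beta:Y\to\unit$, the composite $c_{\alpha,\beta}:=(\beta\otimes\mathrm{id})\circ\rho_Y\circ\alpha$ factors through the right $H$-invariants ${}^H\cO(G)=\Ind^G_H\unit\subset\cO(G)$; conversely, since $\cO(G)=\varinjlim_Y Y$ over $Y\in\Rep_{\cC}G$, every compact subobject of ${}^H\cO(G)$ sits inside some such $Y$ and arises this way. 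A subgroup $K<G$ thus acts trivially on ${}^H\cO(G)$ precisely when it fixes every such $c_{\alpha,\beta}$, equivalently when $K<G_\alpha$ for every $\alpha$; taking the family in~(2) to consist of all such $\alpha$ then yields both directions.

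For $(2)\Rightarrow(4)$, Lemma~\ref{LemStabEq} realises each $G_\alpha$ as the equaliser of the pair $G\rightrightarrows G\ltimes\mA_{Y_\alpha}$. Since small products exist in the category of affine group schemes in~$\cC$ (as tensor products of the Hopf algebras in~$\Ind\cC$), the intersection $H=\bigcap_\alpha G_\alpha$ arises as the equaliser of the joint morphism $G\rightrightarrows\prod_\alpha(G\ltimes\mA_{Y_\alpha})$, exhibiting $H\to G$ as a regular monomorphism.

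For $(1)\Rightarrow(2)$, set $H':=\bigcap_\alpha G_\alpha$ where $\alpha$ ranges over \emph{all} $H$-equivariant $\alpha:\unit\hookrightarrow Y$ with $Y\in\Rep_{\cC}G$; clearly $H<H'$, and the main task is to show equality. The key step is that every $H$-stable subobject $U\subset Y$ in~$\cC$ (with $Y\in\Rep_{\cC}G$) is automatically $H'$-stable. Indeed, observability applied to $U^\vee$ (via Definition~\ref{DefObs}(2)) provides $Y'\in\Rep_{\cC}G$ with $U^\vee\subset\Res^G_HY'$ as $H$-representations, and Lemma~\ref{LemTransStab} then supplies an $H$-equivariant $\alpha:\unit\to Y\otimes Y'$ with $H<G_\alpha<G_U$; as $\alpha$ lies in our family, we get $H'<G_\alpha<G_U$.

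Applying this key step (with Lemma~\ref{LemIndRep}(2) to reduce to compact pieces) to the right $H$-costable ideal $I_H=\ker(\cO(G)\tto\cO(H))$, one obtains that $I_H$ is also right $H'$-costable. Writing $\pi_H,\pi_{H'}$ for the Hopf-algebra surjections $\cO(G)\tto\cO(H),\cO(H')$ and $\pi$ for the induced $\cO(H')\tto\cO(H)$ (so $\pi_H=\pi\circ\pi_{H'}$), the right $H'$-coaction $(\mathrm{id}\otimes\pi_{H'})\circ\Delta_G$ on $\cO(G)$ then descends to an algebra coaction $\tilde\rho:\cO(H)\to\cO(H)\otimes\cO(H')$ lifting $\Delta_H$ along $\pi$, i.e.\ $(\mathrm{id}\otimes\pi)\circ\tilde\rho=\Delta_H$. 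Setting $s:=(\epsilon_H\otimes\mathrm{id})\circ\tilde\rho:\cO(H)\to\cO(H')$ (an algebra morphism), one obtains $\pi\circ s=(\epsilon_H\otimes\mathrm{id})\circ\Delta_H=\mathrm{id}_{\cO(H)}$ by the counit axiom, while in Sweedler notation, for $f\in\cO(G)$, $s\circ\pi(\pi_{H'}(f))=s(\pi_H(f))=\sum\epsilon_G(f_{(1)})\pi_{H'}(f_{(2)})=\pi_{H'}(f)$, so $s\circ\pi=\mathrm{id}_{\cO(H')}$. Hence $\pi$ is an isomorphism and $H=H'$. The main obstacle will be careful execution of the key step for the ind-object $\cO(G)$, in particular ensuring Lemma~\ref{LemTransStab} applies to the compact $H$-stable pieces of $I_H$; once $\tilde\rho$ is in place, the concluding Hopf-algebra computation is formal.
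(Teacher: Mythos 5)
Your proposal is correct and follows essentially the same route as the paper's proof: the same three pieces, with Lemma~\ref{LemIndRep} supplying cofinal compact subrepresentations $X_i\subset\cO(G)$, observability plus Lemma~\ref{LemTransStab} converting the transporters $G_{X_i\cap I_H}$ into stabilisers, Lemma~\ref{LemStabEq} together with products of affine group schemes giving (2)$\Rightarrow$(4), and the adjunction between $H$-fixed vectors and $G$-maps into $\cO(G)$ (your matrix coefficients) giving (2)$\Leftrightarrow$(3). The only real divergence is the conclusion of (1)$\Rightarrow$(2): the paper verifies directly on $A$-points (composing the coaction with $a\otimes p$ and then with the counit) that the intersection of the transporters is contained in $H$, whereas you run the same counit computation universally as a Hopf-algebra section argument ($\pi\circ s=\mathrm{id}$, $s\circ\pi=\mathrm{id}$) showing $\cO(H')\to\cO(H)$ is an isomorphism --- these are the pointwise and universal forms of one and the same calculation.
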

\begin{proof}
To prove that (1) implies (2) consider the ideal $J<\cO(G)$ which defines $H$. By Lemma~\ref{LemIndRep}, we can take a family of compact subrepresentations $X_i\subset \cO(G)$ such that every $\cC\ni X\subset \cO(G)$ is contained in some $X_i$. We also set $U_i=X_i\cap J$. We claim that $H=\cap_i G_{U_i}$. Indeed, $H<G_{U_i}$ follows by definition. On the other hand, for $A\in\Alg\cC$ consider an element $a\in\cap_i G_{U_i}(A)$. For the corresponding homomorphism $a:\cO(G)\to A$, the composition
$$U_i\xrightarrow{\Delta} \cO(G)\otimes \cO(G)\xrightarrow{a\otimes p} A\otimes \cO(H)$$
is zero, for $p$ the projection onto $\cO(G)/J=\cO(H)$. Composing with $A\otimes\varepsilon$ thus shows that~$a$ restricts to zero on every $U_i$ and hence on $J$. Therefore $a\in H(A)$, proving the claim.
Now we can apply Lemma~\ref{LemTransStab} to replace the transporter subgroups $G_{U_i}$ by stabiliser subgroups $H<G_{\nu_i}<G_{U_i}$, so we conclude indeed $H=\cap_i G_{\nu_i}$.

Now we prove that (2) implies (4). By Lemma~\ref{LemStabEq}, every $G_\nu\to G$ is regular. For a collection of equalisers $G_i\to G\rightrightarrows G_i'$, we have an equaliser
$$\cap_i G_i\;\to\; G\;\rightrightarrows\; \prod_i G_i'$$
which concludes the proof.

Finally we prove equivalence between (2) and (3). Every $X\in \Rep_{\cC}G$ can be embedded in a representation of the form $\cO(G)\otimes Y$, for some $Y\in\cC$, with action derived from the left regular action. By using adjunction, we can then see that (2) is equivalent to the condition that $H$ is the maximal subgroup which acts trivially on a certain collection of subobjects $\cC\ni Z\subset \cO(G)$ (automatically in~${}^H\cO(G)$). This is in turn equivalent to (3), by Lemma~\ref{LemIndRep}.
\end{proof}

%\begin{remark}
%It follows from Proposition~\ref{PropObsMono} that a subgroup $H<G$ can only be simultaneously epimorphic and observable if $H=G$ (since regular monomorphisms can only be epimorphisms if they are isomorphisms).
%\end{remark}

Consider the following potential property on $\cC$:
\begin{enumerate}
\item[ {\bf (Obs)}] The conditions (1)-(4) in Proposition~\ref{PropObsMono} are equivalent for every subgroup of every affine group scheme in~$\cC$.
\end{enumerate}
We will prove after some preparation that this property is hereditary.

\begin{lemma}\label{LemInvSys}
Consider an inverse system $\{H_i<G_i\,|\, i\in I\}$ of subgroups (for $j>i$, the morphism $H_j<G_j\to G_i$ lands in~$H_i$). If each $H_i<G_i$ is epimorphic (resp. observable), then $\varprojlim H_i<\varprojlim G_i$ is epimorphic (resp. observable).
\end{lemma}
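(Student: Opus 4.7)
The plan is to handle the two statements separately, in both cases exploiting the fact that for an inverse limit $G:=\varprojlim G_i$ of affine group schemes in~$\cC$, the coordinate algebra is a filtered colimit $\cO(G)\simeq\varinjlim\cO(G_i)$ in~$\Ind\cC$ (as used in the proof of Corollary~\ref{CorProAlg}); the same applies to $H:=\varprojlim H_i$, and the morphism $H\hookrightarrow G$ corresponds to the filtered colimit of the epimorphisms $\cO(G_i)\tto\cO(H_i)$.

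For the epimorphic statement, I would invoke the criterion of Proposition~\ref{PropEpi}(4), reducing the claim to showing $\cO(G)^H=\unit$. Writing this invariant as the equaliser $\mathrm{Eq}(\cO(G)\rightrightarrows\cO(G)\otimes\cO(H))$ and combining exactness of filtered colimits in the Grothendieck category $\Ind\cC$ with cocontinuity of $\otimes$, one obtains
$$\cO(G)^H\;\simeq\;\varinjlim_i\cO(G_i)^{H_i}\;\simeq\;\varinjlim_i\unit\;\simeq\;\unit,$$
where the transition maps on the colimit of copies of $\unit$ are forced to be identities, since each algebra morphism $\cO(G_i)\to\cO(G_j)$ sends $\eta_i$ to $\eta_j$.

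For the observable statement, I would verify condition~(2) in Definition~\ref{DefObs}: every $X\in\Rep_{\cC}H$ must embed into $\Res^G_HY$ for some $Y\in\Rep_{\cC}G$. Since $X$ is compact in~$\cC$ and $\cO(H)=\varinjlim\cO(H_i)$, cocontinuity of $\otimes$ forces the coaction $X\to X\otimes\cO(H)$ to factor through $X\otimes\cO(H_i)$ for some~$i$; after enlarging~$i$ so that coassociativity and counitality hold strictly at that stage (possible because each axiom is an equality of two morphisms out of the compact object~$X$ whose common target is a filtered colimit), $X$ acquires the structure of an $H_i$-representation whose pullback along $H\to H_i$ recovers the original $H$-structure. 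Observability of $H_i<G_i$ then yields $Y_i\in\Rep_{\cC}G_i$ with $X\hookrightarrow\Res^{G_i}_{H_i}Y_i$, and pulling $Y_i$ back along $G\to G_i$ supplies the desired $Y\in\Rep_{\cC}G$.

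The only genuine point to check is the promotion, in the observable case, of a mere factorization of the coaction through some $X\otimes\cO(H_i)$ to a genuine $\cO(H_i)$-coaction; this is the standard statement that $\Rep_{\cC}H$ is the filtered colimit of the $\Rep_{\cC}H_i$, formally parallel to Lemma~\ref{LemIndRep}(2), and purely categorical. Once it is granted, both parts of the lemma reduce to transparent colimit manipulations.
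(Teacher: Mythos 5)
Your proof is correct and takes essentially the same route as the paper, which deduces the lemma from the identification $\Rep_{\cC}(\varprojlim G_i)\simeq\varinjlim \Rep_{\cC}G_i$ (and likewise for the $H_i$) together with the characterisations in Proposition~\ref{PropEpi} and Definition~\ref{DefObs}(2); your treatment of the observable case, via compactness of $X$ and stabilisation of coassociativity at a finite stage, is exactly the intended argument. The only (harmless) variation is in the epimorphic case, where you use the invariant-theoretic criterion of Proposition~\ref{PropEpi}(4) and commutation of equalisers with filtered colimits, while the paper instead invokes fullness, i.e.\ Proposition~\ref{PropEpi}(3).
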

\begin{proof}
This follows easily from the identification $\Rep_{\cC}G\simeq\varinjlim \Rep_{\cC}G_i$ and the formulation of the conditions in \ref{PropEpi}(3) and \ref{DefObs}(2).
\end{proof}

\begin{theorem}\label{ThmObs0}
The following three conditions on $\cC$ are equivalent:
\begin{enumerate}
\item For every subgroup $H<G$ of every affine group scheme $G$ in~$\cC$, there exists an intermediary group $H<H'<G$ such that $H<H'$ is epimorphic and $H'<G$ observable;
\item The conditions \ref{PropObsMono}(1) and \ref{PropObsMono}(2) are equivalent whenever $G$ is algebraic.
\item $\cC$ satisfies {\bf (Obs)}.
\end{enumerate}
If these properties are satisfied, then the intermediary group in (1) is unique and is given by the intersection of all stabiliser subgroups of $G$ that contain~$H$, or alternatively as the maximal subgroup of $G$ which acts trivially on ${}^H\cO(G)\subset \cO(G)$.
We refer to it as the {\bf observable hull} of $H$ in~$G$.
\end{theorem}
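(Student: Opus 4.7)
The plan is to prove the cycle $(3) \Rightarrow (2) \Rightarrow (1) \Rightarrow (3)$. The implication $(3) \Rightarrow (2)$ is immediate, as (2) is simply the restriction of (Obs) to algebraic groups, which is a special case of the general statement.

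For $(1) \Rightarrow (3)$, fix any subgroup $H < G$, and let $H < H' < G$ be the intermediary provided by (1). I will argue that each of the conditions \ref{PropObsMono}(2) and \ref{PropObsMono}(4) on $H$ forces $H = H'$, making $H$ observable (since $H'$ is) and thereby establishing (Obs) when combined with the forward implications of \ref{PropObsMono}. If $H$ satisfies (2), writing $H = \bigcap_\alpha G_\alpha$ as an intersection of stabilizers in $G$, each $G_\alpha \cap H'$ is a stabilizer subgroup of $H'$ (corresponding to the restriction of $\alpha$) containing $H$; by the epimorphicity of $H \hookrightarrow H'$ and Proposition \ref{PropEpi}(5), this stabilizer must equal $H'$, giving $H' \subset G_\alpha$ for each $\alpha$ and hence $H' \subset H$. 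If $H$ satisfies (4) with $H = \mathrm{Eq}(f,g\colon G \rightrightarrows G')$, then $f|_{H'}$ and $g|_{H'}$ agree on $H$, hence on all of $H'$ by the categorical epi property of $H \hookrightarrow H'$, forcing $H' \subset \mathrm{Eq}(f,g) = H$.

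For $(2) \Rightarrow (1)$, first suppose $G$ is algebraic. Set $H' = \bigcap_\alpha G_\alpha$, the intersection of all stabilizers of $G$ containing $H$; this $H'$ satisfies \ref{PropObsMono}(2), so hypothesis (2) of the theorem ensures $H' < G$ is observable. For $H < H'$ epimorphic I appeal to Proposition \ref{PropEpi}(5): given any stabilizer $(H')_\beta < H'$ containing $H$, with $\beta\colon \unit \hookrightarrow Y \in \Rep_\cC H'$, observability of $H' < G$ embeds $Y \hookrightarrow \Res^G_{H'} Z$ for some $Z \in \Rep_\cC G$, and the composite $\beta'\colon \unit \to Z$ has stabilizer $G_{\beta'}$ containing $H$, hence by construction $H' \subset G_{\beta'}$, yielding $(H')_\beta = H' \cap G_{\beta'} = H'$. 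For general $G$, decompose $G = \varprojlim G_i$ with $G_i$ algebraic via Corollary \ref{CorProAlg}, construct the observable hull $H_i'$ in each $G_i$ from the algebraic case, verify the inverse-system compatibility, and apply Lemma \ref{LemInvSys} to obtain $H' = \varprojlim H_i'$ as an observable hull of $H$ in $G$.

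Once the equivalence of (1)--(3) is established, uniqueness of $H'$ and the two explicit descriptions are quick: the $(1) \Rightarrow (3)$ argument already shows any valid intermediary is contained in $\bigcap_\alpha G_\alpha$, and conversely any observable intermediary $H''$ is itself an intersection of stabilizers containing $H$ by \ref{PropObsMono}(2), so contains $\bigcap_\alpha G_\alpha$; the description via ${}^H\cO(G)$ then follows from the equivalence \ref{PropObsMono}(2) $\Leftrightarrow$ \ref{PropObsMono}(3). The main obstacle is the inverse-limit step in $(2) \Rightarrow (1)$: one must check that the hulls $H_i'$ form a compatible system, i.e., that the image of $H_j'$ in $G_i$ for $j > i$ lies inside $H_i'$. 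This should follow from the intrinsic description of $H_i'$ via stabilizers, since every stabilizer in $G_i$ containing $H_i$ pulls back to a stabilizer in $G_j$ containing $H_j$, but is the most delicate bookkeeping in the argument.
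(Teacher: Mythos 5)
Your proposal is correct and follows essentially the same route as the paper: the same cycle $(3)\Rightarrow(2)\Rightarrow(1)\Rightarrow(3)$, the same construction of $H'$ as the intersection of all stabilisers containing $H$ (with observability used to replace an $H'$-representation by a $G$-representation), the same uniqueness argument via the epimorphism property against an equaliser, and the same reduction of the general case to algebraic groups via Corollary~\ref{CorProAlg} and Lemma~\ref{LemInvSys}. The only differences are cosmetic: you invoke Proposition~\ref{PropEpi}(5) where the paper uses fullness (Proposition~\ref{PropEpi}(3)), and you spell out the inverse-system compatibility of the hulls, which the paper leaves implicit.
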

\begin{proof}
If for a chain of subgroups $H<H'<G$, we have that $H\to H'$ is an epimorphism while $H'\to G$ is a regular monomorphism, then $H'$ is unique with these properties.

To show that (1) implies (3), we only need to show that \ref{PropObsMono}(4) implies \ref{PropObsMono}(1) under our assumption (1). Consider thus a subgroup $H<G$ for which $H\to G$ is a regular monomorphism.  Since $H'\to G$ is also a regular monomorphism, by \autoref{PropObsMono}, the uniqueness in the first pargraph implies $H=H'$, so $H<G$ is observable.

Obviously (3) implies (2).

Now we prove that (2) implies (1). We only need to show that (1) is satisfied for algebraic groups $G$, since the general case then follows quickly from Corollary~\ref{CorProAlg} and Lemma~\ref{LemInvSys}. Consider $H<H'<G$ with $G$ algebraic and with $H'$ defined as the intersection of all stabiliser subgroups of $G$ that contain~$H$. By assumption (that \ref{PropObsMono}(2) implies \ref{PropObsMono}(1)) $H'<G$ is observable. It thus suffices to show that $H<H'$ is epimorphic. We do this (using Proposition~\ref{PropEpi}) by showing that $\Rep H'\to \Rep H$ is full. Consider therefore $X\in \Rep H'$ and $\nu:\unit\hookrightarrow X$ which is $H$-linear. We must show it is in fact $H'$-linear. Since $H'<G$ is observable, we can in fact assume without loss of generality that $X$ is a $G$-representation. But then by construction $H<G_\nu $, which implies by definition of $H'$ that $H'<G_\nu$ and $\nu$ is indeed $H'$-linear.

We prove the final statements in the theorem. Uniqueness follows from the first paragraph of the proof. The first explicit description of $H'$ follows from the previous paragraph. The second description then follows from the argument in the last paragraph in the proof of Proposition~\ref{PropObsMono}.
\end{proof}

\begin{corollary}
Property {\bf (Obs)} is hereditary.
\end{corollary}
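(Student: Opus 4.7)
The plan is to verify condition~(2) of Theorem~\ref{ThmObs0} for $\cC_1$, which by that theorem forces condition~(3), i.e. {\bf (Obs)}. Concretely, I will take a tensor functor $F:\cC_1\to\cC_2$ to a pretannakian category $\cC_2$ satisfying {\bf (Obs)}, an algebraic group $G$ in~$\cC_1$, and a subgroup $H<G$ written as an intersection $H=\bigcap_\alpha G_\alpha$ of stabiliser subgroups of~$G$ (the setup of Proposition~\ref{PropObsMono}(2)), and aim to deduce that $H<G$ is observable.

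First I will transport the data across $F$. Since $F$ is an exact, strong monoidal functor, $\cO(FG):=F\cO(G)$ is a Hopf algebra in~$\Ind\cC_2$ and the subgroup $H<G$, cut out by the ideal $J_H\triangleleft\cO(G)$, is sent to a subgroup $FH<FG$ cut out by $FJ_H$. The description of $G_\alpha$ (for $\alpha:\unit\hookrightarrow X$ in~$\Rep_{\cC_1}G$) as the quotient of $\cO(G)$ by the ideal generated by the adjoint morphism $X^\vee\to\cO(G)$ commutes with $F$, so $F(G_\alpha)=(FG)_{F\alpha}$. Cocontinuity of $F$ on Ind-completions then gives $F(\sum_\alpha J_\alpha)=\sum_\alpha F(J_\alpha)$, and hence $FH=\bigcap_\alpha (FG)_{F\alpha}$ inside~$FG$. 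Thus $FH<FG$ also satisfies Proposition~\ref{PropObsMono}(2), and since $\cC_2$ is assumed to satisfy {\bf (Obs)} this gives observability of $FH<FG$, i.e. faithfulness of $\Ind^{FG}_{FH}$.

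Next I will lift this faithfulness back to $\cC_1$. Recall the formula $\Ind^G_HY=\mathrm{Eq}\bigl(\cO(G)\otimes Y\rightrightarrows\cO(G)\otimes\cO(H)\otimes Y\bigr)$; it is built from tensor products and a finite equaliser, both preserved by $F$. This yields a canonical isomorphism $F\circ\Ind^G_H\;\cong\;\Ind^{FG}_{FH}\circ F_H$, where $F_H:\Rep_{\cC_1}^\infty H\to\Rep_{\cC_2}^\infty FH$ is the functor induced by $F$ on comodule categories, which is faithful because $F$ is. Given a morphism $f$ in~$\Rep_{\cC_1}^\infty H$ with $\Ind^G_Hf=0$, applying $F$ gives $\Ind^{FG}_{FH}(F_Hf)=0$; the previous paragraph forces $F_Hf=0$, and then faithfulness of $F_H$ forces $f=0$. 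So $\Ind^G_H$ is faithful, i.e. $H<G$ is observable.

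The principal technical obstacle I anticipate is carefully checking the three compatibility statements above, namely that under $F$: stabilisers of vectors go to stabilisers of vectors, sums of defining ideals of subgroups go to sums of defining ideals, and the induction functor intertwines in the displayed way. All three are formal consequences of $F$ being exact, strong monoidal and cocontinuous on Ind-completions, so the difficulty should be bookkeeping rather than conceptual; nothing in the argument requires assumptions on~$\cC_1$ beyond what is already in force.
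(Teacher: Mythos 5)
Your proposal is correct and follows essentially the same route as the paper: both reduce to showing that the condition in Theorem~\ref{ThmObs0}(2) is hereditary, push the pair $H<G$ forward along the tensor functor, note that being an intersection of stabilisers is preserved, invoke {\bf (Obs)} in the target to get faithfulness of induction there, and pull faithfulness back. The only difference is that you spell out the compatibilities (stabilisers map to stabilisers, sums of ideals are preserved, and $F\circ\Ind^G_H\cong\Ind^{FG}_{FH}\circ F_H$ via the equaliser formula) which the paper compresses into ``clearly'' and ``it follows immediately''.
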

\begin{proof}
We can instead prove that the condition in Theorem~\ref{ThmObs0}(2) is hereditary. So concretely, consider a tensor functor $\cC\to \cC_1$ to a pretannakian category $\cC_1$ for which property \ref{PropObsMono}(2) always implies \ref{PropObsMono}(1). 

We consider a pair $H<G$ of affine group schemes in~$\cC$ and assume that it satisfies \ref{PropObsMono}(2). We denote by $H_1<G_1$ the same groups considered in~$\cC_1$. Clearly \ref{PropObsMono}(2) is still satisfied for this pair, and hence $H_1<G_1$ is observable, or in other words $\Ind^{G_1}_{H_1}$ is faithful. It follows immediately that also $\Ind^{G}_{H}$ is also faithful.
\end{proof}

Finally, we point out a connection between observability and the question of whether every normal subgroup is a kernel.

\begin{prop}\label{PropNObs}
Assume that $\cC$ satisfies {\bf (Hopf)}, then the following properties are equivalent on a normal subgroup $N\lhd G$:
\begin{enumerate}
\item $N\lhd G $ is observable;
\item $N\lhd G$ is a kernel;
%\item The fpqc-sheafification of $A\mapsto G(A)/N(A)$ is representable.
\end{enumerate}
\end{prop}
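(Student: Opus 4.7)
The plan is to prove the two implications separately.

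For $(1) \Rightarrow (2)$, suppose $N$ is normal and observable. By normality, $\cO(G/N) := \cO(G)^N$ is a Hopf subalgebra of $\cO(G)$ (see~\ref{DefQuo}), hence stable under the antipode, which interchanges left and right coinvariants; this yields ${}^N\cO(G) = \cO(G)^N = \cO(G/N)$. By {\bf (Hopf)}, this inclusion is faithfully flat, so $G \to G/N$ is a factor group; let $K := N_{G/N}$ be its kernel. Proposition~\ref{PropQN}(2) gives $N < K$, and the aim is $K < N$. By Proposition~\ref{PropObsMono}(3) and observability, $N$ is the maximal subgroup $H' < G$ satisfying ${}^N\cO(G) \subset {}^{H'}\cO(G)$. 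Since $K$ is a kernel (so normal), ${}^K\cO(G) = \cO(G/K)$; and since $K$ is the kernel of the factor group $G \to G/N$, Proposition~\ref{PropQN}(1) gives $G/K \simeq G/N$, so ${}^K\cO(G) = \cO(G/N) = {}^N\cO(G)$. In particular ${}^N\cO(G) \subset {}^K\cO(G)$, and maximality forces $K < N$. Hence $N = K$ is a kernel.

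For $(2) \Rightarrow (1)$, assume $N$ is the kernel of a factor group $G \to Q$. By {\bf (Hopf)}, $\cO(Q) \hookrightarrow \cO(G)$ is faithfully flat, and $\cO(N) = \cO(G)/\cO(G)\cO(Q)^+$. The plan is a Hopf--Galois/descent argument: verify that the canonical map
\[\cO(G) \otimes_{\cO(Q)} \cO(G) \;\longrightarrow\; \cO(G) \otimes \cO(N), \qquad a \otimes b \mapsto \sum a b_{(1)} \otimes \overline{b_{(2)}},\]
where $\overline{\phantom{x}} : \cO(G) \to \cO(N)$ is the canonical projection, is an isomorphism --- surjectivity being direct, injectivity from faithful flatness. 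From this Galois isomorphism together with the faithful flatness of $\cO(Q) \hookrightarrow \cO(G)$, one deduces that $\cO(G)$ is faithfully coflat as an $\cO(N)$-comodule, whence the induction functor $\Ind^G_N$ (defined via the equaliser in Section~\ref{SecObs}) is faithful on $\Rep^\infty_{\cC} N$. This is precisely observability in the sense of Definition~\ref{DefObs}(1).

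The main obstacle is the direction $(2) \Rightarrow (1)$: adapting classical Hopf--Galois and normal-basis arguments to ind-objects in a pretannakian category, using only {\bf (Hopf)} as the flatness input. All needed tools are present --- the faithful flatness of Hopf subalgebra inclusions, the equaliser characterization of faithfully flat morphisms from Remark~\ref{epifp}, and the explicit equaliser definition of $\Ind^G_N$ --- but verifying the isomorphism of the Galois map in this categorical setting and extracting faithful coflatness of $\cO(G)$ over $\cO(N)$ from faithful flatness over $\cO(Q)$ is the technical core of the argument.
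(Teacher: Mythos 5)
Your proposal is correct, but the two directions compare differently with the paper. For $(2)\Rightarrow(1)$ you follow essentially the paper's own argument: the Galois isomorphism $\cO(G)\otimes_{\cO(Q)}\cO(G)\simeq \cO(G)\otimes\cO(N)$ is exactly the paper's $G\times_Q G\simeq G\times N$, and "faithful coflatness of $\cO(G)$ over $\cO(N)$" is just a repackaging of the paper's conclusion that $\cO(G)\otimes_{\cO(Q)}\Ind^G_N-\simeq\cO(G)\otimes-$ after base change along the faithfully flat $\cO(Q)\to\cO(G)$; both you and the paper leave this descent step as a sketch. One small correction there: the Galois map is an isomorphism for purely formal reasons (the inverse $a\otimes\bar c\mapsto \sum aS(c_{(1)})\otimes c_{(2)}$ is well defined because $\cO(Q)$ is a Hopf subalgebra, equivalently $(g,n)\mapsto(g,gn)$ inverts $(g,h)\mapsto(g,g^{-1}h)$ on points), so faithful flatness is not what gives injectivity; it is needed only for the descent step afterwards. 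For $(1)\Rightarrow(2)$ your route is genuinely different. The paper stays inside Proposition~\ref{PropObsMono}(2): writing $N=\cap_i G_{\alpha_i}$ with $\alpha_i:\unit\hookrightarrow X_i$, it sets $Y_i=X_i^N$ (a $G$-subrepresentation by normality) and checks directly that $N$ is the kernel of $G\to\prod_i GL(Y_i)$; notably this uses no flatness at all, so that direction holds without {\bf (Hopf)}. You instead invoke {\bf (Hopf)} to make $G\to G/N$ a factor group, identify ${}^N\cO(G)=\cO(G)^N$ via the antipode (legitimate: $\cO(G)^N$ is a Hopf subalgebra, hence $S$-stable, and $S$ swaps left and right invariants with $S^2=\mathrm{id}$ by commutativity), and then use the maximality characterisation of Proposition~\ref{PropObsMono}(3) together with Proposition~\ref{PropQN}(1)--(2) to squeeze $N=N_{G/N}$. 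This is shorter given the surrounding machinery, but costs the extra hypothesis in that direction and the left-equals-right invariants lemma; the paper's version is more explicit and hypothesis-free. Both arguments are sound.
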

\begin{proof}
First we prove that (1) implies (2).
 Proposition~\ref{PropObsMono} implies that there exists a collection of representations $X_i$ of $G$ with $\{\nu_i:\unit\hookrightarrow X_i\}$ so that $N=\cap_i G_{\nu_i}$. For each $i$ let $Y_i=X_i^N\subset X_i$ be the maximal $G$-subrepresentation on which $N$ acts trivially. In particular, the image of $\nu_i$ is contained in~$Y_i$. It then follows that $N$ is the kernel of
$$G\;\to\;\prod_i GL(Y_i).$$
Indeed, $N$ is included in the kernel by construction of $Y_i\subset X_i$. Moreover, the kernel is clearly contained in~$\cap_i G_{\nu_i}$. This shows that (2) is satisfied.

Now we show that (2) implies (1). We set $Q=G/N$ and will prove that $\Ind^G_N$ is faithful (and exact). As the argument is very classical we only sketch it. It follows easily that $\Ind^G_NX$ inherits an $\cO(Q)$-module structure, from the $\cO(G)$-module structure on $\cO(G)\otimes X$. By {\bf (Hopf)}, it is sufficient to prove that $\cO(G)\otimes_{\cO(Q)}\Ind^G_N-$ is faithful. From the isomorphism
$$G\times_QG\;\simeq\; G\times N$$
one derives that
$\cO(G)\otimes_{\cO(Q)}\Ind^G_N-\;\simeq\; \cO(G)\otimes -,$
from which the conclusion follows.
\end{proof}
%That (2) implies (3) follows from Proposition~\ref{PropQN}. On the other hand, if condition (3) is satisfied, then since the presheaf in (3) is sends products to products and monomorphisms to monomorphism, it follows that $N$ is the kernel of the morphism from $G$ to the sheafification (which by assumption is an affine group scheme), so (2) follows.

\appendix

\section{Classical (strong) observability}\label{App}

\subsection{Observability}
In this section we demonstrate how our treatment of observability in Section~\ref{SecObs} already leads to slightly stronger results for ordinary affine group schemes over fields than seem available in the literature of \cite{BBHM, Gr}.

\begin{theorem}\label{ClassThm}
Let $k$ be an arbitrary field, and consider an affine group scheme $G$ over~$k$ with subgroup $H<G$. 
\begin{enumerate}
\item The following conditions are equivalent on $H<G$.
\begin{enumerate}
\item Every finite dimensional representation of $H$ extends to one of $G$;
\item The geometric induction functor $\Ind^G_H$ is faithful;
\item The subgroup $H$ is the intersection of a family of stabiliser subgroups of vectors in~$G$-representations;
\item The inclusion $H\to G$ is a regular monomorphism in the category of affine group schemes over $k$.
\end{enumerate}
Furthermore, if $G$ is of finite type, then these properties are equivalent to
\begin{enumerate}
\item[(e)] The quotient scheme $G/H$ is quasi-affine.
\end{enumerate}
\item There exists a unique subgroup $H<H'<G$ for which $H\to H'$ is an epimorphism and $H'< G$ satisfies conditions (a)-(d) in part (1).
\end{enumerate}
\end{theorem}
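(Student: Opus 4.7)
The plan is to apply the framework of Section~\ref{SecObs} with $\cC=\Vecc_k$. The category $\Vecc_k$ is vacuously both {\bf MN} and {\bf GR}, and satisfies {\bf (Hopf)} classically (Takeuchi), so the machinery of Sections~\ref{SecGrp} and~\ref{SecObs} applies in full. The equivalence (a)$\Leftrightarrow$(b) is Proposition~\ref{Propadj0}(3) applied to the restriction tensor functor $\Phi=\Res^G_H$, which is precisely Definition~\ref{DefObs}. The implications (b)$\Rightarrow$(c)$\Rightarrow$(d) are Proposition~\ref{PropObsMono}, so closing the cycle in part~(1) reduces, by Theorem~\ref{ThmObs0}, to establishing property {\bf (Obs)} in~$\Vecc_k$.

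To verify {\bf (Obs)}, Theorem~\ref{ThmObs0}(2) reduces the task to showing that for any algebraic group $G$ and subgroup $H<G$ satisfying~\ref{PropObsMono}(2), the subgroup $H$ is observable. I would proceed by constructing the observable hull $H'<G$ as the maximal subgroup acting trivially on ${}^H\cO(G)\subset \cO(G)$, and verifying: (i) $H\to H'$ is epimorphic, via Proposition~\ref{PropEpi}(3) together with the observation that $H$ and $H'$ produce identical invariants in every $G$-subrepresentation of $\cO(G)$; and (ii) $H'<G$ is observable, via the classical induction argument with $\Ind^G_{H'}V$ and the filtered union structure of $\cO(G)$ from Lemma~\ref{LemIndRep}. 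Under the assumption~\ref{PropObsMono}(2), the hull $H'$ coincides with $H$, so $H$ itself is observable.

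For the equivalence of (a)-(d) with~(e) when $G$ is of finite type, I would invoke the classical characterization: $G/H$ is quasi-affine if and only if the $H$-invariants of $\cO(G)$ realize $H$ as the intersection of stabilizer subgroups of vectors in $G$-representations, which is precisely condition~(c). The standard reference is \cite[Section~2]{Br}, but the framework here delivers the equivalence without hypotheses of smoothness of $H$, algebraic closure of~$k$, or finite type of the ambient scheme, since conditions (a)-(d) have been established without such restrictions. Part~(2) then follows immediately from Theorem~\ref{ThmObs0}(1): once {\bf (Obs)} is known in $\Vecc_k$, the observable hull $H<H'<G$ exists and is unique, characterized both as the intersection of all stabilizer subgroups of $G$ containing $H$, and as the maximal subgroup of $G$ acting trivially on ${}^H\cO(G)$.

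The main obstacle is the verification of {\bf (Obs)} for~$\Vecc_k$, i.e., the observability of the candidate hull $H'$---this is the essential content of the classical Bia{\l}ynicki-Birula--Hochschild--Mostow theorem. Technically, the delicate ingredient is the embedding of finite-dimensional $H'$-representations into finite-dimensional $G$-subrepresentations of the induced module, and the argument must exploit the specific structure of $\cO(G)$ as a filtered union of compact $G$-subrepresentations together with the defining property of $H'$ as the stabilizer of the invariants ${}^H\cO(G)$.
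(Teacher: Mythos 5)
Your formal reduction is exactly the paper's: apply Theorem~\ref{ThmObs0} with $\cC=\Vecc_k$ (indeed vacuously \textbf{MN} and \textbf{GR}), so that everything excluding (1)(e) follows once one shows, for $G$ algebraic, that condition (c) (an intersection of stabilisers) implies condition (b) (faithfulness of $\Ind^G_H$), i.e.\ that \ref{PropObsMono}(2) implies \ref{PropObsMono}(1). The problem is that your proposal never actually proves this implication. Your step (ii) --- ``$H'<G$ is observable, via the classical induction argument'' --- is precisely the statement to be established, and you concede as much in your final paragraph when you call it ``the main obstacle'' and identify it with the Bia{\l}ynicki-Birula--Hochschild--Mostow theorem. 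Appealing to that classical theorem is doubly problematic: first, it is circular in spirit (the content of Theorem~\ref{ClassThm} \emph{is} that theorem, in greater generality); second, the classical statements in \cite{BBHM, Gr} carry hypotheses of smoothness, finite type and algebraic closedness which Theorem~\ref{ClassThm} is explicitly designed to remove, so they cannot be cited to prove it. The same objection applies to your treatment of (e): the characterisation of quasi-affine quotients you want to ``invoke'' from the literature is only available under those restrictive hypotheses.

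The paper closes the gap by a different and genuinely load-bearing use of condition (e): for $G$ of finite type it proves the chain (c)$\Rightarrow$(e)$\Rightarrow$(b). For (c)$\Rightarrow$(e), finite type lets one reduce to $H$ being the stabiliser of a single vector $v$ in a finite-dimensional $G$-representation $V$, whence $G/H\to\mA V$ is an immersion by \cite[Proposition~III.3.5.2]{DG} and $G/H$ is quasi-affine --- an argument valid for arbitrary (non-smooth) group schemes over arbitrary fields. For (e)$\Rightarrow$(b), one factors $\Ind^G_H$ as the faithful exact functor $\cL:\Rep_k H\to\QCoh(G/H)$ followed by global sections $\Gamma$, and faithfulness of $\Gamma$ is a defining property of quasi-affine schemes (see \cite[Chapter~5]{Jantzen}). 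So in the paper, quasi-affineness is not a bolt-on equivalence added for finite type $G$; it is the engine that proves the one implication your proposal leaves open. Without supplying an argument of this kind (or some other self-contained proof that a stabiliser of invariants is observable), your proof is incomplete at its essential step.
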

\begin{proof}
Excluding (1)(e), it follows from Theorem~\ref{ThmObs0} (and Proposition~\ref{PropObsMono}) that it suffices to prove that, for $G$ of finite type, property (1)(c) implies (1)(b).

We will therefore prove that for $G$ of finite type, we have the implications (c)$\Rightarrow$(e)$\Rightarrow$(b) in (1), from which the entire theorem follows.

We prove that (c) implies (e). As $G$ is of finite type, we can assume that $H$ is simply the stabiliser of a vector $v$ in a finite dimensional $G$-representation $V$. It follows from \cite[Proposition~III.3.5.2]{DG} that the morphism of schemes $G/H\to\mA V$ is an immersion, in particular $G/H$ is quasi-affine.

Now we prove that (e) implies (b). For this it suffices to observe (see \cite[Chapter~5]{Jantzen}) that $\Ind^G_H$ (say post-composed with the forgetful functor) can be realised as a composition
$$\Rep_k H\;\xrightarrow{\cL}\; \QCoh(G/H)\;\xrightarrow{\Gamma}\;\Vecc^\infty_k, $$
where $\cL$ is faithful (and exact) and $\Gamma$ is the functor taking global sections. Indeed, the functor $\Gamma$ being faithful is a (defining) property of quasi-affine schemes.
\end{proof}

\begin{remark}
\begin{enumerate}
\item Equivalence of (1)(a), (c) and (e) was proved in \cite{BBHM} for {\em smooth} affine group schemes of {\em finite type} over {\em algebraically closed} fields. The interpretation in (1)(d) seems to be new even in finite type.
\item The subgroup $H'$ from (2) appears in \cite[Chapter~I]{Gr}, again for {smooth} affine group schemes of {finite type} over {algebraically closed} fields.
\item Theorem~\ref{ClassThm}(1) was applied in \cite{CEOP} in the generality provided here (that is, for non-smooth groups).
\item The proof of Theorem~\ref{ClassThm} avoids using the approach of \cite[Theorem~1]{BBHM} focused on 1-dimensional $H$-representations. This makes it more amenable to generalisation to other tensor categories, see also the discussion following Question~\ref{RemChev}.
\item Another interpretation of the observable hull is as follows. In the category of affine group schemes over a field, monomorphisms correspond to subgroups, and one can show that quotient groups correspond to regular epimorphisms. Theorem~\ref{ClassThm} shows that for any group homomorphism $G_1\to G_2$ we have, besides the usual decomposition $G_1\to H\to G_2$ into a regular epimorphism and a monomorphism ($H$ is the image), we also have a (unique) decomposition $G_1\to H'\to G_2$ into an epimorphism and a regular monomorphism.
\end{enumerate}
\end{remark}

\subsection{Strong observability}

For the sake of completeness we also extend the result from \cite{CPS} from smooth groups. Contrary to our treatment of observability, which gives a `new' proof, the result in this section is a simple direct reduction to the result in \cite{CPS}.

We call a subgroup $H<G$ `exact' if the geometric induction functor $\Ind^G_H$ is exact. Due to the results in \cite{CPS} such subgroups are called `strongly observable' in finite type.

\begin{theorem}\label{ThmStrong}
Let $k$ be an algebraically closed field.
Let $G$ be an affine group scheme of finite type over $k$ with a subgroup $H<G$. The following properties are equivalent:
\begin{enumerate}
\item The subgroup $H<G$ is exact;
\item The subgroup $H<G$ is exact and observable;
\item The subgroup $H_{\red}<G_{\red}$ is exact;
\item The quotient scheme $G/H$ is affine.
\end{enumerate}
\end{theorem}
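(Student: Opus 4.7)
The plan is to reduce directly to the theorem of \cite{CPS}, which establishes this statement under the additional hypothesis that $G$ is smooth. The equivalence of (1) and (2) is essentially formal: exactness of $\Ind^G_H$ implies in particular faithfulness, hence observability via Proposition~\ref{PropObsMono}, while `strongly observable' for smooth groups in \cite{CPS} is itself defined as exactness of the induction functor. The implication (4) $\Rightarrow$ (1) is immediate by the same argument used in the proof of Theorem~\ref{ClassThm}: when $G/H$ is affine, $\Ind^G_H$ factors as $\Gamma \circ \cL$ through $\QCoh(G/H)$, and $\Gamma$ is exact on an affine scheme.

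The core of the proof is the chain (1) $\Leftrightarrow$ (3) $\Leftrightarrow$ (4), which I approach by reducing to the smooth case. Over the algebraically closed (hence perfect) field $k$, both $G_{\red}$ and $H_{\red} = H \cap G_{\red}$ are smooth closed subgroup schemes, with $G_{\red}$ normal in $G$ and the quotient $G/G_{\red}$ a finite infinitesimal group scheme. The fppf quotient $G/H$ satisfies $(G/H)_{\red} \simeq G_{\red}/H_{\red}$ over a perfect field, and since affineness of a finite type $k$-scheme is insensitive to nilpotent thickenings, (4) is equivalent to $G_{\red}/H_{\red}$ being affine.

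Applying the \cite{CPS} theorem directly to the smooth finite type affine group $G_{\red}$ and its closed subgroup $H_{\red}$ identifies affineness of $G_{\red}/H_{\red}$ with exactness of $\Ind^{G_{\red}}_{H_{\red}}$, which is condition (3); this yields (3) $\Leftrightarrow$ (4). The remaining step is (1) $\Leftrightarrow$ (3), which I would derive from the fact that induction along the inclusion of a subgroup whose cokernel is a finite infinitesimal group scheme is exact: in both directions $H_{\red} < H$ and $G_{\red} < G$, the larger coordinate algebra is a finitely generated free module over the smaller one. Applied to the square of inclusions $H_{\red}, G_{\red}$ in $H, G$, together with the Mackey-type comparison relating $\Res^G_H \circ \Ind^G_{G_{\red}}$ to a direct sum of induced modules from $H_{\red}$, this transports exactness between $\Ind^G_H$ and $\Ind^{G_{\red}}_{H_{\red}}$.

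The main technical obstacle will be precisely this transport of exactness along the square $H_{\red}, G_{\red} \subset H, G$: one must verify that the intermediate inductions along the infinitesimal cokernels $H/H_{\red}$ and $G/G_{\red}$ do not interfere, and that the resulting correspondence between exactness of the two global induction functors is bidirectional. Once this reduction is carried out cleanly, the theorem becomes a formal consequence of \cite{CPS} together with the purely scheme-theoretic identification $(G/H)_{\red} \simeq G_{\red}/H_{\red}$.
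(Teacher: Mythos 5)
Your skeleton---reduce to \cite{CPS} for the reduced pair, identify $(G/H)_{\red}$ with a quotient of $G_{\red}$, and use the $\QCoh(G/H)$ factorisation to deduce exactness from affineness---matches the paper's, but two of your steps fail as written. First, the claim that (1) $\Leftrightarrow$ (2) is formal because ``exactness of $\Ind^G_H$ implies faithfulness'' is false: exact functors need not be faithful, and for geometric induction faithfulness is precisely observability (Definition~\ref{DefObs}), which is part of what the theorem asserts, not a formality. (For a Borel $B<SL_2$, induction kills all negative characters; nothing formal about induction functors rules this out.) The paper instead obtains (2) from (4): when $G/H$ is affine the global-sections functor $\Gamma$ is exact \emph{and} faithful, so $\Ind^G_H=\Gamma\circ\cL$ is exact and faithful, and (2) then enters the cycle as $(1)\Rightarrow(3)\Rightarrow(4)\Rightarrow(2)\Rightarrow(1)$. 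Your outline admits the same repair, but not with the justification you gave.

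The serious gap is your transport step $(1)\Rightarrow(3)$. The facts you lean on are unavailable: $G_{\red}$ is in general \emph{not} normal in $G$ (for $G=\alpha_p\rtimes\mG_m$ in characteristic $p$ one has $G_{\red}=\mG_m$, which is not normal), so $G/G_{\red}$ is merely a one-point scheme and not a group; $\cO(H_{\red})$ is a \emph{quotient} of $\cO(H)$, not a subalgebra, so ``the larger coordinate algebra is finite free over the smaller'' does not parse; and there is no Mackey decomposition of $\Res^G_H\circ\Ind^G_{G_{\red}}$ for affine group schemes in this setting, where the would-be double coset space is an infinitesimal thickening of a point. The paper's argument needs none of this. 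By transitivity of induction, $\Ind^G_{G_{\red}}\circ\Ind^{G_{\red}}_{H_{\red}}\cong\Ind^G_H\circ\Ind^H_{H_{\red}}$; the quotients $H/H_{\red}$ and $G/G_{\red}$ are finite-type schemes with a single point, hence affine, so the already-established implication $(4)\Rightarrow(2)$ applied to these two pairs makes $\Ind^H_{H_{\red}}$ and $\Ind^G_{G_{\red}}$ exact and faithful; hypothesis (1) then makes the composite exact, and since an exact faithful functor reflects exactness, $\Ind^{G_{\red}}_{H_{\red}}$ is exact, which is (3). Two further remarks: you only need the direction $(1)\Rightarrow(3)$, since the converse follows from $(3)\Rightarrow(4)\Rightarrow(1)$; and the identification $(G/H)_{\red}\simeq G_{\red}/H_{\red}$ that you cite as standard is exactly the point where the paper supplies an argument (in \ref{redquo}), via the orbit map $G_{\red}/H_{\red}\to(G/H)_{\red}$ being a surjective immersion onto a reduced target.
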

\begin{proof}
The equivalence of these properties in case $G=G_{\red}$ is the main result of \cite{CPS}.

In general, (4) implies (2), due to the last paragraph of the proof of Theorem~\ref{ClassThm}. Obviously (2) implies (1). 

Now we prove that (1) implies (3). Consider the commutative square of induction functors
$$
\xymatrix{
\Rep^{\infty}H_{\red}\ar[rr]\ar[d]&&\Rep^\infty G_{\red}\ar[d]\\
\Rep^{\infty}H\ar[rr]&&\Rep^\infty G.\\
}
$$
By assumption, the lower horizontal arrow is exact. Note that $H/H_{\red}$ and $G/G_{\red}$ are affine (this is well-known but also follows from the same considerations as in the next paragraph). Since (4) implies (2), we thus find that the two vertical arrows are exact and faithful. It then follows that the upper horizontal arrow is also exact indeed.

Finally, we prove that (3) implies (4). By \cite{CPS}, (3) implies that $G_{\red}/H_{\red}$ is affine.
Now we can observe that we have an isomorphism \begin{equation}\label{redquoeq}G_{\red}/H_{\red}\;\simeq\;(G/H)_{\red},\end{equation} see \ref{redquo} below, and $(G/H)_{\red}$ is affine if and only if $G/H$ is affine, see \cite[I.2.6.1]{DG}.
\end{proof}

\subsubsection{}\label{redquo} We prove the isomorphism \eqref{redquoeq}. Restrict the action of $G$ on $G/H$ to $G_{\red}<G$ and observe that this restricts to an action on $(G/H)_{\red}$. Clearly the stabiliser of the origin is 
$$H_{\red}\;=\; H\cap G_{\red}.$$
We thus find an immersion $G_{\red}/H_{\red}\to (G/H)_{\red}$, see \cite[Proposition~III.3.5.2]{DG}. However, this immersion is clearly surjective and hence a closed immersion corresponding to a nil ideal. Since the target is already reduced, this must be an isomorphism.

\subsection*{Acknowledgement}
The author thanks Pavel Etingof, Walter Ferrer Santos and Victor Ostrik for interesting discussions.
The research was partially supported by ARC grant DP210100251.

\subsection*{Conflict of interest}
The author has no conflict of interest to declare that are relevant to this article.


\begin{thebibliography}
	{EGNO}
	
%	\bibitem[BGS]{BGS}
%	A.~Beilinson, V.~Ginzburg, W.~Soergel: Koszul duality patterns in representation theory. J. Amer. Math. Soc. 9 (1996), no. 2, 473--527.
	
	%\bibitem[BS]{BS}
%	 B.~Bhatt, P.~Scholze: Projectivity of the Witt vector affine Grassmannian. Invent. Math. 209 (2017), no. 2, 329--423.



\bibitem[BE]{BE}  D.~Benson, P.~Etingof: Symmetric tensor categories in characteristic 2. Adv. Math. 351 (2019), 967--999.

\bibitem[BEO]{BEO} D.~Benson, P.~Etingof, V.~Ostrik: New incompressible symmetric tensor categories in positive characteristic. Duke Math. J. 172 (2023), no. 1, 105–200.




\bibitem[BHM]{BBHM} A.~Białynicki-Birula, G.~Hochschild, G.D.~Mostow:
Extensions of representations of algebraic linear groups. 
Amer. J. Math. 85 (1963), 131--144. 


\bibitem[BF]{BF}H.~Borsari, W.~Ferrer Santos:
Geometrically reductive Hopf algebras.
J. Algebra 152 (1992), no. 1, 65--77. 


\bibitem[Br]{Br} M.~Brion:
Epimorphic subgroups of algebraic groups. 
Math. Res. Lett. 24 (2017), no. 6, 1649--1665. 

\bibitem[CPS]{CPS}  E.~Cline, B.~Parshall, L.~Scott: Induced modules and affine quotients. Math. Ann. 230 (1977), no. 1, 1--14. 

\bibitem[Co1]{Selecta} K.~Coulembier: Tensor ideals, Deligne categories and invariant theory. Selecta Math. (N.S.) 24 (2018), no. 5, 4659--4710. 

\bibitem[Co2]{Tann} K.~Coulembier: Tannakian categories in positive characteristic. Duke Math. J. 169 (2020), no. 16, 3167--3219. 

%\bibitem[Co3]{AbEnv1}

\bibitem[Co3]{AbEnv}  K.~Coulembier: Monoidal abelian envelopes. Compos. Math. 157 (2021), no. 7, 1584--1609.

\bibitem[CEO1]{CEO} K.~Coulembier, P.~Etingof, V.~Ostrik: On Frobenius exact symmetric tensor categories. With an appendix by A.~Kleshchev. Ann. of Math. (2) 197 (2023), no. 3, 1235--1279.


\bibitem[CEO2]{CEO2}	K.~Coulembier, P.~Etingof, V.~Ostrik: Incompressible tensor categories. Adv. Math. 457 (2024), Paper No. 109935, 65 pp.
	 
	 \bibitem[CEOP]{CEOP}  K.~Coulembier, P.~Etingof, V.~Ostrik, B.~Pauwels: Monoidal abelian envelopes with a quotient property. J. Reine Angew. Math. 794 (2023), 179--214. 
	
	\bibitem[De1]{Del90} P.~Deligne: Cat\'egories tannakiennes. The Grothendieck Festschrift, Vol. II, 111--195, Progr. Math., 87, Birkh\"auser Boston, Boston, MA, 1990. 
	
	\bibitem[De2]{Del02} P.~Deligne: Cat\'egories tensorielles. Mosc. Math. J. 2 (2002), no. 2, 227--248.	 
	
		\bibitem[De3]{Del07} P.~Deligne:
La cat\'egorie des repr\'esentations du groupe symétrique $S_t$, lorsque t n'est pas un entier naturel. Algebraic groups and homogeneous spaces, 209–273,
Tata Inst. Fund. Res. Stud. Math., 19, Tata Inst. Fund. Res., Mumbai, 2007.
	
%	\bibitem[Di]{Diers} Y.~Diers:
%Une construction universelle des spectres, topologies spectrales et faisceaux structuraux. 
%Comm. Algebra 12 (1984), no. 17-18, 2141--2183. 


\bibitem[Di]{Diers} Y.~Diers: Some spectra relative to functors. J. Pure Appl. Algebra 22 (1981), no. 1, 57--74.
		
	\bibitem[DM]{DM} P.~Deligne, J.S.~Milne: Tannakian Categories. In
Hodge cycles, motives, and Shimura varieties. 
Lecture Notes in Mathematics, 900. Springer-Verlag, Berlin-New York, 1982, pp. 101-228.

	 
	 \bibitem[DG]{DG}
	 M.~Demazure, P.~Gabriel: Groupes algébriques. Tome I: Géométrie algébrique, généralités, groupes commutatifs. North-Holland Publishing Co., Amsterdam, 1970.


\bibitem[EGNO]{EGNO}P.~Etingof, S.~Gelaki, D.~Nikshych, V.~Ostrik:
Tensor categories. 
Mathematical Surveys and Monographs, 205. American Mathematical Society, Providence, RI, 2015. 


\bibitem[Gr]{Gr} F.D.~Grosshans: Algebraic homogeneous spaces and invariant theory. Lecture Notes in Mathematics, 1673. Springer-Verlag, Berlin, 1997.


	 
	 \bibitem[Ho]{Ho} M.~Hochster:
Prime ideal structure in commutative rings.
Trans. Amer. Math. Soc. 142 (1969), 43--60. 


\bibitem[HS]{HS} N.~Harman, A.~Snowden: Oligomorphic groups and tensor categories. arXiv:2204.04526.


\bibitem[Ja]{Jantzen}
J.C.~Jantzen:
Representations of algebraic groups. 
 Mathematical Surveys and Monographs, 107. American Mathematical Society, Providence, RI, 2003.


\bibitem[Ma]{Ma}A.~Masuoka:
The fundamental correspondences in super affine groups and super formal groups.
J. Pure Appl. Algebra 202 (2005), no. 1-3, 284--312.


\bibitem[Mi]{Milne}

J.S.~Milne: Algebraic groups. 
The theory of group schemes of finite type over a field. Cambridge Studies in Advanced Mathematics, 170. Cambridge University Press, Cambridge, 2017. 



\bibitem[Os]{Os} V.~Ostrik: On symmetric fusion categories in positive characteristic. Selecta Math. (N.S.) 26 (2020), no. 3, Paper No. 36, 19 pp.



\bibitem[vdK]{vdK}  W.~van der Kallen: A reductive group with finitely generated cohomology algebras. Algebraic groups and homogeneous spaces, 301–314, Tata Inst. Fund. Res. Stud. Math., 19, Tata Inst. Fund. Res., Mumbai, 2007.

\bibitem[Ve]{Ve}
S.~Venkatesh:
Hilbert basis theorem and finite generation of invariants in symmetric tensor categories in positive characteristic. 
Int. Math. Res. Not. IMRN 2016, no. 16, 5106--5133. 

%\bibitem[Ve2]{Ve2}
% S.~Venkatesh: HC pairs
 
\bibitem[Wa]{Wa} W.C.~Waterhouse:
Geometrically reductive affine group schemes.
Arch. Math. (Basel) 62 (1994), no. 4, 306--307. 
 
 \bibitem[Zu]{Zu}  A.N.~Zubkov: Affine quotients of supergroups. Transform. Groups 14 (2009), no. 3, 713--745. 
 	\end{thebibliography}
\end{document}